\documentclass[12pt]{amsart}

\usepackage[left=2cm,top=2cm,right=2cm,bottom = 2cm]{geometry}

\usepackage[plainpages,urlcolor=red,linktocpage=true]{hyperref}

\usepackage{amscd}
\usepackage{amssymb}
\usepackage{mathrsfs}
\usepackage{amsmath}
\usepackage{mathabx}
\usepackage{amsthm}
\usepackage[all,cmtip]{xy}
\usepackage{enumerate}
\usepackage{enumitem}
\usepackage{ytableau}
\usepackage{tikz}
\usepackage{mathtools}
\usepackage{color}
\usepackage{wasysym}

\usepackage{charter}
\usepackage{comment}

\theoremstyle{plain}
\newtheorem{thm}{Theorem}[section]
\newtheorem{prop}[thm]{Propsition}
\newtheorem{ques}[thm]{Question}
\theoremstyle{definition}
\newtheorem{defn}[thm]{Definition}
\theoremstyle{remark}
\newtheorem{rmk}[thm]{Remark}

\theoremstyle{lemma}
\newtheorem{lem}[thm]{Lemma}
\theoremstyle{Corollary}
\newtheorem{coro}[thm]{Corollary}
\theoremstyle{example}

\newcommand{\thmref}[1]{Theorem~\ref{#1}}
\newcommand{\secref}[1]{Section~\ref{#1}}
\newcommand{\appref}[1]{Appendix~\ref{#1}}
\newcommand{\lemref}[1]{Lemma~\ref{#1}}
\newcommand{\propref}[1]{Proposition~\ref{#1}}
\newcommand{\corref}[1]{Corollary~\ref{#1}}
\newcommand{\defref}[1]{Definition~\ref{#1}}

\newcommand{\rmkref}[1]{Remark~\ref{#1}}

\newcommand{\be}{\begin{equation}}
\newcommand{\ee}{\end{equation}}


\newcommand{\mc}{\mathcal}

\newcommand{\C}{{\mathbb C}}
\newcommand{\R}{{\mathbb R}}
\newcommand{\Z}{{\mathbb Z}}

\newcommand{\CE}{{\mathcal E}}

\newcommand{\CG}{{\mathcal G}}
\newcommand{\CP}{{\mathcal P}}
\newcommand{\CO}{{\mathcal O}}
\newcommand{\CS}{{\mathcal S}}
\newcommand{\CT}{{\mathcal T}}
\newcommand{\CH}{{\mathcal H}}

\newcommand{\CK}{{\mathcal K}}
\newcommand{\CL}{{\mathcal L}}
\newcommand{\CX}{{\mathcal X}}


\newcommand{\Ker}{{\rm{Ker}}}
\newcommand{\LHS}{{\rm{LHS}}}

\newcommand{\id}{{\rm{id}}}
\newcommand{\sgn}{{\rm{sgn}}}


\newcommand{\mf}{\mathfrak}
\newcommand{\fg}{{\mf g}}
\newcommand{\fh}{{\mf h}}
\newcommand{\fb}{{\mf b}}
\newcommand{\fn}{{\mf n}}

\newcommand{\ft}{{\mathfrak t}}

\newcommand{\fM}{{\mf M}}

\newcommand{\fR}{{\mf R}}

\newcommand{\cR}{\mc R}

\newcommand{\cP}{\mc P}

\newcommand{\bm}{\mathbf m}
\newcommand{\bn}{\mathbf n}

\newcommand{\bI}{\mathbf I}

\newcommand{\La}{\Lambda}
\newcommand{\la}{\lambda}

\newcommand{\de}{\delta}

\newcommand{\cM}{\mc M}

\newcommand{\U}{{\rm{U}}}

\newcommand{\End}{{\rm{End}}}

\newcommand{\Sym}{{\rm{Sym}}}

\newcommand{\Ad}{{\rm{Ad}}}

\advance\headheight by 2pt


\newcommand{\ot}{\otimes}

\newcommand{\fsl}{{\mathfrak {sl}}}

\newcommand{\gl}{{\mathfrak {gl}}}
\newcommand{\fgl}{{\mathfrak {gl}}}
\newcommand{\osp}{{\mathfrak {osp}}}

\newcommand{\GL}{{\rm{GL}}}

\newcommand{\Span}{{\rm Span}}

\newcommand{\Uqg}{{\rm{U}}_q(\gl_{m|n})}
\newcommand{\Uq}{{\rm{U}}_q}

\numberwithin{equation}{section}
\makeatletter
\def\moverlay{\mathpalette\mov@rlay}
\def\mov@rlay#1#2{\leavevmode\vtop{%
   \baselineskip\z@skip \lineskiplimit-\maxdimen
   \ialign{\hfil$\m@th#1##$\hfil\cr#2\crcr}}}
\newcommand{\charfusion}[3][\mathord]{
    #1{\ifx#1\mathop\vphantom{#2}\fi
        \mathpalette\mov@rlay{#2\cr#3}
      }
    \ifx#1\mathop\expandafter\displaylimits\fi}
\makeatother

\newcommand{\overbar}[1]{\mkern1.5mu\overline{\mkern-1.5mu#1\mkern-1.5mu}\mkern 1.5mu}

\begin{document}

\title[Invariant theory for $\Uqg$]{The first and second fundamental theorems\\ of  invariant theory for \\
the quantum general linear supergroup}
\author[Yang Zhang]{Yang Zhang}
\dedicatory{Dedicated to Professor Gus Lehrer on the occasion of his 70th birthday}
\date{\today}
\address{School of Mathematical Sciences,
	University of Science and Technology of China, Hefei 230026, China}
\address{School of Mathematics and Statistics, University of Sydney, Sydney, NSW 2006, Australia}
\email{yang91@mail.ustc.edu.cn}

\begin{abstract}
We develop the non-commutative polynomial version of the invariant theory for the quantum general linear supergroup ${\rm{ U}}_q(\mathfrak{gl}_{m|n})$. A non-commutative ${\rm{ U}}_q(\mathfrak{gl}_{m|n})$-module superalgebra $\mathcal{P}^{k|l}_{\,r|s}$ is constructed, which is the quantum analogue of the supersymmetric algebra  over $\mathbb{C}^{k|l}\otimes \mathbb{C}^{m|n}\oplus \mathbb{C}^{r|s}\otimes (\mathbb{C}^{m|n})^{\ast}$.  We analyse the structure of the subalgebra of ${\rm{ U}}_q(\mathfrak{gl}_{m|n})$-invariants in $\mathcal{P}^{k|l}_{\,r|s}$ by using the quantum super analogue of Howe duality.

The subalgebra of ${\rm{ U}}_q(\mathfrak{gl}_{m|n})$-invariants in $\mathcal{P}^{k|l}_{\,r|s}$ is shown to be finitely generated.  We determine its generators and establish a surjective superalgebra homomorphism from a braided supersymmetric algebra onto it.  This establishes the first fundamental theorem of invariant theory for ${\rm{ U}}_q(\mathfrak{gl}_{m|n})$.  

We show that the above mentioned  superalgebra homomorphism is an isomorphism if and only if $m\geq \min\{k,r\}$  and $n\geq \min\{l,s\}$, and obtain a PBW basis for the subalgebra of invariants in this case.  
When the homomorphism is not injective, we give a  representation theoretical description of the generating elements of the kernel.  This way we obtain the relations obeyed by the generators of the subalgebra of invariants, producing
the second fundamental theorem of invariant theory for ${\rm{ U}}_q(\mathfrak{gl}_{m|n})$.  

We consider the special case with $n=0$ in greater detail, obtaining a complete treatment of the non-commutative polynomial version of the invariant theory for ${\rm{ U}}_q(\mathfrak{gl}_{m})$. In particular, the explicit SFT proved here is believed to be new.  We also recover the FFT and SFT of the invariant theory for the general linear superalgebra from the classical  limit (i.e., $q\to 1$) of our results.	
\end{abstract}

\subjclass[2010]{16T20,17B37,20G42} 
\keywords{Non-commutative invariant theory; quantum Howe duality}

\maketitle
\tableofcontents
\section{Introduction}
\noindent 1.1.\ \
Let $\C^{m|n}$ be the natural module for the general linear Lie superalgebra $\gl_{m|n}$, and let $(\C^{m|n})^{\ast}$ be its dual. Denote by $\CS^{k|l}_{\,r|s}$ the supersymmetric algebra over $\C^{k|l}\ot \C^{m|n}\oplus \C^{r|s}\ot (\C^{m|n})^{\ast}$, which is isomorphic to $S(\C^{k|l}\ot \C^{m|n})\ot S(\C^{r|s}\ot (\C^{m|n})^{\ast})$. 
One formulation of the invariant theory of $\gl_{m|n}$  seeks to describe the subalgebra of  $\gl_{m|n}$-invariants of $\CS^{k|l}_{\,r|s}$. 
The first fundamental theorem (FFT) provides a finite set of generators for the subalgebra of invariants \cite{DLZ, LZ1, S1}, and the second fundamental theorem (SFT) describes the relations among the generators \cite{LZ2, S2}.

The aim  of this paper is to develop quantum analogues of FFT and SFT of  invariant theory for the quantum general linear supergroup $\Uq(\gl_{m|n})$.  The invariant theory of $\Uqg$ in this non-commutative algebraic setting was poorly understood previously. In fact, even the SFT of invariant theory for $\Uq(\gl_m)$ (the case $n=0$) in this setting was unknown. The main issue hindering progress is that (super) commutative algebraic techniques used in classical invariant theory are no longer applicable to the quantum case, as all algebras involved now are not commutative.  

In this paper, we construct a quantum analogue $\CP^{k|l}_{\,r|s}$
of $\CS^{k|l}_{\,r|s}$,  which forms a module superalgebra (defined in \secref{secmodalg}) over the quantum general linear supergroup.  We investigate the subalgebra of $\Uqg$-invariants of $\CP^{k|l}_{\,r|s}$, which is again non-commutative. 
We construct a finite set of  generators for the subalgebra of invariants,  and determine the algebraic relations obeyed by the generators.  These results amount to an FFT and SFT, which are respectively given in \thmref{thmFFT} (also see \thmref{FFTref}) and \thmref{thmSFT}.

\vskip 0.4cm
\noindent 1.2.\ \
The first problem we need to address is the following, which is absent in the classical (i.e., non-quantum) case.
\begin{ques}\label{q:q-poly}
Find an appropriate quantum polynomial superalgebra $\CP^{k|l}_{\,r|s}$ which has $\CS^{k|l}_{\,r|s}$ as classical limit ($q\rightarrow 1$). 
\end{ques}
As discussed below, the problem in itself is highly nontrivial. 
We address it following ideas of reference \cite{BZ}, where 
braided symmetric algebras were taken as quantum polynomial algebras. 
We  generalise this notion to the super setting. 

Given a $\Z_2$-graded vector space $W$, the braided supersymmetric algebra $S_q(W)$ (defined in \secref{secbra}) is viewed as the $q$-deformation of supersymmetric algebra $S(W)$, i.e., has $S(W)$ as classical limit ($q\rightarrow 1$). Recall that $S(W)$ and $S_q(W)$ are both $\Z_{+}$-graded. We say that $S_q(W)$ is a flat deformation of $S(W)$ if $\dim S(W)_N= \dim S_q(W)_N$ for all integers $N\geq 0$. Furthermore, if $W$ is a module  over a quantum general linear supergroup and $S_q(W)$ is a flat deformation, then $W$ is called a flat module. However, a well known fact is that even in the quantum group case almost  all the braided symmetric algebras are not flat deformations (cf. \cite{BZ,LZZ}), that is, they are ``smaller'' than the corresponding polynomial algebras.  Little is known about braided supersymmetric algebras in the quantum super case. Here we will construct some flat deformations, which are useful for the study of invariant theory.

Let $V^{m|n}$ be the natural module for $\Uqg$, and similarly introduce $V^{k|l}$ and $V^{r|s}$. Our preceding discussion suggests that  $\CP^{k|l}_{\,r|s}$ may be defined as the tensor product of $S_q(V^{k|l}\ot V^{m|n})$ and $S_q(V^{r|s}\ot (V^{m|n})^{\ast})$. Indeed, with some effort we will show that $\CP^{k|l}_{\,r|s}$ defined in this way is a flat deformation of $\CS^{k|l}_{\,r|s}$. Yet the algebra structure of $\CP^{k|l}_{\,r|s}$ is more subtle and needs to be defined properly.

The notion of  module superalgebra is therefore brought into our picture, which is a generalisation of module algebra in the sense of \cite[\S 4.1]{M}. For instance, in our case $S_q(V^{k|l}\ot V^{m|n})$ is  an associative algebra carrying  $\Uq(\gl_{k|l})\ot \Uqg$-module structure, whose algebraic structure is preserved by the quantum supergroup action, and similarly for $S_q(V^{r|s}\ot (V^{m|n})^{\ast})$. A useful observation originating from Hopf algebra theory  (cf. \cite{S,LZZ}) is that the tensor product of two module superalgebras is again a module superalgebra, with the multiplication defined in \propref{proptensormodalg}. This implies that $\CP^{k|l}_{\,r|s}$ is a module superalgebra, on which the two actions of $\Uq(\gl_{k|l})\ot \Uq(\gl_{r|s})$ and $\Uqg$ graded-commute with each other.  The subspace of $\Uqg$-invariants in $\CP^{k|l}_{\,r|s}$ is a subalgebra. Hence the  ``generators'' of $\Uqg$-invariant subalgebra of $\CP^{k|l}_{\,r|s}$ (i.e. FFT)  makes sense in this context.

However, the multiplication in $\CP^{k|l}_{\,r|s}$ at the present stage is quite unwieldy to use,  and furthermore, the $\Uqg$-action on $\CP^{k|l}_{\,r|s}$ is non-semisimple. This makes it highly nontrivial to describe the invariants. To overcome these difficulties, we shall use known results  on coordinate superalgebras \cite{Z98} of quantum general linear supergroups  following ideas of \cite{LZZ}. This enables us to give a  new formulation of $\CP^{k|l}_{\,r|s}$ in terms  of generators and relations, and also characterise the  module structure on  $\CP^{k|l}_{\,r|s}$ by using quantum Howe duality of type $(\Uq(\gl_{k|l}),\Uq(\gl_{r|s}))$.

\vskip 0.4cm
\noindent 1.3.\ \
We now briefly describe this new construction of $\CP^{k|l}_{\,r|s}$. Let $\pi: \Uqg\rightarrow \End(V^{m|n})$ be the natural representation. We recall from \cite{Z98} the coordinate superalgebra $\cM_{m|n}$ of the finite dual of $\Uqg$, which is generated by the matrix elements $t_{ab}$ defined by \eqref{eqmatelmt} and encodes the  bi-superalgebra structure. By applying truncation procedure, we obtain the subalgebra $\cM^{k|l}_{r|s}$ of $\cM_{m|n}$ with $k, r\le m$ and $l, s\le n$, which  is a module superalgebra over $\Uq(\gl_{k|l})\ot \Uq(\gl_{r|s})$. Conversely, for any non-negative integers $k,l,r,s$ we can define module superalgebra $\cM^{k|l}_{r|s}$ analogously and embed it into the coordinate superalgebra $\cM_{K|L}$ with $K=\max\{k,r\}$ and $L=\max\{l,s\}$.
Direct calculation shows that $\cM^{k|l}_{r|s}$ is isomorphic to $S_q(V^{k|l}\ot V^{r|s})$ as module superalgebra. All  these arguments go through for $\overbar{\cM}_{m|n}$ and  $\overbar{\cM}^{k|l}_{r|s}$, which are both generated by some matrix elements of the dual module $(V^{m|n})^{\ast}$,  and the latter  is isomorphic to the braided supersymmetric algebra $S_q((V^{k|l})^{\ast}\ot (V^{r|s})^{\ast})$. Now we can define $\CP^{k|l}_{\,r|s}$ as the tensor product of $\cM^{\;k|l}_{m|n}$ and $\overbar{\cM}^{\;r|s}_{m|n}$ with a presentation shown in  \lemref{lemmultP}. 

To prove the flatness of $\CP^{k|l}_{\,r|s}$ and  analyse its module structure, we make extensive use of quantum Howe duality. It was noted in \cite{Z98} that both $\cM_{m|n}$ and $\overbar{\cM}_{m|n}$ admit multiplicity-free decompositions as $\Uqg\ot \Uqg$-modules by a partial analogue of quantum Peter-Weyl theorem. We apply truncation procedure to $\cM_{m|n}$ (resp. $\overbar{\cM}_{m|n}$), producing a  multiplicity-free decomposition of the subalgebra $\cM^{k|l}_{r|s}$  (resp. $\overbar{\cM}^{k|l}_{r|s}$) as  $\Uq(\gl_{k|l})\ot \Uq(\gl_{r|s})$-module with $k, r\le m$ and $l, s\le n$. This is called quantum Howe duality of type $(\Uq(\gl_{k|l}),\Uq(\gl_{r|s}))$  (see \thmref{thmHowe}, and also \cite[Theorem 2.2]{WZ}), where $k,l,r,s$ can actually be any non-negative integers. Using this, we obtain the following  results: 
\begin{enumerate}
\item $\CP^{k|l}_{\,r|s}$ is a flat deformation of $\CS^{k|l}_{\,r|s}$; 
\item  A PBW basis for $\cM^{k|l}_{r|s}$ is constructed;
\item The quantum Howe duality on $\cM^{k|l}_{r|0}$ implies the quantum Schur-Weyl duality between $\Uq(\gl_{k|l})$ and the Hecke algebra $\CH_q(r)$. 
\end{enumerate}
In particular, we show that there are three 
flat $\Uq(\gl_{k|l})\ot\Uq(\gl_{r|s})$-modules $V^{k|l}\ot V^{r|s}$, $(V^{k|l})^{\ast}\ot (V^{r|s})^{\ast}$ and $V^{k|l}\ot (V^{r|s})^{\ast}$.

\vskip 0.4cm
\noindent 1.4.\ \
Now the following problem naturally arises.
\begin{ques}\label{q:fts}
Describe the invariant subalgebra $\CX^{k|l}_{\;r|s}:=(\CP^{k|l}_{\,r|s})^{\Uqg}$ in terms of generators (FFT) and defining relations (SFT). In particular, determine whether $\CX^{k|l}_{\;r|s}$ is a quantum polynomial superalgebra or a quotient thereof.
\end{ques}
As we have mentioned already, the $\Uqg$-invariant subalgebra $\CX^{k|l}_{\;r|s}$ is non-commutative;  there exist no results which can be readily applied to show that it is a (quotient of a)  quantum polynomial superalgebra in the sense of Question \ref{q:q-poly}.

Motivated by the approach in  \cite{LZZ}, we show that the invariant subalgebra $\CX^{k|l}_{\;r|s}$ is finitely generated, and construct the invariants explicitly using bi-superalgebra structure of the coordinate superalgebra. This amounts to the FFT; see  \thmref{thmFFT}. We want to mention that, due to the non-commutative nature of quantum polynomial superalgebra $\CP^{k|l}_{\,r|s}$, techniques from classical invariant theory based on commutative algebra fail in our case, especially those techniques addressing finite generation.

To fully explore the algebraic structure of $\CX^{k|l}_{\;r|s}$, we give a reformulation of FFT. Some elementary quadratic relations among invariants from $\CX^{k|l}_{\;r|s}$ are obtained in \lemref{leminvrel}, by which we are inspired to introduce an auxiliary  quadratic superalgebra $\widetilde{\cM}^{k|l}_{r|s}$ with the same relevant quadratic relations (defined in \secref{secrefFFT}). This quadratic superalgebra $\widetilde{\cM}^{k|l}_{r|s}$ is shown to be isomorphic to the braided supersymmetric algebra  $S_q(V^{k|l}\ot (V^{r|s})^{\ast})$ as superalgebra, which similarly admits quantum Howe duality and is literally a flat deformation. The FFT is then reformulated in terms of the surjective superalgebra homomorphism $\Psi^{k|l}_{r|s}: \widetilde{\cM}^{k|l}_{r|s}\twoheadrightarrow \CX^{k|l}_{\;r|s}$ in \thmref{FFTref}. This implies that the invariant subalgebra $\CX^{k|l}_{\;r|s}$ is the quotient of the quantum polynomial superalgebra $\widetilde{\cM}^{k|l}_{r|s}$ by the two-sided ideal $\Ker\, \Psi^{k|l}_{r|s}$.

The SFT of invariant theory now seeks to describe the kernel of $\Psi^{k|l}_{r|s}$, as the images of nonzero elements in kernel give rise to non-elementary relations among invariants. This can be reduced to the following situation. Note that $\widetilde{\cM}^{k|l}_{r|s}$ can be embedded into a larger superalgebra $ \widetilde{\cM}_{K|L}:=\widetilde{\cM}^{K|L}_{K|L}$ with $K=\max\{k,r\}$ and $L=\max\{l,s\}$. We are led to consider the kernel of $\Psi_{K|L}: \widetilde{\cM}_{K|L}\twoheadrightarrow \CX^{K|L}_{\;K|L}$, since the restriction $\Ker\,\Psi_{K|L}\cap \widetilde{\cM}^{k|l}_{r|s}$ exactly coincides with $\Ker\, \Psi^{k|l}_{r|s}$.  Using quantum Howe duality, we show that $\Ker\, \Psi_{K|L}$ as a $\Uq(\gl_{K|L})\ot \Uq(\gl_{K|L})$-module admits multiplicity-free decomposition over all $(K|L)$-hook partitions containing  partition $\la_c=((n+1)^{m+1})$; this is a special case of \corref{coroXdec}. This module structure can be used to characterise  $\Ker\, \Psi_{K|L}$ as a two-sided ideal of $\widetilde{\cM}_{K|L}$, which we shall explain below.
 
\vskip 0.4cm
\noindent 1.5.\ \
Now our method relies essentially on some favourable properties of matrix elements of tensor representations of the quantum supergroup, by relating $\widetilde{\cM}_{K|L}$ to the coordinate superalgebra $\cM_{K|L}$. By a partial analogue of the Peter-Weyl theorem,  $\cM_{K|L}$  decomposes into a direct sum of subspaces $T_{\la}$, which are spanned by  matrix elements of the simple tensor modules  $L_{\la}^{K|L}$ for $\Uq(\gl_{K|L})$.  We prove in \thmref{thmidealdec} that the two-sided ideal in $\cM_{K|L}$ generated by a single subspace $T_{\la}$ admits a multiplicity-free decomposition into direct sum of $T_{\gamma}$ over all $(K,L)$-hook partitions $\gamma$ containing $\la$.  A key observation is that  this result can be translated to the quadratic superalgebra  $\widetilde{\cM}_{K|L}$, though it is generally not isomorphic to $\cM_{K|L}$ as superalgebra.  Consequently, the multiplicity-free decomposition  of $\Ker\, \Psi_{K|L}$ as a module indicates that $\Ker\, \Psi_{K|L}$ is indeed generated by the direct summand $\widetilde{T}_{\la_c}$ corresponding to  the minimal partition $\la_c=((n+1)^{m+1})$ as  a two-sided ideal of $\widetilde{\cM}_{K|L}$. Here $\widetilde{T}_{\la}\subset \widetilde{\cM}_{K|L}$ is an analogue of $T_{\la}$, which is isomorphic to $L_{\la}^{K|L}\ot L_{\la}^{K|L}$.

Our SFT of invariant theory asserts that $\Ker \, \Psi^{k|l}_{r|s}$ as a two-sided ideal of $\widetilde{\cM}^{k|l}_{r|s}$ is generated by the subspace $\widetilde{T}_{\la_c}\cap\widetilde{\cM}^{k|l}_{r|s}$ with $\la_c=((n+1)^{m+1})$. In particular, we show that $\Psi^{k|l}_{r|s}$ is  an isomorphism if and only if  $m\geq \min\{k,r\}$  and $n\geq \min\{l,s\}$,  with a PBW basis constructed for $\CX^{k|l}_{\;r|s}$.
This implies that the invariant subalgebra $\CX^{k|l}_{\;r|s}$ is a quantum polynomial superalgebra isomorphic to  $\widetilde{\cM}^{k|l}_{r|s}$ in this case. These results are given in  \thmref{thmSFT}

\vskip 0.4cm
\noindent 1.6.\ \
We consider two special cases of our FFT and SFT of invariant theory for $\Uqg$.

The quantum general linear group $\Uq(\gl_m)$ is a special case of $\Uqg$ with $n=0$. We immediately  obtain the generators of the subalgebra of invariants, recovering the FFT of invariant theory given in \cite[Theorem 6.10]{LZZ}. The kernel of the  surjective algebra homomorphism  mentioned above is now generated by the  subspace of $\widetilde{T}_{\la_c}$ with $\la_c=(1^{m+1})$, which is shown to be  spanned by $(m+1)\times (m+1)$ quantum minors.  Therefore,  \thmref{thmFFTglm} and  \thmref{SFTglm} together give a complete treatment for the non-commutative invariant theory for  quantum general linear group, especially the SFT appears to be new.

The universal enveloping algebra $\U(\gl_{m|n})$ is another special case, where  $q\rightarrow 1$. Using the language of matrix elements, we provide a new approach to the FFT and SFT of invariant theory for $\gl_{m|n}$, which was originally obtained  in  \cite{S1,S2}. This is interesting  in its own right.

\vspace{0.3cm}
\noindent 1.7. There have been some earlier works on the non-commutative polynomial version of  invariant theory for quantum groups.  

The works \cite{GL,GLR,Hai} investigate the coinvariant theory for the quantum general linear (super) group, where they only consider the comodule structure on the underlying non-commutative polynomial algebra. The set of  coinvarints in their setting is not a subalgebra and thus our  notion of ``generators'' (FFT) and ``relations'' (SFT) is different from  theirs. 

In \cite{LZZ} it gives a general method to construct the quantum analogues of polynomial rings,  by using module algebras and the braiding of quantum group arising from the universal $\fR$-matrix.   Then the paper gives a complete treatment of FFT for each quantum group associated with a classical Lie algebra. However, there is no complete treatment of SFTs for quantum groups. One of our results in this paper is the SFT for $\Uq(\gl_m)$, which may shed light on the SFTs for the quantum orthogonal and symplectic groups. We also note that the FFT and SFT of invariant theory for quantum symplectic group are obtained in \cite{S}, where the construction of  underlying non-commutative polynomial algebra is a little ambiguous.

It was shown in \cite{LZZ,WZ,Z03}  that 
(skew) Howe duality \cite{H89,H92} survives quantisation for the quantum general linear (super) group, and the resulting quantum Howe duality was applied to develop the $q$-deformed Segal-Shale-Weil representations. 
More recently, the quantum skew Howe dualities \cite{CKM, RT, QS} were 
used in the categorification of representations of $\Uq(\fsl_n)$ and $\Uq(\gl_{m|n})$. 
Here we extend the quantum Howe duality of type $(\Uq(\gl_{k|l}),\Uq(\gl_{r}))$ established in \cite{WZ} to type $(\Uq(\gl_{k|l}),\Uq(\gl_{r|s}))$, and simplify the original proof in \cite{WZ}.

Another  formulation of non-commutative invariant theory provides a description for the endomorphism algebra over quantum (super) groups. The paper \cite{LZZ16} establishes a full tensor functor from the category of ribbon graphs to the category of finite dimensional
representations of $\Uq(\fg)$ with $\fg=\gl_{m|n}, \osp(m|2n)$,   
giving the FFT of invariant theory  in this endomorphism algebra setting.  However, very little was known previously about the non-commutative polynomial version of invariant theory for quantum supergroups.

\section{Quantum general linear supergroup}\label{sect:Uq}
We shall work over the filed $\CK:=\C(q)$, where $q$ is an indeterminate.
For any vector superspace $A=A_{\bar0}\oplus A_{\bar1}$, we let $[\ ]: A_{\bar0}\cup A_{\bar1}\longrightarrow \Z_2:=\{\bar0, \bar1\}$ be the parity map, that is, $[a]=\bar{i}$ if $a\in A_{\bar{i}}$.  Tensor products of vector superspaces are again vector superspaces. We define the functorial permutation map 
\begin{eqnarray}\label{eq:permut}
P: A\otimes B \longrightarrow B\otimes A
\end{eqnarray}
such that  $a\otimes b \mapsto (-1)^{[a][b]}b\otimes a$ for homogeneous $a\in A$ and $b\in B$, and generalise to inhomogeneous elements linearly. 
If $A$ is an associative superalgebra,  
we define the super bracket  $[\ , \ ]: A\otimes A \longrightarrow A$ such that $[X,Y]:=XY-(-1)^{[X][Y]}YX$.

\subsection{The quantum general linear supergroup $\Uqg$} 
Denote $\Z_{+}=\{0,1,2,\dots\}$.   For any given $m,n\in \Z_{+}$, let $\bI_{m|n}=\{1,2,\dots,m+n\}$ and $\bI_{m|n}^{\prime}=\bI_{m|n}\backslash \{m+n \}$. 
Let $\{\epsilon_a\mid a\in \bI_{m|n}\}$ be the basis of a vector space with the non-degenerate symmetric bilinear form $(\epsilon_a,\epsilon_b)=(-1)^{[a]}\delta_{ab}$, 
where $[a]=\bar{0}$ if $a\le m$ and $[a]=\bar{1}$ otherwise. The roots of the general linear Lie superalgebra $\gl_{m|n}$ are $\epsilon_a-\epsilon_b$ for $a\ne b$ in $\bI_{m|n}$.
\begin{defn}\label{defnUglmn}{\rm (\cite{Z93})}
  The quantum general linear supergroup $\Uqg$ is the unital associative $\CK$-superalgebra generated by the
  \[
  \begin{aligned}
   &\text{even generators:}\   K_a, K_a^{-1},E_{b,b+1}, E_{b+1,b}, \ a,b\in \bI_{m|n},b\neq m,m+n,\\
   &\text{and odd generators:} \  E_{m,m+1},E_{m+1,m},
  \end{aligned} 
  \]
  subject to the relations
 \begin{enumerate}[itemsep=1.5mm]
 	\item [(R1)] $K_aK_a^{-1}=K_a^{-1}K_a=1, K_aK_b=K_bK_a$;
 	\item [(R2)] $K_aE_{b,b\pm 1}K_a^{-1}=q^{(\epsilon_a,\epsilon_b-\epsilon_{b\pm 1})}E_{b,b\pm 1}$;
 	\item [(R3)] $[E_{a,a+1},E_{b+1,b}]=\de_{ab}\dfrac{K_aK_{a+1}^{-1}-K_a^{-1} K_{a+1}}{q_a-q_a^{-1}}$ with $q_a=q^{(\epsilon_a,\epsilon_a)}=q^{(-1)^{[a]}}$;
 	\item [(R4)] $E_{a,a+1}E_{b,b+1}=E_{b,b+1}E_{a,a+1}, \quad E_{a+1,a}E_{b+1,b}=E_{b+1,b}E_{a+1,a},  \quad a-b\geq 2$;
 	\item [(R5)]  
 	$ (E_{a,a+1})^2E_{b,b+1}-(q+q^{-1})E_{a,a+1}E_{b,b+1}E_{a,a+1}+E_{b,b+1}(E_{a,a+1})^2=0, \quad |a-b|=1, a\neq m;$\\
 	$ (E_{a+1,a})^2E_{b+1,b}-(q+q^{-1})E_{a+1,a}E_{b+1,b}E_{a+1,a}+E_{b+1,b}(E_{a+1,a})^2=0, \quad |a-b|=1, a\neq m;$
 	\item [(R6)] $(E_{m,m+1})^2=(E_{m+1,m})^2=[E_{m-1,m+2},E_{m,m+1}]=[E_{m+2,m-1},E_{m+1,m}]=0$, 
 	
 	\hspace{-.5cm} where  $E_{m-1, m+2}$ and $E_{m+2, m-1}$ are determined recursively  by 
 	\[
 	E_{a,b}=
 	\begin{cases}
 	E_{a,c}E_{c,b}-q_c^{-1}E_{c,b}E_{a,c},& a<c<b,\\
 	E_{a,c}E_{c,b}-q_cE_{c,b}E_{a,c}, & b<c<a.
 	\end{cases} 	
 	\]
 \end{enumerate}
\end{defn}
Note that  $[E_{a,b}]= [a]+[b]$ for any quantum root vector $E_{a,b}$. 
It is well known that $\Uqg$ has the structure of Hopf superalgebra 
with the following structural maps:

\noindent
co-multiplication 
\begin{equation*}
  \begin{aligned}[c]
  	\Delta(E_{a,a+1})&=E_{a,a+1} \otimes K_a K_{a+1}^{-1} + 1 \otimes E_{a,a+1},\\
  	\Delta(E_{a+1,a})&=E_{a+1,a}\otimes 1 + K_a^{-1} K_{a+1}\otimes E_{a+1,a},\\
  	\Delta(K_a^{\pm 1})&=K_a^{\pm 1}\otimes K_a^{\pm 1},
  \end{aligned}
 \end{equation*}
 co-unit 
\[
 \epsilon(E_{a, a+1})= \epsilon(E_{a+1, a})=0, \ \forall a\in\bI_{m|n}', \quad
 \epsilon(K_b^{\pm 1})=1,\  \forall b\in\bI_{m|n}, 
\]
 and  antipode 
 \begin{equation*}
  \begin{aligned}[c]
  S(E_{a,a+1})&=-E_{a,a+1} K_a^{-1}K_{a+1},\\
  S(E_{a+1,a})&=-K_a K_{a+1}^{-1}E_{a+1,a},\\
  S(K_a^{\pm 1})&=K_a^{\mp 1}.
  \end{aligned} 
\end{equation*}
It should be pointed out that the antipode is a $\Z_2$-graded algebra anti-automorphism, i.e., $S(xy)=(-1)^{[x][y]}S(y)S(x)$ for homogeneous $x,y\in \Uqg$.  We will use Sweedler's notation for the co-multiplication, i.e.,  $\Delta(x)=\sum_{(x)}x_{(1)}\ot x_{(2)}$ for all $x\in \Uqg$.
 
\subsection{Representations of $\Uqg$}\label{RepUqg}
Throughout this paper, we will consider only $\Uqg$-modules which are $\Z_2$-graded, locally finite and of type $\bf{1}$.  A module is locally finite if any vector $v$ satisfies $\dim(\Uqg v)<\infty$; a module is of type $\bf{1}$ if the $K_a$ act semi-simply  with eigenvalues $q^{k}$ ($k\in\Z$). 

The representation theory of $\Uqg$ at generic $q$ is quite similar to that of $\gl(m|n)$, 
which was treated systematically in \cite{Z93}. In \appref{appA}, we briefly discuss the representation theory for $\Uqg$.  Here we consider the tensor modules in detail following \cite{Z98}. 

Let $V^{m|n}$ be the natural $\Uqg$-module, which has the standard basis $\{v_{a}\mid a\in \bI_{m|n}\}$ such that $[v_a]=[a]$, and  
 \[K_av_b=q_a^{\de_{ab}}v_b,\quad E_{a,a\pm 1}v_b=\de_{b,a\pm 1}v_a.\]
Denote by  $\pi: \Uqg\rightarrow \End(V^{m|n}) $ the corresponding representation of $\Uqg$ relative to the standard basis of $V^{m|n}$. Then
 \begin{equation}\label{eqnatrep}
 \begin{aligned}
 &\pi(K_a^{\pm})=I+(q_a^{\pm 1}-1)e_{aa},\quad a\in\bI_{m|n}\\
 &\pi(E_{b,b+1})=e_{b,b+1},\quad \pi(E_{b+1,b})=e_{b+1,b},\quad b\in \bI_{m|n}^{\prime}
 \end{aligned}
 \end{equation}
 where $e_{ab}$ is the $(a,b)$-matrix unit of size $(m+n)\times(m+n)$.
 
Given $k\in \Z_{+}$, we consider the tensor power $(V^{m|n})^{\ot k}=V^{m|n}\otimes\dots\otimes V^{m|n}$ ($k$ factors), which acquires a $\Uqg$-module structure through the iterated co-multiplication $\Delta^{(k-1)}=(\id\times \Delta^{(k-2)})$ with $\Delta^{(1)}=\Delta$.  It is proved in \cite{Z98} that $(V^{m|n})^{\ot k}$ is semi-simple for all $k$. To  characterise their simple submodules, we define a subset $\La_{m|n}$ of $\Z_{+}^{\times (m+n)}$ by
\begin{equation}\label{defnPar}
  \La_{m|n}=\{ \la=(\la_1,\la_2,\dots,\la_{m+n})\in \Z_{+}^{\times (m+n)}\mid \la_{m+1}\leq n,\, \la_a\geq \la_{a+1},\; \forall a\in \bI_{m|n}^{\prime}  \}.
\end{equation}
Such $\la$  is referred to as \emph{$(m,n)$-hook partition}, and $|\la|=\sum_{a\in\bI_{m|n}}\la_a$ is called the \emph{size  of $\la$}. We associate each $\la\in\La_{m|n}$ with $\la^{\natural}$ defined by 
\begin{equation}\label{eqlanat}
\la^{\natural}=\sum_{a=1}^{m}\la_a\epsilon_a+\sum_{b=1}^{n}\langle \la_{b}^{\prime}-m\rangle \epsilon_{m+b} =(\la_1,\dots,\la_m;\langle \la_1^{\prime}-m\rangle,\dots,\langle \la_n^{\prime}-m\rangle),
\end{equation}
where $\la^{\prime}=(\la_1^{\prime},\la_2^{\prime},\dots)$ with $\la_i^{\prime}=\#\{\la_{j}\mid \la_{j}\geq i,\  j\in \bI_{m|n} \}$  is the transpose partition of $\la$, and $\langle u\rangle:=\max\{u,0\}$  for any integer $u$. Observe  that $\la_{m+1}\leq n$ if and only if $ \la_{n+1}^{\prime}\leq m$.
Now introduce the set  $\La_{m|n}^{\natural}=\{\la^{\natural}\mid \la\in \La_{m|n} \}$,
and the subsets $\La_{m|n}^{\natural}(N)=\{\la^{\natural}\in\La_{m|n}^{\natural}\mid |\la|=N \}$ for any $N\in\Z_+$.  

We denote by $L_{\la}^{m|n}$ the irreducible $\Uqg$-module with highest weight $\la^{\natural}\in\La^{\natural}$, that is, there exists a nonzero $v\in L_{\la}^{m|n}$, the highest weight vector, such that 
\[
E_{a, a+1}v=0,  \quad K_b v=q^{(\la^{\natural},\epsilon_b)}v, \quad \forall a\in \bI'_{m|n}, \ \ b\in \bI_{m|n}.
\]

The following results are from \cite{Z98}.
\begin{prop}\label{proptendec} {\rm (\cite{Z98})}
	\begin{enumerate}
	\item Each $\Uqg$-module $(V^{m|n})^{\ot N}$ with $N\in \Z_{+}$ can be decomposed into a direct sum of simple modules with highest weights belonging to $\La_{m|n}^{\natural}(N)$.
	\item Every irreducible $\Uqg$-module with highest weight belonging to $\La_{m|n}^{\natural}(N)$ is contained in $(V^{m|n})^{\ot N}$ as a simple submodule.
\end{enumerate}
\end{prop}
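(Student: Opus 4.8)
The plan is to prove Proposition~\ref{proptendec} by combining a highest-weight argument for part (2) with a dimension-counting/induction argument for part (1), closely following the classical representation theory of $\gl_{m|n}$ as developed in \cite{Z93,Z98}.

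\textbf{Step 1: Weight analysis of $(V^{m|n})^{\ot N}$.} First I would compute the weights of $(V^{m|n})^{\ot N}$ explicitly. Since $V^{m|n}$ has weights $\epsilon_1,\dots,\epsilon_{m+n}$ (each with multiplicity one), the weights of the $N$-th tensor power are exactly the sums $\epsilon_{a_1}+\dots+\epsilon_{a_N}$ with $a_i\in\bI_{m|n}$, where a weight $\mu=\sum_a \mu_a\epsilon_a$ appears with multiplicity equal to the number of ways to realise it. Because each odd $\epsilon_{m+b}$ can be repeated without bound inside a tensor (the Serre-type relation $(E_{m,m+1})^2=0$ in (R6) restricts the algebra, not the tensor multiplicities of the natural module), the set of weights occurring is precisely $\{\mu\in\Z_+^{m+n}\mid |\mu|=N\}$. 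Ordering weights by the usual dominance/root order, the maximal ones are the dominant integral weights, and a short combinatorial check shows these are exactly the $\la^{\natural}$ for $(m,n)$-hook partitions $\la$ with $|\la|=N$: the constraint $\la_{m+1}\le n$ (equivalently $\la'_{n+1}\le m$) is forced by the fact that at most $m$ of the odd-index slots... wait, rather, it is forced by how $\la^{\natural}$ truncates the transpose. This identifies the candidate highest weights with $\La_{m|n}^{\natural}(N)$.

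\textbf{Step 2: Semisimplicity and part (1).} Granting the semisimplicity of $(V^{m|n})^{\ot N}$ (proved in \cite{Z98}; I would cite it, as it rests on complete reducibility for the relevant category of tensor modules at generic $q$, itself a $q$-deformation of the $\gl_{m|n}$ statement in \cite{Z93}), part (1) follows: $(V^{m|n})^{\ot N}$ is a direct sum of simples $L_\mu^{m|n}$, and every highest weight $\mu$ of a summand is a weight of $(V^{m|n})^{\ot N}$ that is maximal among weights, hence lies in $\La_{m|n}^{\natural}(N)$ by Step~1.

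\textbf{Step 3: Part (2) by induction on $N$ (existence of every hook partition).} For the converse, I would argue that each $L_\la^{m|n}$ with $|\la|=N$ actually occurs in $(V^{m|n})^{\ot N}$. The cleanest route is induction on $N$: if $\la\in\La_{m|n}$ with $|\la|=N\ge 1$, pick a box of $\la$ whose removal leaves a hook partition $\mu\in\La_{m|n}$ with $|\mu|=N-1$ (such a removable box exists for any nonempty hook partition — removing the last box of the first row, or of the first column, stays within the hook shape). By the inductive hypothesis $L_\mu^{m|n}\subseteq (V^{m|n})^{\ot(N-1)}$, so $L_\mu^{m|n}\ot V^{m|n}\subseteq (V^{m|n})^{\ot N}$. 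It then remains to show $L_\la^{m|n}$ is a summand of $L_\mu^{m|n}\ot V^{m|n}$; equivalently, that the highest weight vector $v_\mu$ of $L_\mu^{m|n}$ tensored with an appropriate $v_a$ generates a highest weight vector of weight $\la^{\natural}$. One checks $E_{b,b+1}(v_\mu\ot v_a)=0$ for all $b$ using the co-multiplication $\Delta(E_{b,b+1})=E_{b,b+1}\ot K_bK_{b+1}^{-1}+1\ot E_{b,b+1}$ and $E_{b,b+1}v_a=\delta_{b,a-1}v_{a-1}$ together with the highest-weight property of $v_\mu$; the weight computes to $\mu^{\natural}+\epsilon_a=\la^{\natural}$ for the box we added. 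This produces a nonzero highest weight vector of the correct weight, hence a copy of $L_\la^{m|n}$ inside $(V^{m|n})^{\ot N}$ by semisimplicity. The base case $N=0$ (trivial module) and $N=1$ ($V^{m|n}=L_{(1)}^{m|n}$) are immediate.

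\textbf{Main obstacle.} The subtle point is Step~3: showing that the newly created vector $v_\mu\ot v_a$ (or a small correction of it) is genuinely a highest weight vector and that the index $a$ can always be chosen so that $\mu^{\natural}+\epsilon_a=\la^{\natural}$ respects the hook constraint $\la_{m+1}\le n$ — in the ``odd'' regime where one adds a box in the tail that increases a column past length $m$, the weight $\la^{\natural}$ is defined via the \emph{truncated} transpose $\langle\la'_b-m\rangle$, so one must verify the arithmetic of adding a box matches the shift in $\la^{\natural}$ in all cases (box in rows $\le m$ versus the sum over $\langle\la'_b-m\rangle$ for $b\le n$). This is a finite case analysis on where the removable/addable box sits relative to the $m$-th row, and it is where the combinatorics of $(m,n)$-hook partitions genuinely enters. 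Everything else is either cited from \cite{Z98,Z93} (semisimplicity) or a routine application of the co-multiplication formulas.
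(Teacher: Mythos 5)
The paper does not prove this proposition; it is imported wholesale from \cite{Z98} (where it is established via super Schur--Weyl duality with the Hecke algebra). Your attempt is therefore a from-scratch reconstruction, and the route you take has a genuine gap in Steps~1 and 2 that a fix to Step~3 would not repair.

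The error is the inference in Step~2 that ``every highest weight $\mu$ of a summand is a weight of $(V^{m|n})^{\ot N}$ that is maximal among weights, hence lies in $\La_{m|n}^{\natural}(N)$ by Step~1.'' The highest weight of a simple summand is maximal only within that summand, not within the whole module. Already for $\gl_2$ and $N=2$ one has $V^{\ot 2}\cong L_{(2,0)}\oplus L_{(1,1)}$, and the highest weight $(1,1)$ of the second summand is dominated by $(2,0)$, so it is not a maximal weight of $V^{\ot 2}$. Thus the set of highest weights of summands is in general strictly larger than the set of maximal weights, and identifying the latter tells you nothing about the former. Moreover, Step~1's identification of the dominant weights with $\La_{m|n}^{\natural}(N)$ also fails: for $m=n=1$, $N=3$, the weight $(0;3)$ is dominant for $\gl_1\times\gl_1$ and has $|\mu|=N$, yet it is not $\la^{\natural}$ for any $(1,1)$-hook partition $\la$ of $3$ (the $\la^{\natural}$'s are only $(3;0),(2;1),(1;2)$). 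Your mid-sentence hesitation about where the constraint $\la_{m+1}\le n$ ``comes from'' is symptomatic: that constraint is not visible in the weight lattice of $(V^{m|n})^{\ot N}$, and no combinatorial check on dominant weights can produce it.

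The hook constraint is the substantive content of part~(1), and it needs a nontrivial input such as super Schur--Weyl duality (cf.\ \cite{BR} classically, \cite{Mit,DR} quantumly, which the paper invokes in Theorem~\ref{thmSchur}), or the explicit argument in \cite{Z98}. Concretely, one needs to see that the $\CH_q(N)$-action on $(V^{m|n})^{\ot N}$ through $\nu_N$ has image whose simple constituents are exactly the $D_\la$ with $\la$ an $(m,n)$-hook partition of $N$, and that the centraliser algebras match up. Your Step~3 Pieri-rule induction is the right shape for part~(2), modulo the acknowledged hand-wave about the ``small correction'' of $v_\mu\ot v_a$ (in the quantum setting the required highest weight vector is a genuine $q$-linear combination, not $v_\mu\ot v_a$ itself); but even once that is fixed, Step~3 does not supply the missing argument for part~(1), which was the real obstacle.
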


\subsection{Module superalgebras} \label{secmodalg}
Recall that the Drinfeld version of $\Uqg$ (defined over the power series ring $\C[[t]]$ with $q=\exp(t)$) admits a universal $\fR$ matrix, which is an invertible even element in a completion of 
$\Uqg\otimes \Uqg$  satisfying the relations
\begin{align}
&\fR\Delta(x)=\Delta^{\prime}(x)\fR,\quad \forall x\in \Uqg,\label{eqRmat}\\
& (\Delta \ot\id)\fR=\fR_{13}\fR_{23},\quad (\id\ot \Delta)\fR=\fR_{13}\fR_{12},\label{eqDeR}
\end{align}
where $\Delta^{\prime}$ is the opposite co-multiplication.  It is well known that $\fR$ satisfies the celebrated Yang-Baxter equation
\[
\fR_{12}\fR_{13}\fR_{23} =\fR_{23}\fR_{13}\fR_{12}.
\] 

Let $V$ and $W$ be two $\Uqg$-modules. It follows from \eqref{eqRmat} that the map 
 \[ \widecheck{\fR}:=P\fR:\, V\ot W\rightarrow W\ot V, \]
 satisfies $\widecheck{\fR}\Delta(x)(v\ot w)=\Delta(x)\widecheck{\fR}(v\ot w)$, 
 thus gives rise to an isomorphism of $\Uqg$-modules.

For the Jimbo version of the quantum general linear supergroup $\Uqg$ (over $\CK$) under consideration here, there exists a canonical braiding in the category of locally finite $\Uqg$-modules of type ${\bf 1}$, which plays the role of the universal $\fR$ matrix. 

In what follows, we shall make extensive use of module superalgebras over a Hopf superalgebra, which is a super analogue of module algebras in the sense of \cite[\S 4.1]{M}, see also \cite{LZZ}. An associative superalgebra $(A,\mu)$ with multiplication $\mu$ and identity $1_{A}$ is a $\Uqg$-\emph{module superalgebra} if $A$ is $\Uqg$-module with
\[x.\mu(a\ot b)=\sum_{(x)}(-1)^{[x_{(2)}][a]}\mu(x_{(1)}.a\ot x_{(2)}.b),\quad x.1_{A}=\epsilon(x)1_{A},  \]   
for homogeneous elements $a,b\in A$ and $x\in\Uqg$.
 
\begin{prop}\label{proptensormodalg}
Let $(A,\mu_{A})$ and $(B,\mu_{B})$ be locally finite $\Uqg$-module superalgebras. Then $A\ot B$ acquires a $\Uqg$-module superalgebra structure with the multiplication
\[\mu_{A,B}=(\mu_A\ot\mu_B)(\id_A\ot \widecheck{\fR}\ot \id_B),  \]	
where $\widecheck{\fR}:=P\fR$ with $P(a\ot b)=(-1)^{[a][b]}b\ot a$ for any homogeneous $a\in A,b\in B$.
\end{prop}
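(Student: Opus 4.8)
The plan is to verify directly from the definitions that the proposed multiplication $\mu_{A,B}$ on $A\ot B$ is (i) associative, (ii) unital with unit $1_A\ot 1_B$, and (iii) compatible with the $\Uqg$-action in the sense of the module superalgebra axiom. Since $A$ and $B$ are locally finite modules of type $\bf 1$, the braiding $\widecheck{\fR}=P\fR$ is a well-defined $\Uqg$-module isomorphism $B\ot A\to A\ot B$ on each finite-dimensional piece, so all the maps below make sense without worrying about completions. Throughout I would fix homogeneous elements and suppress signs into the convention that $P$ and $\widecheck\fR$ carry the Koszul sign, writing $\widecheck\fR(b\ot a)=\sum (-1)^{[a][b]}\,\fR_{(2)}.a\ot\fR_{(1)}.b$ in a Sweedler-type notation $\fR=\sum\fR_{(1)}\ot\fR_{(2)}$.

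\textbf{Unitality.} This is immediate: $x.1_A=\epsilon(x)1_A$ and $x.1_B=\epsilon(x)1_B$ give $\widecheck\fR(1_B\ot 1_A)=1_A\ot 1_B$ (using $(\epsilon\ot\id)\fR=(\id\ot\epsilon)\fR=1$), whence $\mu_{A,B}((a\ot b)\ot(1_A\ot 1_B))=a\ot b$ and similarly on the other side.

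\textbf{Associativity.} Here I would compute $\mu_{A,B}\circ(\mu_{A,B}\ot\id)$ and $\mu_{A,B}\circ(\id\ot\mu_{A,B})$ on $(a_1\ot b_1)\ot(a_2\ot b_2)\ot(a_3\ot b_3)$ and match them. Both sides reduce, after using associativity of $\mu_A$ and $\mu_B$ separately, to an expression in which the ``middle'' tensor factors $b_1, a_2, b_2, a_3$ have to be reordered past each other using $\widecheck\fR$; the two bracketings produce the two sides of a hexagon/braid relation. The key inputs are exactly \eqref{eqDeR}, $(\Delta\ot\id)\fR=\fR_{13}\fR_{23}$ and $(\id\ot\Delta)\fR=\fR_{13}\fR_{12}$, which say that $\widecheck\fR$ intertwines the module structure correctly when one of the two factors is itself a tensor product of two modules — i.e. $\widecheck\fR_{A,B\ot C}=(\id\ot\widecheck\fR_{A,C})(\widecheck\fR_{A,B}\ot\id)$ and the mirror identity. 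Combined with the fact (from \eqref{eqRmat}) that $\widecheck\fR$ is a morphism of $\Uqg$-modules and that $\mu_A,\mu_B$ are morphisms (the module superalgebra condition on $A$ and $B$), the two triple products coincide; the Yang–Baxter equation is the consistency condition that makes this unambiguous. I would organize this as a string diagram / commuting-diagram chase rather than a raw coefficient computation.

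\textbf{Module superalgebra axiom.} Finally I must check $x.\mu_{A,B}(\xi\ot\eta)=\sum_{(x)}(-1)^{[x_{(2)}][\xi]}\mu_{A,B}(x_{(1)}.\xi\ot x_{(2)}.\eta)$ for $\xi=a_1\ot b_1$, $\eta=a_2\ot b_2$. Expanding $\mu_{A,B}$, the left side is $x$ acting on $\mu_A(a_1\ot \widecheck\fR_1(b_1\ot a_2))\ot \mu_B(\widecheck\fR_2(b_1\ot a_2)\ot b_2)$; applying $\Delta^{(3)}(x)$ and using the module superalgebra property of $A$ and of $B$ termwise turns this into a sum over $\Delta^{(3)}(x)$ of actions distributed across $a_1, b_1, a_2, b_2$, with $\widecheck\fR$ sandwiched in the middle. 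Because $\widecheck\fR$ commutes with the $\Uqg$-action (that is \eqref{eqRmat} repackaged), the two middle actions can be pulled through $\widecheck\fR$, and coassociativity of $\Delta$ reassembles the sum into the right-hand side; the Koszul signs bookkeep correctly precisely because $\widecheck\fR$ is even and $P$ carries the sign $(-1)^{[a][b]}$.

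\textbf{Main obstacle.} The genuinely delicate point is the sign and Sweedler-index bookkeeping in the associativity step: one is moving three or four homogeneous elements past one another via $\widecheck\fR$ while simultaneously splitting comultiplications, and it is easy to misplace a factor of $(-1)^{[a][b]}$ or apply a hexagon identity to the wrong pair of legs. I would handle this by working in the braided tensor category of locally finite type-$\bf 1$ $\Uqg$-modules, where $\widecheck\fR$ is by construction a functorial braiding satisfying the hexagon axioms, and simply invoke the standard fact that an algebra object in a braided (here: symmetric-up-to-braiding) monoidal category yields, via this formula, an algebra structure on the tensor product — the verification then being the well-known categorical computation with the braiding, specialized to our signs. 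This reduces the problem to citing \eqref{eqRmat}, \eqref{eqDeR}, coassociativity, and the hexagon relations they imply, exactly as in \cite{M,LZZ,S}.
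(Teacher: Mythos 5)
Your proof is correct and takes the same approach as the paper on the one step the paper actually writes out, namely verifying the module superalgebra axiom by pulling $\Delta(x)$ through $\widecheck{\fR}$ via \eqref{eqRmat}. The paper relegates associativity to a one-line observation in the remark that follows the proposition and omits unitality altogether, so your explicit treatment of those two points — associativity via the hexagon identities coming from \eqref{eqDeR} and naturality of $\widecheck{\fR}$, unitality via $(\epsilon\ot\id)\fR=(\id\ot\epsilon)\fR=1$ — is a sound and welcome amplification rather than a different route.
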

\begin{proof}
Let $\fR=\sum_h \alpha_h\ot\beta_h$. We have $[\alpha_h]=[\beta_h]$ for all $h$ since $\fR$ is an even element in $\Uqg\ot\Uqg$. Then under the given multiplication, it follows that
\begin{equation*}
(a_1\ot b_1)(a_2\ot b_2)=\sum_h (-1)^{([\alpha_h]+[b_1])[a_2]+[\alpha_h][\beta_h]}a_1(\beta_h.a_2)\ot (\alpha_h.b_1)b_2
\end{equation*}
for any homogeneous $a_1,b_1,a_2,b_2\in \Uqg$.
It is straightforward to check that for all homogeneous $x\in \Uqg$, 
\[x.((a_1\ot b_1)(a_2\ot b_2))=\sum_{(x)}(-1)^{[x_{(2)}]([a_1]+[b_1])}(x_{(1)}.(a_1\ot b_1))(x_{(2)}.(a_2\ot b_2)), \]
where we have used the identity $\widecheck{\fR}\Delta(x)(a\ot b)=\Delta(x)\widecheck{\fR}(a\ot b)$.
\end{proof}

We write $A\ot_{\fR}\!B$  for the braided tensor product of module superalgebras $A$ and $B$ as defined in  \propref{proptensormodalg} to distinguish it from the usual tensor product $A\ot_{\CK}\!B$.

\begin{rmk}
	Observe that the multiplication defined in this way is associative. Therefore, the  $\Uqg$-module superalgebra structure on $A_1\ot_{\fR} \!A_2\ot_{\fR}\dots\ot_{\fR}\! A_k$ is well-defined for any given $\Uqg$-module superalgebras $A_i$, $i=1,2,\dots,k$.
\end{rmk}
 
Let $A^{\Uqg}=\{ a\in A\mid x.a=\epsilon(x)a,\; \forall x\in\Uqg\}$ be the subspace of $\Uqg$-invariants in the module superalgebra $A$. The following result is well known, see e.g. \cite[Lemma 2.2]{LZZ}.
\begin{lem}\label{leminvalg}
Let $A$ be a $\Uqg$-module superalgebra. Then $A^{\Uqg}$ is a submodule superalgebra of $A$.
\end{lem}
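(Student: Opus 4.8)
The plan is to show that $A^{\Uqg}$ is closed under multiplication and contains $1_A$, since it is obviously a linear subspace and the submodule condition is immediate (it is the trivial isotypic component in the sense that $x.a = \epsilon(x)a$ defines a sub-$\Uqg$-module on which the action factors through the counit). The two things genuinely requiring verification are: (i) $1_A \in A^{\Uqg}$, and (ii) if $a, b \in A^{\Uqg}$ then $\mu(a \ot b) \in A^{\Uqg}$.

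For (i), I would simply invoke the module superalgebra axiom $x.1_A = \epsilon(x)1_A$ stated in \secref{secmodalg}; this is exactly the condition defining membership in $A^{\Uqg}$, so $1_A$ is invariant. For (ii), the key computation uses the other module superalgebra axiom together with the counit property of the comultiplication. Take homogeneous $a, b \in A^{\Uqg}$ and arbitrary homogeneous $x \in \Uqg$. Then
\[
x.\mu(a\ot b) = \sum_{(x)} (-1)^{[x_{(2)}][a]} \mu(x_{(1)}.a \ot x_{(2)}.b) = \sum_{(x)} (-1)^{[x_{(2)}][a]} \mu\bigl(\epsilon(x_{(1)})a \ot \epsilon(x_{(2)})b\bigr).
\]
Since $\epsilon(x_{(1)})$ and $\epsilon(x_{(2)})$ are scalars, each nonzero term has $x_{(2)}$ appearing with nonzero counit, hence (working in the graded setting) we may take the contributing $x_{(2)}$ to be even, so the sign $(-1)^{[x_{(2)}][a]}$ is $+1$ on those terms; pulling the scalars out gives $\sum_{(x)} \epsilon(x_{(1)})\epsilon(x_{(2)}) \mu(a\ot b)$. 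Applying the counit axiom $\sum_{(x)} \epsilon(x_{(1)})\epsilon(x_{(2)}) = \epsilon(x)$ (equivalently $(\epsilon \ot \epsilon)\Delta = \epsilon$), this equals $\epsilon(x)\mu(a\ot b)$. Hence $\mu(a\ot b) \in A^{\Uqg}$.

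The only subtlety — and the one point I would be careful about — is the sign $(-1)^{[x_{(2)}][a]}$: one must argue that it does not obstruct the calculation. The cleanest way is to note that since $\epsilon$ vanishes on the odd part of $\Uqg$ (the counit is a superalgebra homomorphism to $\CK$, which is purely even), only even $x_{(1)}$ and even $x_{(2)}$ contribute to $\sum \epsilon(x_{(1)})\epsilon(x_{(2)})$, so on the surviving terms the sign is trivial. Alternatively, and perhaps more robustly, one observes that $A^{\Uqg}$ consists of homogeneous-degree-$\bar 0$ elements compatibility is automatic, but actually invariants may lie in either parity block; the grading argument on $x_{(2)}$ is the right one. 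I expect this sign bookkeeping to be the main (minor) obstacle; everything else is a direct application of the axioms, so the proof is short.

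Finally, I would remark that $A^{\Uqg}$ is automatically a $\Z_2$-graded subspace (it is the intersection of kernels of the homogeneous operators $x - \epsilon(x)$), so together with closure under $\mu$ and containment of $1_A$ it is a submodule superalgebra as claimed. No appeal to semisimplicity or to the $\fR$-matrix is needed here; the statement is purely a consequence of the Hopf superalgebra axioms and the module superalgebra structure.
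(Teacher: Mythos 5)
Your proof is correct and follows essentially the same route as the paper: apply the module superalgebra axiom to $x.(ab)$, use invariance of $a$ and $b$ to replace $x_{(i)}.(\cdot)$ by $\epsilon(x_{(i)})(\cdot)$, and observe that the sign $(-1)^{[x_{(2)}][a]}$ is harmless because $\epsilon$ vanishes on odd elements, so the counit axiom gives $\epsilon(x)ab$. You are slightly more explicit than the paper about the sign bookkeeping and about $1_A\in A^{\Uqg}$, but the underlying argument is identical.
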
 
\begin{proof}
	For any homogeneous $a,b\in A^{\Uqg}$ and $x\in \Uqg$,  
	\[ x.(ab)=\sum_{(x)} (-1)^{[x_{(2)}][a]} (x_{(1)}.a) (x_{(2)}.b)=\sum_{(x)} (-1)^{[x_{(2)}][a]} \epsilon(x_{(1)}) \epsilon(x_{(2)}) ab=\epsilon(x) ab, \]
    where the last equation follows from the identity 
    \[\sum_{(x)} (-1)^{[x_{(2)}][a]} x_{(1)}\epsilon(x_{(2)})=\sum_{(x)}x_{(1)}\epsilon(x_{(2)})=x.   \]
     As a consequence, $ab\in A^{\Uqg}$.
\end{proof} 
 
\subsection{Quantised function algebras on $\Uqg$ }\label{secfunalg}
\subsubsection{The bi-superalgebra $\cM_{m|n}$}
Let 
\[\Uq(\gl_{m|n})^{\circ}:=\{ f\in (\Uqg)^{\ast}\mid  \text{$\Ker f$ contains a cofinite ideal of $\Uqg$}  \}\]
be the finite dual \cite{M} of $\Uqg$, which is a  Hopf superalgebra with structure dualising that of $\Uqg$.  We have the matrix elements $t_{ab}\in \Uq(\gl_{m|n})^{\circ}$,
$a, b\in\bI_{m|n}$, defined by \cite{Z98}
\begin{equation}\label{eqmatelmt}
  \langle t_{ab},x \rangle =\pi ( x )_{ab}, \quad \forall x\in\Uqg,
\end{equation}
where $\pi$ is the natural representation defined as in \eqref{eqnatrep}.
We set the $\Z_{2}$-grading of $t_{ab}$ by $[t_{ab}]=[a]+[b]$.  Consider  the subalgebra $\cM_{m|n}$ of $\Uq(\gl_{m|n})^{\circ}$ generated by matrix elements  $t_{ab} \,(a,b\in \bI_{m|n})$. The  multiplication which $\cM_{m|n}$ inherits from  $\Uq(\gl_{m|n})^{\circ}$ is given by
\begin{equation}\label{eqmultM} 
	\langle tt',x\rangle = \sum_{(x)} \langle t\otimes t',
	x_{(1)}\otimes x_{(2)} \rangle 
	= \sum_{(x)} (-1)^{[t'][x_{(1)}]} \langle t,  x_{(1)} \rangle
	\langle t',x_{(2)} \rangle
\end{equation}	
for all $t, t'\in \cM_{m|n}$ and $x\in \Uqg$. 

 We proceed to give the relations of the generators $t_{ab}$. Applying $\pi\ot\pi$ to both sides of \eqref{eqRmat}, we have
\begin{equation}\label{eqpiR}
 R(\pi\ot\pi)\Delta(x)=(\pi\ot\pi)\Delta^{\prime}(x)R, 
\end{equation}
where $R:=(\pi\ot\pi)\fR$, which is of the form \cite{Z98}
\[
\begin{aligned}
R&=q^{\sum_{a\in \bI_{m|n}}(-1)^{[a]} e_{aa}\ot e_{aa}}+(q-q^{-1})\sum_{a<b}(-1)^{[b]}e_{ab}\ot e_{ba}\\
&=I\otimes I + \sum_{a\in \bI_{m|n}} (q_a - 1)e_{a a}\otimes e_{a a} + 
(q-q^{-1})\sum_{a<b}(-1)^{[b]}e_{ab}\ot e_{ba}, 
\end{aligned}
\]
where $q_a=q^{(-1)^{[a]}}$. It is easy to directly verify that the matrix $R$ satisfies the Yang-Baxter equation.

Write $R=\sum_{a,b,c,d\in \bI_{m|n}} e_{ab}\ot e_{cd}R^{a\,c}_{b\,d}$,  then the nonzero entries of the matrix $R$ are 
\[
R^{a\,b}_{a\,b}=1,\; a\neq b,\quad R^{a\,a}_{a\,a}=q_a,\quad R^{a\,b}_{b\,a}=q_b-q_b^{-1},\; a<b,
\]
where $q_b-q_b^{-1}=(-1)^{[b]}(q-q^{-1})$ for all $a,b\in \bI_{m|n}$. It follows from \eqref{eqpiR} that the  generators $t_{ab}$ obey relations
\begin{equation*}\label{eqRrelation}
\sum_{a^{\prime},b^{\prime}}(-1)^{([a^{\prime}]+[c])([b]+[d])} R^{a\;b}_{a^{\prime}b^{\prime}} t_{a^{\prime}c} t_{b^{\prime}d}=\sum_{a^{\prime},b^{\prime}}(-1)^{([a^{\prime}]+[c])([b]+[b^{\prime}])} t_{bb^{\prime}} t_{aa^{\prime}} R^{a^{\prime}b^{\prime}}_{c\;d},
\end{equation*}	
which can be written more explicitly as
\begin{equation}\label{eqRel1}
 \begin{aligned}
  (t_{ab})^2=&0, &\quad &[a]+[b]=\bar{1},\\
  t_{ac}t_{bc}=&(-1)^{([a]+[c])([b]+[c])}q_ct_{bc}t_{ac},&\quad &a>b,\\
  t_{ab}t_{ac}=&(-1)^{([a]+[c])([a]+[b])}q_at_{ac}t_{ab},&\quad &b>c, \\
  t_{ac}t_{bd}=&(-1)^{([a]+[c])([b]+[d])}t_{bd}t_{ac},&\quad &a>b,\,c<d,\\
  t_{ac}t_{bd}=&(-1)^{([a]+[c])([b]+[d])}t_{bd}t_{ac}\\
  		&+(-1)^{[a]([b]+[d])+[b][d]}(q-q^{-1})t_{bc}t_{ad},&\quad &a>b,\, c>d. 
  \end{aligned}
\end{equation}
It is worthy to note  that  $\cM_{m|n}$ has a bi-superalgebra 	structure with the co-multiplication and the co-unit given by 
\[\Delta(t_{ab})=\sum_{c\in \bI_{m|n}} (-1)^{([a]+[c])([c]+[b])}t_{ac}\ot t_{cb}, \quad \epsilon(t_{ab})=\de_{ab}.\] 

\begin{rmk}
	With the matrix notation $T=\sum_{a,b\in \bI_{m|n}} e_{ab}\ot t_{ab}$,  relations \eqref{eqRel1} can be simply written in the form
	\[ RT_1T_2=T_2T_1R,  \]
	where $T_1T_2=\sum_{a,b,c,d\in\bI_{m|n}} (-1)^{([a]+[b])([c]+[d])}e_{ab}\ot e_{cd}\ot t_{ab}t_{cd} $; see \cite{Z98} and \cite{RTF} for details. 
\end{rmk}

There exist two kinds of actions  on $\cM_{m|n}$,  corresponding respectively to  the left and right translations in the classical situation.  We define the $\cR$-action  by  
\begin{equation}\label{eqRact}
  \cR: \Uqg\ot \cM_{m|n}\rightarrow\cM_{m|n} \quad   x\ot f \mapsto \cR_{x}(f)=\sum_{(f)}(-1)^{([f]+[x])[x]}f_{(1)}\langle f_{(2)},x\rangle.
\end{equation}	
 Recall that  there is an anti-automorphism $w$  of $\Uqg$  given by 
\[w(E_{a,a+1})=E_{a+1,a},\quad w(E_{a+1,a})=E_{a,a+1},\quad w(K_a)=K_a. \]
This and the antipode $S$  respectively give rise to another two left actions $\CL$ and $\widetilde{\CL}$
\begin{equation}\label{eqLact}
  \begin{aligned}
    &\CL: \Uqg\ot \cM_{m|n}\rightarrow\cM_{m|n} \quad  x\ot f \mapsto \sum_{(f)} \langle f_{(1)},w(x)\rangle f_{(2)},\\
    &\widetilde{\CL}: \Uqg\ot \cM_{m|n}\rightarrow\cM_{m|n} \quad  x\ot f \mapsto \sum_{(f)}\langle f_{(1)},S(x)\rangle f_{(2)},
  \end{aligned}
\end{equation}	

The two actions $\CL$ (or $\widetilde{\CL}$) and $\cR$ graded-commute with each other, i.e.,
 \[ \CL_{x}(\cR_{y}(f))=(-1)^{[x][y]}\cR_{x}(\CL_{y}(f)). \] 
 In addition, they preserve the algebraic structure of $\cM_{m|n}$. Thus, $\cM_{m|n}$ is a module superalgebra over $\CL(\Uqg)\ot \cR(\Uqg)$. It admits the following  multiplicity-free decomposition, which is a partial analogue of quantum Peter-Weyl theorem.
\begin{thm}\label{thmPeterWeyl} {\rm (\cite[Proposition 4]{Z98})}
	As an $\CL(\Uqg)\ot \cR(\Uqg)$-module,
	\begin{equation}\label{eqPeterWeyl}
	\cM_{m|n}\cong \bigoplus_{\la\in \La_{m|n}} L^{m|n}_{\la}\ot L^{m|n}_{\la},
	\end{equation}
	where $\La_{m|n}$ is the set of $(m,n)$-hook partitions.
\end{thm}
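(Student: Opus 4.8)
The plan is to obtain the decomposition \eqref{eqPeterWeyl} from the tensor module decomposition in \propref{proptendec} together with the structure of $\cM_{m|n}$ as a span of matrix elements. First I would recall that $\cM_{m|n}$ is generated by the $t_{ab}$, which are precisely the matrix elements of the natural module $V^{m|n}$ relative to its standard basis; consequently every product of the $t_{ab}$ lies in the span of matrix elements of some tensor power $(V^{m|n})^{\ot N}$, via the coproduct formula \eqref{eqmultM} dualising $\Delta^{(N-1)}$. Thus $\cM_{m|n}=\sum_{N\geq 0} C_N$, where $C_N$ denotes the space of matrix elements of $(V^{m|n})^{\ot N}$, i.e.\ the image of the coalgebra map $(V^{m|n})^{\ot N}\ot((V^{m|n})^{\ot N})^{\ast}\to \Uq(\gl_{m|n})^{\circ}$ sending $v\ot\phi$ to the functional $x\mapsto \phi(x.v)$.

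Next I would invoke the standard fact from coalgebra/representation theory (the quantum analogue of the classical statement that matrix coefficients of a completely reducible module decompose as $\bigoplus_\mu L_\mu^{\ast}\ot L_\mu$): since $(V^{m|n})^{\ot N}$ is semisimple by \propref{proptendec}, with simple constituents exactly the $L_\la^{m|n}$ for $\la\in\La_{m|n}$ with $|\la|=N$, the subspace $C_N$ of matrix elements is, as an $\CL(\Uqg)\ot\cR(\Uqg)$-module, isomorphic to $\bigoplus_{|\la|=N} L_\la^{m|n}\ot L_\la^{m|n}$. Here one uses that the $\CL$-action corresponds to the ``row'' variable and the $\cR$-action to the ``column'' variable of a matrix element, which one checks directly from the definitions \eqref{eqRact} and \eqref{eqLact} together with the coproduct; the anti-automorphism $w$ appearing in $\CL$ is what makes the left index transform in the module $L_\la^{m|n}$ rather than its dual. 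Summing over $N$ and observing that matrix elements belonging to inequivalent simple modules $L_\la^{m|n}$ are linearly independent (distinct isotypic components of $\Uq(\gl_{m|n})^{\circ}$, or equivalently using the nondegeneracy of the pairing restricted to each block of the semisimple quotient), one gets that the sum $\sum_N C_N$ is direct in the sense that each $L_\la^{m|n}\ot L_\la^{m|n}$ occurs exactly once, giving \eqref{eqPeterWeyl}.

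The main obstacle I anticipate is establishing that every simple $L_\la^{m|n}$ with $\la\in\La_{m|n}$ really contributes, and contributes with multiplicity exactly one, to $\cM_{m|n}$ — equivalently, that the matrix elements of $(V^{m|n})^{\ot N}$ span enough of $\Uq(\gl_{m|n})^{\circ}$ and that there is no collapsing. Contribution follows from part (2) of \propref{proptendec}: each $L_\la^{m|n}$ with $|\la|=N$ is a submodule of $(V^{m|n})^{\ot N}$, hence its matrix elements lie in $C_N\subseteq\cM_{m|n}$, and they span a copy of $L_\la^{m|n}\ot L_\la^{m|n}$. Multiplicity-freeness is the delicate point: a priori the same simple $L_\la^{m|n}$ appears as a constituent of $(V^{m|n})^{\ot N}$ for every $N\equiv |\la|$ in an appropriate sense and with various multiplicities, so one must argue the matrix-element map identifies all these copies. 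This is handled by the general principle that the matrix coefficient space of a module depends only on its isomorphism class (indeed only on its image in the ``regular'' part of $\Uq(\gl_{m|n})^{\circ}$), so each isomorphism class $L_\la^{m|n}$ contributes a single well-defined block $L_\la^{m|n}\ot L_\la^{m|n}$ regardless of multiplicities in the tensor powers. Since this is precisely \cite[Proposition 4]{Z98}, I would at this point simply cite that reference for the details, having reduced the statement to the semisimplicity input of \propref{proptendec} and the coalgebra generalities above.
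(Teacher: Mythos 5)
The paper states this theorem as a cited result from \cite[Proposition 4]{Z98} and offers no proof of its own, so there is no internal proof to compare against; your sketch correctly reconstructs the standard argument via \propref{proptendec} (semisimplicity of $(V^{m|n})^{\ot N}$) together with the general coalgebra fact that the matrix-coefficient space of a completely reducible module decomposes over the occurring isomorphism classes with multiplicity one, and you correctly identify the role of the anti-automorphism $w$ in making the $\CL$-action produce $L_\la^{m|n}$ rather than its dual. One small simplification worth noting: \propref{proptendec}(1) already restricts the constituents of $(V^{m|n})^{\ot N}$ to hook partitions of size exactly $N$, so a fixed $L_\la^{m|n}$ can only contribute through $C_{|\la|}$; the only multiplicity issue is internal to a single tensor power, and that is exactly what the coalgebra principle you invoke disposes of.
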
	
\begin{rmk} \cite{Z98}
	$\cM_{m|n}\cong \bigoplus_{\la\in \La_{m|n}} (L^{m|n}_{\la})^{\ast}\ot L^{m|n}_{\la}$ as $\widetilde{\CL}(\Uqg)\ot \cR(\Uqg)$-module. 
\end{rmk}	
	
\subsubsection{The Hopf superalgebra $\CK[\GL_q(m|n)]$}
Following convention in \cite{Z98}, we define  elements $\bar{t}_{ab}\in \Uq(\gl_{m|n})^{\circ}$ by
\[\langle \bar{t}_{ab},x\rangle:=\bar{\pi}(x)_{ab}= (-1)^{[b]([a]+[b])}\langle t_{ba},S(x)\rangle,\quad \forall x\in \Uqg,\, a,b\in \bI_{m|n}. \]
These are  matrix elements of the dual vector representation $\bar{\pi}$ of $\Uqg$ acting on $(V^{m|n})^{\ast}$. They generate a $\Z_2$-graded bi-algebra $\overbar{\cM}_{m|n}$ with the multiplication as in \eqref{eqmultM},  and co-multiplication $\Delta$ and co-unit $\epsilon$ given by 
\[\Delta(\bar{t}_{ab}) = \sum_{c\in \bI_{m|n}} (-1)^{([a]+[c])([c]+[b])}\bar{t}_{ac}\ot \bar{t}_{cb}\quad \epsilon(\bar{t}_{ab})=\delta_{ab}.\]
This is a $\Uqg$-module superalgebra with respect to left $\Uqg$-actions
$\CL, \widetilde{\CL},\cR: \Uqg\ot \overbar{\cM}_{m|n}\rightarrow \overbar{\cM}_{m|n}$ similarly defined as in \eqref{eqRact} and \eqref{eqLact}. Furthermore,  $\overbar{\cM}_{m|n}$ admits the following decomposition as in \thmref{thmPeterWeyl}.
\begin{thm}\label{thmPeterWeyl2}
  As  an $\widetilde{\CL}(\Uqg)\ot \cR(\Uqg)$-module,
  \begin{equation}
  	\overbar{\cM}_{m|n}\cong \bigoplus_{\la\in \La_{m|n}} L^{m|n}_{\la}\ot (L^{m|n}_{\la})^{\ast}.
  \end{equation}
\end{thm}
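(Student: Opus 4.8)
The plan is to run the argument that proves \thmref{thmPeterWeyl} (that is, \cite[Proposition 4]{Z98}) with the natural module $V^{m|n}$ replaced by its dual $(V^{m|n})^{\ast}$. For a finite-dimensional $\Uqg$-module $N$, write $C(N)\subseteq\Uq(\gl_{m|n})^{\circ}$ for the span of the matrix coefficients $x\mapsto\langle f,x.v\rangle$ with $f\in N^{\ast}$ and $v\in N$. First I would note that, by the comultiplication formula for the $\bar t_{ab}$ together with the multiplication rule \eqref{eqmultM}, every product $\bar t_{a_1b_1}\cdots\bar t_{a_kb_k}$ is a matrix coefficient of the $k$-fold tensor power $W_k:=\bigl((V^{m|n})^{\ast}\bigr)^{\ot k}$, and the counit $\epsilon$ is the matrix coefficient of the trivial module $W_0$; hence $\overbar{\cM}_{m|n}=\sum_{k\geq 0}C(W_k)$. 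Since $(V^{m|n})^{\ot k}$ is semisimple by \propref{proptendec}, so is $W_k\cong\bigl((V^{m|n})^{\ot k}\bigr)^{\ast}$, and dualising the decompositions in \propref{proptendec} shows that $W_k\cong\bigoplus_{\la}\bigl((L^{m|n}_{\la})^{\ast}\bigr)^{\oplus d_{\la,k}}$, the sum over $\la\in\La_{m|n}$ with $|\la|=k$ and all multiplicities $d_{\la,k}\geq 1$; in particular $C\bigl((L^{m|n}_{\la})^{\ast}\bigr)\subseteq\overbar{\cM}_{m|n}$ for every $\la\in\La_{m|n}$.

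Next I would invoke the standard equivariant description of matrix coefficients: for any finite-dimensional $\Uqg$-module $N$, the map $N^{\ast}\ot N\to C(N)$, $f\ot v\mapsto(x\mapsto\langle f,x.v\rangle)$, is an isomorphism of vector spaces, and with respect to the $\widetilde{\CL}\ot\cR$-actions of \eqref{eqRact} and \eqref{eqLact} the factor $\cR$ acts through the module structure on $N$ while the factor $\widetilde{\CL}$ acts through the module structure on $N^{\ast}$; this is a direct computation from $\Delta(c_{f,v})=\sum_i c_{f,e_i}\ot c_{e^i,v}$ for dual bases $\{e_i\}$ of $N$ and $\{e^i\}$ of $N^{\ast}$. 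Since each $L^{m|n}_{\la}$ has a one-dimensional highest weight space and is generated by it, $\End_{\Uqg}(L^{m|n}_{\la})=\CK$, whence the spaces $C(L)$ attached to pairwise non-isomorphic simple modules $L$ are linearly independent in $\Uq(\gl_{m|n})^{\circ}$ (Jacobson density). Therefore the sum $\overbar{\cM}_{m|n}=\sum_{k\geq 0}C(W_k)=\sum_{\la\in\La_{m|n}}C\bigl((L^{m|n}_{\la})^{\ast}\bigr)$ is direct, and applying the bimodule isomorphism to $N=(L^{m|n}_{\la})^{\ast}$, together with $\bigl((L^{m|n}_{\la})^{\ast}\bigr)^{\ast}\cong L^{m|n}_{\la}$, identifies $C\bigl((L^{m|n}_{\la})^{\ast}\bigr)$ with $L^{m|n}_{\la}\ot(L^{m|n}_{\la})^{\ast}$ as $\widetilde{\CL}(\Uqg)\ot\cR(\Uqg)$-modules. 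This yields the claimed decomposition.

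The delicate step is the bookkeeping in the previous paragraph: one must track the parity signs and, above all, verify that $\widetilde{\CL}$ acts on the first tensor slot with the module structure yielding $L^{m|n}_{\la}$ rather than its dual. Concretely, on the generators one has $\widetilde{\CL}_x(\bar t_{ab})=\sum_c(-1)^{([a]+[c])([c]+[b])}\bar\pi(S(x))_{ac}\,\bar t_{cb}$, so $\widetilde{\CL}$ acts on the row index through $x\mapsto\bar\pi(S(x))$, whose super-transpose is the module structure on $(V^{m|n})^{\ast\ast}\cong V^{m|n}$; the isomorphism $(V^{m|n})^{\ast\ast}\cong V^{m|n}$ rests on the fact that $S^{2}$ is conjugation by a group-like element, and this is also why the indexing set is again $\La_{m|n}$. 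The remaining inputs — semisimplicity of the tensor powers of $V^{m|n}$ and $\End_{\Uqg}(L^{m|n}_{\la})=\CK$ — are available from \cite{Z98} and the representation theory recalled in \appref{appA}.
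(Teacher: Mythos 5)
Your proof is correct and matches the approach the paper implicitly intends: the paper offers no separate argument for this theorem, merely asserting that $\overbar{\cM}_{m|n}$ "admits the following decomposition as in \thmref{thmPeterWeyl}", and you carry over the standard matrix-coefficient/Peter--Weyl reasoning (semisimplicity of the tensor powers of $(V^{m|n})^{\ast}$, linear independence of coefficient spaces of inequivalent simples, and the $\widetilde{\CL}\ot\cR$-equivariance of $N^{\ast}\ot N\to C(N)$) to the superalgebra generated by the $\bar t_{ab}$. You also correctly flag the one genuinely delicate point, namely that $(L^{m|n}_{\la})^{\ast\ast}\cong L^{m|n}_{\la}$ via $S^2$ being inner, which is what keeps the indexing set equal to $\La_{m|n}$.
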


Now we denote by $\CK[\GL_q(m|n)]$ the 	subalgebra  of $\Uq(\gl_{m|n})^{\circ}$ generated by 
all matrix elements $t_{ab}$ and $\bar{t}_{ab}$. The relations,  besides \eqref{eqRel1}, can be derived similarly from \cite{Z98}
\begin{align}
(\bar{\pi}\ot \bar{\pi})(\fR \Delta(x))=(\bar{\pi}\ot \bar{\pi})(\Delta^{\prime}(x)\fR ),  \label{eqrelbar}\\
(\pi\ot \bar{\pi})(\fR \Delta(x))=(\pi\ot \bar{\pi})(\Delta^{\prime}(x)\fR ), 
\end{align}
where the second one enables us to factorise $\CK[\GL_q(m|n)]$ into
\begin{equation*}\label{eqFact}
 \CK[\GL_q(m|n)]=\cM_{m|n}\overbar{\cM}_{m|n},
\end{equation*}
which inherits a natural bi-algebra structure from $\cM_{m|n}$ and $\overbar{\cM}_{m|n}$. Straightforward calculations can verify the relation
\[ \sum_{c\in\bI_{m|n}}(-1)^{[a]([b]+[c])}t_{ac}\bar{t}_{bc}=\sum_{c\in\bI_{m|n}}(-1)^{([a]+[c])([a]+[b]+[c])}\bar{t}_{ca}t_{cb}=\de_{ab}, \]
which implies that $\CK[\GL_q(m|n)]$ is a Hopf superalgebra with the  antipode $S$  given by
\begin{equation}\label{eqantipode}
S(t_{ab})=(-1)^{[a]([a]+[b])}\bar{t}_{ba},\quad S(\bar{t}_{ab})=(-1)^{[b]([a]+[b])}q^{(2\rho,\epsilon_a-\epsilon_b)}t_{ba}.
\end{equation}
Here $2\rho=\sum_{a< b}(-1)^{[a]+[b]}(\epsilon_a-\epsilon_b)$ and  $\epsilon_a-\epsilon_b$ is a root for $\gl_{m|n}$.

\begin{rmk}\label{rmkcoord}
    It is proved in \cite[Theorem 3.5]{ZZ06} that $\CK[\GL_q(m|n)]\cong \CO_q(\GL_{m|n})$ as Hopf superalgebras, where $\CO_q(\GL_{m|n})$ is the  coordinate superalgebra of quantum general linear supergroup.
\end{rmk}

\begin{rmk}\label{rmkRinv}
 Let $\overbar{R}=(\bar{\pi}\ot\bar{\pi})\fR$, by the formula in \cite{Z98} we have
 \[ 
 \begin{aligned}
 \overbar{R}&=q^{\sum_{a\in \bI_{m|n}}(-1)^{[a]} e_{aa}\ot e_{aa}}+(q-q^{-1})\sum_{a>b}(-1)^{[b]}e_{ab}\ot e_{ba}\\
 &=I\otimes I + \sum_{a\in \bI_{m|n}} (q_a - 1)e_{a a}\otimes e_{a a}+(q-q^{-1})\sum_{a>b}(-1)^{[b]}e_{ab}\ot e_{ba}.
 \end{aligned}
 \]
 Thus, relations \eqref{eqrelbar} can be written more explicitly as follows,
\begin{equation}\label{eqRel2}
\begin{aligned}
 (\bar{t}_{ab})^2=&0, &\quad &[a]+[b]=\bar{1},\\
\bar{t}_{ac}\bar{t}_{bc}=&(-1)^{([a]+[c])([b]+[c])}q_c^{-1}\bar{t}_{bc}\bar{t}_{ac},&\quad &a>b,\\
\bar{t}_{ab}\bar{t}_{ac}=&(-1)^{([a]+[c])([a]+[b])}q_a^{-1}\bar{t}_{ac}\bar{t}_{ab},&\quad &b>c, \\
\bar{t}_{ac}\bar{t}_{bd}=&(-1)^{([a]+[c])([b]+[d])}\bar{t}_{bd}\bar{t}_{ac},&\quad &a>b,\,c<d,\\
\bar{t}_{ac}\bar{t}_{bd}=&(-1)^{([a]+c)([b]+[d])}\bar{t}_{bd}\bar{t}_{ac}\\
&-(-1)^{[a]([b]+[d])+[b][d]}(q-q^{-1})\bar{t}_{bc}\bar{t}_{ad},&\quad &a>b,\, c>d. 
\end{aligned}
\end{equation}	
\end{rmk}

\section{Quantum Howe duality of type $(\Uq(\gl_{k|l}),\Uq(\gl_{r|s}))$}

\subsection{Quantum super analogue of Howe duality}\label{secHowedual}
We adopt the following notation for convenience. If $\CG=\{g_1, \dots, g_k\}$ is a finite set of  elements in $\Uqg$, we denote by $\langle \CG\rangle$ the linear span of the elements $g_{i_1}g_{i_2}\dots g_{i_N}$ for all $N\in\Z_+$ and $1\le i_1, \dots, i_N\le k$, that is, $\langle \CG\rangle$ is subalgebra generated by $\CG$.   Given integers $1\leq s,t\leq m+n$, we introduce the following subalgebra of $\Uqg$
\[
 \begin{aligned}
 &\Upsilon_u=\langle K_a\mid 1\leq a \leq u \rangle, \quad 1\leq u\leq m+n,\\
 &\overline{\Upsilon}_v=\langle K_a\mid v+1\leq a \leq m+n \rangle,\quad 0\leq v\leq m+n-1.
\end{aligned}
\]
For any $\Uqg$-module $V$, we define the $\Upsilon_u$-invariant subspace of $V$ by
\[ V^{\Upsilon_u}:=\{v\in V\mid K_av=v, \  \forall 1\leq a\leq u\}, \] 
and call this a truncation of $V$. 
The  $\overline{\Upsilon}_v$-invariant subspace $V^{\overline{\Upsilon}_v}$ can be defined similarly. The following two lemmas will be crucial for our proof of quantum Howe duality.

\begin{lem}\label{lemTrunh}
	Let $\la\in \La_{m|n}$ be an $(m,n)$-hook partition. The  $\overline{\Upsilon}_v$-invariant subspace of $L_{\la}^{m|n}$ is
	\begin{equation*}
	(L_{\la}^{m|n})^{\overline{\Upsilon}_v}\cong
	\begin{cases}
	L_{\la}^{m|(v-m)}, & \text{if}\ v> m \ \text{and}\ \la_{m+1}\leq v-m, \\
	L_{\la}^{v}, & \text{if}\ v\leq m \ \text{and}\ \la_{v+1}=0,\\
	0,& \text{otherwise},
	\end{cases}
	\end{equation*}                         
	where $L_{\la}^{m|v-m}$  is the irreducible $\Uq(\gl_{m|v-m})$-module with highest weight $(\la_1,\dots,\la_m;\langle \la_1^{\prime}-m\rangle,\dots, \\ \langle \la_{v-m}^{\prime}-m\rangle )$, and $L_{\la}^{v}$ is the irreducible $\Uq(\gl_v)$-module with highest weight $(\la_1,\dots,\la_v)$.  
\end{lem}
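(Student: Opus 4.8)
The plan is to realise the truncation $(L_\lambda^{m|n})^{\overline{\Upsilon}_v}$ as a genuine module over a quantum sub-supergroup of $\Uqg$ and to determine it by a weight-positivity argument. Since $q$ is transcendental, $K_a$ acts on a weight vector of weight $\mu=\sum_c\mu_c\epsilon_c$ by the scalar $q_a^{\mu_a}$, which equals $1$ exactly when $\mu_a=0$, so $(L_\lambda^{m|n})^{\overline{\Upsilon}_v}=\bigoplus_{\mu:\,\mu_c=0\ \forall c>v}(L_\lambda^{m|n})_\mu$. Moreover, as $\lambda$ is an $(m,n)$-hook partition, \propref{proptendec}(2) embeds $L_\lambda^{m|n}$ into $(V^{m|n})^{\otimes|\lambda|}$, so \emph{every} weight $\mu$ of $L_\lambda^{m|n}$ has all components $\mu_c\ge 0$; this non-negativity will drive the whole argument. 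Let $\mathfrak{U}_v\subseteq\Uqg$ be the subalgebra generated by $K_a^{\pm1}$ ($1\le a\le v$) and $E_{b,b+1},E_{b+1,b}$ ($1\le b\le v-1$); from the presentation in \defref{defnUglmn} one identifies $\mathfrak{U}_v$ with $\Uq(\gl_v)$ when $v\le m$ and with $\Uq(\gl_{m|v-m})$ when $v>m$. All its generators shift weights by vectors supported on $\{1,\dots,v\}$, so $(L_\lambda^{m|n})^{\overline{\Upsilon}_v}$ is a finite-dimensional $\mathfrak{U}_v$-weight submodule of $L_\lambda^{m|n}$.

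The crux is the claim that \emph{every $\mathfrak{U}_v$-highest weight vector $w$ contained in $(L_\lambda^{m|n})^{\overline{\Upsilon}_v}$ is in fact a $\Uqg$-highest weight vector}. Such a $w$ has a weight $\mu$ with $\mu_c=0$ for all $c>v$ and is killed by $E_{b,b+1}$ for $1\le b\le v-1$; for $b\ge v$ the vector $E_{b,b+1}w$ would have weight $\mu+\epsilon_b-\epsilon_{b+1}$, whose $(b+1)$-st component equals $\mu_{b+1}-1=-1$, impossible by the non-negativity of weights, so $E_{b,b+1}w=0$ for those $b$ as well. By irreducibility of $L_\lambda^{m|n}$, $w$ is then a nonzero multiple of the highest weight vector $v_\lambda^+$, forcing $\mu=\lambda^\natural$ and hence $\lambda^\natural_c=0$ for all $c>v$. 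Unwinding \eqref{eqlanat}, the condition $\lambda^\natural_c=0$ for $c>v$ is equivalent to $\lambda_{v+1}=0$ when $v\le m$, and to $\lambda_{v-m+1}'\le m$, i.e.\ $\lambda_{m+1}\le v-m$, when $v>m$. Therefore, if neither of these holds, $(L_\lambda^{m|n})^{\overline{\Upsilon}_v}$ contains no $\mathfrak{U}_v$-highest weight vector; since a nonzero finite-dimensional weight module always contains one (take a weight maximal in the root order), the truncation must vanish, which settles the ``otherwise'' case.

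In the two remaining cases $\lambda^\natural_c=0$ for $c>v$, so $v_\lambda^+\in(L_\lambda^{m|n})^{\overline{\Upsilon}_v}$ with $\mathfrak{U}_v$-weight $\Lambda:=(\lambda_1,\dots,\lambda_v)$ when $v\le m$ and $\Lambda:=(\lambda_1,\dots,\lambda_m;\langle\lambda_1'-m\rangle,\dots,\langle\lambda_{v-m}'-m\rangle)$ when $v>m$, which is exactly $\lambda^\natural$ computed inside $\mathfrak{U}_v$ and a legitimate hook weight there by the surviving condition. I would then show $(L_\lambda^{m|n})^{\overline{\Upsilon}_v}=\mathfrak{U}_v\cdot v_\lambda^+$: since $L_\lambda^{m|n}=\Uqg^-v_\lambda^+$ is spanned by weight vectors $E_{i_1+1,i_1}\cdots E_{i_N+1,i_N}v_\lambda^+$, the truncation is spanned by those of these whose weight has vanishing components in all positions $>v$; writing out that weight and inducting downwards on $c=m+n,m+n-1,\dots,v+1$ shows each such monomial uses only indices $i_j\le v-1$, hence lies in $\mathfrak{U}_v^-\cdot v_\lambda^+$. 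Thus $(L_\lambda^{m|n})^{\overline{\Upsilon}_v}$ is a cyclic highest weight $\mathfrak{U}_v$-module of highest weight $\Lambda$ with one-dimensional $\Lambda$-weight space; any nonzero $\mathfrak{U}_v$-submodule contains a highest weight vector, which by the emphasised claim is a multiple of $v_\lambda^+$ and so generates everything, whence the module is irreducible and equals $L(\Lambda)$, namely $L_\lambda^v$ if $v\le m$ and $L_\lambda^{m|v-m}$ if $v>m$. (Irreducibility can alternatively be deduced from semisimplicity, since $(L_\lambda^{m|n})^{\overline{\Upsilon}_v}$ embeds $\mathfrak{U}_v$-equivariantly into $\bigl((V^{m|n})^{\overline{\Upsilon}_v}\bigr)^{\otimes|\lambda|}$, a tensor power of the natural $\mathfrak{U}_v$-module, which is semisimple by \propref{proptendec}.) The step demanding the most care is precisely this interplay — weight non-negativity, the identification of $\mathfrak{U}_v$, and the downward induction controlling which monomials survive truncation; everything else is bookkeeping with \defref{defnUglmn} and \eqref{eqlanat}.
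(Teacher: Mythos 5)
Your proof is correct, and it takes a genuinely different organizational route from the paper's. The paper proceeds by an \emph{iterative one-step truncation}: it restricts to $\Uq(\gl_{m|n-1})$, observes that $(L_\lambda^{m|n})_0=\Uq(\gl_{m|n-1})v_\lambda$ is irreducible, derives the lower bound $\mu_{m+n}\ge\langle\lambda_n'-m\rangle$ from the decomposition $L_\lambda^{m|n}=\Uq(\mathfrak{n}_-)_0(L_\lambda^{m|n})_0$ with $\Uq(\mathfrak{n}_-)_0$ generated by the $E_{m+n,a}$, and iterates. You instead handle the full truncation in a single stroke: you use the universal non-negativity of weight components (from the embedding of $L_\lambda^{m|n}$ into a tensor power of the natural module via \propref{proptendec}) rather than the paper's sharper but more bespoke bound, you make explicit the key claim — that a $\mathfrak{U}_v$-highest weight vector inside the truncation is automatically a full $\Uqg$-highest weight vector — and you derive both the vanishing criterion and the cyclic structure $(L_\lambda^{m|n})^{\overline{\Upsilon}_v}=\mathfrak{U}_v\cdot v_\lambda^+$ directly from a monomial weight count ($n_b=0$ for $b\ge v$), concluding irreducibility by the highest-weight-vector argument (or, alternatively, semisimplicity of the truncated tensor power). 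What your approach buys: it avoids the unproved assertion in the paper that $(L_\lambda^{m|n})_0$ is irreducible, it gives the answer for general $v$ without induction on $n$, and it isolates the structural reason (weight positivity forcing $E_{b,b+1}$ for $b\ge v$ to annihilate the truncation). What the paper's iteration buys: each step exhibits the truncation explicitly as $\Uq(\gl_{m|n-1})v_\lambda$ with a precise lower bound on $\mu_{m+n}$, which is marginally finer information and fits the "remove one row/column at a time" combinatorics used elsewhere in the paper. Both rely fundamentally on the same two ingredients — positivity of weights and the highest-weight cyclic structure — so the difference is in organization and level of explicitness rather than in substance.
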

\begin{proof}
	Note that $\Uq(\gl_{m|n-1})$ is canonically embedded in $\Uqg$ as the subalgebra generated by the elements $E_{a, a+1}, E_{a+1, a}$, $K^{\pm 1}_b$ with $1\le a<m+n-1$ and $1\le b\le m+n-1$. Thus we can consider the restriction of the $\Uqg$-module $L_{\la}^{m|n}$ to a module for  $\Uq(\gl_{m|n-1})$. Let $v_{\la}\in  L_{\la}^{m|n}$ be the highest weight vector for $\Uqg$ with the highest weight $\la^{\natural}$ (see \eqref{eqlanat}).
	Then $(L_{\la}^{m|n})_0=\Uq(\gl_{m|n-1})v_{\la}$ forms an irreducible  $\Uq(\gl_{m|n-1})$-module. 
Note that $K_{m+n}$, which commutes with $\Uq(\gl_{m|n-1})$, acts on $(L_{\la}^{m|n})_0$ by multiplication by the scalar $q^{\langle \la_n^{\prime}-m\rangle}$.	
	
	Denote by $\Uq(\fn_{-})_0$ the subalgebra of $\Uqg$ generated by the elements $E_{m+n,a}$ with $1\leq a\leq m+n-1$. Then $L_{\la}^{m|n}=\Uq(\fn_{-})_0(L_{\la}^{m|n})_0$. 
    Thus, all the weights $\mu=(\mu_1,\mu_2,\dots,\mu_{m+n})$ of $L_{\la}^{m|n}$ satisfy $\mu_{m+n}\geq  \langle \la_{n}^{\prime}-m\rangle$.  Therefore, $(L_{\la}^{m|n})^{K_{m+n}}=(L_{\la}^{m|n})^{\overline{\Upsilon}_{m+n-1}}=0$ unless $\langle \la_n^{\prime}-m\rangle=0$, i.e., $\la_n^{\prime}\leq m$, which is equivalent to the condition that $\la_{m+1}\leq n-1$ (recall that $\la_{k+1}\leq l \Leftrightarrow \la_{l+1}^{\prime}\leq k$ for any $k,l\in \Z_{+}$). In this case, we have $(L_{\la}^{m|n})^{\overline{\Upsilon}_{m+n-1}}=(L_{\la}^{m|n})_0$, which is an irreducible  $\Uq(\gl_{m|n-1})$-module with the highest weight $(\la_1,\dots,\la_m;\langle \la_1^{\prime}-m\rangle,\dots, \langle \la_{n-1}^{\prime}-m\rangle )$. 

Iterating this truncation procedure for $(L_{\la}^{m|n})^{\overline{\Upsilon}_{m+n-1}}$, we arrive at the  assertion.
\end{proof}

\begin{lem}\label{lemTrunl}
	Let $\la\in \La_{m|n}$ be an $(m,n)$-hook partition. The $\Upsilon_u$-invariant subspace of  $L_{\la}^{m|n}$ is
	\begin{equation*}
	(L_{\la}^{m|n})^{\Upsilon_u}\cong
	\begin{cases}
	L_{\la}^{(m-u)|n}, & \text{if}\ u< m \ \text{and}\ \la_{m-u+1}\leq n, \\
	L_{\la^{\prime}}^{m+n-u}, & \text{if}\ u\geq m \ \text{and}\ \la_{m+n-u+1}^{\prime}=0,\\
	0,& \text{otherwise},
	\end{cases}
	\end{equation*}                         
	where $L_{\la}^{m-u|n}$  is the irreducible $\Uq(\gl_{m-u|n})$-module with highest weight $(\la_1,\dots,\la_{m-u};\langle \la_1^{\prime}-(m-u)\rangle,\dots,\langle \la_{n}^{\prime}-(m-u)\rangle )$, and  
	$L_{\la^{\prime}}^{m+n-u}$ is the irreducible $\Uq(\gl_{m+n-u})$-module with highest weight
	$(\la_{1}^{\prime},\dots,\la_{m+n-u}^{\prime})$.	 	
\end{lem}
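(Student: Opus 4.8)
The plan is to imitate the proof of \lemref{lemTrunh}, but now stripping the generators $K_1,K_2,\dots,K_u$ off the \emph{bottom} end of the Dynkin diagram, one at a time. For a single step I would view $\Uq(\gl_{m-1|n})$ as the subalgebra of $\Uqg$ generated by $E_{a,a+1},E_{a+1,a}$ with $2\le a\le m+n-1$ and $K_b^{\pm1}$ with $2\le b\le m+n$, acting on $\Span\{v_2,\dots,v_{m+n}\}\subset V^{m|n}$. Then $V^{m|n}\big|_{\Uq(\gl_{m-1|n})}\cong \CK v_1\oplus\Span\{v_2,\dots,v_{m+n}\}\cong\CK\oplus V^{m-1|n}$, so $(V^{m|n})^{\ot N}\big|_{\Uq(\gl_{m-1|n})}$ is a sum of tensor modules, hence semisimple by \propref{proptendec}; therefore so is its submodule $L_\la^{m|n}\big|_{\Uq(\gl_{m-1|n})}$, which decomposes as a direct sum of various $L_\nu^{(m-1)|n}$. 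Since $K_1$ is central in $\Uq(\gl_{m-1|n})$, this decomposition refines the $K_1$-eigenspace decomposition, and $(L_\la^{m|n})^{K_1}$ is precisely the sum of those summands on which $K_1$ acts as the identity.

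To identify that sum I would use characters. At generic $q$ the character of $L_\la^{m|n}$ is the hook Schur polynomial $\operatorname{hs}_\la(x_1,\dots,x_m\mid y_1,\dots,y_n)$ with $x_a\leftrightarrow\epsilon_a$ and $y_b\leftrightarrow\epsilon_{m+b}$; the $K_1$-invariant part is the coefficient of $x_1^0$, and from the expansion $\operatorname{hs}_\la(x_1,\dots,x_m\mid y)=\sum_{\mu\subseteq\la,\ \la_i-\mu_i\le n}s_\mu(x_1,\dots,x_m)\,s_{(\la/\mu)'}(y)$ one reads off that this coefficient equals $\operatorname{hs}_\la(x_2,\dots,x_m\mid y_1,\dots,y_n)$, the character of $L_\la^{(m-1)|n}$, which is nonzero exactly when $\la$ is an $(m-1,n)$-hook partition, i.e.\ when $\la_m\le n$. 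By linear independence of irreducible characters inside the semisimple restriction, $(L_\la^{m|n})^{K_1}\cong L_\la^{(m-1)|n}$ if $\la_m\le n$ and is $0$ otherwise, with highest weight the natural weight of $\la$ for $\gl_{m-1|n}$. Iterating $u$ times — each step kills one further variable $x_i$, and the successive conditions $\la_m\le n,\ \la_{m-1}\le n,\dots$ collapse to the single inequality $\la_{m-u+1}\le n$ since $\la$ is decreasing — yields the case $u<m$.

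For $u\ge m$ one must cross the even--odd boundary. After $m$ steps every even $K_a$ has been removed and one is left with $\Uq(\gl_{0|n})\cong\Uq(\gl_n)$; under this identification the module indexed by the $(0,n)$-hook partition $\la$ becomes the $\Uq(\gl_n)$-module $L_{\la'}^{n}$ with highest weight $(\la_1',\dots,\la_n')$, the transpose entering through the specialisation $\operatorname{hs}_\la(\,\mid y_1,\dots,y_n)=s_{\la'}(y_1,\dots,y_n)$, which is nonzero iff $\la$ has at most $n$ columns, i.e.\ $\la_{n+1}'=0$. From here the iteration continues inside $\Uq(\gl_n)$, now in the purely even (classical) situation: removing one more $K$ sends $L_{\la'}^{N}$ to $L_{\la'}^{N-1}$ with highest weight $(\la_1',\dots,\la_{N-1}')$ precisely when the last part $\la_N'$ vanishes, and after $u-m$ such steps one reaches $L_{\la'}^{m+n-u}$, nonzero iff $\la_{m+n-u+1}'=0$. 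Combining the two regimes gives the stated trichotomy.

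The step I expect to be the main obstacle is the bookkeeping at the even--odd transition: setting up the isomorphism $\Uq(\gl_{0|n})\cong\Uq(\gl_n)$ precisely, checking that the parity/type twist it carries is immaterial for $K$-invariance, and confirming that it carries the relevant module to $L_{\la'}^{n}$ — together with a clean justification of the hook Schur polynomial identities used to track characters through each truncation. A tidier alternative I would also consider is to deduce the whole statement from \lemref{lemTrunh} via the Dynkin-diagram flip $\Uq(\gl_{m|n})\cong\Uq(\gl_{n|m})$ sending $K_a\mapsto K_{m+n+1-a}$, hence $\Upsilon_u\mapsto\overline{\Upsilon}_{m+n-u}$; under this flip $L_\la^{m|n}$ corresponds, up to a harmless twist, to the $\Uq(\gl_{n|m})$-module labelled by $\la'$, so \lemref{lemTrunh} applied in $\Uq(\gl_{n|m})$ reproduces \lemref{lemTrunl} with the transpose built in automatically.
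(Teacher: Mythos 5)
Your proposal is correct, but the paper takes a different path. The paper's proof mirrors that of \lemref{lemTrunh} from the lowest-weight end: it uses the formula for $\overbar{\la}^\natural$ in \propref{proplowwht}, sets $(\varGamma_{\overbar\la}^{m|n})_0 := \Uq(\gl_{m-1|n})v_{\overbar\la}$ for the lowest weight vector $v_{\overbar\la}$, and argues from $L_\la^{m|n} = \Uq(\fn_+)_0 (\varGamma_{\overbar\la}^{m|n})_0$ that all weights $\mu$ have $\mu_1 \ge \langle\la_m-n\rangle$, so that $(L_\la^{m|n})^{K_1}$ is $(\varGamma_{\overbar\la}^{m|n})_0 \cong L_\la^{(m-1)|n}$ when $\la_m \le n$ and $0$ otherwise; then it iterates. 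This avoids characters altogether, and the iteration handles the even--odd crossing uniformly because the argument works one weight coordinate at a time rather than keeping track of which $\Uq(\gl_{a|b})$ one is in. Your character route --- semisimple restriction via \propref{proptendec}, then match the coefficient of $x_1^0$ in $\operatorname{hs}_\la$ --- is a genuinely different and valid proof, but it shifts the work into hook Schur combinatorics (stability $\operatorname{hs}_\la(0,x_2,\dots,x_m\mid y)=\operatorname{hs}_\la(x_2,\dots,x_m\mid y)$, the specialisation $\operatorname{hs}_\la(\,\mid y)=s_{\la'}(y)$, and the reidentification $\Uq(\gl_{0|n})\cong\Uq(\gl_n)$), which is precisely where you anticipated friction. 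Your Dynkin-flip alternative is, in hindsight, the paper's approach compressed to a single stroke: \propref{proplowwht} is exactly the bookkeeping of the flip $K_a\mapsto K_{m+n+1-a}$, which exchanges $\fb^+$ with $\fb^-$, takes $\Upsilon_u$ to $\overline\Upsilon_{m+n-u}$, and carries $L_\la^{m|n}$ to $L_{\la'}^{n|m}$, so the paper's proof is the flip carried out step by step. One technicality if you pursue the flip literally: the relabelling reverses the sign of the form in \eqref{eqbilinear}, so it lands in $\U_{q^{-1}}(\gl_{n|m})$ rather than $\Uq(\gl_{n|m})$; composing with the isomorphism $\U_{q^{-1}}\cong\Uq$ restores the paper's conventions, and since that isomorphism preserves the $\Upsilon$-subalgebras it is indeed harmless for the statement at hand.
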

\begin{proof}
Let $\overbar{\la}^{\natural}$ be the lowest weight of $L_{\la}^{m|n}$.   By \propref{proplowwht} in \appref{appA}, we have
	\[\overbar{\la}^{\natural}= (\langle\la_m-n\rangle,\langle\la_{m-1}-n\rangle,\dots,\langle\la_1-n\rangle; \la_n^{\prime},\la_{n-1}^{\prime},\dots,\la_1^{\prime}). \]
For convenience, we write $\varGamma_{\overbar{\la}}^{m|n}$ for $L_{\la}^{m|n}$ and $v_{\overbar{\la}}$ for the lowest weight vector of $\varGamma_{\overbar{\la}}^{m|n}$. Then $(\varGamma_{\overbar{\la}}^{m|n})_0:=\Uq(\gl_{m-1|n})v_{\overbar{\la}}$ is an irreducible $\Uq(\gl_{m-1|n})$-module, where $\Uq(\gl_{m-1|n})$ is canonically embedded in $\Uqg$ as the subalgebra generated by the elements $E_{a, a+1}, E_{a+1, a}$, $K^{\pm 1}_b$ with $2\leq a\leq m+n-1$ and $2\le b\le m+n$. Now $K_{1}$, which commutes with $\Uq(\gl_{m-1|n})$, acts on $(L_{\la}^{m|n})_0$ by multiplication by the scalar $q^{\langle \la_m-n\rangle}$.

 Denote by $\Uq(\fn_{+})_0$ the subalgebra of $\Uqg$ generated by $E_{1,a}, 2\leq a\leq m+n$, it follows that $\varGamma_{\overbar{\la}}^{m|n}=\Uq(\fn_{+})_0(\varGamma_{\overbar{\la}}^{m|n})_0$. Thus, all the weights $\mu=(\mu_1,\mu_2,\dots,\mu_{m+n})$ of $\varGamma_{\overbar{\la}}^{m|n}$ satisfy $\mu_{1}\geq  \langle\la_m-n\rangle$.  Therefore, it is clear that $(\varGamma_{\overbar{\la}}^{m|n})^{K_1}=(\varGamma_{\overbar{\la}}^{m|n})^{\Upsilon_1}=0$ unless $\langle\la_m-n\rangle=0$, i.e., $\la_m\leq n$. In this case, we have $(\varGamma_{\overbar{\la}}^{m|n})^{\Upsilon_1}=(\varGamma_{\overbar{\la}}^{m|n})_0$, which is an irreducible $\Uq(\gl_{m-1|n})$-module with the lowest weight $(\langle\la_{m-1}-n\rangle,\dots,\langle\la_1-n\rangle; \la_n^{\prime},\la_{n-1}^{\prime},\dots,\la_1^{\prime})$. This by \propref{proplowwht} means that $(\varGamma_{\overbar{\la}}^{m|n})^{\Upsilon_1}=L_{\la}^{(m-1)|n}$ has the highest weight $(\la_1,\dots,\la_{m-1};\langle \la_1^{\prime}-(m-1)\rangle,\dots,\langle \la_{n}^{\prime}-(m-1)\rangle )$. Iterations of  this truncation procedure lead to the assertion.
\end{proof}

Given integers $k,l$ that  $0\leq k\leq m-1$ and $0\leq l\leq n-1$, we define a new subalgebra of $\Uqg$ by
\begin{equation}\label{eqUp}
  \Upsilon_{k|l}:=\Upsilon_{m-k} \overline{\Upsilon}_{m+l}=\langle\, K_a\mid a\in \{1,\dots m-k\}\cup\{m+l+1,\dots,m+n\} \,\rangle,
\end{equation}
and then define the following truncation of $\cM_{m|n}$:
\begin{equation}\label{eqTrunM}
  \cM_{r|s}^{k|l}:=(\cM_{m|n})^{\CL(\Upsilon_{k|l})\ot \cR(\Upsilon_{r|s})}=\{f\in\cM_{m|n}\mid \CL_{\Upsilon_{k|l}}(f)=\cR_{\Upsilon_{r|s}}(f)=f\}.
\end{equation}
Introduce the set $\hat{\bI}_{k|l}:=\{ a\mid m-k+1\leq a\leq m+l \}$,
and similarly the set $\hat{\bI}_{r|s}$. Then the elements $\{K_a, E_{b,b+1},E_{b+1,b} \mid a\in \hat{\bI}_{k|l}, b\in \hat{\bI}_{k|l}, b\neq m+l\}$ generate a Hopf subalgebra $\Uq(\gl_{k|l})$ of $\Uqg$. Clearly, $\Upsilon_{k|l}$ commutes with the subalgebra $\Uq(\gl_{k|l})$.

We obtain the following presentation for $\cM_{r|s}^{k|l}$.

\begin{lem}\label{lemtrunM}
	The subalgebra $\cM_{r|s}^{k|l}$ of $\cM_{m|n}$ is generated by
	$\{t_{ab}\mid a\in \hat{\bI}_{k|l},\ b\in \hat{\bI}_{r|s}\}$ subject to the relevant relations of \eqref{eqRel1}.
\end{lem}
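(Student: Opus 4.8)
The plan is to realise $\cM^{k|l}_{r|s}$ as a truncation of $\cM_{m|n}$ and identify the surviving matrix elements explicitly, then check that the relations are inherited. First I would analyse which matrix elements $t_{ab}$ lie in the truncated subspace. By definition \eqref{eqTrunM}, $f\in\cM^{k|l}_{r|s}$ iff $\CL_{K_a}(f)=f$ for $a\in\{1,\dots,m-k\}\cup\{m+l+1,\dots,m+n\}$ and $\cR_{K_b}(f)=f$ for $b\in\{1,\dots,m-r\}\cup\{m+s+1,\dots,m+n\}$. Using the explicit formulas \eqref{eqLact} and \eqref{eqRact} together with $\Delta(t_{ab})=\sum_c (-1)^{([a]+[c])([c]+[b])}t_{ac}\ot t_{cb}$ and the pairing $\langle t_{cd},K_e\rangle = q_e^{\delta_{ce}}\delta_{cd}$, I would compute that $\CL_{K_e}(t_{ab}) = q_e^{-\delta_{ae}}t_{ab}$ (from $w(K_e)=K_e$ and $S$-twisting as appropriate) and $\cR_{K_e}(t_{ab}) = q_e^{\delta_{be}}t_{ab}$; more precisely one checks $\CL_{K_e}$ records the row index and $\cR_{K_e}$ the column index. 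Hence $t_{ab}$ is fixed by all the relevant $K_e$ precisely when $a\notin\{1,\dots,m-k\}\cup\{m+l+1,\dots,m+n\}$ and $b\notin\{1,\dots,m-r\}\cup\{m+s+1,\dots,m+n\}$, i.e. $a\in\hat{\bI}_{k|l}$ and $b\in\hat{\bI}_{r|s}$. Since both actions preserve the weight-space decomposition of $\cM_{m|n}$ coming from the abelian part, a general monomial in the $t_{ab}$ lies in $\cM^{k|l}_{r|s}$ iff every factor does, so $\cM^{k|l}_{r|s}$ is spanned by monomials in $\{t_{ab}\mid a\in\hat{\bI}_{k|l}, b\in\hat{\bI}_{r|s}\}$, proving it is generated by these elements as an algebra.

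Next I would verify the relations. The generators $\{t_{ab}\mid a\in\hat{\bI}_{k|l}, b\in\hat{\bI}_{r|s}\}$ inherit from $\cM_{m|n}$ exactly those instances of \eqref{eqRel1} in which all four indices $a,b,c,d$ already lie in the allowed ranges; call these the "relevant relations" (as in the statement). One must check that no extra relations are forced: a product like $t_{ac}t_{bd}$ with $a>b,\,c>d$ expands via the last relation of \eqref{eqRel1} into a term $t_{bc}t_{ad}$, and since $a,b\in\hat{\bI}_{k|l}$ and $c,d\in\hat{\bI}_{r|s}$ the cross term $t_{bc}t_{ad}$ again involves only allowed indices — so the relation closes within the subalgebra and is itself a relevant relation. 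Thus the defining relations of $\cM^{k|l}_{r|s}$ are consequences of relations of $\cM_{m|n}$ supported on allowed indices.

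To prove the presentation is \emph{complete} — that there are no relations beyond the relevant ones — I would invoke the structural results already available. The quantum Peter–Weyl decomposition \thmref{thmPeterWeyl} identifies $\cM_{m|n}$ with $\bigoplus_{\la\in\La_{m|n}} L^{m|n}_\la\ot L^{m|n}_\la$ as an $\CL\otimes\cR$-module; applying the truncation to both tensor factors and using \lemref{lemTrunh} and \lemref{lemTrunl} yields a dimension count for $\cM^{k|l}_{r|s}$ in each multidegree, expressed through the truncated modules $(L^{m|n}_\la)^{\Upsilon_{k|l}}\ot (L^{m|n}_\la)^{\Upsilon_{r|s}}$. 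On the other hand, the abstract algebra presented by the generators and the relevant relations maps onto $\cM^{k|l}_{r|s}$; one checks (by the same ordering/diamond-lemma argument that establishes a PBW-type basis for $\cM_{m|n}$ itself, e.g. ordering monomials lexicographically in the index pairs) that its graded dimensions are bounded above by the classical ones, which agree with the dimensions computed from the truncated Howe-duality decomposition. Matching upper and lower bounds forces the surjection to be an isomorphism. The main obstacle I anticipate is precisely this last dimension-matching: one needs the abstract quadratic algebra defined by the relevant relations of \eqref{eqRel1} to have a manageable basis of ordered monomials, and one must confirm that the straightening rules coming from \eqref{eqRel1} never produce a monomial outside the allowed index set — which, as noted above, is guaranteed by the fact that the cross terms in the Faddeev–Reshetikhin–Takhtajan-type relations only permute indices among $\hat{\bI}_{k|l}$ and among $\hat{\bI}_{r|s}$ separately.
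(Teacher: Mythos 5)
Your argument for the generating set is essentially the paper's: truncation by the $\CL(\Upsilon_{k|l})$- and $\cR(\Upsilon_{r|s})$-actions on matrix elements singles out precisely those $t_{ab}$ with $a\in\hat{\bI}_{k|l}$, $b\in\hat{\bI}_{r|s}$, and the weight argument shows a monomial is fixed if and only if each factor is. The paper packages the same computation by acting with the products $\prod_{k|l}$ and $\prod_{r|s}$ of the excluded $K$'s; also note $\CL_{K_e}(t_{ab})=q_e^{\delta_{ae}}t_{ab}$, not $q_e^{-\delta_{ae}}$, though the sign is immaterial to the conclusion. Where you diverge is that you go on to verify that the relevant relations give a \emph{complete} presentation, by bounding the graded dimensions of the abstract quadratic algebra by ordered monomials and matching against the truncated Peter--Weyl decomposition. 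The paper's proof of \lemref{lemtrunM} stops after noting that the relations are inherited from \eqref{eqRel1}, and defers exactly this dimension-matching step to \propref{propMbasis}, which constructs the PBW basis from \thmref{thmHowe} together with \lemref{lemIden}. So your proposal is correct and more self-contained at this point, but you are front-loading work the paper spreads out over the rest of Section 3.1; both routes rest on the same ingredients, namely the truncation of Peter--Weyl and the ordered-monomial spanning set.
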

\begin{proof}
	Let $\prod_{k|l}=\prod_{a=1}^{m-k}\prod_{b=m+l+1}^{m+n}K_aK_b$, and similarly introduce $\prod_{r|s}$. Observe that for any $t_{ab}\in \cM_{m|n}$
	\[
	\CL_{\prod_{k|l}}(t_{ab})=
	\begin{cases}
	t_{ab}, \ &a\in \hat{I}_{k|l},\\
	q_at_{ab},\ & \text{otherwise},
	\end{cases}
	\quad
	\cR_{\prod_{r|s}}(t_{ab})=
		\begin{cases}
		t_{ab}, \ &a\in \hat{I}_{r|s},\\
		q_bt_{ab},\ & \text{otherwise}.
		\end{cases}	
	\]
   Therefore, $t_{a_1,b_1}t_{a_2,b_2}\dots t_{a_p,a_p}\in \cM^{k|l}_{r|s}$ if and only if $a_i\in \hat{\bI}_{k|l}$ and $b_i\in \hat{\bI}_{r|s}$ for all $1\leq i\leq p$. This implies that $\cM^{k|l}_{r|s}$ is generated by $\{t_{ab}\mid a\in \hat{\bI}_{k|l},\ b\in \hat{\bI}_{r|s}\}$, while the relations follows directly from \eqref{eqRel1}.
\end{proof}

The following theorem is the quantum Howe duality of type $(\Uq(\gl_{k|l}),\Uq(\gl_{r|s}))$ applied to the subalgebra $\cM_{r|s}^{k|l}$ of $\cM_{m|n}$ with $k,r\leq m$ and $l,s\leq n$, which is a generalisation of \cite[Theorem 2.2]{WZ}.

\begin{thm}\label{thmHowe} {\rm ($(\Uq(\gl_{k|l}),\Uq(\gl_{r|s}))$-duality)}
 The superalgebra $\cM_{r|s}^{k|l}$ admits a multiplicity-free decomposition  as $\CL(\Uq(\gl_{k|l}))\ot \cR(\Uq(\gl_{r|s}))$-module
 \begin{equation}\label{eqHoweDec}
  \cM_{r|s}^{k|l}\cong \bigoplus_{\la\in \La_{k|l}\cap \La_{r|s}} L^{k|l}_{\la}\ot L^{r|s}_{\la},
 \end{equation}
 where $\La_{k|l}$ (resp. $\La_{r|s}$) is the set of $(k|l)$ (resp. $(r|s)$) hook partitions.
\end{thm}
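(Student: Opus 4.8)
The plan is to deduce the decomposition \eqref{eqHoweDec} from the global quantum Peter--Weyl decomposition \eqref{eqPeterWeyl} of $\cM_{m|n}$ as $\CL(\Uqg)\ot\cR(\Uqg)$-module by taking $\Upsilon_{k|l}$-invariants on the left and $\Upsilon_{r|s}$-invariants on the right. Since $\cM_{r|s}^{k|l}=(\cM_{m|n})^{\CL(\Upsilon_{k|l})\ot\cR(\Upsilon_{r|s})}$ by \eqref{eqTrunM}, and since taking invariants under the toral subalgebras $\Upsilon_{k|l}$, $\Upsilon_{r|s}$ is exact (these act semisimply on every type-$\mathbf 1$ module), we obtain
\[
\cM_{r|s}^{k|l}\cong\bigoplus_{\la\in\La_{m|n}}\bigl(L^{m|n}_{\la}\bigr)^{\CL(\Upsilon_{k|l})}\ot\bigl(L^{m|n}_{\la}\bigr)^{\cR(\Upsilon_{r|s})}.
\]
The left-hand factor is to be computed by the truncation Lemmas. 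Recall $\Upsilon_{k|l}=\Upsilon_{m-k}\overline{\Upsilon}_{m+l}$, so I would apply \lemref{lemTrunh} with $v=m+l$ (truncating away the tail $K_{m+l+1},\dots,K_{m+n}$) and \lemref{lemTrunl} with $u=m-k$ (truncating away the head $K_1,\dots,K_{m-k}$); since these two subalgebras commute and act independently, iterating gives that $(L^{m|n}_{\la})^{\CL(\Upsilon_{k|l})}$ is nonzero iff $\la$ is simultaneously an $(m,l)$-hook and a $(k,n)$-hook partition, i.e.\ $\la_{m+1}\le l$ and $\la_{k+1}\le n$, equivalently $\la\in\La_{k|l}$; and in that case the surviving space is $L^{k|l}_{\la}$ as a $\Uq(\gl_{k|l})$-module with highest weight $\la^{\natural}$. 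Likewise the right-hand factor is $L^{r|s}_{\la}$ when $\la\in\La_{r|s}$ and $0$ otherwise.

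Combining, only $\la\in\La_{k|l}\cap\La_{r|s}$ contribute, and each contributes $L^{k|l}_{\la}\ot L^{r|s}_{\la}$ exactly once, which is \eqref{eqHoweDec}. Two points need care. First, one must check that the $\CL(\Upsilon_{k|l})$-invariants genuinely carry the full $\CL(\Uq(\gl_{k|l}))$-action and not merely a $\Uq(\gl_k)\ot\Uq(\gl_{|l})$-action: this is where the commuting of $\Upsilon_{k|l}$ with $\Uq(\gl_{k|l})$ (noted after \eqref{eqUp}) is essential, so that $\CL(\Uq(\gl_{k|l}))$ preserves the invariant subspace, and where \lemref{lemTrunh}/\lemref{lemTrunl} must be read as isomorphisms of modules over the relevant smaller quantum supergroup, carrying highest weight $\la^{\natural}$ to the highest weight of $L^{k|l}_{\la}$. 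Second, one must confirm that the two actions $\CL(\Upsilon_{k|l})$ and $\cR(\Upsilon_{r|s})$ commute (they do, by the graded-commutativity of $\CL$ and $\cR$ recorded before \thmref{thmPeterWeyl}, and the $K_a$'s are even), so that the bigraded invariant space is the tensor product of the two singly-truncated factors. Alternatively one can invoke \lemref{lemtrunM} to get the presentation of $\cM_{r|s}^{k|l}$ directly and identify it abstractly with the relevant coordinate-type superalgebra, but the Peter--Weyl route is cleaner for the module decomposition.

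The main obstacle is the bookkeeping in iterating \lemref{lemTrunh} and \lemref{lemTrunl} and matching the resulting highest weights with the normalisation $\la\mapsto\la^{\natural}$ of \eqref{eqlanat}: one must verify that after stripping $n-l$ odd nodes from the ``$|n$''-tail and $m-k$ even nodes from the ``$m|$''-head, the condition for nonvanishing is exactly $\la_{k+1}\le n$ \emph{and} $\la_{m+1}\le l$ (using repeatedly $\la_{a+1}\le b\Leftrightarrow\la^{\prime}_{b+1}\le a$), which is precisely membership in $\La_{k|l}$, and that the surviving module is $L^{k|l}_{\la}$ with the highest weight $(\la_1,\dots,\la_k;\langle\la^{\prime}_1-k\rangle,\dots,\langle\la^{\prime}_l-k\rangle)$ predicted by \eqref{eqlanat} for the pair $(k,l)$. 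The representation-theoretic input (semisimplicity of tensor powers, \propref{proptendec}, exactness of $K_a$-invariants) is all in place from \secref{RepUqg}, so the argument is essentially a careful assembly rather than a new idea; the simplification over \cite{WZ} comes precisely from routing everything through the single global decomposition \eqref{eqPeterWeyl} instead of a direct computation on $\cM_{r|s}^{k|l}$.
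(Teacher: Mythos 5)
Your approach is the same as the paper's: start from the quantum Peter--Weyl decomposition \eqref{eqPeterWeyl} of $\cM_{m|n}$, take $\CL(\Upsilon_{k|l})$- and $\cR(\Upsilon_{r|s})$-invariants, and identify the surviving factors via \lemref{lemTrunh} and \lemref{lemTrunl}. However, the bookkeeping in the crucial step is incorrect. You assert that $(L^{m|n}_{\la})^{\CL(\Upsilon_{k|l})}\neq 0$ iff $\la$ is simultaneously an $(m,l)$-hook and a $(k,n)$-hook partition, i.e.\ $\la_{m+1}\le l$ and $\la_{k+1}\le n$, and claim this is equivalent to $\la\in\La_{k|l}$. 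That equivalence is false. Take $m=n=2$, $k=l=1$, $\la=(2,2)\in\La_{2|2}$: then $\la_{m+1}=\la_3=0\le 1=l$ and $\la_{k+1}=\la_2=2\le 2=n$, so your conjunction holds, yet $\la_2=2>1=l$ and $\la\notin\La_{1|1}$. Indeed the two single-sided truncated spaces $(L^{2|2}_{(2,2)})^{\Upsilon_{1}}$ and $(L^{2|2}_{(2,2)})^{\overline{\Upsilon}_{3}}$ are each nonzero, but their common invariant subspace vanishes.

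The point you elide is that the two truncations must be applied sequentially, with the second one read inside the smaller quantum supergroup produced by the first (this is exactly what the paper does). Applying \lemref{lemTrunl} with $u=m-k$ gives $(L^{m|n}_\la)^{\Upsilon_{m-k}}\cong L_\la^{k|n}$ under the condition $\la_{k+1}\le n$; then applying \lemref{lemTrunh} to $L_\la^{k|n}$ \emph{in the $\Uq(\gl_{k|n})$-context} (where $\overline{\Upsilon}_{m+l}$ becomes $\overline{\Upsilon}_{k+l}$ in the $(k,n)$-indexing) gives $L_\la^{k|l}$ under the condition $\la_{k+1}\le l$. The binding condition is $\la_{k+1}\le l$ (which implies the first since $l\le n$), and this is precisely $\la\in\La_{k|l}$. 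The conjunction of the two separate nonvanishing criteria does not capture this; your later restatement of the condition as ``$\la_{k+1}\le n$ and $\la_{m+1}\le l$'' repeats the same error. The rest of the argument (exactness of toral invariants, commutativity of $\CL$ and $\cR$, preservation of the $\Uq(\gl_{k|l})\ot\Uq(\gl_{r|s})$-action) is sound and matches the paper.
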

\begin{proof}
	By \thmref{thmPeterWeyl}, we have
	\[
	\cM_{r|s}^{k|l}=(\cM_{m|n})^{\CL(\Upsilon_{k|l})\ot \cR(\Upsilon_{r|s})}\cong
	\bigoplus_{\la\in \La_{m|n}} (L^{m|n}_{\la})^{\Upsilon_{k|l}}\ot (L^{m|n}_{\la})^{\Upsilon_{r|s}}.	
	\]
	By \lemref{lemTrunh} and \lemref{lemTrunl},
	\[(L^{m|n}_{\la})^{\Upsilon_{k|l}}=((L^{m|n}_{\la})^{\Upsilon_{m-k}})^{\overline{\Upsilon}_{m+l}}\cong(L_{\la}^{k|n})^{\overline{\Upsilon}_{m+l}}\cong L_{\la}^{k|l}, \]
	where the second isomorphism requires $\la_{k+1}\leq n$ and the last one $\la_{k+1}\leq l$, yielding $\la\in \La_{k|l}$. The same argument applies to $(L^{m|n}_{\la})^{\Upsilon_{r|s}}$. Thus,  $(L^{m|n}_{\la})^{\Upsilon_{k|l}}\ot (L^{m|n}_{\la})^{\Upsilon_{r|s}}\cong L^{k|l}_{\la}\ot L^{r|s}_{\la}$ if  $\la\in \La_{k|l}\cap \La_{r|s}$, and  it will be 0 otherwise.
\end{proof}

\begin{rmk}\label{rmkHowedual}
	Similarly, we can show that as $\widetilde{\CL}(\Uq(\gl_{k|l}))\ot \cR(\Uq(\gl_{r|s}))$-module, 
	\[
	\overbar{\cM}_{r|s}^{k|l}\cong \bigoplus_{\la\in \La_{k|l}\cap \La_{r|s}} L^{k|l}_{\la}\ot (L^{r|s}_{\la})^{\ast}.\]
\end{rmk}

\noindent {\bf Notation}. 
 We shall regard $\cM^{k|l}_{r|s}$ as the superalgebra  generated by $t_{ab}$ with $a\in \bI_{k|l}$ and $b\in \bI_{r|s}$ subject to relations of the form \eqref{eqRel1}, as the two superalgebras are isomorphic.

\begin{rmk}\label{rmkHowecl} 
	We have following facts in the classical case ($q\rightarrow 1$): 
	\begin{enumerate}
		\item By specialising $q$ to 1,  relations $\eqref{eqRel1}$ reduce to the supercommutative relations $[t_{ab}, t_{cd}]=0$. In this case, we denote by $\cM^{k|l}_{r|s}|_{q=1}$ the superalgebra over $\C$ generated by $t_{ab}$.    This superalgebra is isomorphic to the supersymmetric algebra $S(\C^{k|l}\ot \C^{r|s})$, where $\C^{k|l}$ and $\C^{r|s}$ are respectively natural representations of $\gl_{k|l}$ and $\gl_{r|s}$.  Thus, \thmref{thmHowe} recovers classical Howe duality for $S(\C^{k|l}\ot \C^{r|s})$ (see \cite[Theorem 3.2]{CW01} and \cite[Theorem 1.3]{S2}), that is, the decomposition \eqref{eqHoweDec} holds for $\cM^{k|l}_{r|s}|_{q=1}$ in the case of classical limit $q\rightarrow 1$. In general, we will show that $\cM^{k|l}_{r|s}$ is isomorphic to a quantum analogue of supersymmetric algebra; see \secref{secbra}. 
		\item For any $k,l\in \Z_{+}$, we denote by $L_{\la}^{k|l}|_{q=1}$ the irreducible $\gl_{k|l}$-module corresponding to the irreducible $\Uq(\gl_{k|l})$-module $L_{\la}^{k|l}$. It was proved in \cite[Proposition 3]{Z93} that  $\dim_{\C} L_{\la}^{k|l}|_{q=1}= \dim_{\CK}L_{\la}^{k|l}$. This will be used frequently in what follows.
	\end{enumerate}
\end{rmk}

As a quick application of the quantum Howe duality, we obtain an explicit PBW basis for $\cM^{k|l}_{r|s}$, which is a special case of \cite[Theorem 1.14]{YM}.
Let $\bm=(m_{ab}), a\in \bI_{k|l}, b\in \bI_{r|s}$ be a  $(k+l)\times (r+s)$ supermatrix with parity assignment $[m_{ab}]=[a]+[b]$ such that $m_{ab}\in \Z_{+}$ whenever $[m_{ab}]=\bar{0}$ and $m_{ab}\in \{0,1\}$ whenever $[m_{ab}]=\bar{1}$. We denote by $\fM^{k|l}_{r|s}$ the  set of all such supermatrices $\bm$. Introduce a linear order $\succ$ for the pairs $(a,b)$ such that 
\begin{equation}\label{eqorder}
(a,b)\succ (c, b+k), \quad (a,b)\succ (a+k,b), \quad \forall k>0.
\end{equation}
Define monomial $t^{\bm}\in \cM^{k|l}_{r|s}$ by
\[ t^{\bm}=\prod_{(a,b)}^{\succ}t_{ab}^{m_{ab}}=t_{11}^{m_{11}}t_{21}^{m_{21}}\cdots t_{12}^{m_{12}}t_{22}^{m_{22}}\cdots, \quad \bm\in \fM^{k|l}_{r|s}, \] 
where the factors are arranged decreasingly in the order $\succ$.  Note that $\cM^{k|l}_{r|s}$ is $\Z_{+}$-graded with the gradation  $\deg t_{ab}=1$. We write $|\bm|:=\sum_{a,b}m_{ab}$ for the degree of monomial $t^{\bm}$, and denote by $(\cM^{k|l}_{r|s})_{N}$ the homogeneous subspace of degree $N$ in $\cM^{k|l}_{r|s}$. 

\begin{lem}\label{lemIden}
We have the following identities:
\[ 
\begin{aligned}
\!\!\!\!\sum_{\la\in \La_{k|l}\cap \La_{r|s},\; |\la|= N}\!\!\!\! \dim_{\CK} L^{k|l}_{\la}\dim_{\CK}  L^{r|s}_{\la}&=\!\!\!\!\sum_{\la\in \La_{k|l}\cap \La_{r|s},\; |\la|= N}\!\!\!\! \dim_{\C} L^{k|l}_{\la}|_{q=1}\dim_{\C}  L^{r|s}_{\la}|_{q=1} 
\\
&=\#\{ t^{\bm}\mid \bm\in \fM^{k|l}_{r|s}, |\bm|= N\},
\end{aligned} 
\]
where $\#S$ stands for the cardinality of set S.
\end{lem}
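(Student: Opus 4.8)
The plan is to read both sides of the asserted chain of equalities as the dimension of the degree-$N$ homogeneous component of the supersymmetric algebra $\cM^{k|l}_{r|s}|_{q=1}\cong S(\C^{k|l}\ot\C^{r|s})$, and to pass through the classical limit $q\to 1$. The first equality is immediate: by \rmkref{rmkHowecl}(2) (i.e. \cite[Proposition 3]{Z93}) we have $\dim_{\CK}L^{k|l}_{\la}=\dim_{\C}L^{k|l}_{\la}|_{q=1}$ and $\dim_{\CK}L^{r|s}_{\la}=\dim_{\C}L^{r|s}_{\la}|_{q=1}$ for every $\la\in\La_{k|l}\cap\La_{r|s}$, so the two sums over such $\la$ with $|\la|=N$ agree term by term.

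For the second equality I would first specialise the quantum Howe duality \thmref{thmHowe} at $q=1$. As recorded in \rmkref{rmkHowecl}(1), under $q\to 1$ the relations \eqref{eqRel1} become the supercommutative relations, $\cM^{k|l}_{r|s}|_{q=1}\cong S(\C^{k|l}\ot\C^{r|s})$ as superalgebras, and the decomposition \eqref{eqHoweDec} reduces to classical $(\gl_{k|l},\gl_{r|s})$-Howe duality. Restricting that decomposition to the degree-$N$ piece (the grading $\deg t_{ab}=1$ is preserved) yields
\[
\dim_{\C}\bigl(\cM^{k|l}_{r|s}|_{q=1}\bigr)_N=\sum_{\la\in\La_{k|l}\cap\La_{r|s},\,|\la|=N}\dim_{\C}L^{k|l}_{\la}|_{q=1}\,\dim_{\C}L^{r|s}_{\la}|_{q=1}.
\]
It then remains to show that the monomials $t^{\bm}$ with $\bm\in\fM^{k|l}_{r|s}$ and $|\bm|=N$ form a $\C$-basis of $\bigl(\cM^{k|l}_{r|s}|_{q=1}\bigr)_N$. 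Here one uses that $\C^{k|l}\ot\C^{r|s}$ has even part $\C^k\ot\C^r\oplus\C^l\ot\C^s$ and odd part $\C^k\ot\C^s\oplus\C^l\ot\C^r$, so among the generators $t_{ab}$ ($a\in\bI_{k|l}$, $b\in\bI_{r|s}$) exactly those with $[a]+[b]=\bar0$ are even and those with $[a]+[b]=\bar1$ are odd; hence $S(\C^{k|l}\ot\C^{r|s})$ is the polynomial algebra on the even $t_{ab}$ tensored with the exterior algebra on the odd $t_{ab}$. A monomial basis consists of arbitrary products of the even generators times squarefree products of the odd generators, and fixing the linear order $\succ$ of \eqref{eqorder} to normalise the ordering of factors, this basis is exactly $\{t^{\bm}\mid\bm\in\fM^{k|l}_{r|s}\}$, graded by $|\bm|$. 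Therefore $\dim_{\C}\bigl(\cM^{k|l}_{r|s}|_{q=1}\bigr)_N=\#\{t^{\bm}\mid\bm\in\fM^{k|l}_{r|s},\,|\bm|=N\}$, and combining the two displays with the first equality gives the lemma.

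There is no genuine obstacle in this argument; the only points needing a little care are the bookkeeping of the parities $[t_{ab}]=[a]+[b]$ — so as to see that $\fM^{k|l}_{r|s}$ is precisely the index set parametrising the monomial basis of the polynomial$\,\times\,$exterior algebra — and the observation that the $q=1$ specialisation of \thmref{thmHowe} is literally classical Howe duality for $S(\C^{k|l}\ot\C^{r|s})$, both of which are already isolated in \rmkref{rmkHowecl}. (One could alternatively avoid invoking classical Howe duality as a black box and instead cite the decomposition of $S(\C^{k|l}\ot\C^{r|s})$ from \cite[Theorem 3.2]{CW01} or \cite[Theorem 1.3]{S2}, but the route above is the most economical given what the excerpt already provides.)
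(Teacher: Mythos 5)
Your proof is correct and follows essentially the same route as the paper: the first identity comes from $\dim_{\CK}L^{k|l}_{\la}=\dim_{\C}L^{k|l}_{\la}\big|_{q=1}$ as in \rmkref{rmkHowecl}(2), and the second comes from computing $\dim_{\C}\bigl(\cM^{k|l}_{r|s}\big|_{q=1}\bigr)_N$ two ways, via classical Howe duality (\rmkref{rmkHowecl}(1)) and via the monomial basis of the super polynomial algebra. The only cosmetic difference is that you spell out the parity bookkeeping that identifies $\fM^{k|l}_{r|s}$ as the index set for the polynomial-times-exterior basis, where the paper simply cites \cite[Theorem 3.1]{SZ}.
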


\begin{proof}
    The first identity follows from \rmkref{rmkHowecl}, so we only need to prove the second one.
   It is easy to  see that $t^{\bm},\bm\in  \fM^{k|l}_{r|s}$ form a basis for $\cM^{k|l}_{r|s}|_{q=1}$, which is isomorphic to the super polynomial algebra generated by $kr+ls$ even variables and $kl+rs$ Grassmannian variables (cf. \cite[Theorem 3.1]{SZ}).   Let $(\cM^{k|l}_{r|s}|_{q=1})_{N}$ be the homogeneous subspace of degree $N$ in $\cM^{k|l}_{r|s}|_{q=1}$. Then we obtain
   \[ \dim_{\C} (\cM^{k|l}_{r|s}|_{q=1})_{N}= \#\{ t^{\bm}\mid \bm\in \fM^{k|l}_{r|s}, |\bm|= N\}.  \]
   On the other hand, it follows from classical  Howe duality for $\cM^{k|l}_{r|s}|_{q=1}$ that
   \[ \dim_{\C} (\cM^{k|l}_{r|s}|_{q=1})_{N}=\!\!\!\!\sum_{\la\in \La_{k|l}\cap \La_{r|s},\; |\la|= N}\!\!\!\! \dim_{\C} L^{k|l}_{\la}|_{q=1}\dim_{\C}  L^{r|s}_{\la}|_{q=1}.  \]
   This proves the second identity.
\end{proof}

We obtain the following PBW basis for superalgebra  $\cM^{k|l}_{r|s}$.
\begin{prop}\label{propMbasis}
	The set of monomials $\{t^{\bm}\mid \bm\in \fM^{k|l}_{r|s}  \}$ constitutes a $\CK$-basis  for  $\cM^{k|l}_{r|s}$. 
\end{prop}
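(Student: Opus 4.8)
The plan is to establish two things about the monomials $\{t^{\bm}\mid \bm\in\fM^{k|l}_{r|s}\}$: that they span $\cM^{k|l}_{r|s}$, and that they are linearly independent. Together these give the PBW basis.

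\textbf{Spanning.} First I would show that every product of generators $t_{a_1b_1}t_{a_2b_2}\cdots t_{a_pb_p}$ can be rewritten, using the relations \eqref{eqRel1}, as a $\CK$-linear combination of the ordered monomials $t^{\bm}$ with $|\bm|\le p$ (in fact $|\bm|=p$, since the relations are homogeneous). This is a standard straightening/Bergman-diamond-lemma type argument: whenever two adjacent factors $t_{ab}t_{cd}$ occur out of the order $\succ$ defined in \eqref{eqorder}, one of the relations in \eqref{eqRel1} lets us swap them, possibly at the cost of a scalar and, in the last case ($a>b$, $c>d$, i.e.\ here $a>c$ and $b>d$ after matching indices), a lower ``disorder'' term $t_{bc}t_{ad}$ which is closer to being ordered; the relation $(t_{ab})^2=0$ for odd $t_{ab}$ kills repeated odd factors, matching the constraint $m_{ab}\in\{0,1\}$ when $[m_{ab}]=\bar1$. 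One checks the rewriting terminates by induction on a suitable well-founded measure (e.g.\ the number of inversions with respect to $\succ$, with ties broken appropriately), so that $(\cM^{k|l}_{r|s})_N$ is spanned by $\{t^{\bm}\mid |\bm|=N\}$ for every $N$.

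\textbf{Linear independence via dimension count.} Rather than verify directly that no nontrivial relation holds among the $t^{\bm}$, I would compare dimensions. By quantum Howe duality (\thmref{thmHowe}), $\dim_{\CK}(\cM^{k|l}_{r|s})_N=\sum_{\la\in\La_{k|l}\cap\La_{r|s},\,|\la|=N}\dim_{\CK}L^{k|l}_{\la}\dim_{\CK}L^{r|s}_{\la}$, since the grading $\deg t_{ab}=1$ is preserved by the $\CL(\Uq(\gl_{k|l}))\ot\cR(\Uq(\gl_{r|s}))$-action and the summand $L^{k|l}_{\la}\ot L^{r|s}_{\la}$ sits in degree $|\la|=N$ (this last point can be seen from the classical limit, or from the fact that the highest weight vector of $L^{k|l}_\la\ot L^{r|s}_\la$ is a product of $|\la|$ matrix elements). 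By \lemref{lemIden}, this number equals $\#\{t^{\bm}\mid\bm\in\fM^{k|l}_{r|s},\ |\bm|=N\}$. Combining with the spanning statement, the spanning set $\{t^{\bm}\mid|\bm|=N\}$ has cardinality equal to $\dim_{\CK}(\cM^{k|l}_{r|s})_N$, hence it is a basis of that homogeneous component; taking the union over $N$ gives the claim.

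\textbf{Main obstacle.} The genuinely delicate point is the spanning/straightening step: one must check that the rewriting procedure using \eqref{eqRel1} is confluent and terminating, i.e.\ that the ordered monomials really do span and that one does not, through the ``lower term'' $t_{bc}t_{ad}$ appearing in the last relation, generate monomials that fail to reduce. Choosing the termination measure compatibly with the order $\succ$ in \eqref{eqorder}, and checking that $t_{bc}t_{ad}$ is strictly smaller in that measure than $t_{ac}t_{bd}$, is the crux; the odd-square relations and the scalar commutation relations are then routine. An alternative that sidesteps confluence entirely is to invoke \lemref{lemIden} together with the spanning (which only needs that \emph{some} reduction to ordered monomials exists, not uniqueness) and the dimension equality — this is the route I would actually write up, as it is cleaner. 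One could also cite \cite[Theorem 1.14]{YM} for the PBW property directly, but giving the short Howe-duality argument is more self-contained.
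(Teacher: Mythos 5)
Your proposal is correct and takes essentially the same route as the paper: spanning follows by straightening with the relations \eqref{eqRel1}, and linear independence is obtained by comparing $\dim_\CK(\cM^{k|l}_{r|s})_N$ computed from the Howe decomposition \eqref{eqdimleqN} with the monomial count supplied by \lemref{lemIden}. Your extra discussion of termination and confluence is a sound elaboration of what the paper compresses into ``we deduce from relations \eqref{eqRel1} that $\cM^{k|l}_{r|s}$ is spanned,'' and your observation that the dimension count replaces the confluence step is exactly the point of this style of argument.
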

\begin{proof}
	We deduce from relations $\eqref{eqRel1}$ that $\cM^{k|l}_{r|s}$ is spanned by the set of given monomials, and hence it remains to show the linear independence.  By \thmref{thmHowe},  we have
	\begin{equation}\label{eqdimleqN}
	(\cM^{k|l}_{r|s})_{N} \cong \bigoplus_{\la\in \La_{k|l}\cap \La_{r|s},\; |\la|=N} L^{k|l}_{\la}\ot L^{r|s}_{\la}, 
	\end{equation}
	which implies that 
	\[ \dim_{\CK} (\cM^{k|l}_{r|s})_{N}=\!\!\!\!\sum_{\la\in \La_{k|l}\cap \La_{r|s},\; |\la|= N}\!\!\!\! \dim_{\CK} L^{k|l}_{\la}\dim_{\CK}  L^{r|s}_{\la}.   \]
    Combing this and \lemref{lemIden}, we obtain  that
    \[ \dim_{\CK} (\cM^{k|l}_{r|s})_{N}=\#\{ t^{\bm}\mid \bm\in \fM^{k|l}_{r|s}, |\bm|= N\}. \]	
	Since the monomials $t^{\bm}$ with $|\bm|= N$ span the degree $N$ homogeneous subspace $(\cM^{k|l}_{r|s})_{N}$, they must be linearly independent. This completes our proof.
\end{proof}

We now turn to another formulation of $\cM^{k|l}_{r|s}$.

\subsection{Braided supersymmetric algebra and flat modules} \label{secbra}
 Let $V$ be a finite dimensional module over $\Uqg$, and $R_{V,V}\in \GL(V\ot V)$ be the  associated $R$-matrix. Recall that $\widecheck{R}_{V,V}=PR_{V,V}$, where $P$ is the  graded permutation map \eqref{eq:permut}. Then $\widecheck{R}_{V,V}\in \End_{\Uqg}(V\ot V)$ with
 \begin{equation}\label{eqRbraidrel}
 (\widecheck{R}_{V,V}\ot \id_V)(\id_V\ot \widecheck{R}_{V,V})(\widecheck{R}_{V,V}\ot \id_V)=(\id_V\ot \widecheck{R}_{V,V})(\widecheck{R}_{V,V}\ot \id_V)(\id_V\ot \widecheck{R}_{V,V}).
 \end{equation}
 It is well-known that $\widecheck{R}_{V,V}$ acts on $V\ot V$ semi-simply, and its eigenvalues are of the form $\pm q^{r}$ with $r\in \frac{1}{2}\Z$. 
Thus $\widecheck{R}_{V,V}$ obeys a minimal characteristic polynomial of the form
 \[
 \prod_{i=1}^{k^{+}}(\widecheck{R}_{V,V}-q^{\chi_i^{+}})  
 \prod_{i=1}^{k^{-}}(\widecheck{R}_{V,V}+q^{\chi_i^{-}})=0,
 \]
where $k^{\pm}\in\Z_{+}$ and $\chi_i^{+}, \chi_i^{-}\in \frac{1}{2}\Z$. 
 Define the following submodules of $V\ot V$ 
 \begin{equation*}
 \begin{aligned}
 S^{2}_{q}(V)&=\{  w\in V\ot V   \mid \prod_{i=1}^{k^{+}}(\widecheck{R}_{V,V}-q^{\chi_i^{+}})(w)=0\}, \\
 \La^{2}_{q}(V)&=\{  w\in V\ot V   \mid \prod_{i=1}^{k^{-}}(\widecheck{R}_{V,V}+q^{\chi_i^{-}})(w)=0\}.
 \end{aligned}
 \end{equation*}
 
 We generalise braided symmetric algebra in the sense of \cite{BZ} as follows.
  \begin{defn}\label{defnBraid} 
 	Let $V$ be  $\Z_2$-graded finite-dimensional $\Uqg$-module. We define the braided supersymmetric algebra and braided superexterior algebra respectively by
 	\[ S_q(V)=T(V)/\langle \La^{2}_q(V)\rangle,\quad \La_q(V)=T(V)/\langle S^{2}_q(V)\rangle, \]
 	where $T(V)$ is the tensor superalgebra of $V$ and $\langle I
 	\rangle$ is the 2-sided ideal  generated by subset $I\subset T(V)$.
 \end{defn}

Let $V|_{q=1}$ be the complex vector space spanned by the same basis elements as in $V$. The  supersymmetric algebra $S(V|_{q=1})$ over $\C$ is  $\Z_{+}$-graded,  so is $S_q(V)$. As a quotient module of $T(V)$, $S_q(V)$  encodes $\Uqg$-module structure  which preserves the algebraic structure, and hence it is a module superalgebra. 
 
 Following \cite{BZ}, we will call $V$ \emph{flat module}  if and only if $\dim_{\CK} S_q(V)_N=\dim_{\C} S(V|_{q=1})_N$ for all $N\in \Z_{+}$, and $S_q(V)$  is called the \emph{flat deformation} of $S(V|_{q=1})$. The flat deformation $S_q(V)$ acts as quantum analogue of supersymmetric algebra. However, other than natural modules, flat modules are extremely rare even in the quantum group case; see \cite[\S 2.2]{LZZ}.

 We now give some  concrete examples of flat modules. Let $V^{k|l}$ be the natural representation for $\Uq(\gl_{k|l})$ with the standard weight basis $\{v_i\}_{i\in\bI_{k|l}}$. Then $\widecheck{R}_{V^{k|l},V^{k|l}}$ gives an automorphism of $V^{k|l}\ot V^{k|l}$ with the action 
 \begin{equation*}
 \widecheck{R}_{V^{k|l},V^{k|l}}(v_i\ot v_j)=
 \begin{cases}
 (-1)^{[i][j]}v_j\ot v_i, & \text{if}\ i<j, \\
 (-1)^{[i]}q_iv_i\ot v_i, & \text{if}\ i=j,\\
 (-1)^{[i][j]}v_j\ot v_i+(q-q^{-1})v_i\ot v_j,& \text{if}\ i>j.
 \end{cases}
 \end{equation*}    
 The tensor product $V^{k|l}\ot V^{k|l}$ can be decomposed as direct sum of two irreducible $\Uq(\gl_{k|l})$-submodules,
 \[V^{k|l}\ot V^{k|l}=S^{2}_{q}(V^{k|l})\oplus \La^{2}_{q}(V^{k|l}).\]
 The basis for $ S^{2}_{q}(V^{k|l})$ is given by 
 \begin{equation}\label{eqbaS}
 v_i\ot v_i, \ 1\leq i\leq k,\quad
 v_i\ot v_j+(-1)^{[i][j]}qv_j\ot v_i, \ 1\leq i<j\leq k+l,
 \end{equation}
 while the basis for $\La^{2}_{q}(V^{k|l})$ is 
 \begin{equation} \label{eqbaLa}
 v_i\ot v_i, \  k+1\leq i\leq k+l, \quad
 v_i\ot v_j-(-1)^{[i][j]}q^{-1}v_j\ot v_i,  \  1\leq i<j\leq k+l. 
 \end{equation} 
 Let $P^{(k|l)}_{\;s}$ and $P^{(k|l)}_{\;a}$ be the idempotent projection onto the irreducible submodules $S^{2}_{q}(V^{k|l})$ and $\La^{2}_{q}(V^{k|l})$, respectively. Then  we have $\widecheck{R}_{V^{k|l},V^{k|l}}=qP^{k|l}_{\;s}-q^{-1} P^{k|l}_{\;a},$
 which leads to 
 \begin{equation}\label{eqRquad}
 (\widecheck{R}_{V^{k|l},V^{k|l}}-q)(\widecheck{R}_{V^{k|l},V^{k|l}}+q^{-1})=0.
 \end{equation}
  
 \begin{prop} \label{propQspace}
   Let $k,l\in \Z_{+}$, we have 
 	\begin{enumerate}
 		\item As a superalgebra, $\cM^{k|l}_{1|0}\cong S_q(V^{k|l})$, which is  generated by $x_1,x_2,\dots,x_{k+l}$ with parity assignments $[x_i]=[i]$ and defining relations
 		\[ 
 		\begin{aligned}
 		&x_i^2=0, \quad \text{if}\ [i]=\bar{1},\\
 		& x_jx_i=(-1)^{[i][j]}qx_ix_j\quad 1\leq i<j\leq k+l.
 		\end{aligned}	
 		\]	
 		\item The natural representation $V^{k|l}$ is flat, and $S_q(V^{k|l})=\bigoplus_{N\in \Z_{+}} S_q(V^{k|l})_N$, 
 		where  $S_q(V^{k|l})_N$ is the irreducible $\Uq(\gl_{k|l})$-module with the highest weight $N\epsilon_1$. 
 	\end{enumerate}
 \end{prop}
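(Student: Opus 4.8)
The strategy for (1) is to write out both superalgebras by generators and relations and observe that the presentations coincide. For $\cM^{k|l}_{1|0}$ the column index is forced into $\bI_{1|0}=\{1\}$, which has parity $\bar0$; setting $x_a:=t_{a1}$ for $a\in\bI_{k|l}$ and running through the five families of relations in \eqref{eqRel1}, the families that involve two distinct column indices — the third, fourth and fifth — simply do not arise, the first family contributes $x_a^2=0$ exactly when $[a]=\bar1$, and the second contributes $x_ax_b=(-1)^{[a][b]}q_1x_bx_a=(-1)^{[a][b]}qx_bx_a$ for $a>b$ (here $q_1=q$ because it is the column parity, not the row parity, that enters). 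Hence $\cM^{k|l}_{1|0}$ is the superalgebra on $x_1,\dots,x_{k+l}$ with $[x_a]=[a]$ and precisely the asserted relations. On the other side, $S_q(V^{k|l})=T(V^{k|l})/\langle\La^{2}_q(V^{k|l})\rangle$ is by construction the quadratic algebra whose defining relations are the spanning vectors of $\La^{2}_q(V^{k|l})$ listed in \eqref{eqbaLa}; under $v_i\mapsto x_i$ these read $x_i^2=0$ for $i>k$ and $x_ix_j=(-1)^{[i][j]}q^{-1}x_jx_i$, i.e.\ $x_jx_i=(-1)^{[i][j]}qx_ix_j$, for $i<j$ — the identical presentation. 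Therefore $v_i\mapsto t_{i1}$ extends to a superalgebra isomorphism $S_q(V^{k|l})\cong\cM^{k|l}_{1|0}$; since both sides are module superalgebras generated in degree $1$ and this isomorphism matches the degree-$1$ generating space (a copy of $V^{k|l}$) together with the corresponding submodule of quadratic relations inside $V^{k|l}\ot V^{k|l}$, it is automatically $\Uq(\gl_{k|l})$-equivariant.

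For the flatness claim in (2), I would specialise \propref{propMbasis} to $(r,s)=(1,0)$: here $\fM^{k|l}_{1|0}$ consists of columns $\bm=(m_1,\dots,m_{k+l})$ with $m_a\in\Z_+$ for $a\le k$ and $m_a\in\{0,1\}$ for $a>k$, and $\{t^{\bm}\mid\bm\in\fM^{k|l}_{1|0}\}$ is a $\CK$-basis of $\cM^{k|l}_{1|0}\cong S_q(V^{k|l})$ consisting of elements homogeneous of degree $|\bm|$. Thus $\dim_\CK S_q(V^{k|l})_N=\#\{\bm\in\fM^{k|l}_{1|0}\mid|\bm|=N\}$, which is manifestly the number of degree-$N$ monomials in the ordinary supersymmetric algebra $S(V^{k|l}|_{q=1})$ (same index set; cf.\ \lemref{lemIden} and \rmkref{rmkHowecl}). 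Hence $\dim_\CK S_q(V^{k|l})_N=\dim_\C S(V^{k|l}|_{q=1})_N$ for all $N$, so $V^{k|l}$ is flat.

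For the module decomposition I would use the graded form of quantum Howe duality already extracted in the proof of \propref{propMbasis}, namely $(\cM^{k|l}_{r|s})_N\cong\bigoplus_{\la\in\La_{k|l}\cap\La_{r|s},\;|\la|=N}L^{k|l}_\la\ot L^{r|s}_\la$, taken with $(r,s)=(1,0)$. It then remains only to identify $\La_{1|0}$: a $(1,0)$-hook partition must satisfy $\la_2\le 0$, hence is a single row $\la=(N)$; for such $\la$ the module $L^{1|0}_{(N)}$ is one-dimensional and $(N)^{\natural}=N\epsilon_1$ by \eqref{eqlanat}, and $(N)\in\La_{k|l}$ for every $N$. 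So the direct sum collapses to one term and $S_q(V^{k|l})_N\cong(\cM^{k|l}_{1|0})_N\cong L^{k|l}_{N\epsilon_1}$, the irreducible $\Uq(\gl_{k|l})$-module of highest weight $N\epsilon_1$.

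None of these steps is genuinely hard; the only point requiring care is the bookkeeping in part (1) — verifying that exactly the square-zero family and the single $q$-commutation family survive the restriction to one column, and that the signs and the power of $q$ produced there agree with those coming from the explicit basis \eqref{eqbaLa} of $\La^{2}_q(V^{k|l})$ (in particular the distinction between $q_1$ and $q_a$). Part (2) is then a direct specialisation of \propref{propMbasis}, \lemref{lemIden}, \rmkref{rmkHowecl} and the Howe-duality decomposition already in hand.
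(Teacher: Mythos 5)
Your proposal is correct and follows essentially the same route as the paper: identify the presentation of $\cM^{k|l}_{1|0}$ via \eqref{eqRel1} restricted to one column, match it with the quadratic relations from \eqref{eqbaLa}, and then invoke $(\Uq(\gl_{k|l}),\Uq(\gl_{1|0}))$-duality (\thmref{thmHowe}) to obtain the decomposition into one-row highest weight modules and the flatness. The only small reordering is in part (2): you derive flatness first from the PBW count in \propref{propMbasis} and then read off the module decomposition, whereas the paper reads off the decomposition $S_q(V^{k|l})_N\cong L^{k|l}_{(N)}$ directly from \thmref{thmHowe} and then appeals to $\dim_\CK L^{k|l}_\la=\dim_\C L^{k|l}_\la|_{q=1}$ (\rmkref{rmkHowecl}) for flatness — but since \propref{propMbasis} itself rests on exactly these facts, the two orderings are equivalent in content.
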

 \begin{proof}
 	 The relations for $S_q(V^{k|l})$ in Part (1) come  from \eqref{eqbaLa}, and the isomorphism is given by $t_{i1}\mapsto x_i$ for  $1\leq i\leq k+l$. By \thmref{thmHowe}, as $\Uq(\gl_{k|l})\ot \Uq(\gl_{1|0})\cong \Uq(\gl_{k|l})$ module, 
 	\[ S_q(V^{k|l})\cong \cM^{k|l}_{1|0}\cong \bigoplus_{\la\in \La_{k|l}\cap\La_{1|0}}L^{k|l}_{\la}\ot_{\CK} \CK, \]
 	where the sum is over all one-row Young diagrams. For any  one-row Young diagram $\la$ with $N$ boxes, we obtain $S_q(V^{k|l})_N \cong L^{k|l}_{\la}$ and
 	\[ 
 	\dim_{\CK}S_q(V^{k|l})_N=\dim_{\CK}L^{k|l}_{\la}=\dim_{\C}L^{k|l}_{\la}|_{q=1}=\dim_{\C} S(\C^{k|l})_N.
 	\] 
 	Thus, $V^{k|l}$ is a flat $\Uq(\gl_{k|l})$-module.
 \end{proof}

The following proposition will not be used later, but is interesting in its own right. 
Recall that  Manin \cite{YM}  introduced  two multiparameter quantum superspaces $A_q$ and $A_q^{\ast}$ of superdimensions $(k|l)$. In the one-parameter case $A_q\cong \cM^{k|l}_{1|0}$ as superalgebras, while $A_q^{\ast}\cong \cM^{k|l}_{0|1}$ as shown in the following proposition.

\begin{prop} \label{propextalg}
 	Let $k,l\in \Z_{+}$, we have
 	\begin{enumerate}
 		\item  As a superalgebra, $\cM^{k|l}_{0|1}\cong A_q^{\ast}$, which is  generated by $\xi_1,\xi_2,\dots,\xi_{k+l}$ with parity assignments $[\xi_i]=[i]+\bar{1}$ and defining relations
 		\[ 
 		\begin{aligned}
 		&\xi_i^2=0, \quad \text{if}\ [i]=\bar{0},\\
 		& \xi_j\xi_i=(-1)^{([i]+\bar{1})([j]+\bar{1})}q^{-1}\xi_i\xi_j\quad 1\leq i<j\leq k+l.
 		\end{aligned}	
 		\] 
 	    \item $A_q^{\ast}=\bigoplus_{N\in \Z_{+}} (A_{q}^{\ast})_N$, where $(A_{q}^{\ast})_N$ is the irreducible $\Uq(\gl_{k|l})$-module with the highest weight $\sum_{i=1}^{N}\epsilon_i$ if $N<k$, and $\sum_{i=1}^{k}\epsilon_i+(N-k)\epsilon_{k+1}$ otherwise.
 	\end{enumerate}
 \end{prop}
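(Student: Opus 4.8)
The plan is to prove Proposition~\ref{propextalg} in close parallel with \propref{propQspace}, replacing the role of $S_q(V^{k|l})$ by its ``exterior'' counterpart realised inside the coordinate superalgebra. For Part (1), I would first identify $\cM^{k|l}_{0|1}$ concretely via \lemref{lemtrunM} (or the Notation convention following \rmkref{rmkHowedual}): it is generated by the single column of matrix elements $t_{a,1}$ with $a\in\bI_{k|l}$ and $1\in\bI_{0|1}$, so that $[t_{a,1}]=[a]+\bar1$, subject to the relations extracted from \eqref{eqRel1}. Setting $\xi_a:=t_{a,1}$, the first family of relations $(t_{ab})^2=0$ for $[a]+[b]=\bar1$ gives $\xi_a^2=0$ whenever $[a]=\bar0$, and the relation $t_{ac}t_{bc}=(-1)^{([a]+[c])([b]+[c])}q_c t_{bc}t_{ac}$ for $a>b$, specialised at $c$ the single index of $\bI_{0|1}$ (which has odd parity, so $q_c=q^{-1}$), yields precisely $\xi_a\xi_b=(-1)^{([a]+\bar1)([b]+\bar1)}q^{-1}\xi_b\xi_a$ for $a>b$; rewriting with $i<j$ gives the stated presentation. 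The only subtlety is bookkeeping of the parity-dependent sign $(-1)^{([a]+[c])([b]+[c])}$ with $[c]=\bar1$, which I would simply expand. This identification also matches Manin's $A_q^{\ast}$ by construction, so the isomorphism $\cM^{k|l}_{0|1}\cong A_q^{\ast}$ is immediate once the presentations are lined up.

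For Part (2), I would invoke \thmref{thmHowe} with $(k|l)$ fixed and $(r|s)=(0|1)$. Since $\Uq(\gl_{0|1})$ is the one-dimensional torus generated by a single $K$, its irreducible modules are one-dimensional, so the decomposition \eqref{eqHoweDec} becomes
\[
A_q^{\ast}\cong \cM^{k|l}_{0|1}\cong \bigoplus_{\la\in\La_{k|l}\cap\La_{0|1}} L^{k|l}_{\la}\ot\CK,
\]
and the grading by $N=|\la|$ on the right corresponds to the $\Z_{+}$-grading $\deg t_{a,1}=1$ on the left, so $(A_q^{\ast})_N\cong\bigoplus_{\la\in\La_{k|l}\cap\La_{0|1},\,|\la|=N}L^{k|l}_{\la}$. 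The point is then to determine $\La_{0|1}$: by \eqref{defnPar} with $m=0,n=1$, a partition $\la$ lies in $\La_{0|1}$ iff $\la_1\le 1$, i.e.\ $\la$ is a single column $(1^N)$. Hence the sum has at most one term, namely $\la=(1^N)$, and it is nonzero iff $(1^N)\in\La_{k|l}$, i.e.\ iff the column of height $N$ is a $(k|l)$-hook partition; by \eqref{defnPar} (with $m=k,n=l$) this holds for all $N$ since $(1^N)_{k+1}\le 1\le l$ when $l\ge 1$, and the highest weight $\la^{\natural}$ of $L^{k|l}_{(1^N)}$ is computed from \eqref{eqlanat}: for $N\le k$ it is $\epsilon_1+\cdots+\epsilon_N$, and for $N>k$ the transpose $\la'=(N)$ feeds $\langle N-k\rangle$ into the $(k+1)$-st slot, giving $\epsilon_1+\cdots+\epsilon_k+(N-k)\epsilon_{k+1}$. (When $l=0$ one gets a finite truncation consistent with $\La^2_q$ being nilpotent, but the statement assumes $k,l\in\Z_{+}$ so the generic picture is the stated one; I would remark on the $l=0$ degeneration if needed.)

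I expect the main obstacle to be purely notational rather than conceptual: carefully pinning down which index set $\bI_{r|s}$ with $(r|s)=(0|1)$ refers to and therefore which parity $[c]$ and which scalar $q_c=q^{(-1)^{[c]}}$ appears when specialising \eqref{eqRel1}, so that the sign $(-1)^{([i]+\bar1)([j]+\bar1)}$ and the factor $q^{-1}$ come out exactly as written in the statement. A secondary point requiring a line of care is the claim that $(A_q^{\ast})_N$ is \emph{irreducible}: this follows because $\La_{0|1}\cap\La_{k|l}$ contributes at most one partition of each size $N$, so the multiplicity-free sum \eqref{eqHoweDec} degenerates to a single irreducible summand in each degree — exactly as in the proof of \propref{propQspace}(2). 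Everything else (finite generation, the $\Z_+$-grading, the dimension match) is inherited verbatim from the machinery already set up, so no new ideas are needed beyond transporting the $S_q$ argument to the $\La_q$/$A_q^\ast$ side.
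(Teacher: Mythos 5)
Your proposal is correct and follows essentially the same route as the paper, which simply says the proof is analogous to that of \propref{propQspace}. You correctly read off the presentation of $\cM^{k|l}_{0|1}$ from \lemref{lemtrunM} and \eqref{eqRel1} with the single column index of odd parity (so $q_c=q^{-1}$ and the sign becomes $(-1)^{([i]+\bar1)([j]+\bar1)}$), and in Part (2) you apply \thmref{thmHowe} with $(r|s)=(0|1)$, identify $\La_{k|l}\cap\La_{0|1}$ with column shapes $(1^N)$, and compute the corresponding highest weight via \eqref{eqlanat}, exactly mirroring the $\cM^{k|l}_{1|0}$ case treated in \propref{propQspace}.
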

 \begin{proof}
 	This can be proved similarly as in \propref{propQspace}.
 \end{proof}

 The following proposition gives the second formulation of the module superalgebra $\cM^{k|l}_{r|s}$ via the braiding operator on $V^{k|l}\ot V^{r|s}$.
 
 \begin{prop}\label{propBraidedSym}
 	Let $V^{k|l}$ and $V^{r|s}$ be natural modules of $\Uq(\gl_{k|l})$ and $\Uq(\gl_{r|s})$, respectively.
 	\begin{enumerate}
 		\item 	As a superalgebra, 
 		$\cM^{k|l}_{r|s}\cong S_q(V^{k|l}\ot V^{r|s}).$
 	    \item  As a $\Uq(\gl_{k|l})\ot\Uq(\gl_{r|s})$-module, 
 	    \[ S_q(V^{k|l}\ot V^{r|s})\cong \bigoplus_{\la\in \La_{k|l}\cap \La_{r|s}} L^{k|l}_{\la}\ot L^{r|s}_{\la}.  \]
 		In particular, $V^{k|l}\ot V^{r|s}$ is a flat $\Uq(\gl_{k|l})\ot\Uq(\gl_{r|s})$-module.
 	\end{enumerate}
 \end{prop}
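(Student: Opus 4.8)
The plan is to deduce both parts from the quantum Howe duality \thmref{thmHowe} and the PBW-type counting already available, exactly as in the proof of \propref{propQspace}, but now for general $r|s$. For Part (1), I would first identify the defining relations of the braided supersymmetric algebra $S_q(V^{k|l}\ot V^{r|s})$. By \defref{defnBraid} these are generated by $\La^2_q(V^{k|l}\ot V^{r|s})$, the $(-q^{-1})$-eigenspace (and more generally the negative-eigenvalue part) of the braiding $\widecheck R_{W,W}$ on $W\ot W$ with $W=V^{k|l}\ot V^{r|s}$. The key computational input is that this braiding factors through the two individual $R$-matrices: on $W\ot W\cong V^{k|l}\ot V^{r|s}\ot V^{k|l}\ot V^{r|s}$ one has $\widecheck R_{W,W} = (\id\ot P\ot\id)\circ(\widecheck R_{V^{k|l},V^{k|l}}\ot \widecheck R_{V^{r|s},V^{r|s}})\circ(\id\ot P\ot\id)$ up to the appropriate permutation conventions, so its eigenspace decomposition is read off from \eqref{eqbaS}–\eqref{eqbaLa} applied to each factor. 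Writing $x_{ab}$ for the image of $v_a\ot v_b$ (with $a\in\bI_{k|l}$, $b\in\bI_{r|s}$), the resulting quadratic relations are precisely the relations \eqref{eqRel1} defining $\cM^{k|l}_{r|s}$ — this is the RTT-presentation matching the $R$-matrix $R$ of \secref{secfunalg}. Hence the assignment $t_{ab}\mapsto x_{ab}$ gives a surjective superalgebra homomorphism $\cM^{k|l}_{r|s}\twoheadrightarrow S_q(V^{k|l}\ot V^{r|s})$.

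\textbf{From surjection to isomorphism, and Part (2).} To upgrade the surjection to an isomorphism I would compare graded dimensions. The source $\cM^{k|l}_{r|s}$ has, by \propref{propMbasis}, the PBW basis $\{t^{\bm}\}$, so $\dim_{\CK}(\cM^{k|l}_{r|s})_N = \#\{\bm\in\fM^{k|l}_{r|s} : |\bm|=N\}$, which by \lemref{lemIden} equals $\dim_\C S(\C^{k|l}\ot\C^{r|s})_N$. On the other hand $S_q(V^{k|l}\ot V^{r|s})$ is by construction a quotient of the tensor algebra $T(W)$ by a quadratic ideal whose degree-$2$ component is exactly the classical symmetrizing relations' deformation, so $\dim_{\CK} S_q(W)_N \le \dim_\C S(W|_{q=1})_N$ for all $N$ (quotienting can only shrink). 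Since we already have a surjection $\cM^{k|l}_{r|s}\twoheadrightarrow S_q(W)$ in each degree and the left side has dimension $\dim_\C S(W|_{q=1})_N$, the squeeze forces equality in every degree, hence the surjection is an isomorphism and $V^{k|l}\ot V^{r|s}$ is flat. Part (2) is then immediate: transport the $\CL(\Uq(\gl_{k|l}))\ot\cR(\Uq(\gl_{r|s}))$-module decomposition \eqref{eqHoweDec} of \thmref{thmHowe} through the isomorphism of Part (1), after checking the isomorphism is equivariant — which it is, since $t_{ab}\mapsto x_{ab}$ intertwines the actions on generators and both sides are generated in degree $1$.

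\textbf{Main obstacle.} The delicate point is verifying that the quadratic relations coming from $\La^2_q(V^{k|l}\ot V^{r|s})$ coincide \emph{exactly} with relations \eqref{eqRel1}, including all the sign factors $(-1)^{([a]+[c])([b]+[d])}$ and the correct powers $q_a, q_b$. This requires carefully computing the spectral decomposition of $\widecheck R_{W,W}$ on $W\ot W$ from the tensor-factor braidings — in particular, the minimal polynomial of $\widecheck R_{W,W}$ may have more than two roots (products of eigenvalues $\pm q^{\pm1}$ of the two factors), so one must check that the \emph{negative}-eigenvalue part is spanned by exactly the vectors producing \eqref{eqRel1} and nothing more, and symmetrically for $S^2_q$. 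A clean way to sidestep a brute-force eigenvector computation is to instead argue abstractly: both $\cM^{k|l}_{r|s}$ and $S_q(W)$ are quadratic module-superalgebra quotients of $T(W)$, there is an obvious $\Uq(\gl_{k|l})\ot\Uq(\gl_{r|s})$-module surjection in degree $2$ from one relation space to the other (by semisimplicity and comparing which isotypic components survive), and then the dimension count closes the argument without ever writing the relations out. I would present the dimension-count version as the main line, relegating the explicit relation matching to a remark.
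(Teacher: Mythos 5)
Your overall route (conjugate the braiding on $W\ot W$ through $P_{23}$ to reduce to the individual factor braidings, identify $\La^2_q(W)$, match relations with \eqref{eqRel1}, and then invoke Howe duality for the module decomposition and flatness) is the same as the paper's. However, there are two genuine gaps.

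First, the ``surjection $+$ squeeze'' step does not close. You construct a surjection $\cM^{k|l}_{r|s}\twoheadrightarrow S_q(W)$ (so $\dim_{\CK} S_q(W)_N\le\dim_{\CK}(\cM^{k|l}_{r|s})_N$), and you also assert $\dim_{\CK} S_q(W)_N\le\dim_{\C} S(W|_{q=1})_N$; these are two upper bounds in the same direction and force nothing. To get a sandwich you would need either a lower bound on $\dim_{\CK} S_q(W)_N$ or a surjection $S_q(W)\twoheadrightarrow\cM^{k|l}_{r|s}$, i.e.\ the containment $\La^2_q(W)\subseteq R_{\cM}$ of degree-$2$ relation spaces — but that containment is essentially the whole content of the relation match you were trying to avoid. (Your parenthetical justification for the upper bound — ``quotienting can only shrink'' — also compares against $T(W)$, not against $S(W|_{q=1})$; the fact is true but the reason given is not a proof of it.) The paper avoids all of this by verifying that the degree-$2$ relation spaces coincide outright, so the map is an isomorphism directly; the dimension count is used only to deduce flatness afterwards. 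Your relegated ``abstract'' argument is actually the cleaner way to repair this: since $W\ot W$ is multiplicity-free as a $\Uq(\gl_{k|l})\ot\Uq(\gl_{r|s})$-module with summands $L^{k|l}_{(2)}\ot L^{r|s}_{(2)}$, $L^{k|l}_{(2)}\ot L^{r|s}_{(1,1)}$, $L^{k|l}_{(1,1)}\ot L^{r|s}_{(2)}$, $L^{k|l}_{(1,1)}\ot L^{r|s}_{(1,1)}$, and both $\La^2_q(W)$ and $R_{\cM}$ are submodules consisting of exactly the two mixed summands, they are literally equal. You should have promoted that to the main line.

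Second, the map $t_{ab}\mapsto x_{ab}$ does not preserve the defining relations in the super case: in the mixed relation (last line of \eqref{eqRel1}) the sign $(-1)^{[a]([b]+[d])+[b][d]}$ does not transform correctly under the naive relabelling, and one must twist by $(-1)^{[a][b]}$, i.e.\ use $t_{ab}\mapsto(-1)^{[a][b]}x_{ab}$ as the paper does. This is not merely a normalization: without the twist, the two sides of the fifth relation disagree by a sign $(-1)^{([a]+[b])([i]+[j])}$ in general. Your ``main obstacle'' paragraph anticipates this issue but does not resolve it, and the body of the argument assumes the untwisted map works.
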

 \begin{proof}
 	Let $U=V^{k|l}\ot V^{r|s}$, then it has a basis $\{x_{ia}:=v_i\ot v_a\mid i\in\bI_{k|l}, a\in\bI_{r|s} \}$,
 	where $v_i$ and $v_a$ are basis elements for $V^{k|l}$ and $V^{r|s}$, respectively. Define the permutation map $P_{23}: U\ot U\rightarrow U\ot U$ of two middle factors  by
 	\[ P_{23}(v_i\ot v_j\ot v_a\ot v_b)=(-1)^{[j][a]}P_{23}(v_i\ot v_a\ot v_j \ot v_b).  \]
 	Therefore, the $R$-matrix of $\Uq(\gl_{k|l}\times \gl_{r|s})$ acting on $U\ot U$ is 
 	$ \widecheck{R}_{U,U}=P_{23} \circ(\widecheck{R}_{V^{k|l},V^{k|l}}\ot \widecheck{R}_{V^{r|s},V^{r|s} })\circ P_{23},$
 	which implies $$ \La_q^{2}(U)=P_{23}\bigg( \big(S^{2}_{q}(V^{k|l})\ot \La^{2}_{q}(V^{r|s}) \big) \bigoplus \big(\La^{2}_{q}(V^{k|l})\ot S^{2}_{q}(V^{r|s})\big)   \bigg).  $$
 Using bases for $S_q^2(V^{k|l})$ and $\La_q^2(V^{k|l})$ ($S_q^2(V^{r|s})$ and $\La_q^2(V^{r|s})$) given in \eqref{eqbaS} and \eqref{eqbaLa}, we immediately obtain the following quadratic relations for $S_q(U)=T(U)/\La_q^{2}(U)$:
 	\begin{equation*}\label{eqRelS}
 	\begin{aligned}
 	(x_{ia})^2=&0, &\quad &[i]+[a]=\bar{1},\\
 	x_{ja}x_{ia}=&(-1)^{([i]+[a])([j]+[a])}q_ax_{ia}x_{ja},&\quad &j>i,\\
 	x_{ib}x_{ia}=&(-1)^{([i]+[a])([i]+[b])}q_ix_{ia}x_{ib},&\quad &b>a, \\
 	x_{ja}x_{ib}=&(-1)^{([i]+[b])([j]+[a])}x_{ib}x_{ja},&\quad &j>i,\,a<b,\\
 	x_{jb}x_{ia}=&(-1)^{([i]+[a])([j]+[b])}x_{ia}x_{jb}\\
 				&+(-1)^{[i]([j]+[b])+[j][b]}(q-q^{-1})x_{ib}x_{ja},&\quad &j>i,\, b>a. 
 	\end{aligned}
 	\end{equation*}
 	It is straightforward to verify that the assignment $t_{ia}\mapsto (-1)^{[i][a]}x_{ia}$ preserves defining relations, which becomes the superalgebra isomorphism between $\cM^{k|l}_{r|s}$ and $S_q(V^{k|l}\ot V^{r|s})$. Thus we have
 	\[ \dim_{\CK}S_q(V^{k|l}\ot V^{r|s})_{N}=\dim_{\CK}(\cM^{k|l}_{r|s})_{N}=\sum_{\la\in \La_{k|l}\cap \La_{r|s},\ |\la|= N } \!\!\!\!\!\!\!\! \dim_\CK L_{\la}^{k|l}\ot L_{\la}^{r|s},   \]
 	while by \rmkref{rmkHowecl} in the classical case ($q\to1$)
 	\[ \dim_{\C} S(\C^{k|l}\ot \C^{r|s})_{N}=\sum_{\la\in \La_{k|l}\cap \La_{r|s},\ |\la|\leq N } \!\!\!\!\!\!\!\! \dim_\C L_{\la}^{k|l}|_{q=1}\ot L_{\la}^{r|s}|_{q=1}.   \]
 	Using $\dim_{\CK}L^{k|l}_{\la}=\dim_{\C}L^{k|l}_{\la}|_{q=1}$ for any $k,l\in \Z_{+}$, we have
 	$ \dim_{\CK}S_q(V^{k|l}\ot V^{r|s})_{N}=\dim_{\C} S(\C^{k|l}\ot \C^{r|s})_{N}$. This implies that  $V^{k|l}\ot V^{r|s}$ is a flat $\Uq(\gl_{k|l})\ot\Uq(\gl_{r|s})$-module. For part (2), it is easy to see that $S_q(V^{k|l}\ot V^{r|s})$ acquires a $\Uq(\gl_{k|l})\ot\Uq(\gl_{r|s})$-module structure through the isomorphism given in part (1). Thus  the decomposition in part (2) follows from \thmref{thmHowe}. 
 \end{proof}
 
\begin{rmk} \label{rmkdul}
       We immediately recover the following dualities:
	\begin{enumerate}
		\item (skew $(\Uq(\gl_k),\Uq(\gl_s))$-duality, \cite[Theorem 6.16]{LZZ}) As $\Uq(\gl_{k})\ot \Uq(\gl_{s})$-module, 
		\[ \La_{q}(V^{k|0}\ot V^{s|0})\cong S_q(V^{k|0}\ot V^{0|s})\cong \bigoplus_{\la \in \La_{k|0}\cap \La_{0|s}} L^{k|0}_{\la} \ot L^{s|0}_{\la^{\prime}}, \]
		where the first isomorphism follows from the defining relations of these two algebras (see \cite[Proposition 6.14]{LZZ}), and the second one has used the isomorphism $L^{0|s}_{\la}\cong L^{s|0}_{\la^{\prime}}$ as $\Uq(\gl_{s|0})$-modules (see part (1) of \propref{proplowwht}). Note $\Uq(\gl_{s}):=\Uq(\gl_{s|0})$.
		\item ($(\Uq(\gl_k),\Uq(\gl_r))$-duality, \cite[Theorem 6.4]{LZZ} and \cite{Z03})
		     As $\Uq(\gl_{k})\ot \Uq(\gl_{r})$-module,
			\[ S_{q}(V^{k|0}\ot V^{r|0})\cong \bigoplus_{\la\in \La_{k|0}\cap \La_{r|0}}  L^{k|0}_{\la}\ot L^{r|0}_{\la}. \]
	     \item  ($(\Uq(\gl_{k|l}),\Uq(\gl_r))$-duality,  \cite[Theorem 2.2]{WZ})
	     	As $\Uq(\gl_{k|l})\ot \Uq(\gl_{r})$-module,
	     	\[ S_q(V^{k|l}\ot V^{r|0})\cong \bigoplus_{\la\in \La_{k|l}\cap\La_{r|0}  } L^{k|l}_{\la}\ot L^{r|0}_{\la}.\]
	\end{enumerate}
\end{rmk} 
 
\begin{prop}\label{propBraidedSym2}
 As a superalgebra, $\overbar{\cM}^{k|l}_{r|s} \cong S_{q}((V^{k|l})^{\ast}\ot (V^{r|s})^{\ast} )$. Thus we have the following multiplicity-free decomposition as $\Uq(\gl_{k|l})\ot\Uq(\gl_{r|s})$-module
 \[ S_{q}((V^{k|l})^{\ast}\ot (V^{r|s})^{\ast} )\cong \bigoplus_{\la\in \La_{k|l}\cap \La_{r|s}} L_{\la}^{k|l}\ot (L_{\la}^{r|s})^{\ast}. \]
  In particular, $(V^{k|l})^{\ast}\ot (V^{r|s})^{\ast}$ is a flat $\Uq(\gl_{k|l})\ot\Uq(\gl_{r|s})$-module.
\end{prop}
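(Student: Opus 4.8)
The plan is to follow the proof of \propref{propBraidedSym} almost verbatim, with the $R$-matrix $R$ replaced throughout by its dual counterpart $\overbar R$ from \rmkref{rmkRinv} and the relations \eqref{eqRel1} replaced by \eqref{eqRel2}. First I would work out the braiding on the dual natural module: writing $\widecheck R_{(V^{k|l})^\ast,(V^{k|l})^\ast}=P\,\overbar R_{(V^{k|l})^\ast,(V^{k|l})^\ast}$ and using the explicit form of $\overbar R$ (which differs from $R$ only in that the off-diagonal sum runs over $a>b$ instead of $a<b$), a short computation on the weight basis $\{v_i^\ast\}$ of $(V^{k|l})^\ast$ shows that $\widecheck R_{(V^{k|l})^\ast,(V^{k|l})^\ast}$ still satisfies the \emph{same} two-term minimal polynomial \eqref{eqRquad}, namely $(\widecheck R-q)(\widecheck R+q^{-1})=0$, so that $(V^{k|l})^\ast\ot(V^{k|l})^\ast=S^2_q((V^{k|l})^\ast)\oplus\La^2_q((V^{k|l})^\ast)$. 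The weight bases of the two summands are obtained from \eqref{eqbaS} and \eqref{eqbaLa} by the substitution $q\mapsto q^{-1}$ in the off-diagonal vectors (the diagonal vectors being unchanged); this is exactly the source of the inverse powers $q_c^{-1}$, $q_a^{-1}$ appearing in \eqref{eqRel2}.

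Next, set $U^\ast=(V^{k|l})^\ast\ot(V^{r|s})^\ast$ with basis $\{\bar x_{ia}:=v_i^\ast\ot v_a^\ast\mid i\in\bI_{k|l},\,a\in\bI_{r|s}\}$. Exactly as in \propref{propBraidedSym}, the braiding of $\Uq(\gl_{k|l}\times\gl_{r|s})$ on $U^\ast\ot U^\ast$ factors as $\widecheck R_{U^\ast,U^\ast}=P_{23}\circ(\widecheck R_{(V^{k|l})^\ast,(V^{k|l})^\ast}\ot\widecheck R_{(V^{r|s})^\ast,(V^{r|s})^\ast})\circ P_{23}$, whence
\[
\La^2_q(U^\ast)=P_{23}\Big(\big(S^2_q((V^{k|l})^\ast)\ot\La^2_q((V^{r|s})^\ast)\big)\oplus\big(\La^2_q((V^{k|l})^\ast)\ot S^2_q((V^{r|s})^\ast)\big)\Big).
\]
Expanding this with the bases from the first step produces the quadratic relations defining $S_q(U^\ast)=T(U^\ast)/\langle\La^2_q(U^\ast)\rangle$; these coincide with the relevant subset of relations \eqref{eqRel2} satisfied by the generators $\bar t_{ia}$ of $\overbar\cM^{k|l}_{r|s}$ (which presents $\overbar\cM^{k|l}_{r|s}$ by the analogue of \lemref{lemtrunM} for $\overbar\cM_{m|n}$) after a sign rescaling of the generators of the same type used in \propref{propBraidedSym}. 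This yields the superalgebra isomorphism $\overbar\cM^{k|l}_{r|s}\cong S_q((V^{k|l})^\ast\ot(V^{r|s})^\ast)$.

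For flatness, transport the $\Z_+$-grading ($\deg\bar x_{ia}=1$) along this isomorphism. By the analogue of \rmkref{rmkHowedual} for the truncation $\overbar\cM^{k|l}_{r|s}$, we have $\overbar\cM^{k|l}_{r|s}\cong\bigoplus_{\la\in\La_{k|l}\cap\La_{r|s}}L^{k|l}_\la\ot(L^{r|s}_\la)^\ast$ with the $\la$-summand sitting in degree $|\la|$, so $\dim_\CK(\overbar\cM^{k|l}_{r|s})_N=\sum_{|\la|=N}\dim_\CK L^{k|l}_\la\cdot\dim_\CK(L^{r|s}_\la)^\ast$, and $\dim_\CK(L^{r|s}_\la)^\ast=\dim_\CK L^{r|s}_\la=\dim_\C L^{r|s}_\la|_{q=1}$ by \rmkref{rmkHowecl}(2). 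Comparing with the classical Howe duality decomposition of $S(\C^{k|l}\ot\C^{r|s})$ exactly as in \propref{propBraidedSym} gives $\dim_\CK S_q((V^{k|l})^\ast\ot(V^{r|s})^\ast)_N=\dim_\C S(\C^{k|l}\ot\C^{r|s})_N$, i.e.\ flatness of $(V^{k|l})^\ast\ot(V^{r|s})^\ast$. The multiplicity-free decomposition in the statement then follows by transporting the $\Uq(\gl_{k|l})\ot\Uq(\gl_{r|s})$-module structure along the isomorphism just established and invoking \rmkref{rmkHowedual} once more.

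The step I expect to require the most care is the very first one: pinning down the signs and the $q\leftrightarrow q^{-1}$ behaviour in the explicit weight bases of $S^2_q((V^{k|l})^\ast)$ and $\La^2_q((V^{k|l})^\ast)$, since $\overbar R$ is built from the \emph{opposite} ordering of indices to $R$, and then checking that the resulting presentation of $S_q(U^\ast)$ is genuinely the relation set \eqref{eqRel2} (under the correct normalisation of generators) rather than some variant. Once this bookkeeping is settled, the remainder is a transcription of the argument for $\cM^{k|l}_{r|s}$.
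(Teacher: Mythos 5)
Your proposal is correct and follows essentially the same route as the paper's own proof, which is also explicitly written as a sketch mirroring \propref{propBraidedSym}: compute $\widecheck R$ on the dual natural module, observe the same minimal polynomial $(\widecheck R-q)(\widecheck R+q^{-1})=0$, record the weight bases of $S^2_q$ and $\La^2_q$ (with the $q\leftrightarrow q^{-1}$ swap in the off-diagonal vectors exactly as you describe), factor the braiding on $\overbar U\ot\overbar U$ through $P_{23}$, read off the quadratic relations, match them to \eqref{eqRel2} via $\bar t_{ia}\mapsto(-1)^{[i][a]}x_{ia}$, and conclude by \rmkref{rmkHowedual}. The only place you add something the paper leaves implicit is the explicit dimension count for flatness, which is the right one.
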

 \begin{proof}
 	We only give a sketch of the proof, as it is similar to that of \propref{propBraidedSym}. Let $v_{i}^{\ast}, i\in \bI_{r|s},$ be standard weight basis for $(V^{k|l})^{\ast}$. Then $\widecheck{R}_{(V^{r|s})^{\ast},(V^{r|s})^{\ast}}$ acts on $(V^{r|s})^{\ast}\ot (V^{r|s})^{\ast}$ by
 	\begin{equation}\label{eqRmatact}
 	\widecheck{R}_{(V^{r|s})^{\ast},(V^{r|s})^{\ast}}(v_i^{\ast}\ot v_j^*)=
 	\begin{cases}
 	(-1)^{[i][j]}v_j^*\ot v_i^*, & \text{if}\ i>j \\
 	(-1)^{[i]}q_iv_i^*\ot v_i^*, & \text{if}\ i=j,\\
 	(-1)^{[i][j]}v_j^*\ot v_i^*+(q-q^{-1})v_i^*\ot v_j^*,& \text{if}\ i<j.
 	\end{cases}
 	\end{equation}  
 	Let $\overbar{U}=(V^{k|l})^{\ast}\ot (V^{r|s})^{\ast}$ and $P_{23}$ be graded the permutation of two middle factors in $\overbar{U}\ot \overbar{U}$. Then we have
 	\[ \La_q^{2}(\overbar{U})=P_{23}\bigg( \big(S^{2}_{q}((V^{k|l})^{\ast})\ot \La^{2}_{q}((V^{r|s})^{\ast}) \big) \bigoplus \big(\La^{2}_{q}((V^{k|l})^{\ast})\ot S^{2}_{q}((V^{r|s})^{\ast})\big)   \bigg).  \]
 	The bases for $S^{2}_{q}((V^{r|s})^{\ast})$ and $\La^{2}_{q}((V^{r|s})^{\ast})$ are given respectively by
 	$\{v_i^{\ast}\ot v_{i}^{\ast}, 1\leq i\leq r, \ v_i^{\ast}\ot v_{j}^{\ast}+(-1)^{[i][j]}q^{-1}v_{j}^{\ast}\ot v_i^{\ast}, 1\leq i< j\leq r+s\}$ and $\{v_i^{\ast}\ot v_{i}^{\ast}, r+1\leq i\leq r+s, \ v_i^{\ast}\ot v_{j}^{\ast}-(-1)^{[i][j]}q v_{j}^{\ast}\ot v_i^{\ast}, 1\leq i< j\leq r+s\}$. Now we can determine the quadratic relations for $S_q(\overbar{U})=T(\overbar{U})/\La_q^{2}(\overbar{U})$ as follows:
 		\begin{equation*}\label{eqRelS2}
 		\begin{aligned}
 		(x_{ia})^2=&0, &\quad &[i]+[a]=\bar{1},\\
 		x_{ja}x_{ia}=&(-1)^{([i]+[a])([j]+[a])}q_a^{-1}x_{ia}x_{ja},&\quad &j>i,\\
 		x_{ib}x_{ia}=&(-1)^{([i]+[a])([i]+[b])}q_i^{-1}x_{ia}x_{ib},&\quad &b>a, \\
 		x_{ja}x_{ib}=&(-1)^{([i]+[b])([j]+[a])}x_{ib}x_{ja},&\quad &j>i,\,a<b,\\
 		x_{jb}x_{ia}=&(-1)^{([i]+[a])([j]+[b])}x_{ia}x_{jb}\\
 		&-(-1)^{[i]([j]+[b])+[j][b]}(q-q^{-1})x_{ib}x_{ja},&\quad &j>i,\, b>a. 
 		\end{aligned}
 		\end{equation*}	
 Here $x_{ia}:=v_{i}^{\ast}\ot v_{a}^{\ast}$.	The isomorphism between  $\overbar{\cM}^{k|l}_{r|s}$ and $S_q(\overbar{U})$ is given by $\bar{t}_{ia}\mapsto (-1)^{[i][a]} x_{ia}$. The multiplicity-free decomposition is also clear from this isomorphism and \rmkref{rmkHowedual}.
 \end{proof}

\begin{rmk}
	Also we will  show in \propref{propBraidedSym3} that $V^{k|l}\ot (V^{r|s})^{\ast}$ is a flat $\Uq(\gl_{k|l})\ot\Uq(\gl_{r|s})$-module. Therefore, we have three  flat $\Uq(\gl_{k|l})\ot\Uq(\gl_{r|s})$-modules: $V^{k|l}\ot V^{r|s}$, $(V^{k|l})^{\ast}\ot (V^{r|s})^{\ast}$ and $V^{k|l}\ot (V^{r|s})^{\ast}$.
\end{rmk}
 
\subsection{Howe duality implies Schur-Weyl duality}
We derive the quantum super analogue of Schur-Weyl duality from the quantum super analogue of Howe duality following the methods of 
 \cite[Proposition 4.1]{Z03}, and also \cite[Proposition 2.4.5.3]{H92} for the classical case.

 Recall that  the Hecke algebra $\CH_q(r)$ of the symmetric group  is the unital $\CK$-algebra with generators $H_1,H_2,\dots,H_{r-1}$  subject to relations
\begin{equation}
\begin{aligned}
(H_i-q)(H_i+q^{-1})=0,\quad &1\leq i\leq r-1,\\
H_iH_j=H_jH_i,\quad &|i-j|\geq 2,\\
H_iH_{i+1}H_i=H_{i+1}H_iH_{i+1},\quad &1\leq i\leq r-2.     
\end{aligned}
\end{equation}
It is well known that the Hecke algebra $\CH_q(r)$  and the symmetric group algebra $\CK\Sym_r$ are isomorphic as associative algebras (cf. \cite{L}). Therefore, the irreducible representations of $\CH_q(r)$  are also indexed by partitions $\la\vdash r$. 

Let $V^{k|l}$ be the natural module for $\Uq(\gl_{k|l})$ with the standard basis $\{v_a\}_{a\in \bI_{k|l}}$. Then there is a natural action of $\CH_q(r)$ on $(V^{k|l})^{\ot r}$ defined by
\[  H_i.v_{a_1}\ot v_{a_2}\ot\dots\ot v_{a_r}:= v_{a_1}\ot\dots\ot \widecheck{R}_{V^{k|l},V^{k|l}}(v_{a_i}\ot v_{a_{i+1}})\ot\dots\ot v_{a_r},      \]
for all  $1\leq i\leq r$. 
This action is well defined  following \eqref{eqRbraidrel} and  the quadratic relation \eqref{eqRquad}, thus leads to the algebra  homomorphism
\begin{eqnarray}\label{eq:hecke-action}  
\nu_r: \CH_q(r) \longrightarrow \End_{\Uq(\gl_{k|l})} ((V^{k|l})^{\ot r}).  
\end{eqnarray}
The quantum super analogue of Schur-Weyl duality (see , e.g. \cite{Mit,DR}) states that this is actually an epimorphism. 

\begin{thm}\label{thmSchur}
	{\rm (Schur-Weyl duality)}
	Let $V^{k|l}$ be the natural representation of $\Uq(\gl_{k|l})$.  As  $\Uq(\gl_{k|l})\ot\CH_q(r)$-module, $(V^{k|l})^{\ot r}$ has the following multiplicity-free decomposition,  
	\[ (V^{k|l})^{\ot r}\cong\bigoplus_{\la\in\La_{k|l},\ |\la|=r} L_{\la}^{k|l}\ot D_{\la},  \]
    where $D_{\la}$ is the simple left $\CH_q(r)$-module associated to $\la$. This in particular implies that the algebra  homomorphism $\nu_r$ defined by \eqref{eq:hecke-action} is surjective.
\end{thm}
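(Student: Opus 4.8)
The plan is to derive both assertions from the quantum Howe duality of type $(\Uq(\gl_{k|l}),\Uq(\gl_r))$ by a polarisation argument, as in \cite[Proposition 4.1]{Z03} and its classical prototype \cite[Proposition 2.4.5.3]{H92}. By \propref{propBraidedSym} and \rmkref{rmkdul}(3), $\cM^{k|l}_{r|0}\cong S_q(V^{k|l}\ot V^{r|0})\cong\bigoplus_{\la\in\La_{k|l}\cap\La_{r|0}}L^{k|l}_\la\ot L^{r|0}_\la$ as $\CL(\Uq(\gl_{k|l}))\ot\cR(\Uq(\gl_r))$-modules. First I would isolate the ``multilinear'' subspace $P\subset\cM^{k|l}_{r|0}$ spanned by the monomials $t_{a_11}t_{a_22}\cdots t_{a_rr}$ with $a_1,\dots,a_r\in\bI_{k|l}$; by the relations \eqref{eqRel1} this is exactly the $\cR(\Uq(\gl_r))$-weight subspace of weight $(1,1,\dots,1)$. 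The PBW basis of \propref{propMbasis} shows these $(k+l)^r$ monomials are linearly independent, so $v_{a_1}\ot\cdots\ot v_{a_r}\mapsto t_{a_11}\cdots t_{a_rr}$ is a vector-space isomorphism $(V^{k|l})^{\ot r}\to P$; a short computation with $\Delta(t_{ab})$ (using that $\pi\circ w$ is the transpose of $\pi$ on the natural module) upgrades it to an isomorphism of $\CL(\Uq(\gl_{k|l}))$-modules.

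Next I would place the Hecke action inside this picture. The braiding operators $\widecheck R_{V^{r|0},V^{r|0}}$ acting on adjacent $\gl_r$-column indices preserve $P$ (they fix the $\cR(\Uq(\gl_r))$-weight), commute with $\CL(\Uq(\gl_{k|l}))$, and satisfy \eqref{eqRbraidrel} and \eqref{eqRquad}; re-ordering the output into PBW form by means of \eqref{eqRel1} shows that, transported through the isomorphism of the previous paragraph, they become precisely the operators $\widecheck R_{V^{k|l},V^{k|l}}$ on adjacent tensor factors of $(V^{k|l})^{\ot r}$, i.e.\ the $\CH_q(r)$-action $\nu_r$ of \eqref{eq:hecke-action}. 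Restricting the Howe decomposition to $P$ then gives, as $\Uq(\gl_{k|l})\ot\CH_q(r)$-modules,
\[
(V^{k|l})^{\ot r}\ \cong\ P\ \cong\ \bigoplus_{\substack{\la\in\La_{k|l}\\ |\la|=r}}L^{k|l}_\la\ot\big(L^{r|0}_\la\big)_{(1^r)},
\]
with $\CH_q(r)$ acting through the above braidings on the weight space $\big(L^{r|0}_\la\big)_{(1^r)}$; only $|\la|=r$ contributes because a nonzero $(1^r)$-weight space forces $|\la|=r$, and then $\la\in\La_{r|0}$ is automatic.

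It then remains to identify $\big(L^{r|0}_\la\big)_{(1^r)}$ with the simple $\CH_q(r)$-module $D_\la$. Dimensions already match: $\dim_\CK\big(L^{r|0}_\la\big)_{(1^r)}=\dim_\C\big(L^{r|0}_\la|_{q=1}\big)_{(1^r)}=\#\,\mathrm{SYT}(\la)=\dim_\CK D_\la$, using that quantum characters agree with classical ones (\rmkref{rmkHowecl}(2)) and classical $\GL_r$-theory. To pin down the module structure I would run the construction above with $(k|l)$ replaced by $(r|0)$ and take the weight-$(1^r)$ space in \emph{both} $\gl_r$-factors of $S_q(V^{r|0}\ot V^{r|0})$: the resulting $r!$-dimensional space $\Span\{v_{\si(1)}\ot\cdots\ot v_{\si(r)}\mid\si\in\Sym_r\}$ is, by a direct computation with $\widecheck R_{V^{r|0},V^{r|0}}$, the regular representation of $\CH_q(r)$, and is simultaneously $\bigoplus_{\la\vdash r}\big(L^{r|0}_\la\big)_{(1^r)}\ot\big(L^{r|0}_\la\big)_{(1^r)}$ as an $\CH_q(r)$-bimodule; comparing with the Artin--Wedderburn decomposition of $\CH_q(r)$ (split semisimple for generic $q$, with simple modules $\{D_\la\}_{\la\vdash r}$), and fixing the labelling by the $q\to1$ specialisation, yields $\big(L^{r|0}_\la\big)_{(1^r)}\cong D_\la$. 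Substituting back proves the claimed decomposition; and surjectivity of $\nu_r$ is then formal, since $(V^{k|l})^{\ot r}$ is semisimple (\propref{proptendec}) with the $L^{k|l}_\la$ pairwise non-isomorphic, so $\End_{\Uq(\gl_{k|l})}\big((V^{k|l})^{\ot r}\big)\cong\bigoplus_{\la\in\La_{k|l},\,|\la|=r}\End_\CK(D_\la)$, onto which $\nu_r$ maps $\CH_q(r)\cong\bigoplus_{\la\vdash r}\End_\CK(D_\la)$ by killing the blocks with $L^{k|l}_\la=0$ and mapping the rest isomorphically. \textbf{The main obstacle} will be the identification of the Hecke action: both transferring the $\gl_r$-column braiding to the $\widecheck R_{V^{k|l}}$-braiding via \eqref{eqRel1}, and recognising the weight space $\big(L^{r|0}_\la\big)_{(1^r)}$ as $D_\la$, carry the classical Weyl-group-on-a-weight-space picture over to the quantum setting and need careful bookkeeping of signs and conventions.
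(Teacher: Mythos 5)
Your proposal follows essentially the same route as the paper's proof: apply the $(\Uq(\gl_{k|l}),\Uq(\gl_r))$-Howe duality to $\cM^{k|l}_{r|0}$, restrict to the weight-$(1^r)$ subspace for the $\cR(\Uq(\gl_r))$-action (what the paper calls $\cM^{k|l}_{r|0}[0]$), and identify that space with $(V^{k|l})^{\ot r}\cong\bigoplus_{|\la|=r}L^{k|l}_{\la}\ot L^{r|0}_{\la}[0]$. The one place you genuinely diverge is in the last step: the paper simply cites \cite[Proposition 4.1]{Z03} for the fact that $L^{r|0}_{\la}[0]$ carries the Hecke-module structure $D_{\la}$, whereas you reprove it directly by passing to the case $(k|l)=(r|0)$, taking the $(1^r,1^r)$-biweight space $\operatorname{Span}\{t_{\si(1),1}\cdots t_{\si(r),r}\mid\si\in\Sym_r\}$, recognising it as the regular $\CH_q(r)$-bimodule, and matching summands with the Artin--Wedderburn decomposition (pinning down the labelling by the classical limit). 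That bimodule argument is correct and makes the identification self-contained, at the cost of the two bookkeeping checks you already flagged: that the $\cR$-side braiding on $P$ transports along your PBW-identification to the tensor-factor braiding $\nu_r$ on $(V^{k|l})^{\ot r}$ (this needs the relations \eqref{eqRel1} but is routine), and that the $\CH_q(r)$-bimodule structure on the biweight space really is the biregular one. The paper's version buys brevity by outsourcing the identification; yours buys independence from \cite{Z03}. Your closing remark on surjectivity of $\nu_r$ via the double centraliser and semisimplicity of $(V^{k|l})^{\ot r}$ (\propref{proptendec}) also matches what the paper implicitly means by ``the second statement is clear.''
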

\begin{proof}
	It follows from \thmref{thmHowe} that 
	$ \cM^{k|l}_{r|0}\cong \bigoplus_{\la\in\La_{k|l}\cap \La_{r|0}} L_{\la}^{k|l}\ot L_{\la}^{r|0}$ as $\CL(\Uq(\gl_{k|l}))\ot \cR(\Uq(\gl_{r}))$-module,
	where $L_{\la}^{r|0}$ is the  irreducible $\Uq(\gl_{r})$-module. Denote by 
	\[ \cM^{k|l}_{r|0}[0]:=\{ f\in \cM^{k|l}_{r|0}\mid \cR_{K_a}(f)=qf,\quad \forall 1\leq a\leq r  \}  \]
    the zero weight space of $\cM^{k|l}_{r|0}$ with respect to the $\cR$-action of $\Uq(\gl_r)$. This is isomorphic to $(V^{k|l})^{\ot r}$ with a basis $\{t_{a_1,1}t_{a_2,2}\dots t_{a_r,r} \mid 1\leq a_1,a_2,\dots,a_r\leq k+l \}$ by \propref{propMbasis}.  Then we immediately obtain as left $\CL(\Uq(\gl_{k|l}))$ -modules
	\[ \cM^{k|l}_{r|0}[0] \cong (V^{k|l})^{\ot r}\cong \bigoplus_{\la\in\La_{k|l},\ |\la|=r} L_{\la}^{k|l}\ot L_{\la}^{r|0}[0],  \]
	where $L_{\la}^{r|0}[0]$ is the zero weight space of $L_{\la}^{r|0}$, and we have used the fact that $L_{\la}^{r|0}[0]$ is nonzero if and only if $|\la|=r$.
	Now it is well known that $L_{\la}^{r|0}[0]$ is actually the irreducible $\CH_q(r)$-module associated to partition $\la$ of size $r$, see \cite[Proposition 4.1]{Z03} (and
	\cite[Proposition 2.4.5.3]{H92} for the classical case). This proves the multiplicity-free decomposition. The second statement is clear. 
\end{proof}

\section{The FFT of invariant theory for $\Uqg$}
\subsection{Invariants for  $\Uqg$}
Fix non-negative integers $k,l, r, s$. In this section, we shall work with the following $\Uqg$-module superalgebra.
\begin{defn}\label{defnA}
	Let $\CP^{k|l}_{\,r|s}:=\cM^{\;k|l}_{m|n}\ot_{\fR} \overbar{\cM}^{\;r|s}_{m|n}$, the braided tensor algebra of $\cM^{\;k|l}_{m|n}$ and $\overbar{\cM}^{\;r|s}_{m|n}$. Then it is a $\Uqg$-module superalgebra with respect to the action $\cR(\Uqg)\ot \cR(\Uqg)$,  defined for any $f\ot g\in \CP^{k|l}_{\,r|s}$ and $x\in \Uqg$ by
	\[ x(f\ot g)=\sum_{(x)} (-1)^{[x_{(2)}][f]} \cR_{x_{(1)}}(f)\ot \cR_{x_{(2)}} (g).\]
\end{defn}

\begin{rmk}\label{rmkflatdef}
  By \propref{propBraidedSym} and \propref{propBraidedSym2}, $\cP^{k|l}_{\,r|s}$ is a  flat deformation of the  supersymmetric algebra  $S(\C^{k|l}\ot\C^{m|n}\oplus (\C^{r|s})^{\ast}\ot (\C^{m|n})^{\ast})$, which is isomorphic to $S(\C^{k|l}\ot\C^{m|n}\oplus \C^{r|s}\ot (\C^{m|n})^{\ast})$ as superalgebra.
\end{rmk}

The superalgebra algebraic structure of $\CP^{k|l}_{\,r|s}$ can be described as follows. 
\begin{lem}\label{lemmultP}
Let $T_{ai}=t_{ai}\ot 1$ and $\overline{T}_{bj}=1\ot \bar{t}_{bj}$ with $a\in \bI_{k|l}$, $b \in \bI_{r|s}$ and $i,j\in\bI_{m|n}$. Then $\CP^{k|l}_{\,r|s}$ is generated by the elements $T_{ai}$ and $\overline{T}_{bj}$ subject to the following relations 
for all  $a\in \bI_{k|l}$, $b \in \bI_{r|s}$ and $i,j\in\bI_{m|n}$:
\begin{equation}\label{eqRelModAlg}
\begin{aligned}
&T_{ai} \ \text{\  satisfy the relevant relations for $t_{ai}$ in \eqref {eqRel1}},\\
& \overline{T}_{bi} \ \text{\  satisfy the relevant relations for $\overline{t}_{bi}$  in \eqref{eqRel2}},\\
& \overline{T}_{bj}T_{ai}=(-1)^{([a]+[i])([b]+[j])}T_{ai}\overline{T}_{bj},\quad i\neq j\\
&\overline{T}_{bi}T_{ai}=(-1)^{([a]+[i])([b]+[i])}q_i^{-1}T_{ai}\overline{T}_{bi}-\sum_{j>i}(-1)^{[b]([a]+[i])+[a][j]}(q-q^{-1})T_{aj}\overline{T}_{bj}.
     \end{aligned} 
	\end{equation}
\end{lem}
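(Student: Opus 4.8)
The plan is to exploit that, as a vector superspace, $\CP^{k|l}_{\,r|s}=\cM^{\;k|l}_{m|n}\ot\overbar{\cM}^{\;r|s}_{m|n}$ is just the ordinary tensor product, the braiding affecting only the multiplication $\mu_{A,B}=(\mu_A\ot\mu_B)(\id_A\ot\widecheck{\fR}\ot\id_B)$ of \propref{proptensormodalg}; so the task is to evaluate $\mu_{A,B}$ on generators. Since $1_{\cM}$ and $1_{\overbar{\cM}}$ are $\Uqg$-invariant we have $\widecheck{\fR}(b\ot1_A)=1_A\ot b$ and $\widecheck{\fR}(1_B\ot a)=a\ot1_B$, whence $\mu_{A,B}$ restricts to $\mu_A$ on $\cM^{\;k|l}_{m|n}\ot1$ and to $\mu_B$ on $1\ot\overbar{\cM}^{\;r|s}_{m|n}$; thus $a\mapsto a\ot1$ and $b\mapsto1\ot b$ are superalgebra embeddings. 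This immediately yields that the $T_{ai}=t_{ai}\ot1$ obey the relevant relations of \eqref{eqRel1} and the $\overline{T}_{bj}=1\ot\bar{t}_{bj}$ obey the relevant relations of \eqref{eqRel2}, and, together with $(x\ot1)(1\ot y)=x\ot y$, that the $T_{ai}$ and $\overline{T}_{bj}$ generate $\CP^{k|l}_{\,r|s}$.

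The heart of the proof is the computation of the mixed relations. By the formula for $\mu_{A,B}$,
\[
\overline{T}_{bj}T_{ai}=(1\ot\bar{t}_{bj})(t_{ai}\ot1)=(\mu_A\ot\mu_B)(\id_A\ot\widecheck{\fR}\ot\id_B)(1\ot\bar{t}_{bj}\ot t_{ai}\ot1),
\]
so the problem reduces to evaluating $\widecheck{\fR}(\bar{t}_{bj}\ot t_{ai})$, the result being a $\CK$-combination of elements $t_{ai'}\ot\bar{t}_{bj'}=T_{ai'}\overline{T}_{bj'}$. The key observation is that, for fixed first index, $\Span\{t_{ai}\mid i\in\bI_{m|n}\}\subset\cM^{\;k|l}_{m|n}$ is a $\cR(\Uqg)$-submodule isomorphic to the natural module $V^{m|n}$, while $\Span\{\bar{t}_{bj}\mid j\in\bI_{m|n}\}\subset\overbar{\cM}^{\;r|s}_{m|n}$ is isomorphic to its dual $(V^{m|n})^{\ast}$; both follow directly from \eqref{eqRact}, the coproducts of $t_{ai}$ and $\bar{t}_{bj}$, and \eqref{eqmatelmt}. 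Hence $\widecheck{\fR}$ on these factors is the braiding $\widecheck{R}_{(V^{m|n})^{\ast},V^{m|n}}$, whose matrix is $P\circ(\bar{\pi}\ot\pi)(\fR)$ --- a mixed $R$-matrix obtained by the same computation that produced $R$ and $\overbar{R}$ in \secref{secfunalg} (cf.\ the mixed relations around \eqref{eqrelbar} and \rmkref{rmkRinv}). Substituting its explicit entries, applying $P$ and then $\mu_A\ot\mu_B$, and tracking the Koszul signs produced by the $\Z_2$-grading gives exactly the last two displayed identities, the sum $\sum_{j>i}$ being the contribution of the off-diagonal entries of that $R$-matrix.

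It remains to verify completeness of the presentation. Let $\widetilde{\CP}$ be the superalgebra abstractly defined by the four families of relations, so that there is a canonical surjection $\widetilde{\CP}\twoheadrightarrow\CP^{k|l}_{\,r|s}$. Using the first two families one rewrites any word as a $\CK$-combination of products of an ordered monomial in the $T_{ai}$ and an ordered monomial in the $\overline{T}_{bj}$ of the PBW form of \propref{propMbasis} --- applied to $\cM^{\;k|l}_{m|n}$ and to $\overbar{\cM}^{\;r|s}_{m|n}$, the latter by the same argument since \eqref{eqRel2} has the shape of \eqref{eqRel1} --- and using the mixed relations one moves every $\overline{T}_{bj}$ to the right of every $T_{ai}$. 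Thus, in each $\Z_{+}$-degree, $\widetilde{\CP}$ is spanned by such products, whose images in $\CP^{k|l}_{\,r|s}=\cM^{\;k|l}_{m|n}\ot\overbar{\cM}^{\;r|s}_{m|n}$ are the tensor products of the two PBW bases, hence linearly independent; a degreewise dimension comparison then forces the surjection $\widetilde{\CP}\twoheadrightarrow\CP^{k|l}_{\,r|s}$ to be an isomorphism.

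The main obstacle is the second step: pinning down the mixed $R$-matrix $(\bar{\pi}\ot\pi)\fR$ with the correct normalisation --- and, relatedly, checking that the $\cR$-action on the second index of $t_{ai}$ (resp.\ $\bar{t}_{bj}$) realises $V^{m|n}$ (resp.\ $(V^{m|n})^{\ast}$) rather than a twist or a dual --- and then carrying all the grading signs through $P$ and $\mu_A\ot\mu_B$ so that the factor $q_i^{-1}$, the signs $(-1)^{([a]+[i])([b]+[i])}$ and $(-1)^{[b]([a]+[i])+[a][j]}$, and the summation range $j>i$ in the last relation come out precisely as stated.
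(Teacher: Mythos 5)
Your proof of the mixed relations is, at its core, the same computation as the paper's: both expand $(1\ot\bar{t}_{bj})(t_{ai}\ot 1)=P\fR(\bar{t}_{bj}\ot t_{ai})$ via the $\cR$-actions of the components of the universal $\fR$-matrix. You phrase it by identifying $\Span\{t_{ai}\mid i\}\cong V^{m|n}$ and $\Span\{\bar{t}_{bj}\mid j\}\cong (V^{m|n})^{\ast}$ as $\cR(\Uqg)$-modules and reading off $P\circ(\bar{\pi}\ot\pi)(\fR)$; the paper instead rewrites $\langle\bar{t}_{cj},\alpha_h\rangle$ in terms of $\langle t_{jc},S(\alpha_h)\rangle$ so that the coefficients become entries of $R^{-1}=(\pi\ot\pi)\fR^{-1}$, listed in \eqref{eqRinvmat}. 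Since $\bar{\pi}$ equals $\pi\circ S$ up to a supertranspose and $(S\ot\id)\fR=\fR^{-1}$, your $(\bar{\pi}\ot\pi)\fR$ and the paper's $R^{-1}$ are the same object up to a transpose on the first tensor factor, so the two formulations compute the same coefficients; the paper's route is slightly more convenient only because $R^{-1}$ is already recorded explicitly. The genuinely new ingredient in your argument is the completeness-of-presentation step: you normal-order arbitrary words using the stated relations and compare dimensions degreewise against the tensor product of the PBW bases of \propref{propMbasis} (applied to $\cM^{\;k|l}_{m|n}$, and to $\overbar{\cM}^{\;r|s}_{m|n}$ via $q\mapsto q^{-1}$). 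The paper omits this step, treating it as clear from the general theory of braided tensor products of module superalgebras; making it explicit is correct and worthwhile. Your closing caveat---that the remaining work is tracking grading signs through $P$ and $\mu_A\ot\mu_B$---is exactly right: that bookkeeping is precisely what the paper's displayed formulas for $\cR_{\beta_h}(t_{ai})$ and $\cR_{\alpha_h}(\bar{t}_{bj})$ carry out in full.
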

\begin{proof} All the statements in the lemma are obvious except the last two relations, which require proof.  Note that they are equivalent to 
\begin{eqnarray}\label{eq:all-cases}
\overline{T}_{bj}T_{ai}=\sum_{a^{\prime},b^{\prime}\in\bI_{m|n}}(-1)^{([a]+[i])([b]+[b^{\prime}])+([i]+[j]+[b^{\prime}])([j]+[b^{\prime}])}(R^{-1})^{j\;a^{\prime}}_{b^{\prime}i} T_{aa^{\prime}}\overline{T}_{bb^{\prime}}.
\end{eqnarray}

Let $\fR=\sum_h \alpha_h\ot \beta_h\in\Uqg\ot \Uqg$  be the universal $\fR$ matrix. It follows from  \propref{proptensormodalg} that
\begin{equation*}
(1\ot\bar{t}_{bj})(t_{ai}\ot 1)=P\fR(\bar{t}_{bj}\ot t_{ai})
=\sum_h (-1)^{[\alpha_h][\beta_h]+([\alpha_h]+[j]+[b])([i]+[a])} \cR_{\beta_h}(t_{ai})\ot \cR_{\alpha_h} (\bar{t}_{bj}),
\end{equation*}
where we have the $\cR$-actions
\begin{align*}
\cR_{\beta_h}(t_{ai})&=\sum_{a^{\prime}\in \bI_{m|n}}(-1)^{([a]+[a^{\prime}])([i]+[a^{\prime}])+([\beta_h]+[i]+[a])[\beta_h]}t_{aa^{\prime}}\langle t_{a^{\prime}i},\beta_h\rangle,\\
\cR_{\alpha_h}(\bar{t}_{bj})&=\sum_{b^{\prime}\in\bI_{m|n}}(-1)^{([b]+[b^{\prime}])([b^{\prime}]+[j])+([\alpha_h]+[j]+[b])[\alpha_h]+[j]([j]+[b^{\prime}])}\bar{t}_{bb^{\prime}}\langle t_{jb^{\prime}},S(\alpha_h)\rangle.
\end{align*}
Recall  $\fR^{-1}=\sum_h S(\alpha_h)\ot \beta_h$ and  $R^{-1}=(\pi\ot \pi)\fR^{-1}=\sum_{a,b,c,d\in\bI_{m|n}}e_{ab}\ot e_{cd} (R^{-1})^{ac}_{bd}$. Note that 
$ \sum_{h} \langle t_{jb^{\prime}},S(\alpha_h)\rangle \langle t_{a^{\prime}i},\beta_h\rangle=(R^{-1})^{j\;a^{\prime}}_{b^{\prime}i}$
with the following nonzero entries of $R^{-1}$
\begin{equation}\label{eqRinvmat}
(R^{-1})^{a\,b}_{a\,b}=1,\; a\neq b,\quad (R^{-1})^{a\,a}_{a\,a}=q_a^{-1},\quad (R^{-1})^{a\,b}_{b\,a}=-(q_b-q_b^{-1}),\; a<b.
\end{equation}
Using above equations together, we obtain \eqref{eq:all-cases} as required.
\end{proof}

We want to study the subalgebra of $\Uqg$-invariants in $\CP^{k|l}_{\,r|s}$ (cf. \lemref{leminvalg}). Let
\begin{equation}\label{eqninv}
 \CX^{k|l}_{\;r|s}:=(\CP^{k|l}_{\,r|s})^{\Uqg}=\{ f\in \CP^{k|l}_{\,r|s} \mid  x(f)=\epsilon(x)f,\,\forall x\in \Uqg
 \}. 
\end{equation}
We define the following elements in $\CP^{k|l}_{\,r|s}$ by
\begin{eqnarray}\label{eq:inv-X}
X_{ab}:=\sum_{i\in \bI_{m|n}} (-1)^{[a]([b]+[i])} t_{ai}\ot \bar{t}_{bi}=\sum_{i\in \bI_{m|n}} (-1)^{[a]([b]+[i])}T_{ai}\overline{T}_{bi} 
\end{eqnarray}
for all $a\in \bI_{k|l}$ and $b\in \bI_{r|s}$. The $\Z_{2}$-gradation is given by $[X_{ab}]=[a]+[b]$.

\begin{lem}\label{lemXinv}
	The elements $X_{ab}$ belong to $\CX^{k|l}_{\;r|s}$ for all $a\in \bI_{k|l}$ and $b\in  \bI_{r|s}$.
\end{lem}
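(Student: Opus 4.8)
The plan is to verify directly that each generator $x\in\{K_a^{\pm1},E_{b,b+1},E_{b+1,b}\}$ of $\Uqg$ acts on $X_{ab}$ by the counit, i.e. $x(X_{ab})=\epsilon(x)X_{ab}$; since the invariants form a subspace this suffices. The key structural input is that the $\cR$-action is defined through the right regular coaction, so $\cR_x(t_{ai})$ and $\cR_x(\bar t_{bi})$ are governed by the matrix coefficients $\langle t_{\bullet\bullet},x\rangle$ and $\langle t_{\bullet\bullet},S(x)\rangle$ of the natural representation $\pi$ and its dual $\bar\pi$. Concretely, $\cR_x(t_{ai})=\sum_j (\pm)\, t_{aj}\langle t_{ji},x\rangle$ up to the sign/$\mathbb Z_2$-bookkeeping appearing in the proof of \lemref{lemmultP}, and similarly $\cR_x(\bar t_{bi})=\sum_j (\pm)\,\bar t_{bj}\langle \bar t_{ij},x\rangle$. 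The defining property $X_{ab}=\sum_i(-1)^{[a]([b]+[i])}T_{ai}\overline{T}_{bi}$ is built exactly so that the ``$i$-index'' is contracted; the point is that this contraction is the pairing between $V^{m|n}$ and $(V^{m|n})^\ast$, which is a $\Uqg$-module map.

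First I would compute the $K_a$-case: since $\Delta(K_a)=K_a\otimes K_a$ and $K_a$ acts semisimply, $\cR_{K_a}(t_{ai})=q^{(\epsilon_a,\epsilon_i)}t_{ai}$ (up to the conventional normalisation) while $\cR_{K_a}(\bar t_{bi})=q^{-(\epsilon_b,\epsilon_i)}\bar t_{bi}$ — the inverse weight because $\bar\pi$ is the dual. Hence in $K_a(X_{ab})=\sum_i(\pm)\,\cR_{(K_a)_{(1)}}(T_{ai})\cR_{(K_a)_{(2)}}(\overline T_{bi})$ the two $q$-powers depending on $i$ cancel, leaving $X_{ab}$; and $\epsilon(K_a)=1$, so this is the desired identity. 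Second I would do $E_{b,b+1}$ (the $E_{b+1,b}$ case is symmetric): using $\Delta(E_{b,b+1})=E_{b,b+1}\otimes K_bK_{b+1}^{-1}+1\otimes E_{b,b+1}$ and $\epsilon(E_{b,b+1})=0$, one gets
\[
E_{b,b+1}(X_{ab})=\sum_i(\pm)\Big(\cR_{E_{b,b+1}}(T_{ai})\,\cR_{K_bK_{b+1}^{-1}}(\overline T_{bi})+T_{ai}\,\cR_{E_{b,b+1}}(\overline T_{bi})\Big),
\]
and one checks the two sums cancel term-by-term after reindexing, because $\cR_{E_{b,b+1}}$ raises an index by one on $T$ but lowers it by one on $\overline T$ (again duality), with the $K$-factor supplying precisely the compensating power of $q$. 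In bra-ket form this is just the statement that the canonical element $\sum_i v_i\otimes v_i^\ast\in V^{m|n}\otimes(V^{m|n})^\ast$ is $\Uqg$-invariant, transported into $\CP^{k|l}_{r|s}$ via $t_{ai}$ and $\bar t_{bi}$ and braided through $\widecheck{\fR}$ in the multiplication; alternatively one may recognise $X_{ab}$ as (a truncation of) the evaluation/coevaluation composite and invoke naturality directly.

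The main obstacle I expect is purely the sign and $K$-factor bookkeeping: one must be careful that the parity-dependent signs $(-1)^{[a]([b]+[i])}$ in the definition of $X_{ab}$, the sign $(-1)^{[x_{(2)}][f]}$ in the $\cR(\Uqg)\otimes\cR(\Uqg)$-action from \defref{defnA}, and the signs implicit in $\cR_x$ from \eqref{eqRact} conspire correctly, and that the $K_bK_{b+1}^{-1}$ appearing in $\Delta(E_{b,b+1})$ acts on $\bar t_{bi}$ with exactly the weight that makes the $E$-term from the first tensor factor cancel the $E$-term from the second. A clean way to organise this, and to avoid a long index computation, is to first prove the case $k=l=r=s=m,\ n=0$ style identity abstractly — namely that the coevaluation map $\CK\to V^{m|n}\otimes(V^{m|n})^\ast$ is a morphism of $\Uqg$-modules and that $X_{ab}$ is its image under the module-superalgebra homomorphisms $\cM_{m|n}\to\cM^{k|l}_{m|n}$, $\overbar{\cM}_{m|n}\to\overbar{\cM}^{r|s}_{m|n}$ composed with the braided multiplication of \propref{proptensormodalg} — and then the invariance of $X_{ab}$ follows formally from \lemref{leminvalg} applied to the image of the invariant coevaluation element. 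I would present the explicit generator-by-generator check as the main line and mention the conceptual coevaluation argument as a remark.
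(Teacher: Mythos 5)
Your main line — checking the generators $K_a$, $E_{b,b+1}$, $E_{b+1,b}$ one at a time via the explicit co-multiplication formulas — would work, but it is not the paper's route. The paper instead computes $xX_{ab}$ uniformly for arbitrary $x\in\Uqg$: after substituting the $\cR$-action formulas and using the parity observations that $\pi(x_{(1)})_{ci}\neq 0$ forces $[x_{(1)}]=[c]+[i]$ and $\pi(S(x_{(2)}))_{id}\neq 0$ forces $[x_{(2)}]=[i]+[d]$, all the signs collapse to give
\[
xX_{ab}=\sum_{c,d\in\bI_{m|n}} (-1)^{[a]([b]+[d])}\,\pi\Bigl(\sum_{(x)}x_{(1)}S(x_{(2)})\Bigr)_{cd}\,T_{ac}\overline{T}_{bd},
\]
so the antipode axiom $\sum_{(x)}x_{(1)}S(x_{(2)})=\epsilon(x)1$ finishes the argument at once, with no case distinctions and no need for the explicit $\Delta(E_{i,i\pm 1})$ formulas. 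Your generator-by-generator check is correct in outline (the $K_a$ case is a weight cancellation, the $E_{b,b+1}$ case a two-term cancellation), but it is longer and more exposed to exactly the sign bookkeeping you flag as the main obstacle. Your closing remark — recognising $X_{ab}$ as a truncation of the image of the $\Uqg$-invariant coevaluation element $\sum_i v_i\ot v_i^*$ — is the right conceptual gloss on the paper's proof, since the antipode axiom is precisely what makes coevaluation equivariant, and I would promote that observation to the main line rather than leaving it as a remark. The one caution is that, for fixed $a$, the $\cR$-action on $\Span\{t_{ai}\mid i\in\bI_{m|n}\}$ is a parity-shifted copy of the natural representation (and similarly for $\bar t_{bi}$ and $\bar\pi$), so the identification of $X_{ab}$ with coevaluation needs slightly more care than your phrase ``image under module-superalgebra homomorphisms'' suggests; the direct computation above sidesteps the issue.
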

\begin{proof}
	For all $x\in \Uqg$, we have
\begin{equation}\label{eqxPhi}
  xX_{ab}  =  \sum_{(x)}\sum_{i\in \bI_{m|n}}(-1)^{[x_{(2)}]([a]+[i])+[a]([b]+[i])}\cR_{x_{(1)}}(t_{ai})\ot \cR_{x_{(2)}} (\bar{t}_{bi}).
\end{equation}
Note that 
\begin{equation}\label{eqRt}
  \begin{aligned}
   \cR_{x_{(1)}}(t_{ai})&=\sum_{c\in \bI_{m|n}} (-1)^{[x_{(1)}]([a]+[i]+[x_{(1)}])+([a]+[c])([c]+[i])}t_{ac}\langle t_{ci},x_{(1)}\rangle\\
   \cR_{x_{(2)}}(\bar{t}_{bi})&=\sum_{d\in \bI_{m|n}} (-1)^{[x_{(2)}]([b]+[i]+[x_{(2)}])+([b]+[d])([d]+[i])}\bar{t}_{bd}\langle \bar{t}_{di},x_{(2)}\rangle,
  \end{aligned}
\end{equation}
where $\langle t_{ci},x_{(1)}\rangle=\pi(x_{(1)})_{ci}$ and $\langle \bar{t}_{di},x_{(2)}\rangle=(-1)^{[i]([d]+[i])}\pi(S(x_{(2)}))_{id}$. Observe that 
\[
\pi(x_{(1)})_{ci}\neq 0 \Leftrightarrow [x_{(1)}]=[c]+[i],\quad \pi(S(x_{(2)}))_{id}\neq 0\Leftrightarrow [x_{(2)}]=[i]+[d].
\]
Combing \eqref{eqxPhi} and \eqref{eqRt}, we obtain
\[
\begin{aligned}
xX_{ab}&=\sum_{(x)}\sum_{i,c,d\in \bI_{m|n}}(-1)^{[a]([b]+[d])}\pi(x_{(1)})_{ci}\pi(S(x_{(2)}))_{id}T_{ac}\overline{T}_{bd}\\
&=\sum_{c,d\in\bI_{m|n}} (-1)^{[a]([b]+[d])}\pi(\epsilon(x))_{cd}T_{ac}\overline{T}_{bd}\\
&= \epsilon(x) X_{ab},
\end{aligned}
 \]
 where we have used the identity $\sum_{(x)}x_{(1)}S(x_{(2)})=\epsilon(x)1$.
\end{proof}

\begin{lem}\label{leminvrel} 
	\begin{enumerate}
		\item The following relations hold in $\CP^{k|l}_{\,r|s}$:
		\begin{align*}
		&X_{ac}T_{bi}=(-1)^{([b]+[i])([a]+[c])}T_{bi}X_{ac},\; &a>b,\\
		&X_{ac}T_{ai}=(-1)^{([a]+[i])([a]+[c])}q_{a}^{-1} T_{ai}X_{ac},\\
		T_{ai}X_{bc}-(-1)&^{([a]+[i])([b]+[c])}X_{bc}T_{ai}=(-1)^{[c]([a]+[b])+[a][b]}(q-q^{-1})T_{bi}X_{ac},\; &a>b,\\
		&X_{ab}\overline{T}_{bi}=(-1)^{([a]+[b])([b]+[i])}q_{b}\overline{T}_{bi}X_{ai},\\
		&X_{ab}\overline{T}_{ci}=(-1)^{([a]+[b])([c]+[i])}\overline{T}_{ci} X_{ab},\; &b>c,\\
		X_{ac}\overline{T}_{bi}-(-1)&^{([b]+[i])([a]+[c])}\overline{T}_{bi} X_{ac}=(-1)^{[i]([a]+[c])+[a][b]}(q-q^{-1})\overline{T}_{ci}X_{ab},\; &b>c.    
		\end{align*}
		\item The elements $X_{ab}\in \CX^{k|l}_{\;r|s}, a\in \bI_{k|l}, b\in\bI_{r|s}$ satisfy the following relations
		\begin{equation}\label{eqinvrel}
			\begin{aligned}
			(X_{ab})^{2}=&0,&\quad &[a]+[b]=\bar{1},\\
			X_{ac}X_{bc}=&(-1)^{([a]+[c])([b]+[c])}q_{c}X_{bc}X_{ac},&\quad &a>b,\\
			X_{ab}X_{ac}=&(-1)^{([a]+[b])([a]+[c])}q_{a}^{-1}X_{ac}X_{ab},&\quad&b>c,\\
			X_{ac}X_{bd}=&(-1)^{([a]+[c])([b]+[d])}X_{bd}X_{ac},&\quad  &a>b, c>d,\\
			X_{ac}X_{bd}=&(-1)^{([a]+[c])([b]+[d])}X_{bd}X_{ac}\\
			&   +  (-1)^{[a]([b]+[d])+[b][d]}(q-q^{-1})X_{bc}X_{ad},&\quad  &a>b,c<d.
			\end{aligned}	
		\end{equation}
	\end{enumerate}
\end{lem}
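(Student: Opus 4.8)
The plan is to prove everything by direct computation from the presentation of $\CP^{k|l}_{\,r|s}$ obtained in \lemref{lemmultP}, with Part (1) as the substantive step; Part (2) then follows from it, and the assertion $X_{ab}\in\CX^{k|l}_{\;r|s}$ is already \lemref{lemXinv}.

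\emph{Part (1).} Fix one of the six identities, say $X_{ac}T_{bi}=(-1)^{([b]+[i])([a]+[c])}T_{bi}X_{ac}$ for $a>b$. Expand $X_{ac}=\sum_{j\in\bI_{m|n}}(-1)^{[a]([c]+[j])}T_{aj}\overline{T}_{cj}$ and commute $T_{bi}$ to the far left: first past $\overline{T}_{cj}$ using the mixed relation \eqref{eq:all-cases} (equivalently the last two lines of \eqref{eqRelModAlg}), then past $T_{aj}$ using the relevant line of \eqref{eqRel1}. Each step produces correction terms summed over $\bI_{m|n}$; regrouping the resulting double sum and using the explicit entries of $R$ from \eqref{eqRel1}, of $R^{-1}$ from \eqref{eqRinvmat}, together with $RR^{-1}=I$ (equivalently the intertwining property \eqref{eqpiR}), one verifies that the correction terms cancel and the whole expression collapses to $T_{bi}X_{ac}$. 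The remaining five identities are handled the same way, with the roles of the two indices of $X$ and of $T$ versus $\overline{T}$ interchanged: the three identities involving $\overline{T}$ use \eqref{eqRel2} in place of \eqref{eqRel1}, and the three carrying a $(q-q^{-1})$-term arise exactly when the index being pushed through interacts with the summation index of $X$, so that one correction term survives.

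\emph{Part (2).} The relations \eqref{eqinvrel} among the $X_{ab}$ are obtained by the same technique. In the cases $a>b$ with $c>d$ or $c<d$ it is quickest to slide one $X$ past the other using Part (1): from $X_{ac}X_{bd}=\sum_{j}(-1)^{[b]([d]+[j])}X_{ac}T_{bj}\overline{T}_{dj}$, move $X_{ac}$ past $T_{bj}$ and then past $\overline{T}_{dj}$; the parity factors accumulated, $([a]+[c])([b]+[j])$ and $([a]+[c])([d]+[j])$, sum to the $j$-independent value $([a]+[c])([b]+[d])$, so the middle summation reassembles into $X_{bd}$ — and, when $c<d$, the surviving correction summation into $X_{bc}X_{ad}$ — reproducing \eqref{eqinvrel} after collecting signs. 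The remaining cases (those with coinciding second indices) are proved by normal-ordering $X_{ac}X_{bd}=\sum_{i,j}\pm\,T_{ai}\overline{T}_{ci}T_{bj}\overline{T}_{dj}$ directly via \eqref{eqRelModAlg}, \eqref{eqRel1}, \eqref{eqRel2}; and the nilpotency relations $(X_{ab})^2=0$ for $[a]+[b]=\bar1$ follow from $T_{ai}^2=0$, $\overline{T}_{bi}^2=0$ and the quadratic relations in \eqref{eqRelModAlg}.

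The main obstacle is the sign bookkeeping in Part (1): every one of the six identities unpacks into a double sum over $\bI_{m|n}$ containing several correction terms, and one must check, case by case according to the order relations among the $\bI_{m|n}$-indices, that the Yang--Baxter cancellations occur with exactly the parities claimed. There is no conceptual difficulty once \lemref{lemmultP} is available, but keeping the $(-1)^{\ldots}$ factors correct throughout is delicate.
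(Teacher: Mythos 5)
Your proposal is correct and follows essentially the same route as the paper: prove the commutation rules of Part (1) by expanding $X_{ab}$ as a sum over $\bI_{m|n}$ and pushing the outside generator through using \lemref{lemmultP}'s relations, then deduce Part (2) by sliding one $X$ past the other via Part (1). The paper illustrates Part (1) with the third identity and Part (2) with $(X_{ab})^2=0$ (also handled via Part (1) rather than direct normal-ordering), but the underlying strategy is identical.
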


\begin{proof}
   The proof for part (1) is rather involved as there are many cases to consider, even though all are quite similar.  We illustrate the proof by using the third relation in part (1) as an example.
   Using relations \eqref{eqRelModAlg}, we obtain
   \[ 
      \begin{aligned}
      T_{ai}X_{bc}&=\sum_{j\in \bI_{m|n}} (-1)^{[b]([c]+[j])}T_{ai}T_{bj}\overline{T}_{cj}=S_{i>j}+S_{i<j}+S_{i=j}, \ \  \text{with} \\
      S_{j<i}=&\sum_{j<i} (-1)^{[b]([c]+[j])} \biggl( (-1)^{([a]+[i])([b]+[j])}T_{bj}T_{ai}\overline{T}_{cj}\\
     &+(-1)^{[a]([b]+[j])+[b][j]}(q-q^{-1})T_{bi}T_{aj}\overline{T}_{cj}
    \biggr), \\
    S_{j>i}=&\sum_{j>i} (-1)^{[b]([c]+[j])+([a]+[i])([b]+[j])}T_{bj}T_{ai}\overline{T}_{cj},\\
    S_{j=i}=&(-1)^{[b]([c]+[i])+([a]+[i])([b]+[i])}q_iT_{bi}T_{ai}\overline{T}_{ci}.
   \end{aligned}
   \]
   On the other hand, we have 
   \[
   \begin{aligned}
   X_{bc}T_{ai}&=\sum_{j\in\bI_{m|n}} (-1)^{[b]([c]+[j])} T_{bj}\overline{T}_{cj}T_{ai}=S^{\prime}_{j=i}+S^{\prime}_{j\neq i} \ \  \text{with}\\
         S^{\prime}_{j=i}=& (-1)^{[b]([c]+[i])} \biggl( (-1)^{([a]+[i])([c]+[i])}q_i^{-1} T_{bi}T_{ai}\overline{T}_{ci}\\
   &- \sum_{k>i}(-1)^{[c]([a]+[i])+[a][k]}(q-q^{-1}) T_{bi}T_{ak}\overline{T}_{ck} \!\biggr), \\
   S^{\prime}_{j\neq i}=& \sum_{j\neq i}(-1)^{[b]([c]+[j])+ ([a]+[i])([c]+[j])} T_{bj}T_{ai}\overline{T}_{cj}. 
   \end{aligned}
   \]
 Straightforward calculation shows that   
   \[ 
   T_{ai}X_{bc}-(-1)^{([a]+[i])([b]+[c])}X_{bc}T_{ai}=(-1)^{[c]([a]+[b])+[a][b]}(q-q^{-1})T_{bi}X_{ac},\; a>b.  
   \]

   Now we turn to part (2), which is an easy consequence of part (1). For instance, if $[a]+[b]=\bar{1}$, then
   \[ 
   \begin{aligned}
      (X_{ab})^{2}&=\sum_{i\in\bI_{m|n}}
      (-1)^{[a]([b]+[i])}X_{ab}T_{ai}\overbar{T}_{bi}\\
      &=\sum_{i\in \bI_{m|n}}(-1)^{[a]([b]+[i])+([a]+[b])([a]+[i])}q_a^{-1}T_{ai}X_{ab}\overbar{T}_{bi}\\
      &= \sum_{i\in \bI_{m|n}}(-1)^{[a]([b]+[i])+([a]+[b])}q_a^{-1}q_b T_{ai}\overbar{T}_{ai}X_{ab}\\
      &=-q_a^{-1}q_b(X_{ab})^{2}.
   \end{aligned}
    \]
  Hence $(X_{ab})^2=0$ in this case. The remaining relations can be proved in a similar way. 
\end{proof}

The following is one of the main results of this paper. 
\begin{thm}\label{thmFFT} {\rm (FFT for $\Uqg$)} 
	 The invariant subalgebra $\CX^{k|l}_{\;r|s}$ is generated by the elements $X_{ab}$ with $a\in \bI_{k|l}$ and $b\in  \bI_{r|s}$.
\end{thm}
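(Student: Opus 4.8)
The plan is to prove the two inclusions separately. That $\langle X_{ab}\mid a\in\bI_{k|l},\,b\in\bI_{r|s}\rangle\subseteq\CX^{k|l}_{\;r|s}$ is immediate from \lemref{lemXinv} together with \lemref{leminvalg}. All the work is in the reverse inclusion, and I would organise it around a complete description of $\CX^{k|l}_{\;r|s}$ as a module over the commuting actions of $\Uq(\gl_{k|l})\ot\Uq(\gl_{r|s})$ (via $\CL\ot\widetilde{\CL}$ on the two tensor factors of $\CP^{k|l}_{\,r|s}=\cM^{k|l}_{m|n}\ot_{\fR}\overbar{\cM}^{r|s}_{m|n}$) and of $\Uqg$ (via $\cR\ot\cR$, as in \defref{defnA}).

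First I would pin down the module structure of the invariants. The braided tensor product is bigraded by $(\deg_{\cM},\deg_{\overbar{\cM}})$ and $\cR(\Uqg)$ respects this bigrading. By \thmref{thmHowe} and \rmkref{rmkHowedual} applied with the second pair of parameters equal to $(m|n)$, one has as $\Uqg$-modules $(\cM^{k|l}_{m|n})_{N}\cong\bigoplus_{|\la|=N}(\dim L^{k|l}_{\la})\,L^{m|n}_{\la}$ and $(\overbar{\cM}^{r|s}_{m|n})_{N'}\cong\bigoplus_{|\mu|=N'}(\dim L^{r|s}_{\mu})\,(L^{m|n}_{\mu})^{\ast}$. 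Since $(L^{m|n}_{\la}\ot(L^{m|n}_{\mu})^{\ast})^{\Uqg}=\Hom_{\Uqg}(L^{m|n}_{\mu},L^{m|n}_{\la})=\de_{\la\mu}\CK$ by Schur's lemma for the simple modules $L^{m|n}_{\la}$ — note that no semisimplicity of the mixed tensor modules is needed — invariants occur only in balanced bidegree $(N,N)$ and only for $\la=\mu$, whence
\[
\CX^{k|l}_{\;r|s}\cong\bigoplus_{\la\in\La}L^{k|l}_{\la}\ot L^{r|s}_{\la},\qquad \La:=\La_{k|l}\cap\La_{r|s}\cap\La_{m|n},
\]
a \emph{multiplicity-free} decomposition as $\Uq(\gl_{k|l})\ot\Uq(\gl_{r|s})$-module, the summand for $\la$ living in bidegree $(|\la|,|\la|)$.

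Now set $\CA:=\langle X_{ab}\rangle$. By \lemref{leminvrel}(1) the $X_{ab}$ span a copy of the natural module $V^{k|l}\ot V^{r|s}$ inside the bidegree-$(1,1)$ part of $\CX^{k|l}_{\;r|s}$, so $\CA$ is a $\Uq(\gl_{k|l})\ot\Uq(\gl_{r|s})$-submodule superalgebra; by multiplicity-freeness, $\CA=\bigoplus_{\la\in\La'}L^{k|l}_{\la}\ot L^{r|s}_{\la}$ for some $\La'\subseteq\La$, and it remains to prove $\La'=\La$, i.e.\ that every $\la\in\La$ is realised in $\CA$. For this I would exhibit an explicit highest-weight vector $\xi_{\la}\in\CA$ of weight $(\la^{\natural};\la^{\natural})$: taking the column heights $\la'_1\ge\la'_2\ge\cdots$ of $\la$, let $\xi_{\la}$ be the ordered product $\prod_{j}\mathrm D_{\la'_j}$ of the principal quantum minors (quantum bideterminants) of the matrix $(X_{ab})$, where $\mathrm D_h$ uses the first $h$ indices of $\bI_{k|l}$ as rows and of $\bI_{r|s}$ as columns, $q$-antisymmetrised in the order dictated by \eqref{eqinvrel}; using \lemref{leminvrel}(1) one checks $\xi_{\la}$ is killed by the raising operators and has the claimed weight. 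It then suffices to show $\xi_{\la}\ne 0$ in $\CP^{k|l}_{\,r|s}$, which I would do by passing to the classical limit: the relations \eqref{eqRelModAlg} have coefficients in $\Z[q,q^{-1}]$, so the $\Z[q,q^{-1}]$-subalgebra generated by the $T_{ai}$ and $\overline{T}_{bj}$ is a form of $\CP^{k|l}_{\,r|s}$, free on the PBW-type monomial basis of \propref{propMbasis}, whose specialisation at $q=1$ is $\CS^{k|l}_{\,r|s}$ — this is exactly the flatness recorded in \rmkref{rmkflatdef}. The element $\xi_{\la}$ lies in this form and $\xi_{\la}|_{q=1}$ is, up to the classical signs of \rmkref{rmkHowecl}, the product of ordinary bideterminants of shape $\la$ in the generic matrix of contractions $\bigl(\sum_{i\in\bI_{m|n}}x_{ai}y_{bi}\bigr)$; this is a nonzero (super)polynomial precisely because $\la\in\La_{m|n}$. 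Hence $\xi_{\la}\ne 0$, so $\la\in\La'$, giving $\La'=\La$ and $\CA=\CX^{k|l}_{\;r|s}$. (The degenerate cases, e.g.\ $k=0$ or $r=0$, where there are no $X_{ab}$ and $\CX^{k|l}_{\;r|s}=\CK$, are checked directly.)

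The main obstacle is the nonvanishing $\xi_{\la}\ne 0$: it requires (i) choosing the right explicit highest-weight element — a product of quantum bideterminants whose ordering and $q$-twists are genuinely compatible with \eqref{eqinvrel} — and verifying it is highest-weight; and (ii) setting up the $\Z[q,q^{-1}]$-form carefully enough that specialisation at $q=1$ is legitimate, the delicate input being its freeness, which is supplied by the PBW basis of \propref{propMbasis} and the flatness of \rmkref{rmkflatdef}. An alternative route to $\La'=\La$ that avoids explicit bideterminants is to expand a product $X_{c_1d_1}\cdots X_{c_Nd_N}$ in $\CP^{k|l}_{\,r|s}$ by moving all $T$'s to the left using \lemref{lemmultP}: the leading term is the ``diagonal coupling'' $\sum_{\vec i}\pm(T_{c_1 i_1}\cdots T_{c_N i_N})\ot(\overline{T}_{d_1 i_1}\cdots\overline{T}_{d_N i_N})$, and the correction terms have strictly smaller length once matched indices recombine, so an induction on $N$ places all such couplings in $\CA$; combining this with quantum Schur--Weyl duality for $V^{m|n}$ (\thmref{thmSchur}), which identifies $\bigl((V^{m|n})^{\ot N}\ot((V^{m|n})^{\ast})^{\ot N}\bigr)^{\Uqg}$ with words in $\widecheck{R}_{V^{m|n},V^{m|n}}$, and noting that such a word acts on the $\bI_{m|n}$-indices of a $T$-monomial through the relations \eqref{eqRel1} and so stays within the span of $T$-monomials, yields $\CX^{k|l}_{\;r|s}\cap\bigl((\cM^{k|l}_{m|n})_N\ot(\overbar{\cM}^{r|s}_{m|n})_N\bigr)\subseteq\CA$ directly. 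The bookkeeping of the correction terms is the cost of avoiding the classical limit.
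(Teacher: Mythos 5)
Your first two steps are sound and in fact reproduce a lemma the paper proves (\lemref{leminv}): the bigraded decomposition of $\CP^{k|l}_{\,r|s}$ via \thmref{thmHowe}, Schur's lemma on each $L^{m|n}_{\la}\ot(L^{m|n}_{\mu})^{\ast}$, and the resulting multiplicity-free structure of $\CX^{k|l}_{\;r|s}$ as a $\CL(\Uq(\gl_{k|l}))\ot\widetilde\CL(\Uq(\gl_{r|s}))$-module, from which it follows that $\CA=\langle X_{ab}\rangle$ is a partial sum $\bigoplus_{\la\in\La'}L^{k|l}_{\la}\ot L^{r|s}_{\la}$. The problem is step three, and the gap is genuine. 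For $\la\in\La=\La_{k|l}\cap\La_{r|s}\cap\La_{m|n}$ the number of parts $\la'_1$ can be arbitrarily large while $\la$ stays in $\La$ (e.g.\ $\la=(1^N)\in\La_{1|1}\cap\La_{1|1}\cap\La_{m|n}$ for any $N$ and $m\ge 1$), so ``the first $\la'_1$ indices of $\bI_{k|l}$'' do not exist when $\la'_1>k+l$, and even when $k<h\le k+l$ the $q$-antisymmetrised minor over rows $1,\dots,h$ carries $\CL$-weight $\sum_{a=1}^{h}\epsilon_a$, whereas the correct highest weight of $L^{k|l}_{(1^h)}$ is $\sum_{a=1}^{k}\epsilon_a+(h-k)\epsilon_{k+1}$ by \propref{propextalg}. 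Hence $\prod_j D_{\la'_j}$ does not have weight $\la^{\natural}$ except when every $\la'_j\le\min\{k,r\}$, and it is not a candidate highest-weight vector for $L^{k|l}_{\la}\ot L^{r|s}_{\la}$. What you actually need is a $q$-deformation of the Sergeev super-bideterminant, with repeated odd row/column entries for the columns of height exceeding the bosonic rank — and the paper is careful to note (see the remark after \thmref{thmSFT} and \secref{secapplication}) that such explicit generators are worked out only for $\Uq(\gl_m)$ (where ordinary quantum minors suffice) and for $\U(\gl_{m|n})$ at $q=1$ (where Sergeev's $P_{\ft}(I,J)$ suffice), precisely because a workable quantum-super bideterminant is nontrivial. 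Your specialisation-at-$q=1$ mechanism is fine in principle once you have a genuine $\Z[q,q^{-1}]$-highest-weight vector, but the vector itself is missing.

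The paper sidesteps this entirely. When $m\ge\max\{k,r\}$ and $n\ge\max\{l,s\}$ it introduces the injective linear map $\widetilde\Delta=(\id\ot S)\Delta:\cM^{k|l}_{r|s}\to\CP^{k|l}_{\,r|s}$, shows $\widetilde\Delta(t_{ab})=X_{ab}$ and $\widetilde\Delta(\cM^{k|l}_{r|s})=\CX^{k|l}_{\;r|s}$ by comparing graded dimensions (\lemref{lemXim}), and then — because $\widetilde\Delta$ is \emph{not} an algebra map — establishes the twisted multiplicativity $\widetilde\Delta(f)\widetilde\Delta(g)=\widetilde\Delta(\mu\circ\varpi(f\ot g))$ with $\varpi$ a bijective $\fR^{-1}$-twist (\lemref{lemKeyFFT}); since $\mu\circ\varpi$ is surjective in each degree, an induction on degree gives the result without ever naming a highest-weight vector. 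The remaining case is reduced to this one by truncation and the action of $\CL\ot\widetilde\CL$ (\lemref{lemcase2}), an observation you have essentially built in through leminv. So the two strategies agree on the module-theoretic setup but diverge on the generation argument: yours would recover the classical De Concini–Procesi/Sergeev proof and requires solving the quantum-super bideterminant problem, while the paper's Hopf-algebraic route avoids it at the cost of the $\varpi$-bookkeeping. Your ``alternative route'' via expanding $X_{c_1d_1}\cdots X_{c_Nd_N}$ and isolating a leading diagonal coupling is too loosely stated to assess: the correction terms in the last relation of \lemref{lemmultP} preserve the number of $T$'s and $\overline T$'s, so it is not clear which ``length'' decreases, and as written the induction does not obviously close.
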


\begin{rmk}The special case with $n=0$ of the theorem is the FFT of  invariant theory for the quantum general linear group \cite{LZZ}. One can also recover from this theorem the FFT of  invariant theory for  the general linear Lie superalgebra \cite{LZ1,S1,S2}  by specialising $q$ to 1. These points will be discussed in Section \ref{sect:appl}. 
\end{rmk}

\subsection{Proof of FFT}

The proof of \thmref{thmFFT}  will be  divided into two cases based on the following lemma.

\begin{lem}\label{leminv}
  The invariant subalgebra $\CX^{k|l}_{\;r|s}$ admits the multiplicity-free decomposition 
  \begin{equation}
   \CX^{k|l}_{\;r|s}\cong\bigoplus_{\la\in \La_{k|l}\cap \La_{r|s}\cap \La_{m|n}} L_{\la}^{k|l}\ot L_{\la}^{r|s}
  \end{equation}
  as $\CL(\Uq(\gl_{k|l}))\ot \widetilde{\CL}(\Uq(\gl_{r|s}))$-module.
\end{lem}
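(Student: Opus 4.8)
The plan is to decompose $\CP^{k|l}_{\,r|s}$ simultaneously under the two ``left'' actions and the diagonal $\Uqg$-action, and then read off the invariants by Schur's lemma. First I would record the relevant module structure. By \defref{defnA} the $\Uqg$-action on $\CP^{k|l}_{\,r|s}=\cM^{k|l}_{m|n}\ot_{\fR}\overbar{\cM}^{r|s}_{m|n}$ is exactly the coproduct action on the \emph{ordinary} tensor product of the $\cR(\Uqg)$-modules $\cM^{k|l}_{m|n}$ and $\overbar{\cM}^{r|s}_{m|n}$ — the braiding entering the multiplication $\mu_{A,B}$ plays no role at the level of modules. Since $\cM^{k|l}_{m|n}$ is a module over $\Uq(\gl_{k|l})\ot\Uqg$ via $\CL\ot\cR$ and $\overbar{\cM}^{r|s}_{m|n}$ is a module over $\Uq(\gl_{r|s})\ot\Uqg$ via $\widetilde{\CL}\ot\cR$, the superspace $\CP^{k|l}_{\,r|s}$ becomes a module over $\Uq(\gl_{k|l})\ot\Uq(\gl_{r|s})\ot\Uqg$ by letting the last factor act through $\Delta$; the three actions $\CL$, $\widetilde{\CL}$ and the diagonal $\cR\ot\cR$ pairwise graded-commute, so by \lemref{leminvalg} the invariant subalgebra $\CX^{k|l}_{\;r|s}=(\CP^{k|l}_{\,r|s})^{\Uqg}$ is stable under, hence a module over, $\CL(\Uq(\gl_{k|l}))\ot\widetilde{\CL}(\Uq(\gl_{r|s}))$.

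Next I would decompose each tensor factor. Applying the quantum Howe duality \thmref{thmHowe} (with the subscript $m|n$ in place of $r|s$, i.e.\ no truncation on the $\cR$-side) and its dual version \rmkref{rmkHowedual}, one has
\[
\cM^{k|l}_{m|n}\cong\bigoplus_{\la\in\La_{k|l}\cap\La_{m|n}}L^{k|l}_{\la}\ot L^{m|n}_{\la},\qquad
\overbar{\cM}^{r|s}_{m|n}\cong\bigoplus_{\mu\in\La_{r|s}\cap\La_{m|n}}L^{r|s}_{\mu}\ot (L^{m|n}_{\mu})^{\ast}
\]
as $\CL(\Uq(\gl_{k|l}))\ot\cR(\Uqg)$- and $\widetilde{\CL}(\Uq(\gl_{r|s}))\ot\cR(\Uqg)$-modules respectively. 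Tensoring and regrouping the factors gives, as a $\Uq(\gl_{k|l})\ot\Uq(\gl_{r|s})\ot\Uqg$-module,
\[
\CP^{k|l}_{\,r|s}\cong\bigoplus_{\substack{\la\in\La_{k|l}\cap\La_{m|n}\\ \mu\in\La_{r|s}\cap\La_{m|n}}}L^{k|l}_{\la}\ot L^{r|s}_{\mu}\ot\bigl(L^{m|n}_{\la}\ot (L^{m|n}_{\mu})^{\ast}\bigr),
\]
with $\CL$ acting on $L^{k|l}_{\la}$, $\widetilde{\CL}$ on $L^{r|s}_{\mu}$, and $\Uqg$ (through $\cR\ot\cR$) on $L^{m|n}_{\la}\ot (L^{m|n}_{\mu})^{\ast}$.

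Finally I would pass to $\Uqg$-invariants. Taking invariants commutes with direct sums and with tensoring by a superspace on which $\Uqg$ acts trivially, so
\[
\CX^{k|l}_{\;r|s}\cong\bigoplus_{\la,\mu}L^{k|l}_{\la}\ot L^{r|s}_{\mu}\ot\bigl(L^{m|n}_{\la}\ot (L^{m|n}_{\mu})^{\ast}\bigr)^{\Uqg}.
\]
Using the standard $\Uqg$-module isomorphism $M\ot N^{\ast}\cong\Hom_{\CK}(N,M)$, whose invariants are $\Hom_{\Uqg}(N,M)$, the inner space equals $\Hom_{\Uqg}(L^{m|n}_{\mu},L^{m|n}_{\la})$. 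Each $L^{m|n}_{\la}$ is an absolutely irreducible highest weight module (an endomorphism must preserve the one-dimensional highest weight line, hence is a scalar, so $\End_{\Uqg}(L^{m|n}_{\la})=\CK$), and $L^{m|n}_{\la}\cong L^{m|n}_{\mu}$ forces $\la=\mu$ since the weights $\la^{\natural}$ are distinct for distinct $\la\in\La_{m|n}$. Hence $\bigl(L^{m|n}_{\la}\ot (L^{m|n}_{\mu})^{\ast}\bigr)^{\Uqg}\cong\de_{\la\mu}\CK$, and the double sum collapses to $\bigoplus_{\la\in\La_{k|l}\cap\La_{r|s}\cap\La_{m|n}}L^{k|l}_{\la}\ot L^{r|s}_{\la}$, which is the asserted decomposition.

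The routine but genuinely delicate step is the first paragraph: one must verify that the $\CL$- and $\widetilde{\CL}$-actions descend to the braided tensor product and graded-commute with the coproduct $\Uqg$-action, keeping the Koszul signs straight, so that the triple $\Uq(\gl_{k|l})\ot\Uq(\gl_{r|s})\ot\Uqg$-module structure is well-defined; once this is in place the rest is formal. It is worth stressing that $\CP^{k|l}_{\,r|s}$ is \emph{not} semisimple as a $\Uqg$-module, but this causes no difficulty, because invariants are only extracted \emph{after} decomposing $\CP^{k|l}_{\,r|s}$ over the larger algebra, where the relevant summands $L^{m|n}_{\la}\ot (L^{m|n}_{\mu})^{\ast}$ are treated directly through $\Hom$-spaces rather than through any semisimplicity of $\CP^{k|l}_{\,r|s}$ itself.
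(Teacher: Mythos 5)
Your proof is correct and follows the same route as the paper: decompose $\CP^{k|l}_{\,r|s}$ via the Howe dualities (\thmref{thmHowe} and \rmkref{rmkHowedual}), regroup the factors, take $\Uqg$-invariants, and collapse the double sum by Schur's Lemma applied to $(L^{m|n}_{\la}\ot (L^{m|n}_{\mu})^{\ast})^{\Uqg}$. You spell out more details than the paper does (the observation that the braiding does not alter the underlying module structure, the $\Hom$-space identification, absolute irreducibility), but the underlying argument is identical.
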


\begin{proof}
	By \defref{defnA} and \thmref{thmHowe}, we obtain that as a $\CL(\Uq(\gl_{k|l}))\ot \cR(\Uq(\gl_{m|n}))\ot \widetilde{\CL}(\Uq(\gl_{r|s}))\ot \cR(\Uq(\gl_{m|n}))$-module,
	\[ \CP^{k|l}_{\,r|s}\cong \bigoplus_{\substack{\la\in \La_{k|l}\cap\La_{m|n},\\ \mu\in \La_{r|s}\cap\La_{m|n}}} L_{\la}^{k|l}\ot L_{\la}^{m|n}\ot L_{\mu}^{r|s}\ot (L_{\mu}^{m|n})^{\ast}.\]
	It follows that
	\[ 
	\begin{aligned}
	\CX^{k|l}_{\;r|s}&\cong \bigoplus_{\substack{\la\in \La_{k|l}\cap\La_{m|n},\\ \mu\in \La_{r|s}\cap\La_{m|n}}} L_{\la}^{k|l}\ot  L_{\mu}^{r|s} \ot (L_{\la}^{m|n}\ot (L_{\mu}^{m|n})^{\ast})^{\Uqg}   \\
	&\cong \bigoplus_{\la\in \La_{k|l}\cap \La_{r|s}\cap \La_{m|n}} L_{\la}^{k|l}\ot L_{\la}^{r|s},
	\end{aligned}
	 \]
where we have used Schur's Lemma that $(L_{\la}^{m|n}\ot (L_{\mu}^{m|n})^{\ast})^{\Uqg}$  has a unique invariant (up to scalar multiples) if and only if $\la=\mu$.
\end{proof}

Motivated  by the approach to FFT of invariant theory in  \cite{LZZ}, we are in a position to deal with the following two cases respectively:
\begin{itemize}
	\item $m\geq \max\{k,r\}$ and $n\geq \max\{l,s\}$;
	\item $m< \max\{k,r\}$ or $n< \max\{l,s\}$.
\end{itemize}

\subsubsection{The case $m\geq \max\{k,r\}$ and $n\geq \max\{l,s\}$}\label{secfirstcase}
In this case,  one immediately obtains
 \begin{equation}\label{eqLafirst}
   \La_{k|l}\cap \La_{r|s}\cap \La_{m|n}=\La_{k|l}\cap \La_{r|s}.
 \end{equation}
($m\geq \max\{k,r\}$ and $n\geq \max\{l,s\}$ is only a sufficient condition for this equation.)  Viewing $\cM^{k|l}_{r|s}$ as a subalgebra of $\cM_{m|n}$, we have $\Delta(\cM_{r|s}^{k|l})\subset \cM_{m|n}^{\;k|l}\ot \cM_{\;\,r|s}^{m|n}$ under the co-multiplication on $\cM_{m|n}$. Let $\widetilde{\Delta}:=(\id\ot S)\Delta$ be the $\CK$-linear map
\[ \widetilde{\Delta}: \cM_{r|s}^{k|l}\longrightarrow \CP_{\,r|s}^{k|l}=\cM^{\;k|l}_{m|n}\ot_{\fR} \overbar{\cM}^{\;r|s}_{m|n},  \]
where $S$ is the antipode of $\cM_{m|n}$ restricted on $\cM^{m|n}_{\;\,r|s}$.

\begin{lem}\label{leminj}
 $\widetilde{\Delta}$ is injective.	
\end{lem}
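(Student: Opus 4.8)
The plan is to show $\widetilde{\Delta}$ is injective by an explicit counit-type argument combined with the multiplicity-free decompositions already at our disposal. First I would note the general Hopf-algebraic identity: for the bialgebra $\cM_{m|n}$ with antipode $S$, one has $\mu\circ(\id\ot S)\circ\Delta = \eta\circ\epsilon$ on $\CK[\GL_q(m|n)]$, where $\mu$ is multiplication. Hence if we compose $\widetilde{\Delta}:\cM^{k|l}_{r|s}\to \cM^{\;k|l}_{m|n}\ot_{\fR}\overbar{\cM}^{\;r|s}_{m|n}$ with the multiplication map $\CP^{k|l}_{\,r|s}\to \CK[\GL_q(m|n)]$ (which is a well-defined superalgebra map because of the factorisation $\CK[\GL_q(m|n)]=\cM_{m|n}\overbar{\cM}_{m|n}$ and the braiding compatibility encoded in \lemref{lemmultP}), we land on $\eta\circ\epsilon$ restricted to $\cM^{k|l}_{r|s}$. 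That composite is \emph{not} injective, so I would not use it directly; instead I would use a right-handed variant: compose $\widetilde{\Delta}$ with $\id\ot \overline{\epsilon}$ — more precisely, apply to $\widetilde{\Delta}(f)=\sum_{(f)} f_{(1)}\ot S(f_{(2)})$ a suitable evaluation that recovers $f$.

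Concretely, the cleanest route is: consider the map $\rho:\CP^{k|l}_{\,r|s}=\cM^{\;k|l}_{m|n}\ot_{\fR}\overbar{\cM}^{\;r|s}_{m|n}\to \cM^{\;k|l}_{m|n}$ obtained by applying $\epsilon$ to the second tensor factor (this is a superalgebra homomorphism, since $\epsilon(\overline{t}_{bj})=\delta_{bj}$ and the braiding relations \eqref{eqRelModAlg} collapse correctly under $\epsilon$ on the barred generators). Then $\rho\circ\widetilde{\Delta}(f)=\sum_{(f)} f_{(1)}\,\epsilon(S(f_{(2)}))=\sum_{(f)} f_{(1)}\,\epsilon(f_{(2)})=f$, using $\epsilon\circ S=\epsilon$ and the counit axiom. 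Since $\rho\circ\widetilde{\Delta}=\id_{\cM^{k|l}_{r|s}}$ (after identifying $\cM^{k|l}_{r|s}$ with its image under $\Delta$ truncated — here one must check $\widetilde{\Delta}$ genuinely lands in $\cM^{\;k|l}_{m|n}\ot\overbar{\cM}^{\;r|s}_{m|n}$, which follows from $\Delta(\cM^{k|l}_{r|s})\subset \cM^{\;k|l}_{m|n}\ot\cM^{m|n}_{\;\,r|s}$ noted just before the lemma together with $S(\cM^{m|n}_{\;\,r|s})\subset \overbar{\cM}^{\;r|s}_{m|n}$ via \eqref{eqantipode}), we conclude $\widetilde{\Delta}$ has a left inverse and is therefore injective.

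The main obstacle I anticipate is bookkeeping the signs and the braiding: one must verify that $\rho$ (evaluation of $\epsilon$ on the barred factor) really is an algebra homomorphism out of the \emph{braided} tensor product $\ot_{\fR}$, not just the naive tensor product — i.e. that $\widecheck{\fR}$ becomes trivial after applying $\epsilon$ to one leg, which holds because $(\epsilon\ot\id)\fR=1=(\id\ot\epsilon)\fR$. Given this, the last two relations of \eqref{eqRelModAlg} degenerate to the defining relations of $\cM^{\;k|l}_{m|n}$ upon setting $\overline{T}_{bj}\mapsto \delta_{bj}$, so $\rho$ is well-defined. The remaining verification that $\rho\circ\widetilde{\Delta}=\id$ is then a one-line counit computation. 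An alternative, should the homomorphism property of $\rho$ prove delicate, is to argue at the level of the multiplicity-free decompositions: by \thmref{thmHowe} and \defref{defnA}, $\widetilde{\Delta}$ is a morphism of $\CL(\Uq(\gl_{k|l}))\ot\widetilde{\CL}(\Uq(\gl_{r|s}))$-modules from $\cM^{k|l}_{r|s}\cong\bigoplus L^{k|l}_\la\ot L^{r|s}_\la$ into $\CP^{k|l}_{\,r|s}\cong\bigoplus_{\la,\mu} L^{k|l}_\la\ot L^{r|s}_\mu\ot(\cdots)$, and one checks it is nonzero on each isotypic summand $L^{k|l}_\la\ot L^{r|s}_\la$ (e.g. by evaluating on a highest-weight vector, where $\widetilde{\Delta}$ visibly does not annihilate the top matrix element $t_{m-k+1,m-k+1}\cdots$); by Schur's lemma a nonzero map on each irreducible summand of a multiplicity-free module is injective. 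I would present the left-inverse argument as the primary proof since it is shortest.
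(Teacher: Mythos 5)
Your argument is correct and is essentially the same as the paper's: the paper computes $(\epsilon\ot\id)\circ\widetilde{\Delta}=S$ and appeals to invertibility of the antipode, while you compute $(\id\ot\epsilon)\circ\widetilde{\Delta}=\id$, which is the same counit trick applied to the other tensor leg and is if anything slightly more direct. The lengthy worry about whether $\rho$ respects the braided multiplication is superfluous for this lemma: injectivity only requires $\id\ot\epsilon$ to be a well-defined $\CK$-linear map, which is automatic since $\ot_{\fR}$ and $\ot_{\CK}$ share the same underlying vector space.
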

\begin{proof}
   We have
   \[ (\epsilon\ot \id)\widetilde{\Delta}(f)=\sum_{(f)}1\ot \epsilon(f_{(1)})S(f_{(2)})=S(f),\quad \forall f\in \cM^{k|l}_{r|s}.  \]
   Thus, $\widetilde{\Delta}(f)=0$ if and only if $f=0$, as $S$ is 	invertible.
\end{proof}

\begin{lem}\label{lemXim}
	Assume that $m\geq \max\{k,r\}$ and $n\geq \max\{l,s\}$, then $\widetilde{\Delta}(\cM_{r|s}^{k|l})=\CX^{k|l}_{\;r|s}$. In particular, 
	$\widetilde{\Delta}(t_{ab})=X_{ab}\in \CX^{k|l}_{\;r|s}, \forall a\in \bI_{k|l}, b\in\bI_{r|s}$.
\end{lem}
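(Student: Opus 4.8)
The plan is to show that $\widetilde\Delta$ is injective with image exactly $\CX^{k|l}_{\;r|s}$, by first proving the inclusion $\widetilde\Delta(\cM^{k|l}_{r|s})\subseteq\CX^{k|l}_{\;r|s}$ and then the reverse inclusion by a graded dimension count. As a preliminary I would compute $\widetilde\Delta(t_{ab})$ directly: from $\Delta(t_{ab})=\sum_{c\in\bI_{m|n}}(-1)^{([a]+[c])([c]+[b])}t_{ac}\ot t_{cb}$ and the antipode formula \eqref{eqantipode}, which gives $S(t_{cb})=(-1)^{[c]([c]+[b])}\bar{t}_{bc}$, one obtains
\[
\widetilde\Delta(t_{ab})=(\id\ot S)\Delta(t_{ab})=\sum_{c\in\bI_{m|n}}(-1)^{[a]([b]+[c])}\,t_{ac}\ot\bar{t}_{bc},
\]
the exponent $([a]+[c])([c]+[b])+[c]([c]+[b])$ reducing modulo $2$ to $[a]([b]+[c])$. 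Since the unit of each tensor factor is $\Uqg$-invariant, $\widecheck{\fR}$ fixes $1\ot 1$, so $t_{ac}\ot\bar{t}_{bc}=(t_{ac}\ot 1)(1\ot\bar{t}_{bc})=T_{ac}\overline{T}_{bc}$ in $\CP^{k|l}_{\,r|s}$; comparing with \eqref{eq:inv-X} gives $\widetilde\Delta(t_{ab})=X_{ab}$, which belongs to $\CX^{k|l}_{\;r|s}$ by \lemref{lemXinv}.

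For the inclusion $\widetilde\Delta(\cM^{k|l}_{r|s})\subseteq\CX^{k|l}_{\;r|s}$, the crucial point is that $\widetilde\Delta=(\id\ot S)\Delta$ is a superalgebra homomorphism from $\cM^{k|l}_{r|s}$ into the \emph{braided} tensor product $\CP^{k|l}_{\,r|s}$: this is precisely the property for which the braiding $\widecheck{\fR}$ was built into the multiplication of \lemref{lemmultP}, and it follows from the $\fR$-matrix relation $\fR\Delta(x)=\Delta'(x)\fR$ together with $S$ being a graded algebra anti-automorphism, as in \cite{LZZ}. Equivalently, one checks directly that $x\cdot\widetilde\Delta(f)=\epsilon(x)\widetilde\Delta(f)$ for every $f\in\cM^{k|l}_{r|s}$ and $x\in\Uqg$, by expanding the two $\cR$-actions in $\CP^{k|l}_{\,r|s}$ and collapsing the result through the antipode identity $\sum_{(x)}x_{(1)}S(x_{(2)})=\epsilon(x)1$, exactly as in the proof of \lemref{lemXinv} but now for arbitrary $f$ rather than $f=t_{ab}$. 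Granting this, since $\cM^{k|l}_{r|s}$ is generated by the $t_{ab}$ (\lemref{lemtrunM}) and $\CX^{k|l}_{\;r|s}$ is a subalgebra (\lemref{leminvalg}), we conclude $\widetilde\Delta(\cM^{k|l}_{r|s})=\langle X_{ab}\rangle\subseteq\CX^{k|l}_{\;r|s}$. I expect this homomorphism step (equivalently, the invariance of $\widetilde\Delta(f)$ for all $f$, not merely the generators) to be the main obstacle; the remaining steps are bookkeeping or appeals to results already in hand.

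It remains to prove $\CX^{k|l}_{\;r|s}\subseteq\widetilde\Delta(\cM^{k|l}_{r|s})$. Both $\cM^{k|l}_{r|s}$ and $\CP^{k|l}_{\,r|s}$ are $\Z_+$-graded with $\deg t_{ab}=\deg T_{ai}=\deg\overline{T}_{bj}=1$, and $\widetilde\Delta$ doubles degrees since $\deg X_{ab}=2$; so $\widetilde\Delta$ restricts to a map $(\cM^{k|l}_{r|s})_N\to(\CX^{k|l}_{\;r|s})_{2N}$, injective by \lemref{leminj}. By \thmref{thmHowe} in its graded form \eqref{eqdimleqN}, $\dim_{\CK}(\cM^{k|l}_{r|s})_N=\sum_{\la\in\La_{k|l}\cap\La_{r|s},\,|\la|=N}\dim_{\CK}L^{k|l}_\la\,\dim_{\CK}L^{r|s}_\la$. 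On the other hand, tracking degrees in the argument proving \lemref{leminv}---the $\la$-summand of the first tensor factor of $\CP^{k|l}_{\,r|s}$ sits in degree $|\la|$, likewise the $\mu$-summand of the second in degree $|\mu|$, and $(L^{m|n}_\la\ot(L^{m|n}_\mu)^{\ast})^{\Uqg}\neq 0$ forces $\la=\mu$, hence degree $2|\la|$---gives $(\CX^{k|l}_{\;r|s})_{2N}\cong\bigoplus_{\la\in\La_{k|l}\cap\La_{r|s}\cap\La_{m|n},\,|\la|=N}L^{k|l}_\la\ot L^{r|s}_\la$, with the odd graded pieces zero. Under the hypothesis $m\geq\max\{k,r\}$ and $n\geq\max\{l,s\}$ we have \eqref{eqLafirst}, that is $\La_{k|l}\cap\La_{r|s}\cap\La_{m|n}=\La_{k|l}\cap\La_{r|s}$, whence $\dim_{\CK}(\cM^{k|l}_{r|s})_N=\dim_{\CK}(\CX^{k|l}_{\;r|s})_{2N}$ for all $N$. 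Therefore each injection $(\cM^{k|l}_{r|s})_N\hookrightarrow(\CX^{k|l}_{\;r|s})_{2N}$ is an isomorphism, and $\widetilde\Delta(\cM^{k|l}_{r|s})=\CX^{k|l}_{\;r|s}$ as claimed.
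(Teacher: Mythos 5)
Your overall route---compute $\widetilde\Delta(t_{ab})=X_{ab}$, establish the inclusion $\widetilde\Delta(\cM^{k|l}_{r|s})\subseteq\CX^{k|l}_{\;r|s}$, and then upgrade to equality via \lemref{leminj} together with a graded dimension count using \thmref{thmHowe}, \lemref{leminv} and \eqnref{eqLafirst}---is precisely the paper's, and your sign bookkeeping in the computation of $\widetilde\Delta(t_{ab})$ and your degree-tracking are correct. There is, however, a genuine error in your middle paragraph: the claim that $\widetilde\Delta=(\id\ot S)\Delta$ is a superalgebra homomorphism from $\cM^{k|l}_{r|s}$ into the braided tensor product $\CP^{k|l}_{\,r|s}$ is false. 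The braiding does make $\CP^{k|l}_{\,r|s}$ a module superalgebra, but it does not make $\widetilde\Delta$ multiplicative. What actually holds is $\widetilde\Delta(f)\widetilde\Delta(g)=\widetilde\Delta\circ\mu\circ\varpi(f\ot g)$ for a nontrivial twist $\varpi$ built from $\fR^{-1}$ (\lemref{lemKeyFFT}(2)), and the paper records explicitly in the remark following \lemref{lemKeyFFT} that $\widetilde\Delta$ is \emph{not} a superalgebra homomorphism. Consequently your parenthetical ``$\widetilde\Delta(\cM^{k|l}_{r|s})=\langle X_{ab}\rangle$'' is not available at this stage; it is ultimately true, but proving it requires the $\varpi$-machinery and is essentially the content of the FFT, not of the present lemma.

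Your two framings---``$\widetilde\Delta$ is a homomorphism'' and ``one checks directly that $x\cdot\widetilde\Delta(f)=\epsilon(x)\widetilde\Delta(f)$ for all $f$''---are \emph{not} equivalent: the first is false while the second is true, and the second is exactly what the paper's proof does (a Sweedler-notation computation using $\Delta(S(f))=\sum_{(f)}(-1)^{[f_{(1)}][f_{(2)}]}S(f_{(2)})\ot S(f_{(1)})$ and $\sum_{(x)}x_{(1)}S(x_{(2)})=\epsilon(x)1$). The direct check already puts every $\widetilde\Delta(f)$ into $\CX^{k|l}_{\;r|s}$ with no appeal to multiplicativity, and the dimension count then gives equality. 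Drop the homomorphism claim and the generation-by-$X_{ab}$ assertion; nothing else in your argument relies on them.
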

\begin{proof}
    By \eqref{eqLafirst}, \lemref{leminv} and \thmref{thmHowe}, we have
    \[ \dim_{\CK} (\CX^{k|l}_{\;r|s})_{N}=\sum_{\la\in \La_{k|l}\cap\La_{r|s}, |\la|=N} \dim_{\CK}L^{k|l}_{\la}\times \dim_{\CK}L^{r|s}_{\la}=\dim_{\CK}(\cM_{r|s}^{k|l})_N,  \]
	for any homogeneous components of degree $N$. We only need to show that $\widetilde{\Delta}(\cM_{r|s}^{k|l})\subseteq \CX^{k|l}_{\;r|s}$, since the identity follows from the injectivity of $\widetilde{\Delta}$. Now for any $x\in \Uqg$ and $f\in \cM^{k|l}_{r|s}$, we have 
	\begin{equation*}\label{eqxinv}
	\begin{aligned}
	x\widetilde{\Delta}(f)&=\sum_{(x),(f)} (-1)^{[x_{(2)}][f_{(1)}]} \cR_{x_{(1)}}(f_{(1)})\ot \cR_{x_{(2)}} (S(f_{(2)})),\\
	&=\sum_{(x),(f)} (-1)^{[x_{(2)}]([f_{(1)}]+[f_{(2)}])}X_1\ot X_{2}, 
	\end{aligned}
	\end{equation*}
	with 
	\[
	\begin{aligned}
	  X_1&=(-1)^{([f_{(1)}]+[f_{(2)}]+[x_{(1)}])[x_{(1)}]}f_{(1)}  \langle f_{(2)},x_{(1)}\rangle,\\
	  X_{2}&= (-1)^{([f_{(3)}]+[f_{(4)}]+[x_{(2)}])[x_{(2)}]+[f_{(3)}][f_{(4)}]} S(f_{(4)})\langle S(f_{(3)}), x_{(2)}\rangle. 
	\end{aligned}	
	\]
   Note that we have used the identity $\Delta(S(f))=\sum_{(f)}(-1)^{[f_{(1)}][f_{(2)}]} S(f_{(2)})\ot S(f_{(1)})$. Thus, we obtain
   \[x\widetilde{\Delta}(f)= \sum_{(f)}(-1)^{[f_{(1)}][x]}\epsilon(x)f_{(1)}\ot S(f_{(2)})=\epsilon(x)\widetilde{\Delta}(f) , \]
   completing our proof. The second claim is easy to see by the antipode $S$ in \eqref{eqantipode}.
\end{proof}

To complete the proof of \thmref{thmFFT}, we need the following key lemma.

\begin{lem}\label{lemKeyFFT}
	 Let $\mu$ be  the multiplication in $\cM^{k|l}_{r|s}$. Suppose  that $m\geq \max\{k,r\}$ and $n\geq \max\{l,s\}$,  
	\begin{enumerate}
		\item There exists the $\Z_{+}\times \Z_{+}$-graded vector space bijection:
		      \[
		      \begin{aligned}
		        \varpi:& \quad\cM^{k|l}_{r|s}\ot \cM^{k|l}_{r|s}\longrightarrow \cM^{k|l}_{r|s}\ot \cM^{k|l}_{r|s},\\
		               & f\ot g \mapsto \sum_{(f),(g)} (-1)^{[f_{(2)}][g_{(1)}]} f_{(1)}\ot g_{(1)}\langle f_{(2)}\ot g_{(2)}, \fR^{-1}\rangle.
		      \end{aligned}
		      \]
		      In particular, for any degree $N$ ($N\in\Z_{+}$) graded component of $\cM^{k|l}_{r|s}$,
		      \[(\cM^{k|l}_{r|s})_N=\mu\circ \varpi ( (\cM^{k|l}_{r|s})_{N-1}\ot (\cM^{k|l}_{r|s})_1 ).  \]
		 \item Let $f,g\in \cM^{k|l}_{r|s}$, we have the multiplication formula in $\CX^{k|l}_{\;r|s}$ given by
		       \[ \widetilde{\Delta}(f)\widetilde{\Delta}(g)=\widetilde{\Delta}\circ \mu\circ \varpi(f\ot g).  \]
	\end{enumerate}
	
\end{lem}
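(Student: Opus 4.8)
The plan is to treat the two parts separately; part (1) is essentially formal, while part (2) is a direct Hopf-superalgebra computation.

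For part (1), observe that $\varpi$ is, up to a Koszul sign, the operator on $\cM^{k|l}_{r|s}\ot\cM^{k|l}_{r|s}$ induced by the inverse of the braiding attached to $\fR$. First I would check that $\varpi$ is well defined: by \thmref{thmPeterWeyl} (equivalently \thmref{thmHowe}) the subalgebra $\cM^{k|l}_{r|s}\subset\cM_{m|n}$ decomposes into finite-dimensional $\cR(\Uqg)$-submodules, so pairing $f_{(2)}\ot g_{(2)}$ against $\fR^{-1}$ involves only finitely many terms. Since $\deg t_{ab}=1$ and the $\cR$-action is given by coalgebra-compatible operators (so it preserves the $\Z_{+}$-grading, $\Delta\circ\cR_x=(\id\ot\cR_x)\circ\Delta$), the map $\varpi$ preserves the $\Z_{+}\times\Z_{+}$-bigrading. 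To prove bijectivity I would exhibit the two-sided inverse explicitly: the same formula with $\fR^{-1}$ replaced by $\fR$ and the sign adjusted accordingly; composing the two maps and using coassociativity collapses the two pairings into a single pairing against $\fR\fR^{-1}=\fR^{-1}\fR=1\ot 1$, and since $\langle\,\cdot\,,1\rangle=\epsilon$ the counit axiom yields the identity. The asserted identity $(\cM^{k|l}_{r|s})_N=\mu\circ\varpi\big((\cM^{k|l}_{r|s})_{N-1}\ot(\cM^{k|l}_{r|s})_1\big)$ is then immediate: $\cM^{k|l}_{r|s}$ is generated in degree $1$ by the $t_{ab}$, so $\mu\big((\cM^{k|l}_{r|s})_{N-1}\ot(\cM^{k|l}_{r|s})_1\big)=(\cM^{k|l}_{r|s})_N$, and since $\varpi$ restricts to a bijection of the bihomogeneous component of bidegree $(N-1,1)$, the composite $\mu\circ\varpi$ has the same image on it.

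Part (2) is the computational heart. I would write $\widetilde{\Delta}(f)=\sum_{(f)}f_{(1)}\ot S(f_{(2)})$ as an element of $\cM^{k|l}_{m|n}\ot\overbar{\cM}^{r|s}_{m|n}=\CP^{k|l}_{\,r|s}$, using $\Delta(\cM^{k|l}_{r|s})\subseteq\cM^{k|l}_{m|n}\ot\cM^{m|n}_{r|s}$ together with $S(\cM^{m|n}_{r|s})\subseteq\overbar{\cM}^{r|s}_{m|n}$ from \eqref{eqantipode}. Expanding the braided multiplication of \propref{proptensormodalg} with $\fR=\sum_h\alpha_h\ot\beta_h$ gives
\[
\widetilde{\Delta}(f)\widetilde{\Delta}(g)=\sum_{(f),(g),h}(-1)^{\star}\;f_{(1)}\,\cR_{\beta_h}(g_{(1)})\;\ot\;\cR_{\alpha_h}\big(S(f_{(2)})\big)\,S(g_{(2)}),
\]
with the sign $\star$ read off from \propref{proptensormodalg}. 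I would then apply, in order: the module-algebra identity $\cR_x(ab)=\sum_{(x)}(-1)^{[x_{(2)}][a]}\cR_{x_{(1)}}(a)\,\cR_{x_{(2)}}(b)$ and $\Delta\circ\cR_x=(\id\ot\cR_x)\circ\Delta$ to move the translations inside the coproducts; the anti-automorphism property of $S$ together with $\Delta\circ S=(S\ot S)\circ P\circ\Delta$ and the compatibility of $S$ with the $\cR$-action; the hexagon identities $(\Delta\ot\id)\fR=\fR_{13}\fR_{23}$, $(\id\ot\Delta)\fR=\fR_{13}\fR_{12}$ and the quasitriangularity $\fR\Delta(x)=\Delta'(x)\fR$; and finally $(S\ot\id)\fR=\fR^{-1}$. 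After regrouping, the factors of the form $f_{(1)}\cR_{\beta_h}(g_{(1)})$ reassemble into the first tensor leg of $\widetilde{\Delta}$ applied to a product in $\cM^{k|l}_{r|s}$, the $S(\,\cdot\,)$ factors into its second leg, and the surviving scalar is precisely $\langle f_{(2)}\ot g_{(2)},\fR^{-1}\rangle$; this identifies the right-hand side as $\widetilde{\Delta}\circ\mu\circ\varpi(f\ot g)$. (That $\widetilde{\Delta}(f)\widetilde{\Delta}(g)$ lies in $\im\widetilde{\Delta}=\CX^{k|l}_{\;r|s}$ is automatic from \lemref{lemXim} and \lemref{leminvalg} since $\CX^{k|l}_{\;r|s}$ is a subalgebra; alternatively one may apply $(\epsilon\ot\id)$ to both sides, which by the proof of \lemref{leminj} sends $\widetilde{\Delta}(h)$ to $S(h)$, and compare inside $\overbar{\cM}^{r|s}_{m|n}$.)

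The part-(1) argument carries no real difficulty. The main obstacle is the bookkeeping in part (2): there is no conceptual subtlety, but one must track the many Koszul signs produced by the $\Z_2$-grading while applying the $\fR$-matrix hexagon relations in the correct order, and one must use the right variant of each antipode/braiding compatibility (these differ according to $S$ versus $S^{-1}$ and according to which tensor leg carries the action). A useful sanity check is the classical limit $q\to 1$: the braiding degenerates to the graded permutation $P$, the map $\varpi$ becomes $P$ up to a sign, and the asserted formula reduces to the statement that $\widetilde{\Delta}=(\id\ot S)\Delta$ intertwines $\mu$ with the multiplication of the classical invariant algebra up to the supersymmetry twist — matching signs in this limit pins down the signs in general.
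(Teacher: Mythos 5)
Your proposal misses the central difficulty of part (1). The formula $\varpi(f\ot g)=\sum(-1)^{\cdots}f_{(1)}\ot g_{(1)}\langle f_{(2)}\ot g_{(2)},\fR^{-1}\rangle$ a priori takes values in $\cM^{k|l}_{m|n}\ot\cM^{k|l}_{m|n}$, not in $\cM^{k|l}_{r|s}\ot\cM^{k|l}_{r|s}$: for $f\in\cM^{k|l}_{r|s}$ one only has $\Delta(f)\in\cM^{\,k|l}_{m|n}\ot\cM^{m|n}_{\;r|s}$, so $f_{(1)}$ may well involve generators $t_{aa'}$ with $a'\notin\hat{\bI}_{r|s}$. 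The content of the paper's part-(1) argument is precisely to rule these out: the explicit block form of $R^{-1}$ in \eqref{eqRinvmat} (nonzero entries only $(R^{-1})^{ab}_{ab}$ and $(R^{-1})^{ab}_{ba}$ with $a<b$) together with \eqref{eqDeR} forces $\langle f_{(2)}\ot g_{(2)},\fR^{-1}\rangle=0$ unless the intermediate indices lie in $\hat{\bI}_{r|s}$, and only then does $\varpi$ land in the claimed codomain. Your discussion treats well-definedness solely as a finiteness issue (finitely many terms in the pairing), which is not in question; the same point would have to be checked for the purported inverse built from $\fR$. Without this step the asserted bijection — and hence the identity $(\cM^{k|l}_{r|s})_N=\mu\circ\varpi((\cM^{k|l}_{r|s})_{N-1}\ot(\cM^{k|l}_{r|s})_1)$ — is not established.

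On part (2), your outline is plausible in spirit but left at the level of a recipe. The paper instead isolates a self-contained ``braided commutativity'' identity (\lemref{lemKeylemlem}) derived directly from the dualised relation $\Delta(x)\fR^{-1}=\fR^{-1}\Delta'(x)$, and applies it once to reassemble $\widetilde{\Delta}(f)\widetilde{\Delta}(g)$ into $\widetilde{\Delta}\circ\mu\circ\varpi(f\ot g)$. Your route through the hexagon axioms and $(S\ot\id)\fR=\fR^{-1}$ could be made to work, but the crucial sign bookkeeping and regrouping that you acknowledge as ``the main obstacle'' is exactly what you leave undone — and it is nontrivial in the $\Z_2$-graded setting. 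As written, the proposal has a genuine gap in part (1) and only a sketch for part (2).
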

\begin{proof}
  For part (1), we only need to show that $\varpi(f\ot g)$ is contained in $\cM^{k|l}_{r|s}\ot \cM^{k|l}_{r|s}$, since $\fR$ is invertible. Viewing $\cM^{k|l}_{r|s}$ as a subalgebra of $\cM_{m|n}$ obtained by the truncation \eqref{eqTrunM} and recalling the labelling set $\hat{\bI}_{k|l}$ therein,  we have for any $a,c \in\hat{\bI}_{k|l}$ and $b,d\in \hat{\bI}_{r|s}$,
  \[ \varpi(t_{ab}\ot t_{cd})=\sum_{a^{\prime},b^{\prime}}\sgn\  t_{aa^{\prime}}\ot t_{cc^{\prime}}\langle t_{a^{\prime}b}\ot t_{c^{\prime}d}, \fR^{-1}\rangle,      \]
  with $\sgn=(-1)^{([a]+[a^{\prime}])([a^{\prime}]+[b])+ ([c]+[c^{\prime}])([c^{\prime}]+[d])+([c]+[c^{\prime}])([a^{\prime}]+[b]) }$. Using the matrix form of $R^{-1}$ in \eqref{eqRinvmat}, we obtain that $\langle t_{a^{\prime}b}\ot t_{c^{\prime}d}, \fR^{-1}\rangle= 0$ unless $a^{\prime}, c^{\prime}\in \hat{\bI}_{r|s}$. In general, for any ordered monomials $f=t_{a_1b_1}\cdots t_{a_Mb_M}$ and $g=t_{c_1d_1}\cdots t_{c_Nd_N}$ with $b_i, d_i\in \hat{\bI}_{r|s}$, we have $\langle f\ot g, \fR^{-1}\rangle =0$ unless $a_i,c_i\in \hat{\bI}_{r|s}$ by the defining property of $\fR$ \eqref{eqDeR}.
  
  To prove part (2), we need the following technical lemma.
  \begin{lem}\label{lemKeylemlem} Maintaining above notation, we have
  	\[
  	\begin{aligned}
  	\sum_{(f),(g)}(-1)^{[g_{(1)}][f_{(2)}]} f_{(2)}g_{(2)}\langle f_{(1)}\ot g_{(1)}, \fR\rangle &=\sum_{(f),(g)}(-1)^{([f_{(1)}]+[f_{(2)}])[g_{(1)}]} g_{(1)}f_{(1)}\langle f_{(2)}\ot g_{(2)},\fR\rangle; \\ 
  \sum_{(f),(g)}(-1)^{[f_{(2)}][g_{(1)}]} f_{(1)}g_{(1)}\langle f_{(2)}\ot g_{(2)},\fR^{-1} \rangle&=\sum_{(f),(g)} (-1)^{([g_{(1)}]+[g_{(2)}])[f_{(2)}]}g_{(2)} f_{(2)} \langle f_{(1)}\ot g_{(1)}, \fR^{-1}\rangle. 
  	\end{aligned}
  	\]
  \end{lem}
  We postpone the proof of  \lemref{lemKeylemlem} to the end. Let $\fR=\sum_h \alpha_h\ot \beta_h$ and hence $ \fR^{-1}=\sum_h S(\alpha_h)\ot \beta_h$. Now for any $f,g\in \cM^{k|l}_{r|s}$,
  \[
  \begin{aligned}
   \widetilde{\Delta}(f)\widetilde{\Delta}(g)&=\sum_{(f),(g),h} (-1)^{([\alpha_h]+[f_{(2)}])[g_{(1)}]+[\alpha_h][\beta_h]} f_{(1)} \cR_{\beta_h}(g_{(1)}) \ot \cR_{\alpha_h}(S(f_{(2)}))S(g_{(2)})\\
   &=\sum_{(f),(g),h} (-1)^{([\alpha_h]+[f_{(2)}]+[f_{(3)}])([g_{(1)}]+[g_{(2)}])+[\alpha_h][\beta_h]} X_1\ot X_2
  \end{aligned}
  \]
  with 
  \[
  \begin{aligned}
  X_1&=(-1)^{([\beta_h]+[g_{(1)}]+[g_{(2)}])[\beta_h]} f_{(1)}g_{(1)}\langle g_{(2)},\beta_h\rangle,\\
  X_2&=(-1)^{([\alpha_h]+[f_{(2)}]+[f_{(3)}])[\alpha_h]+[f_{(2)}][f_{(3)}]+[f_{(3)}][g_{(3)}]}S(g_{(3)}f_{(3)}) \langle S(f_{(2)}),\alpha_h\rangle.
  \end{aligned}
  \]
The sum  is over $(f),(g),h$ such that $[\alpha_h]=[f_{(2)}]$, $[\beta_h]=[g_{(2)}]$ and $[f_{(2)}]+[g_{(2)}]=\bar{0}$ since
  \[ \langle S(f_{(2)}),\alpha_h\rangle \langle g_{(2)},\beta_h\rangle=(-1)^{[f_{(2)}][g_{(2)}]}\langle f_{(2)}\ot g_{(2)},\fR^{-1}\rangle. \]
  Using the second identity in \lemref{lemKeylemlem}, we obtain 
  \[
  \begin{aligned}
    \widetilde{\Delta}(f)\widetilde{\Delta}(g)&=\sum_{(f),(g)} (-1)^{[f_{(3)}]([g_{(1)}]+[g_{(2)}])+ [g_{(1)}][g_{(2)}]+[f_{(3)}][g_{(3)}]}f_{(1)}g_{(1)} S(g_{(3)}f_{(3)})\langle f_{(2)}\ot g_{(2)},\fR^{-1}\rangle\\  
    &= \sum_{(f),(g)} (-1)^{[f_{(3)}]([g_{(1)}]+[g_{(2)}])+[g_{(1)}][g_{(2)}]}  f_{(1)}g_{(1)} S(f_{(2)}g_{(2)})\langle f_{(3)}\ot g_{(3)},\fR^{-1}\rangle  \\
    &= \sum_{(f),(g)} (-1)^{[f_{(2)}][g_{(1)}]} \widetilde{\Delta}(f_{(1)} g_{(1)})\langle f_{(2)}\ot g_{(2)}, \fR^{-1}\rangle  \\
    &=\widetilde{\Delta}\circ \mu\circ \varpi(f\ot g).   
  \end{aligned}
  \] 
  This completes the proof of Lemma \ref{lemKeyFFT}.
\end{proof}

\begin{rmk}
	Part (2) of \lemref{lemKeyFFT} implies that  the $\CK$-linear map $\widetilde{\Delta}$ is \emph{not a superalgebra homomorphism}.
\end{rmk}

\begin{proof}[Proof of \lemref{lemKeylemlem}]
	For our purpose, we only prove the second identity, while the first one can be proved in the same way. Let $\fR^{-1}=\sum_h S(\alpha_h)\ot \beta_h$, then from \eqref{eqRmat} we have $\Delta(x)\fR^{-1}=\fR^{-1}\Delta^{\prime}(x)$, which implies that for all $x\in \Uqg$
	\[ 
	\begin{aligned}
		&\sum_{(x), h} (-1)^{[\alpha_h][x_{(2)}]}x_{(1)}S(\alpha_h)\ot x_{(2)}\beta_h\ot x_{(3)}\\
		=&\sum_{(x),h}(-1)^{[x_{(2)}]([\beta_h]+[x_{(1)}]) } S(\alpha_h)x_{(2)}\ot \beta_h x_{(1)}\ot x_{(3)}.
	\end{aligned}
     \] 
	Now applying $f\ot g\ot 1$ to both sides of the above equation, we obtain the left hand side
	\[ 
      \LHS=\sum_{(x),(f),(g),h} \sgn\  \langle f_{(1)}, x_{(1)}\rangle \langle f_{(2)}, S(\alpha_h)\rangle \langle	g_{(1)}, x_{(2)}\rangle \langle g_{(2)}, \beta_h \rangle\  x_{(3)},
	  \]
	with $\sgn=(-1)^{[\alpha_h][x_{(2)}]+[f_{(2)}][x_{(1)}]+[g_{(2)}][x_{(2)}]+([g_{(1)}]+[g_{(2)}])([x_{(1)}]+[\alpha_h])}$. The sum is over $(x),(f),(g)$, $h$ such that 
	$[x_{(1)}]=[f_{(1)}], [\alpha_h]=[f_{(2)}], [x_{(2)}]=[g_{(1)}],[\beta_h]=[g_{(2)}]$ and $[f_{(2)}]+[g_{(2)}]=\bar{0}$, yielding
	\[\LHS=\sum_{(x),(f),(g)} (-1)^{[g_{(1)}][f_{(2)}]}\langle f_{(1)}g_{(1)},x_{(1)}\rangle x_{(2)} \langle f_{(2)}\ot g_{(2)},\fR^{-1}\rangle.  \]
	The right hand side can be simplified in a  similar way, and hence we obtain the equality
	\[ 
	\begin{aligned}
	&\sum_{(f),(g),(x)} (-1)^{[f_{(2)}][g_{(1)}]} \langle f_{(1)}g_{(1)},x_{(1)}\rangle x_{(2)} \langle f_{(2)}\ot g_{(2)},\fR^{-1}\rangle \\
	=&\sum_{(f),(g),(x)} (-1)^{([g_{(1)}]+[g_{(2)}])[f_{(2)}]}\langle g_{(2)}f_{(2)},x_{(1)}\rangle x_{(2)} \langle f_{(1)}\ot g_{(1)},\fR^{-1}\rangle. 
	\end{aligned}
	 \]
	 We view $f_{(1)}g_{(1)}$ and $g_{(2)}f_{(2)}$ as linear functions on $\Uqg$. Applying the co-unit $\epsilon$ to both sides on $x_{(2)}$ and using $\sum_{(x)}x_{(1)}\epsilon(x_{(2)})=x$, we obtain our assertion in the lemma.
\end{proof}

Now we prove   \thmref{thmFFT} in the case that $m\geq \max\{k,r\}$ and $n\geq \max\{l,s\}$.

\begin{proof}[Proof of \thmref{thmFFT}]
	We use induction on the degree $N$ of $(\cM^{k|l}_{r|s})_N$.
	By \lemref{lemXim}, $\widetilde{\Delta}(\cM_{r|s}^{k|l})=\CX^{k|l}_{\;r|s}$ and we have seen $\widetilde{\Delta}(t_{ab})=X_{ab}$.  As $(\cM^{k|l}_{r|s})_N=\mu\circ \varpi ( (\cM^{k|l}_{r|s})_{N-1}\ot (\cM^{k|l}_{r|s})_1 )$ by part (1) of \lemref{lemKeyFFT}, the theorem follows directly from part (2) of \lemref{lemKeyFFT} and  inductive step. 
\end{proof}

\subsubsection{The case $m< \max\{k,r\}$ or $n< \max\{l,s\}$}
Let $u=\min\{k,r,m\}$ and $v=\min\{l,s,n\}$. Note that $\cM^{\;u|v}_{m|n}$ (resp. $\overbar{\cM}^{u|v}_{m|n}$) can be embedded into $\cM^{\;k|l}_{m|n}$ (resp. $\overbar{\cM}^{\;r|s}_{m|n}$) by using the truncation procedure as in \eqref{eqTrunM}. Explicitly,  we define the following  commutative subalgebras  respectively by 
\[
\begin{aligned}
  &\Upsilon_{u|v}^{k|l}:=\langle\, K_a\mid a\in  \{1,2,\dots, k-u\}\cup  \{k+v+1,\dots, k+l\} \, \rangle \subseteq \Uq(\gl_{k|l}),\\
  &\Upsilon_{u|v}^{r|s}:=\langle \, K_a\mid a\in \{1,2,\dots, r-u\} \cup \{r+v+1,\dots, r+s\} \, \rangle \subseteq \Uq(\gl_{r|s}).
\end{aligned}
\] 
Then we obtain  
\begin{equation}\label{eqtrunMuv}
   \cM^{\;u|v}_{m|n}=(\cM^{\;k|l}_{m|n})^{\CL(\Upsilon_{u|v}^{k|l})},\quad
   \overbar{\cM}^{\;u|v}_{m|n}=(\overbar{\cM}^{\;r|s}_{m|n})^{\widetilde{\CL}(\Upsilon_{u|v}^{r|s})}.
\end{equation}
Using  the fact that $\CL(\Uq(\gl_{k|l}))\ot \widetilde{\CL}(\Uq(\gl_{r|s}))$ graded-commutes with the $\Uqg$-action on $\CP^{k|l}_{r|s}$, we define
\begin{equation}\label{eqtrunX}
  \CX_{u|v}:=(\CX^{k|l}_{\;r|s})^{\CL(\Upsilon^{k|l}_{u|v})\ot \widetilde{\CL}(\Upsilon^{r|s}_{u|v})}=((\cM^{\;k|l}_{m|n}\ot_{\fR}\overbar{\cM}^{\;r|s}_{m|n} )^{\CL(\Upsilon^{k|l}_{u|v})\ot \widetilde{\CL}(\Upsilon^{r|s}_{u|v})})^{\Uqg}.
\end{equation}

\begin{lem}\label{lemcase2}
 There are following assertions:
 \begin{enumerate}
 	\item As a $\CL(\Uq(\gl_{u|v}))\ot \widetilde{\CL}(\Uq(\gl_{u|v}))$-module, $\CX_{u|v}$ has a multiplicity-free decomposition
 	\begin{equation*}
 	  \CX_{u|v}=\bigoplus_{\la\in \La_{u|v}} L_{\la}^{u|v}\ot L_{\la}^{u|v}.
 	\end{equation*}
 	\item $\CX_{u|v}$ is generated by $X_{ab}$ with $ a,b\in \bI_{u|v}$.
 	\item As a $\CL(\Uq(\gl_{k|l}))\ot \widetilde{\CL}(\Uq(\gl_{r|s}))$-module, $\CX^{k|l}_{\;r|s}$ is generated by $\CX_{u|v}$, i.e.,
 	\[\CX^{k|l}_{\;r|s}= \left(\CL(\Uq(\gl_{k|l}))\ot \widetilde{\CL}(\Uq(\gl_{r|s}))\right)  \CX_{u|v}.\]
 \end{enumerate}
\end{lem}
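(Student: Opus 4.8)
The plan is to prove the three assertions of \lemref{lemcase2} in the order stated, reducing everything to results already established for the ``good'' range of parameters.

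\textbf{Part (1).} First I would observe that $\Upsilon_{u|v}^{k|l}$ commutes with the Hopf subalgebra $\Uq(\gl_{u|v})\subseteq \Uq(\gl_{k|l})$ generated by the appropriate $E$'s and $K$'s (exactly as $\Upsilon_{k|l}$ commutes with $\Uq(\gl_{k|l})$ in \secref{secHowedual}), and similarly on the $\widetilde{\CL}$-side. Taking $\CL(\Upsilon_{u|v}^{k|l})\otimes\widetilde{\CL}(\Upsilon_{u|v}^{r|s})$-invariants in the decomposition of \lemref{leminv},
\[
\CX^{k|l}_{\;r|s}\cong\bigoplus_{\la\in \La_{k|l}\cap \La_{r|s}\cap \La_{m|n}} L_{\la}^{k|l}\ot L_{\la}^{r|s},
\]
and applying the truncation lemmas \lemref{lemTrunh}, \lemref{lemTrunl} termwise (precisely as in the proof of \thmref{thmHowe}), each $L_\la^{k|l}$ contributes $(L_\la^{k|l})^{\Upsilon_{u|v}^{k|l}}\cong L_\la^{u|v}$ when $\la\in\La_{u|v}$ and $0$ otherwise, and likewise for $L_\la^{r|s}$. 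Since $u=\min\{k,r,m\}$ and $v=\min\{l,s,n\}$, the surviving index set $\La_{u|v}\cap\La_{k|l}\cap\La_{r|s}\cap\La_{m|n}$ collapses to $\La_{u|v}$, giving the claimed decomposition.

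\textbf{Part (2).} Since $u\le m$ and $v\le n$, the pair $(u,v)$ falls into the case already treated in \secref{secfirstcase}: we have $m\ge\max\{u,u\}$ and $n\ge\max\{v,v\}$. Hence by \lemref{lemXim} and the proof of \thmref{thmFFT} in that case, the invariant subalgebra $\CX^{u|v}_{\;u|v}$ — built from $\cM^{\;u|v}_{m|n}\ot_{\fR}\overbar{\cM}^{\;u|v}_{m|n}$ — is generated by the elements $X_{ab}$ with $a,b\in\bI_{u|v}$. I would then identify $\CX_{u|v}$ with $\CX^{u|v}_{\;u|v}$: using \eqref{eqtrunMuv}, the truncations $\cM^{\;u|v}_{m|n}=(\cM^{\;k|l}_{m|n})^{\CL(\Upsilon_{u|v}^{k|l})}$ and $\overbar{\cM}^{\;u|v}_{m|n}=(\overbar{\cM}^{\;r|s}_{m|n})^{\widetilde{\CL}(\Upsilon_{u|v}^{r|s})}$ are subalgebras, and because the $\CL$- and $\widetilde{\CL}$-actions defining these truncations graded-commute with the $\cR\otimes\cR$-action (hence with $\Uqg$) and with each other, taking invariants commutes with forming the braided tensor product; comparing with \eqref{eqtrunX} yields $\CX_{u|v}=\CX^{u|v}_{\;u|v}$ as superalgebras. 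The generators $X_{ab}$ of the excerpt restrict to exactly the generators $X_{ab}$ ($a,b\in\bI_{u|v}$) of $\CX^{u|v}_{\;u|v}$ under this identification, because the defining sum \eqref{eq:inv-X} only involves $t_{ai}\otimes\bar t_{bi}$ with $i\in\bI_{m|n}$ and survives the relevant truncation. The decomposition in Part (1) then matches \lemref{leminv} applied to $(u,v,u,v,m,n)$, confirming consistency.

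\textbf{Part (3).} Here I would argue by comparing two submodules of $\CX^{k|l}_{\;r|s}$. Let $\CY:=\big(\CL(\Uq(\gl_{k|l}))\ot\widetilde{\CL}(\Uq(\gl_{r|s}))\big)\CX_{u|v}$; this is a $\CL(\Uq(\gl_{k|l}))\ot\widetilde{\CL}(\Uq(\gl_{r|s}))$-submodule of $\CX^{k|l}_{\;r|s}$ containing $\CX_{u|v}$. By Part (1), $\CX_{u|v}$ contains, for every $\la\in\La_{u|v}=\La_{k|l}\cap\La_{r|s}\cap\La_{m|n}$, the lowest-weight line of a copy of $L_\la^{k|l}\ot L_\la^{r|s}$ living inside the $\la$-isotypic summand of $\CX^{k|l}_{\;r|s}$ from \lemref{leminv} (the truncation by $\Upsilon_{u|v}$ precisely picks out such an extremal-weight subspace). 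Applying the full $\CL(\Uq(\gl_{k|l}))\ot\widetilde{\CL}(\Uq(\gl_{r|s}))$-action to this line generates all of $L_\la^{k|l}\ot L_\la^{r|s}$, since each factor is irreducible and generated by any nonzero vector under its respective algebra. Summing over $\la$ shows $\CY$ contains every isotypic summand of $\CX^{k|l}_{\;r|s}$, hence $\CY=\CX^{k|l}_{\;r|s}$.

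\textbf{Main obstacle.} The routine points — commuting $\Upsilon$'s with the $\gl_{u|v}$-subalgebras, termwise truncation — are handled by the lemmas already in place. The step requiring genuine care is the identification $\CX_{u|v}\cong\CX^{u|v}_{\;u|v}$ in Part (2): one must check that the two left-translation-type actions used to cut down $\cM^{\;k|l}_{m|n}\ot_{\fR}\overbar{\cM}^{\;r|s}_{m|n}$ interact correctly with the $\fR$-twisted multiplication of \propref{proptensormodalg}, so that the invariants of the braided tensor product are the braided tensor product of the invariants; this hinges on the graded-commutativity of $\CL$ (resp. $\widetilde{\CL}$) with $\cR$ and the evenness of $\fR$, and is the place I would write out the compatibility explicitly rather than treat it as obvious. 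A secondary subtlety in Part (3) is verifying that $\CX_{u|v}$ really meets every $\la$-summand nontrivially (equivalently, that the $\Upsilon_{u|v}$-invariants of an irreducible are a \emph{nonzero} extremal-weight subspace for exactly the $\la\in\La_{u|v}$), which again follows from \lemref{lemTrunh}--\lemref{lemTrunl} but should be stated.
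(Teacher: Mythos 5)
Your proposal is correct and follows essentially the same strategy as the paper, though with two small variations worth noting. For Part (1) the paper does not truncate the decomposition of $\lemref{leminv}$ termwise; instead it first establishes the identification $\CX_{u|v}=\bigl(\cM^{\;u|v}_{m|n}\ot_{\fR}\overbar{\cM}^{\;u|v}_{m|n}\bigr)^{\Uqg}$ and then invokes the already-proved first case $m\geq\max\{u,u\}$, $n\geq\max\{v,v\}$ to get both the multiplicity-free decomposition and the generation statement at once. Your termwise truncation using \lemref{lemTrunh} and \lemref{lemTrunl} reaches the same answer and is a perfectly valid alternative, but note that the paper's route gets (1) and (2) from a single reduction. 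For Part (3) the paper phrases the argument in terms of highest-weight vectors for the embedding $\Uq(\gl_{u|v})\ot\Uq(\gl_{u|v})\subseteq\Uq(\gl_{k|l})\ot\Uq(\gl_{r|s})$; your version speaks of an extremal-weight line, which is imprecise (the truncation $(L_\la^{k|l})^{\Upsilon_{u|v}^{k|l}}$ is a whole copy of $L_\la^{u|v}$, not a line), but the conclusion is unaffected since irreducibility means any nonzero vector in the $\la$-summand suffices. Finally, the compatibility you flagged as the main obstacle in Part (2) — that passing to $\CL\ot\widetilde{\CL}$-invariants of $\Upsilon$-type commutes with the $\fR$-twisted multiplication — is exactly what the unproved display \eqref{eqtrunX} encodes, and the paper simply asserts it; your instinct to justify it via the graded-commutation of $\CL,\widetilde{\CL}$ with $\cR$ and the evenness of $\fR$ is sound and fills a genuine, if small, gap left implicit in the original.
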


\begin{proof}
	By \eqref{eqtrunMuv} and \eqref{eqtrunX}, we have $\CX_{u|v}=(\cM^{\;u|v}_{m|n}\ot_{\fR} \overbar{\cM}^{\;u|v}_{m|n}  )^{\Uqg}$. Now $\cM^{\;u|v}_{m|n}$ and $ \overbar{\cM}^{\;u|v}_{m|n}$   can  be viewed respectively as subalgebras of $\cM_{m|n}$ and $\overbar{\cM}_{m|n}$  by truncation since $u\leq m$ and $v\leq n$. This reduces to the first case discussed in  \secref{secfirstcase}, leading to part (1) and part (2)  directly. For part (3), it is well known that the  
	$\CL(\Uq(\gl_{k|l}))\ot \widetilde{\CL}(\Uq(\gl_{r|s}))$ highest weight vectors of weight $\la$ are precisely the $\CL(\Uq(\gl_{u|v}))\ot \widetilde{\CL}(\Uq(\gl_{u|v}))$ highest weight vectors of the same weight, as there is a natural embedding $\Uq(\gl_{k|l})\ot \Uq(\gl_{r|s})\supseteq \Uq(\gl_{u|v})\ot \Uq(\gl_{u|v}).$ Therefore,  $\CX^{k|l}_{\;r|s}$ is generated by $\CX_{u|v}$ as $\CL(\Uq(\gl_{k|l}))\ot \widetilde{\CL}(\Uq(\gl_{r|s}))$-module. 
\end{proof}

Now we can finish the proof of \thmref{thmFFT}.
\begin{proof}[Proof of \thmref{thmFFT}]
 Note that $\CX_{u|v}$ is $\Z_{+}$-graded by setting $\deg X_{ab}=1$ for all $a,b\in \bI_{u|v}$. By \lemref{lemcase2}, we use induction on the degree $N$ homogeneous component of $\CX_{u|v}$ to show that 
 \[ \left(\CL(\Uq(\gl_{k|l}))\ot \widetilde{\CL}(\Uq(\gl_{r|s}))\right) (\CX_{u|v})_{N} \]
  is generated  by elements $X_{ab}$ with $a\in \bI_{k|l}, b\in \bI_{r|s}$. It is obvious for the case $N=1$. In general, let  $XY\in \CX_{u|v}$ with $\deg(XY)=N>1$. Then it is clear that for any homogeneous $x\in \Uq(\gl_{k|l})$ and $y\in \Uq(\gl_{r|s})$
  \[
   (\CL_{x}\ot\widetilde{\CL}_{y})(XY)=\sum_{(x),(y)}(-1)^{[x_{(2)}][y_{(1)}]+([x_{(2)}]+[y_{(2)}])[X]} (\CL_{x_{(1)}}\ot\widetilde{\CL}_{y_{(1)}})(X)  (\CL_{x_{(2)}}\ot\widetilde{\CL}_{y_{(2)}})(Y).
  \]
  Since both  the $\Z_{+}$-gradations of $X$ and $Y$ are less than $N$, our claim follows by induction.
\end{proof}

\subsection{Reformulation of FFT}\label{secrefFFT}
As in classical case, we shall reformulate the polynomial FFT for $\Uqg$ in terms of superalgebra homomorphism. 

In view of the  linear order $\succ$ defined in \eqref{eqorder}, we define monomials $X^{\bm}=\prod_{(a,b)}^{\succ}X_{ab}^{m_{ab}}, \bm\in \fM^{k|l}_{r|s}$, where the factors are arranged  decreasingly in the order $\succ$. Recall that $\CX^{k|l}_{\;r|s}$ is an invariant superalgebra of $\Uqg$-invariants.

\begin{lem}\label{lemXbasis}
	Assume that $m\geq \min\{k,r\}$ and $n\geq \min\{l,s\}$. The set of monomials 
	$ \{ X^{\bm}\mid \bm\in \fM^{k|l}_{r|s} \} $ 
	constitutes a $\CK$-basis for the invariant superalgebra $\CX^{k|l}_{\;r|s}$. 
\end{lem}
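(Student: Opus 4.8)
The plan is to derive the lemma from the FFT (\thmref{thmFFT}) together with a graded dimension count. Since $\CP^{k|l}_{\,r|s}$ is bigraded with each $X_{ab}$ of bidegree $(1,1)$, every monomial $X^{\bm}$ lies in bidegree $(|\bm|,|\bm|)$; let $(\CX^{k|l}_{\;r|s})_N$ denote the intersection of $\CX^{k|l}_{\;r|s}$ with the bidegree $(N,N)$ subspace. It then suffices to prove (i) that $\{X^{\bm}\mid \bm\in\fM^{k|l}_{r|s},\ |\bm|=N\}$ spans $(\CX^{k|l}_{\;r|s})_N$, and (ii) that $\dim_\CK(\CX^{k|l}_{\;r|s})_N=\#\{\bm\in\fM^{k|l}_{r|s}\mid |\bm|=N\}$: then a spanning family with at most $\dim(\CX^{k|l}_{\;r|s})_N$ members must in fact have exactly that many members and be linearly independent, hence a basis, and letting $N$ vary finishes the proof.

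For (i), \thmref{thmFFT} says $\CX^{k|l}_{\;r|s}$ is generated by the $X_{ab}$, and \lemref{leminvrel}(2) says these satisfy the quadratic relations \eqref{eqinvrel}. As in the proof of \propref{propMbasis} for the relations \eqref{eqRel1} of $\cM^{k|l}_{r|s}$, these form a terminating rewriting system compatible with the order $\succ$ of \eqref{eqorder}: each relation rewrites a non-$\succ$-ordered product $X_{ab}X_{cd}$ as a $\CK$-combination of $\succ$-ordered products of equal total degree with corrections strictly smaller in the induced lexicographic order, and $(X_{ab})^2=0$ for $[a]+[b]=\bar{1}$ explains the exponent bound $m_{ab}\le 1$ in that parity. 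Hence every element of $\CX^{k|l}_{\;r|s}$ reduces, degree by degree, to a $\CK$-linear combination of the $X^{\bm}$.

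For (ii), \lemref{leminv} gives
\[
\dim_\CK(\CX^{k|l}_{\;r|s})_N=\sum_{\substack{\la\in\La_{k|l}\cap\La_{r|s}\cap\La_{m|n}\\ |\la|=N}}\dim_\CK L_{\la}^{k|l}\,\dim_\CK L_{\la}^{r|s},
\]
while \lemref{lemIden} gives $\#\{\bm\in\fM^{k|l}_{r|s}\mid |\bm|=N\}=\sum_{\la\in\La_{k|l}\cap\La_{r|s},\,|\la|=N}\dim_\CK L_{\la}^{k|l}\,\dim_\CK L_{\la}^{r|s}$. So (ii) is equivalent to the set-theoretic identity $\La_{k|l}\cap\La_{r|s}\cap\La_{m|n}=\La_{k|l}\cap\La_{r|s}$, i.e.\ the general form of \eqref{eqLafirst}. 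Establishing this under the hypothesis $m\geq\min\{k,r\}$, $n\geq\min\{l,s\}$ — equivalently, showing that a hook partition contained in both the $(k|l)$- and the $(r|s)$-hook is automatically contained in the $(m|n)$-hook — is the heart of the argument; I would attack it directly from the definition \eqref{defnPar}, reducing via the truncations $u=\min\{k,r\}$, $v=\min\{l,s\}$ (so that $u\le m$, $v\le n$) used in \secref{secfirstcase} and \lemref{lemcase2}. This hook-partition identity, together with the (routine but slightly delicate) confluence check in step (i), is the only real obstacle; the rest is bookkeeping with the two gradings.
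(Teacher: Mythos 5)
Your proposal follows the paper's own proof almost step for step: spanning from \thmref{thmFFT} together with the quadratic relations \eqref{eqinvrel}, then linear independence from the graded dimension count via \lemref{leminv} and \lemref{lemIden}, reduced to the hook-partition inclusion $\La_{k|l}\cap\La_{r|s}\subseteq\La_{m|n}$. The paper simply asserts this inclusion is equivalent to the hypothesis without giving an argument; you are right to isolate it as the crux, but neither your sketch nor the paper's assertion actually establishes it.

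In fact the inclusion fails under the hypothesis as stated, precisely when the two hooks are ``crossed'' so that neither contains the other. Take $k=1$, $l=2$, $r=2$, $s=1$ and $m=n=1$: then $m\geq\min\{k,r\}=1$ and $n\geq\min\{l,s\}=1$ both hold, yet $\la=(2,2)$ satisfies $\la_2=2\leq l$ and $\la_3=0\leq s$, so $\la\in\La_{1|2}\cap\La_{2|1}$, while $\la_2=2>1=n$ gives $\la\notin\La_{1|1}$. Consequently the dimension count in your step (ii) is a strict inequality at degree $4$, and the monomials $X^{\bm}$ cannot all be linearly independent. Your proposed fix --- reducing via $u=\min\{k,r\}$, $v=\min\{l,s\}$ --- cannot rescue this, since it would require the same inclusion $\La_{k|l}\cap\La_{r|s}\subseteq\La_{u|v}$ with an even smaller target hook. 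The condition actually equivalent to $\La_{k|l}\cap\La_{r|s}\subseteq\La_{m|n}$ is that the $(m,n)$-hook contain one of the two hooks, i.e.\ ``$(m\geq k$ and $n\geq l)$ or $(m\geq r$ and $n\geq s)$''; this coincides with $m\geq\min\{k,r\}$, $n\geq\min\{l,s\}$ only when the hooks are nested. So: same route as the paper, but the combinatorial identity you flag as the heart of the argument is exactly where both your sketch and the paper's unproved assertion break down, and the hypothesis of the lemma (and hence of the ``if'' direction in \thmref{thmSFT}) should be strengthened accordingly.
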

\begin{proof}
	Note that  $\La_{k|l}\cap \La_{r|s}\subseteq \La_{m|n}$ if and only if $m\geq \min\{k,r\}$  and $n\geq \min\{l,s\}$. Under our assumption, we obtain from \lemref{leminv} the multiplicity-free decomposition 
	\begin{equation}\label{eqdecX}
	\CX^{k|l}_{\;r|s}\cong\bigoplus_{\la\in \La_{k|l}\cap \La_{r|s}} L_{\la}^{k|l}\ot L_{\la}^{r|s}
	\end{equation}
	as $\CL(\Uq(\gl_{k|l}))\ot \widetilde{\CL}(\Uq(\gl_{r|s}))$-module. By  relations \eqref{eqinvrel}, the monomials $ X^{\bm}, \bm\in \fM^{k|l}_{r|s}$ span the invariant algebra $\CX^{k|l}_{\;r|s}$. Thus, we only need to prove the linear independence of these monomials.  
	
Let $(\CX^{k|l}_{\;r|s})_{N}$ be the homogeneous subspace of degree $N$ in $\CX^{k|l}_{\;r|s}$. Then we obtain from \eqref{eqdecX} 
\[
\dim_{\CK}(\CX^{k|l}_{\;r|s})_{N}=\!\!\!\!\!\!\!\!\sum_{\la\in \La_{k|l}\cap \La_{r|s},\; |\la|= N}\!\!\!\!\!\!\!\!\dim_{\CK}(L_{\la}^{k|l}\otimes L_{\la}^{r|s})=\!\!\!\!\!\!\!\!\sum_{\la\in \La_{k|l}\cap \La_{r|s},\; |\la|= N}\!\!\!\!\!\!\!\!
\dim_{\CK} L_{\la}^{k|l}\dim_{\CK} L_{\la}^{r|s}.
\]
Combing this and  \lemref{lemIden}, we have
\[  \dim_{\CK}(\CX^{k|l}_{\;r|s})_{N}=\# \{ t^{\bm}\mid \bm\in \fM^{k|l}_{r|s}, \ |\bm|=N \}=\# \{ X^{\bm}\mid \bm\in \fM^{k|l}_{r|s}, \ |\bm|=N \}. \]
This finishes the proof of  linear independence, since  $(\CX^{k|l}_{\;r|s})_{N}$ is spanned by the monomials  $X^{\bm}, |\bm|=N $. 
\end{proof}

We introduce the following auxiliary superalgebra, which will play a crucial role in our reformulation of FFT.
\begin{defn}\label{defntilM}
	 Let $k,l,r,s$ be non-negative integers.
   We denote by $\widetilde{\cM}^{k|l}_{r|s}$ the quadratic superalgebra over $\CK$ generated by $\widetilde{t}_{ab}$ with $a\in \bI_{k|l}, b\in \bI_{r|s}$, subject to the following relations:
   	\begin{equation}\label{eqnewalgrel}
   	\begin{aligned}
   	(\widetilde{t}_{ab})^{2}=&0,&\quad &[a]+[b]=\bar{1},\\
   	\widetilde{t}_{ac}\widetilde{t}_{bc}=&(-1)^{([a]+[c])([b]+[c])}q_{c}\widetilde{t}_{bc}\widetilde{t}_{ac},&\quad &a>b,\\
   	\widetilde{t}_{ab}\widetilde{t}_{ac}=&(-1)^{([a]+[b])([a]+[c])}q_{a}^{-1}\widetilde{t}_{ac}\widetilde{t}_{ab},&\quad&b>c,\\
   	\widetilde{t}_{ac}\widetilde{t}_{bd}=&(-1)^{([a]+[c])([b]+[d])}\widetilde{t}_{bd}\widetilde{t}_{ac},&\quad  &a>b, c>d,\\
   	\widetilde{t}_{ac}\widetilde{t}_{bd}=&(-1)^{([a]+[c])([b]+[d])}\widetilde{t}_{bd}\widetilde{t}_{ac}\\
   	&   +  (-1)^{[a]([b]+[d])+[b][d]}(q-q^{-1})\widetilde{t}_{bc}\widetilde{t}_{ad},&\quad  &a>b,c<d.
   	\end{aligned}	
   	\end{equation}
   	The $\Z_{2}$-grading is given by $[\widetilde{t}_{ab}]=[a]+[b]$. We shall write $\widetilde{\cM}_{k|l}:=\widetilde{\cM}^{k|l}_{k|l}$ for convenience.
\end{defn}

\begin{lem}\label{lemsuralgmor}
  Let  $k,l,r,s$ be fixed non-negative integers.  For any $m,n\in \Z_{+}$, the $\CK$-linear map from $\widetilde{\cM}^{k|l}_{r|s}$ to the invariant superalgebra $\CX^{k|l}_{\;r|s}$ of $\Uqg$-invariants 
   \begin{equation}\label{eqalghomo}
   \Psi^{k|l}_{r|s}: \widetilde{\cM}^{k|l}_{r|s}\longrightarrow \CX^{k|l}_{\;r|s}, \quad \widetilde{t}_{ab} \longmapsto X_{ab},\quad \forall a\in\bI_{k|l},b\in \bI_{r|s}.
   \end{equation}
   is a surjective superalgebra homomorphism.
\end{lem}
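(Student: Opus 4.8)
The plan is to verify two things: that $\Psi^{k|l}_{r|s}$ is a well-defined superalgebra homomorphism, and that it is surjective. Surjectivity is immediate: by \thmref{thmFFT}, the invariant superalgebra $\CX^{k|l}_{\;r|s}$ is generated by the elements $X_{ab}$ with $a\in\bI_{k|l}$, $b\in\bI_{r|s}$, and these are precisely the images of the generators $\widetilde{t}_{ab}$ of $\widetilde{\cM}^{k|l}_{r|s}$. So the image of $\Psi^{k|l}_{r|s}$ contains a generating set of $\CX^{k|l}_{\;r|s}$, hence equals it. The real content is well-definedness.

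To show $\Psi^{k|l}_{r|s}$ is well-defined as a superalgebra homomorphism, I would argue as follows. Since $\widetilde{\cM}^{k|l}_{r|s}$ is the free $\CK$-superalgebra on the generators $\widetilde{t}_{ab}$ modulo the two-sided ideal generated by the quadratic relations \eqref{eqnewalgrel}, it suffices to check that the assignment $\widetilde{t}_{ab}\mapsto X_{ab}$ (which extends uniquely to a homomorphism from the free superalgebra) kills all the defining relations \eqref{eqnewalgrel} when the $\widetilde{t}_{ab}$ are replaced by $X_{ab}$. But this is exactly the content of part (2) of \lemref{leminvrel}: the elements $X_{ab}\in\CX^{k|l}_{\;r|s}$ satisfy precisely the relations \eqref{eqinvrel}, which are literally the relations \eqref{eqnewalgrel} with $\widetilde{t}_{ab}$ replaced by $X_{ab}$. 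One also needs to note that the parity assignment is compatible, i.e. $[X_{ab}]=[a]+[b]=[\widetilde{t}_{ab}]$, which was recorded right after \eqref{eq:inv-X} and in \defref{defntilM}. Therefore the homomorphism descends to the quotient $\widetilde{\cM}^{k|l}_{r|s}$, giving the desired $\Psi^{k|l}_{r|s}$.

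Putting these together: the universal property of the quotient superalgebra $\widetilde{\cM}^{k|l}_{r|s}$ yields a unique superalgebra homomorphism sending $\widetilde{t}_{ab}\mapsto X_{ab}$, precisely because \lemref{leminvrel}(2) guarantees the relations are respected; and \thmref{thmFFT} gives surjectivity. I do not expect any serious obstacle here — the lemma is essentially a formal consequence of results already established (\thmref{thmFFT} and \lemref{leminvrel}). The only point requiring any care is making explicit that \eqref{eqnewalgrel} and \eqref{eqinvrel} are the same relations under the substitution $\widetilde{t}_{ab}\leftrightarrow X_{ab}$, which is a direct term-by-term comparison; no computation beyond that is needed.
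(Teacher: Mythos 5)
Your proposal is correct and coincides with the paper's own argument: the paper's proof simply states that well-definedness is clear from the matching of relations \eqref{eqnewalgrel} and \eqref{eqinvrel} (your Lemma~\ref{leminvrel}(2) step, made explicit via the universal property of the quotient) and that surjectivity is immediate from Theorem~\ref{thmFFT}. You have merely spelled out what the paper leaves implicit; there is no gap and no difference of approach.
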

\begin{proof}
	It is clear from \eqref{eqinvrel} and  \eqref{eqnewalgrel} that the map is a well defined superalgebra homomorphism. The surjectivity is immediate by \thmref{thmFFT}.
\end{proof}

\begin{lem}\label{lemisoX}
  Suppose that $m\geq \min\{k,r\}$ and $n\geq \min\{l,s\}$. Then	$\Psi^{k|l}_{r|s}$ is a superalgebra isomorphism.
\end{lem}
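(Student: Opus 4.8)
The plan is to prove injectivity of $\Psi^{k|l}_{r|s}$ under the hypothesis $m\geq\min\{k,r\}$ and $n\geq\min\{l,s\}$, since surjectivity is already in place by \lemref{lemsuralgmor}. The strategy is a dimension count, degree by degree. First I would observe that $\widetilde{\cM}^{k|l}_{r|s}$ is $\Z_+$-graded by $\deg\widetilde{t}_{ab}=1$, and that the relations \eqref{eqnewalgrel} are formally identical to the relations \eqref{eqRel1} defining $\cM^{k|l}_{r|s}$ (with $\widetilde{t}_{ab}$ in place of $t_{ab}$); hence, exactly as in the spanning argument in the proof of \propref{propMbasis}, the monomials $\widetilde{t}^{\,\bm}:=\prod_{(a,b)}^{\succ}\widetilde{t}_{ab}^{\,m_{ab}}$ with $\bm\in\fM^{k|l}_{r|s}$ span $\widetilde{\cM}^{k|l}_{r|s}$, so $\dim_\CK(\widetilde{\cM}^{k|l}_{r|s})_N\leq \#\{\bm\in\fM^{k|l}_{r|s}\mid |\bm|=N\}$ for every $N$.

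Next I would use the surjective homomorphism $\Psi^{k|l}_{r|s}$ together with \lemref{lemXbasis}: since $m\geq\min\{k,r\}$ and $n\geq\min\{l,s\}$, \lemref{lemXbasis} tells us that the monomials $\{X^{\bm}\mid \bm\in\fM^{k|l}_{r|s}\}$ form a $\CK$-basis of $\CX^{k|l}_{\;r|s}$, so $\dim_\CK(\CX^{k|l}_{\;r|s})_N=\#\{\bm\in\fM^{k|l}_{r|s}\mid |\bm|=N\}$. Because $\Psi^{k|l}_{r|s}$ is graded and surjective, $\dim_\CK(\widetilde{\cM}^{k|l}_{r|s})_N\geq\dim_\CK(\CX^{k|l}_{\;r|s})_N=\#\{\bm\mid |\bm|=N\}$. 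Combining with the upper bound from the previous paragraph forces $\dim_\CK(\widetilde{\cM}^{k|l}_{r|s})_N=\#\{\bm\mid |\bm|=N\}$ for all $N$; in particular the spanning set $\{\widetilde{t}^{\,\bm}\}$ is a basis, and a graded surjection between finite-dimensional spaces of equal dimension in each degree is an isomorphism. Hence $\Psi^{k|l}_{r|s}$ is bijective, and being an algebra homomorphism, it is a superalgebra isomorphism.

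I do not expect a serious obstacle here: the only thing to check carefully is that the defining relations of $\widetilde{\cM}^{k|l}_{r|s}$ really do suffice to reorder an arbitrary word into the monomial form $\widetilde{t}^{\,\bm}$ with respect to the order $\succ$ of \eqref{eqorder} — this is the same combinatorial straightening used implicitly for $\cM^{k|l}_{r|s}$ in \propref{propMbasis} and \propref{propMbasis}'s proof, so I would simply invoke it, noting that \eqref{eqnewalgrel} is a verbatim copy of \eqref{eqRel1}. The slightly delicate point is that the upper bound $\dim_\CK(\widetilde{\cM}^{k|l}_{r|s})_N\leq\#\{\bm\}$ must be established purely from the relations (no prior knowledge of a basis), which the straightening argument provides; the matching lower bound then comes for free from surjectivity onto a space whose dimension is already known via \lemref{lemXbasis}. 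This is a clean ``squeeze'' argument, and the hypothesis $m\geq\min\{k,r\}$, $n\geq\min\{l,s\}$ enters exactly and only through \lemref{lemXbasis} (equivalently, through $\La_{k|l}\cap\La_{r|s}\subseteq\La_{m|n}$).
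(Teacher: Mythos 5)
Your argument is essentially the paper's: both rest on Lemma~\ref{lemXbasis} (for the linear independence of the $X^{\bm}$) together with the fact that the ordered monomials $\widetilde{t}^{\,\bm}$ span $\widetilde{\cM}^{k|l}_{r|s}$, and both conclude that the surjection $\Psi^{k|l}_{r|s}$ has zero kernel; the paper phrases this as a contradiction argument, you phrase it as a degree-by-degree dimension squeeze, which is arguably the cleaner presentation of the same content.

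One small correction is warranted, though it does not affect the validity. You assert twice that \eqref{eqnewalgrel} is a ``verbatim copy'' of \eqref{eqRel1}. It is not: \eqref{eqnewalgrel} is a verbatim copy of \eqref{eqinvrel} (that is exactly how $\widetilde{\cM}^{k|l}_{r|s}$ was designed), but relative to \eqref{eqRel1} the third line carries $q_a^{-1}$ instead of $q_a$, and the conditions $c<d$ and $c>d$ on the last two lines are interchanged, so the $(q-q^{-1})$ correction term arises in a different case. The straightening with respect to $\succ$ still goes through — e.g.\ when $a<c$, $b>d$ the fifth line of \eqref{eqnewalgrel} produces the extra term $\widetilde{t}_{ad}\widetilde{t}_{cb}$, which is already $\succ$-ordered since $d<b$ — so your spanning bound $\dim_\CK(\widetilde{\cM}^{k|l}_{r|s})_N\le\#\{\bm\mid |\bm|=N\}$ is correct, but it should be justified by checking the straightening directly against \eqref{eqnewalgrel}, not by appealing to an identity with \eqref{eqRel1} that does not hold.
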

\begin{proof}
	By \lemref{lemsuralgmor}, it suffices to prove $\Ker\, \Psi^{k|l}_{r|s}=0$, which is equivalent to show that elements in  $\CX^{k|l}_{\;r|s}$ have no nontrivial relations except \eqref{eqinvrel} if $m\geq \min\{k,r\}$ and $n\geq \min\{l,s\}$.

	Assume that $\CX^{k|l}_{\;r|s}$ has a  nontrivial relation except \eqref{eqinvrel}. We denote it  by 
	\[ f(X_{a_1b_1},\dots, X_{a_pb_p})=0, \quad p\geq 1, \]	
   where $f(X_{a_1b_1},\dots, X_{a_pb_p})$ is a non-zero polynomial in $\CX^{k|l}_{\;r|s}$. Using \lemref{lemXbasis}, we obtain the expression
  $ f(X_{a_1b_1},\dots, X_{a_pb_p})=\sum_{\bm} c_{\bm}X^{\bm}=0$
   for finitely many $c_{\bm}\in \CK$ with $\bm\in\fM^{k|l}_{r|s}$.  
  This forces all $c_{\bm}=0$ since $X^{\bm}$'s are $\CK$-linearly independent, and hence $f(X_{a_1b_1},\dots, X_{a_pb_p})$ is a zero polynomial, leading to a contradiction. This completes our proof.
\end{proof}

\lemref{lemisoX} can be viewed as another  formulation of $\widetilde{\cM}_{r|s}^{k|l}$, that is, $\widetilde{\cM}_{r|s}^{k|l}\cong \CX^{k|l}_{\;r|s}$ as superalgebras for sufficient large $m$ and $n$. Therefore, $\widetilde{\cM}_{r|s}^{k|l}$ acquires  the $\Uq(\gl_{k|l}))\ot \Uq(\gl_{r|s})$-module structure from its counterpart on $\CX^{k|l}_{\;r|s}$  through the  isomorphism $\Psi^{k|l}_{r|s}$ . 


\begin{prop}\label{propHowedec}There are following properties of $\widetilde{\cM}_{r|s}^{k|l}$:
\begin{enumerate}
\item  (Howe duality) The superalgebra $\widetilde{\cM}_{r|s}^{k|l}$ admits the   multiplicity-free decomposition 
   \begin{equation*}\label{eqHowedec}
   \widetilde{\cM}_{r|s}^{k|l}\cong \bigoplus_{\la\in \La_{k|l}\cap \La_{r|s}} L^{k|l}_{\la}\ot L^{r|s}_{\la}
   \end{equation*}
   as $\Uq(\gl_{k|l}))\ot \Uq(\gl_{r|s})$-module;
\item (PBW basis) The set 
   $ \CT:=\{ \widetilde{t}^{\,\bm}=\prod_{(a,b)}^{\succ}\widetilde{t}_{ab}^{\,m_{ab}}\mid \bm\in \fM^{k|l}_{r|s} \} $ 
   constitutes a $\CK$-basis for $\widetilde{\cM}_{r|s}^{k|l}$. 
\end{enumerate}
\end{prop}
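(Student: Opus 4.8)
The plan is to prove both parts of \propref{propHowedec} simultaneously by bootstrapping from the case of large rank, where \lemref{lemisoX} gives us direct control, and then transferring the answer down to arbitrary $m,n$ via the observation that $\widetilde{\cM}^{k|l}_{r|s}$ is defined purely by generators and relations, \emph{independently of $m$ and $n$}. Concretely: first I would fix $M\geq \max\{k,r\}$ and $N\geq \max\{l,s\}$ (so certainly $M\geq\min\{k,r\}$ and $N\geq\min\{l,s\}$), and invoke \lemref{lemisoX} to get a superalgebra isomorphism $\Psi^{k|l}_{r|s}\colon \widetilde{\cM}^{k|l}_{r|s}\xrightarrow{\sim}\CX^{k|l}_{\,r|s}$ (where $\CX^{k|l}_{\,r|s}$ is now computed inside $\CP^{k|l}_{\,r|s}$ built from $\C^{M|N}$). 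Then \lemref{leminv} together with the identity $\La_{k|l}\cap\La_{r|s}\subseteq\La_{M|N}$ (valid precisely because $M\geq\min\{k,r\}$, $N\geq\min\{l,s\}$, as noted in the proof of \lemref{lemXbasis}) yields
\[
\widetilde{\cM}^{k|l}_{r|s}\cong\CX^{k|l}_{\,r|s}\cong\bigoplus_{\la\in\La_{k|l}\cap\La_{r|s}}L^{k|l}_{\la}\ot L^{r|s}_{\la}
\]
as $\Uq(\gl_{k|l})\ot\Uq(\gl_{r|s})$-modules, which is part (1), and \lemref{lemXbasis} transports to the statement that $\{\widetilde{t}^{\,\bm}\mid\bm\in\fM^{k|l}_{r|s}\}$ is a $\CK$-basis of $\widetilde{\cM}^{k|l}_{r|s}$ — but so far only under the standing hypothesis on $M,N$.

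The second, and genuinely necessary, step is to remove that hypothesis: the presentation of $\widetilde{\cM}^{k|l}_{r|s}$ in \defref{defntilM} does not mention $m$ or $n$ at all, so the superalgebra $\widetilde{\cM}^{k|l}_{r|s}$ is literally one fixed object. Thus whatever we prove about it using one convenient choice of ambient $(m,n)$ holds unconditionally. In particular, from relations \eqref{eqnewalgrel} the monomials $\widetilde{t}^{\,\bm}$ with $\bm\in\fM^{k|l}_{r|s}$ always span $\widetilde{\cM}^{k|l}_{r|s}$ — this is a purely combinatorial rewriting argument identical to the spanning half of \propref{propMbasis} — and their linear independence was just established by exhibiting a specialization ($M,N$ large) in which $\dim_{\CK}(\widetilde{\cM}^{k|l}_{r|s})_n$ equals $\#\{\widetilde{t}^{\,\bm}\mid|\bm|=n\}$ via \lemref{lemIden}. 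Hence part (2) holds. Part (1) is then immediate because the module structure and the decomposition were defined on $\widetilde{\cM}^{k|l}_{r|s}$ once and for all through the isomorphism $\Psi^{k|l}_{r|s}$ for large rank (as remarked right after \lemref{lemisoX}), and the dimension count in \lemref{lemIden} shows the graded dimensions on both sides of the claimed decomposition agree for \emph{all} $k,l,r,s$.

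The only subtlety — and the step I would be most careful about — is making rigorous the claim that "$\widetilde{\cM}^{k|l}_{r|s}$ is independent of $m,n$, so it suffices to check one case." One must be explicit that: (a) the generators-and-relations algebra in \defref{defntilM} is manifestly $(m,n)$-free; (b) the $\Uq(\gl_{k|l})\ot\Uq(\gl_{r|s})$-action used in the statement is the one pinned down after \lemref{lemisoX}, i.e.\ the one obtained by choosing \emph{any} $m\geq\min\{k,r\}$, $n\geq\min\{l,s\}$ and pulling back the action on $\CX^{k|l}_{\,r|s}$ along $\Psi^{k|l}_{r|s}$ — and one should check this does not depend on the choice, e.g.\ by noting the action of each generator $K_a, E_{b,b+1}, E_{b+1,b}$ of $\Uq(\gl_{k|l})\ot\Uq(\gl_{r|s})$ on each $\widetilde{t}_{ab}$ is given by a formula (coming from the $\cR$-action and $\eqref{eqantipode}$) with coefficients in $\CK$ not involving $m,n$; and (c) the PBW spanning argument via \eqref{eqnewalgrel} is, again, $(m,n)$-free. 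With those three points spelled out, both parts follow formally from \lemref{leminv}, \lemref{lemisoX}, \lemref{lemXbasis}, and \lemref{lemIden}, and no further computation is needed.
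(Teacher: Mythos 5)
Your proposal is correct and takes essentially the same route as the paper: the paper's proof is exactly "by \lemref{lemisoX}, $\widetilde{\cM}^{k|l}_{r|s}\cong\CX^{k|l}_{\,r|s}$ for $m\ge\min\{k,r\}$, $n\ge\min\{l,s\}$; then part (1) follows from \eqref{eqdecX} and part (2) from \lemref{lemXbasis}." The only thing you add is an explicit discussion of why the module structure on the $(m,n)$-independent object $\widetilde{\cM}^{k|l}_{r|s}$ is itself well defined independently of the ambient $(m,n)$; the paper declares this convention just before the proposition ("$\widetilde{\cM}^{k|l}_{r|s}$ acquires the $\Uq(\gl_{k|l})\ot\Uq(\gl_{r|s})$-module structure from its counterpart on $\CX^{k|l}_{\,r|s}$ through $\Psi^{k|l}_{r|s}$") without verifying independence of the choice, and your point (b) supplies that check. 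Your "second step" of removing the hypothesis on $M,N$ is a non-step — once the statement is established for one admissible choice, it is established, since $\widetilde{\cM}^{k|l}_{r|s}$ is a single fixed object — but you correctly note this yourself.
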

\begin{proof}
 By \lemref{lemisoX}, $\widetilde{\cM}_{r|s}^{k|l}\cong \CX^{k|l}_{\;r|s}$ as superalgebras for $m\geq \min\{k,r\}$ and $n\geq \min\{l,s\}$. Then part (1)  follows from \eqref{eqdecX} and part (2) from \lemref{lemXbasis} and  \lemref{lemisoX}.
\end{proof}

\begin{rmk}\label{rmk:BPW-multipl}
For any $\bm=(m_{ab}),\bn=(n_{ab})\in\fM^{k|l}_{r|s}$, we set $\bm+\bn=(m_{ab}+n_{ab})$.  By relations \eqref{eqnewalgrel},
\[ \widetilde{t}^{\,\bm}\widetilde{t}^{\,{\bf n}}=(-1)^{\de_1} q^{\de_2} 
\,\widetilde{t}^{\,\bm+\bn}+ (q-q^{-1})X,  \]
where $X\in \C[q, q^{-1}]\CT$, and  
$\de_1,\de_2\in \Z$ (which depend on $\bm$ and ${\bf n}$). Here $\widetilde{t}^{\,\bm+\bn}=0 $ if  $m_{ab}+n_{ab}=2$ for some $[a]+[b]=\bar{1}$. 
\end{rmk}

We reformulate the FFT of invariant theory for $\Uqg$ as follows.
\begin{thm}\label{FFTref}
	 {\rm (FFT  for $\Uqg$)}
	The superalgebra homomorphism $\Psi^{k|l}_{r|s}: \widetilde{\cM}^{k|l}_{r|s} \rightarrow \CX^{k|l}_{\;r|s}$ is surjective. Moreover, $\Psi^{k|l}_{r|s}$ is a $\Uq(\gl_{k|l})\ot \Uq(\gl_{s|r})$-module homomorphism.
\end{thm}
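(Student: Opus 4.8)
The surjectivity part of the statement has already been established: it is \lemref{lemsuralgmor}, which itself rests on \thmref{thmFFT}. So the plan is to concentrate on the assertion that $\Psi^{k|l}_{r|s}$ intertwines the $\Uq(\gl_{k|l})\ot\Uq(\gl_{r|s})$-actions, where $\widetilde{\cM}^{k|l}_{r|s}$ carries the module structure transported from $\CX^{k|l}_{\;r|s}$ along the isomorphism $\Psi^{k|l}_{r|s}$ of \lemref{lemisoX} (valid whenever $m\geq\min\{k,r\}$ and $n\geq\min\{l,s\}$). The scheme has three steps: (i) compute the $\CL(\Uq(\gl_{k|l}))$- and $\widetilde{\CL}(\Uq(\gl_{r|s}))$-actions on the generators $X_{ab}$ explicitly and observe that the coefficients do not depend on $m,n$; (ii) deduce that the transported module structure on $\widetilde{\cM}^{k|l}_{r|s}$ is intrinsic, i.e. independent of the auxiliary large $m,n$; (iii) verify equivariance of $\Psi^{k|l}_{r|s}$ on generators for \emph{arbitrary} $m,n$ and propagate it to all of $\widetilde{\cM}^{k|l}_{r|s}$ by induction on degree.

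For step (i) I would use the fact, read off from the proof of \lemref{leminv}, that on $\CP^{k|l}_{\,r|s}=\cM^{\;k|l}_{m|n}\ot_{\fR}\overbar{\cM}^{\;r|s}_{m|n}$ the algebra $\Uq(\gl_{k|l})$ acts via $\CL$ only on the tensorand $\cM^{\;k|l}_{m|n}$, while $\Uq(\gl_{r|s})$ acts via $\widetilde{\CL}$ only on $\overbar{\cM}^{\;r|s}_{m|n}$; in particular $T_{ai}$ is $\Uq(\gl_{r|s})$-invariant and $\overline{T}_{bi}$ is $\Uq(\gl_{k|l})$-invariant. Using the module-superalgebra axiom of \secref{secmodalg} together with the formulas \eqref{eqRact}--\eqref{eqLact} for the $\CL$- and $\widetilde{\CL}$-actions on matrix elements, one computes $\CL_x(X_{ab})$ by acting on the $t_{ai}$-factor of $X_{ab}=\sum_{i\in\bI_{m|n}}(-1)^{[a]([b]+[i])}T_{ai}\overline{T}_{bi}$; after resumming over $i$ (which is precisely how the sum defining $X_{cb}$ reappears) one obtains, schematically,
\[
\CL_x(X_{ab})=\sum_{c}(\text{sign})\,\pi^{k|l}\!\bigl(w(x)\bigr)_{ac}\,X_{cb},
\qquad
\widetilde{\CL}_y(X_{ab})=\sum_{d}(\text{sign})\,\bar{\pi}^{r|s}\!\bigl(S(y)\bigr)_{bd}\,X_{ad},
\]
with $\pi^{k|l}$ and $\bar{\pi}^{r|s}$ the natural and dual-natural representations of $\Uq(\gl_{k|l})$ and $\Uq(\gl_{r|s})$, and the signs depending only on the parities $[a],[b],[c],[d]$. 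Nothing here involves $m,n$, so transport along $\Psi^{k|l}_{r|s}$ produces, for every admissible large $m,n$, the \emph{same} action $x.\widetilde{t}_{ab}=\sum_{c}(\text{sign})\pi^{k|l}(w(x))_{ac}\widetilde{t}_{cb}$ (and similarly on the right index); this gives step (ii), and compatibility of this action with the relations \eqref{eqnewalgrel} is automatic, because for large $m,n$ those relations coincide with \eqref{eqinvrel}, which the action on $\CX^{k|l}_{\;r|s}$ preserves.

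For step (iii), the matching of formulas gives $\Psi^{k|l}_{r|s}(x.\widetilde{t}_{ab})=\CL_x(X_{ab})=x.\Psi^{k|l}_{r|s}(\widetilde{t}_{ab})$ for all $x\in\Uq(\gl_{k|l})$ and \emph{all} $m,n$, and likewise for $\Uq(\gl_{r|s})$. For a general monomial, writing it as $u\,\widetilde{t}_{ab}$ with $u$ of smaller degree and using that $\Psi^{k|l}_{r|s}$ is a superalgebra homomorphism together with the module-superalgebra axiom on both sides,
\[
\Psi^{k|l}_{r|s}\bigl(x.(u\,\widetilde{t}_{ab})\bigr)
=\sum_{(x)}(-1)^{[x_{(2)}][u]}\Psi^{k|l}_{r|s}(x_{(1)}.u)\,\Psi^{k|l}_{r|s}(x_{(2)}.\widetilde{t}_{ab})
=x.\Psi^{k|l}_{r|s}(u\,\widetilde{t}_{ab}),
\]
so an induction on the $\Z_{+}$-degree of $\widetilde{\cM}^{k|l}_{r|s}$ completes the proof. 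The main obstacle is step (i): one has to keep careful track of the signs contributed by the graded flip, by the braiding $\widecheck{\fR}$ in the product $\ot_{\fR}$ of \propref{proptensormodalg}, by the truncations carving $\cM^{\;k|l}_{m|n}$ and $\overbar{\cM}^{\;r|s}_{m|n}$ out of $\cM_{m|n}$ and $\overbar{\cM}_{m|n}$, and by the anti-automorphism/antipode in $\CL$ and $\widetilde{\CL}$, and then check that the surviving coefficients genuinely do not involve $m,n$ --- which is exactly what makes steps (ii) and (iii) legitimate. The rest is routine bookkeeping.
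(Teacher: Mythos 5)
The paper supplies no proof of this theorem; it is stated as a repackaging of previously established facts, namely surjectivity from Lemma~\ref{lemsuralgmor}, and equivariance from the very definition of the $\Uq(\gl_{k|l})\ot\Uq(\gl_{r|s})$-module structure on $\widetilde{\cM}^{k|l}_{r|s}$, which is \emph{transported} along $\Psi^{k|l}_{r|s}$ using some large auxiliary $m,n$ (discussion after Lemma~\ref{lemisoX}). Your proposal correctly identifies the two points the paper glosses over --- that the transported structure does not depend on the auxiliary choice, and that $\Psi^{k|l}_{r|s}$ remains equivariant when $m,n$ are small enough that it has a kernel --- and your argument is sound. The crux is your step (i): the $\CL$-action of $\Uq(\gl_{k|l})$ lives only on the first tensorand $\cM^{\;k|l}_{m|n}$ and moves only the left index $a$ of $T_{ai}$, with coefficients $\pi^{k|l}(w(x))_{ac}$ drawn from the natural representation of $\Uq(\gl_{k|l})$ (not of $\Uqg$); the extra sign $(-1)^{[a]([b]+[i])}(-1)^{([a]+[c])([c]+[i])}$ collapses, after using that $\pi^{k|l}(w(x))_{ac}=0$ unless $[x]=[a]+[c]$, to $(-1)^{[c]([b]+[i])}$ times a factor depending only on $[a],[b],[c]$, so the sum over $i$ genuinely reassembles into $X_{cb}$. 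The same applies to the $\widetilde{\CL}$-action of $\Uq(\gl_{r|s})$ on $\overline{T}_{bi}$. Steps (ii) and (iii) then go through as you describe. It is also worth noting that the paper later offers an intrinsic model for $\widetilde{\cM}^{k|l}_{r|s}$ as the braided supersymmetric algebra $S_q(V^{k|l}\ot(V^{r|s})^{\ast})$ (Proposition~\ref{propBraidedSym3}), which sidesteps the auxiliary $m,n$ entirely for well-definedness, but this appears only \emph{after} Theorem~\ref{FFTref} and still requires the equivariance check of your step (iii).
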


For notational convenience, we introduce the shorthand $\la=(1^{m_1}2^{m_2}\cdots)$ for the partition $\la$ with $m_1$ copies of 1, $m_2$ copies of 2 and so on. For any two partitions $\la$ and $\mu$,  $\la\subseteq\mu$  means that the  Young diagram of $\la$ can be embedded into that of $\mu$. 

\begin{coro}\label{coroXdec}
   As a $\Uq(\gl_{k|l})\ot \Uq(\gl_{r|s})$-module,  $\Ker\, \Psi^{k|l}_{r|s}$ admits the multiplicity-free decomposition
 	\[ \Ker\, \Psi^{k|l}_{r|s}\cong \bigoplus_{ \la_c\subseteq
 		\la\in \La_{k|l}\cap \La_{r|s}} L_{\la}^{k|l}\ot L_{\la}^{r|s}, \quad \text{with $\la_c=((n+1)^{m+1})$}.\]
 \end{coro}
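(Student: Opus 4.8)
The plan is to read off $\Ker\,\Psi^{k|l}_{r|s}$ from the two multiplicity-free decompositions already in hand, using that $\Psi^{k|l}_{r|s}$ is a module homomorphism, and then to translate the resulting index set into the combinatorial condition $\la_c\subseteq\la$.

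First I would assemble the ingredients. By \thmref{FFTref}, $\Psi^{k|l}_{r|s}\colon\widetilde{\cM}^{k|l}_{r|s}\to\CX^{k|l}_{\;r|s}$ is a \emph{surjective} homomorphism of $\Uq(\gl_{k|l})\ot\Uq(\gl_{r|s})$-modules. Its source decomposes, by part (1) of \propref{propHowedec}, as $\widetilde{\cM}^{k|l}_{r|s}\cong\bigoplus_{\la\in\La_{k|l}\cap\La_{r|s}}L^{k|l}_\la\ot L^{r|s}_\la$, and its target decomposes, by \lemref{leminv}, as $\CX^{k|l}_{\;r|s}\cong\bigoplus_{\la\in\La_{k|l}\cap\La_{r|s}\cap\La_{m|n}}L^{k|l}_\la\ot L^{r|s}_\la$. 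Both are multiplicity-free, the summands $L^{k|l}_\la\ot L^{r|s}_\la$ are irreducible over $\Uq(\gl_{k|l})\ot\Uq(\gl_{r|s})$, and they have pairwise distinct highest weights. Since $\Ker\,\Psi^{k|l}_{r|s}$ is a submodule of the multiplicity-free module $\widetilde{\cM}^{k|l}_{r|s}$, it is a direct sum of a subfamily of these summands, say over $\la$ in some subset of $\La_{k|l}\cap\La_{r|s}$.

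Next I would pin down that subfamily. For each $\la\in\La_{k|l}\cap\La_{r|s}$, the restriction of $\Psi^{k|l}_{r|s}$ to $L^{k|l}_\la\ot L^{r|s}_\la$ is a module map into $\CX^{k|l}_{\;r|s}$; by Schur's lemma (as used in the proof of \lemref{leminv}) it is either zero or injective with image the isotypic summand of $\CX^{k|l}_{\;r|s}$ carrying the same highest weight. If $\la\notin\La_{m|n}$ there is no such summand, so the restriction vanishes and $L^{k|l}_\la\ot L^{r|s}_\la\subseteq\Ker\,\Psi^{k|l}_{r|s}$. Conversely, $\widetilde{\cM}^{k|l}_{r|s}/\Ker\,\Psi^{k|l}_{r|s}\cong\CX^{k|l}_{\;r|s}$ by surjectivity; matching isotypic components on the two sides of this isomorphism (they have distinct highest weights, so the matching is forced) shows that exactly the summands indexed by $\la\in\La_{k|l}\cap\La_{r|s}\cap\La_{m|n}$ survive. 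Hence
\[
 \Ker\,\Psi^{k|l}_{r|s}\cong\bigoplus_{\la\in(\La_{k|l}\cap\La_{r|s})\setminus\La_{m|n}}L^{k|l}_\la\ot L^{r|s}_\la
\]
as $\Uq(\gl_{k|l})\ot\Uq(\gl_{r|s})$-modules. Finally I would translate the index set: any $\la\in\La_{k|l}\cap\La_{r|s}$ is a partition, so by \eqref{defnPar} it lies in $\La_{m|n}$ precisely when $\la_{m+1}\le n$; thus $\la\notin\La_{m|n}$ iff $\la_{m+1}\ge n+1$, iff $\la_1,\dots,\la_{m+1}\ge n+1$, iff the Young diagram of $\la$ contains the $(m+1)\times(n+1)$ rectangle, i.e. $\la_c=((n+1)^{m+1})\subseteq\la$. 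Substituting this gives the stated decomposition.

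The argument is essentially formal once the two Howe-duality decompositions and the module-homomorphism property of $\Psi^{k|l}_{r|s}$ are available; the only point I would write out with care is the converse direction — using surjectivity (equivalently the isomorphism $\widetilde{\cM}^{k|l}_{r|s}/\Ker\,\Psi^{k|l}_{r|s}\cong\CX^{k|l}_{\;r|s}$, or a degree-by-degree dimension count via \propref{propHowedec} and \lemref{leminv}) to guarantee that every $\la\in\La_{m|n}$-summand genuinely survives, so that $\Ker\,\Psi^{k|l}_{r|s}$ is exactly the complementary family and not something larger.
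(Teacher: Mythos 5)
Your proposal is correct and is essentially the paper's own argument made explicit: the paper's proof simply cites \thmref{FFTref}, \propref{propHowedec} and \lemref{leminv}, and you have spelled out how to combine them (multiplicity-free source and target, Schur's lemma to match isotypic components, and the elementary translation of $\la\notin\La_{m|n}$ into $\la_c\subseteq\la$). No gaps; this is exactly what the one-line proof in the paper is pointing to.
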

\begin{proof}
   This is a consequence of \thmref{FFTref}, \propref{propHowedec} and \lemref{leminv}.
 \end{proof}

We close this section by giving a new formulation of $\widetilde{\cM}^{k|l}_{r|s}$ via braiding operator on $V^{k|l}\ot (V^{r|s})^{\ast}$, which shows that $\widetilde{\cM}^{k|l}_{r|s}$ is in fact not artificial.
\begin{prop}\label{propBraidedSym3}
	As a superalgebra, $\widetilde{\cM}^{k|l}_{r|s} \cong S_{q}(V^{k|l}\ot (V^{r|s})^{\ast} )$. Thus we have the following multiplicity-free decomposition
	\[ S_{q}(V^{k|l}\ot (V^{r|s})^{\ast} )\cong \bigoplus_{\la\in \La_{k|l}\cap \La_{r|s}} L_{\la}^{k|l}\ot L_{\la}^{r|s} \]
	as $\Uq(\gl_{k|l})\ot\Uq(\gl_{r|s})$-module. In particular, $V^{k|l}\ot (V^{r|s})^{\ast}$ is a flat $\Uq(\gl_{k|l})\ot\Uq(\gl_{r|s})$-module (in the sense of Section \ref{secbra}).
\end{prop}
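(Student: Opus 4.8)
The plan is to follow the proofs of \propref{propBraidedSym} and \propref{propBraidedSym2} with the module $U:=V^{k|l}\ot (V^{r|s})^{\ast}$, equipped with the basis $\{x_{ia}:=v_i\ot v_a^{\ast}\mid i\in\bI_{k|l},\, a\in\bI_{r|s}\}$, where $\{v_i\}$ is the standard basis of $V^{k|l}$ and $\{v_a^{\ast}\}$ the dual basis of $(V^{r|s})^{\ast}$. Writing $P_{23}$ for the graded permutation of the two middle factors of $U\ot U$, the braiding on $U\ot U$ factorises as $\widecheck{R}_{U,U}=P_{23}\circ(\widecheck{R}_{V^{k|l},V^{k|l}}\ot \widecheck{R}_{(V^{r|s})^{\ast},(V^{r|s})^{\ast}})\circ P_{23}$, and hence $\La^{2}_q(U)=P_{23}\bigl( (S^{2}_q(V^{k|l})\ot \La^{2}_q((V^{r|s})^{\ast})) \oplus (\La^{2}_q(V^{k|l})\ot S^{2}_q((V^{r|s})^{\ast})) \bigr)$. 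First I would substitute the explicit bases of $S^{2}_q(V^{k|l})$ and $\La^{2}_q(V^{k|l})$ from \eqref{eqbaS} and \eqref{eqbaLa}, together with the bases of $S^{2}_q((V^{r|s})^{\ast})$ and $\La^{2}_q((V^{r|s})^{\ast})$ read off from \eqref{eqRmatact} exactly as in the proof of \propref{propBraidedSym2}, and thereby extract the quadratic relations among the generators $x_{ia}$ of $S_q(U)=T(U)/\langle\La^{2}_q(U)\rangle$.

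The second step is to check that these relations match the defining relations \eqref{eqnewalgrel} of $\widetilde{\cM}^{k|l}_{r|s}$ under the rescaling $\widetilde{t}_{ia}\mapsto (-1)^{[i][a]}x_{ia}$; this yields the asserted superalgebra isomorphism $\widetilde{\cM}^{k|l}_{r|s}\cong S_q(U)$. Transporting the $\Uq(\gl_{k|l})\ot\Uq(\gl_{r|s})$-module structure across this isomorphism, the multiplicity-free decomposition of $S_q(U)$ is then immediate from part~(1) of \propref{propHowedec}.

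Finally, for flatness I would compare graded dimensions. Since the isomorphism respects the $\Z_{+}$-gradings (the generators go to degree-one elements), $\dim_{\CK}S_q(U)_N=\dim_{\CK}(\widetilde{\cM}^{k|l}_{r|s})_N$, which by the PBW basis in part~(2) of \propref{propHowedec} and by \lemref{lemIden} equals $\#\{t^{\bm}\mid \bm\in\fM^{k|l}_{r|s},\ |\bm|=N\}=\dim_{\C}S(\C^{k|l}\ot\C^{r|s})_N$; since $(V^{r|s})^{\ast}|_{q=1}$ and $\C^{r|s}$ agree as $\Z_2$-graded vector spaces, this coincides with $\dim_{\C}S(U|_{q=1})_N$, so $U$ is a flat $\Uq(\gl_{k|l})\ot\Uq(\gl_{r|s})$-module. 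The only laborious point -- and the place most likely to harbour a sign slip -- is the term-by-term matching of the relations coming from $\La^{2}_q(U)$ with \eqref{eqnewalgrel}; conceptually there is no genuine obstacle, since every structural ingredient (the factorisation of $\widecheck{R}_{U,U}$, the bases of the symmetric and exterior squares, and the module decomposition of $\widetilde{\cM}^{k|l}_{r|s}$) is already in place.
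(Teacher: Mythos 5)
Your proposal is correct and follows essentially the same route as the paper: factor $\widecheck{R}_{U,U}$ through $P_{23}$, read off $\La^2_q(U)$ and hence the quadratic relations from the explicit bases \eqref{eqbaS}, \eqref{eqbaLa}, \eqref{eqRmatact}, match them against \eqref{eqnewalgrel} via the rescaling $\widetilde{t}_{ia}\mapsto (-1)^{[i][a]}x_{ia}$, and invoke \propref{propHowedec} for the decomposition. The paper leaves the flatness claim implicit at this point (it is the same graded-dimension comparison as in \propref{propBraidedSym}); your spelling it out via \lemref{lemIden} and the PBW basis is a harmless and correct elaboration.
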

\begin{proof}
	The proof is similar to that of \propref{propBraidedSym}. 
	Let $\widetilde{U}=V^{k|l}\ot (V^{r|s})^{\ast}$ and $P_{23}$ be graded the permutation of two middle factors in $\widetilde{U}\ot\widetilde{U}$. Then we have
	\[ \La_q^{2}(\widetilde{U})=P_{23}\bigg( \big(S^{2}_{q}(V^{k|l})\ot \La^{2}_{q}((V^{r|s})^{\ast}) \big) \bigoplus \big(\La^{2}_{q}(V^{k|l})\ot S^{2}_{q}((V^{r|s})^{\ast})\big)   \bigg).  \]
    Using bases given in \eqref{eqbaS}, \eqref{eqbaLa} and \propref{propBraidedSym2}, we obtain the quadratic relations for $S_q(\widetilde{U})=T(\widetilde{U})/\La_q^{2}(\widetilde{U})$ as follows:
    	\begin{equation*}\label{eqRelS3}
    	\begin{aligned}
    	(x_{ia})^2=&0, &\quad &[i]+[a]=\bar{1},\\
    	x_{ja}x_{ia}=&(-1)^{([i]+[a])([j]+[a])}q_ax_{ia}x_{ja},&\quad &j>i,\\
    	x_{ib}x_{ia}=&(-1)^{([i]+[a])([i]+[b])}q_i^{-1}x_{ia}x_{ib},&\quad &b>a, \\
    	x_{jb}x_{ia}=&(-1)^{([i]+[a])([j]+[b])}x_{ia}x_{jb},&\quad &j>i,\,b>a,\\
    	x_{ja}x_{ib}=&(-1)^{([i]+[b])([j]+[a])}x_{ib}x_{ja}\\
    	&+(-1)^{[i]([j]+[a])+[j][a]}(q-q^{-1})x_{jb}x_{ia},&\quad &j>i,\, a<b. 
    	\end{aligned}
    	\end{equation*}	
 Here $x_{ia}=v_i\ot v_{a}^{\ast}$. The isomorphism between  $\widetilde{\cM}^{k|l}_{r|s}$ and $S_q(\widetilde{U})$ is specified by $\widetilde{t}_{ia}\mapsto (-1)^{[i][a]} x_{ia}$. The multiplicity-free decomposition is also clear from this isomorphism and \propref{propHowedec}.
\end{proof}

Now the superalgebra homomorphisms given  in  \propref{propBraidedSym}, \propref{propBraidedSym2}, \propref{propBraidedSym3} and \thmref{FFTref}   fit into the following commutative diagram, and hence we obtain the surjective superalgebra homomorphism in the bottom row, which reduces  in the limit $q\to 1$ to  the classical case. 

\begin{equation*}\label{eqcommdiag1}
\xymatrixcolsep{5pc}
\xymatrix{
	\widetilde{\cM}^{k|l}_{r|s} \ar[d]^-{\cong} \ar@{->>}[r]^-{\Psi^{k|l}_{\,r|s}} & \CX^{k|l}_{r|s}:= (\cM^{\;k|l}_{m|n}\ot_{\fR} \overbar{\cM}^{\;r|s}_{m|n})^{\Uqg} \ar[d]^-{\cong}   \\
S_q(V^{k|l}\ot (V^{r|s})^{\ast}) \ar@{->>}[r] & \bigg(S_q(V^{k|l}\ot V^{m|n} )\ot_{\fR} S_q((V^{r|s})^{\ast}\ot (V^{m|n})^{\ast})\bigg)^{\Uqg}
}
\end{equation*}
%

\section{The SFT of invariant theory for $\Uqg$}\label{secSFT}

In this section, we shall describe the kernel of the superalgebra epimorphism $ \Psi^{k|l}_{r|s}: \widetilde{\cM}^{k|l}_{r|s}\rightarrow \CX^{k|l}_{\;r|s}$ as a two-sided ideal of $\widetilde{\cM}^{k|l}_{r|s}$. The images of nonzero elements in kernel will give rise to new relations among invariants in $\CX^{k|l}_{\;r|s}$, apart from the quadratic relations \eqref{eqinvrel}. 

The main idea is to identify $\widetilde{\cM}^{k|l}_{r|s}$ as a subalgebra  of $\widetilde{\cM}_{K|L}$ with $K=\max\{k,r \}$ and $L=\max \{l,s\}$. This way we obtain $\Ker\, \Psi^{k|l}_{r|s}$ by restricting $\Ker\, \Psi_{K|L}$ to $\widetilde{\cM}^{k|l}_{r|s}$; see commutative diagram \eqref{eqcommdiag} below.
We start by developing some general results on the algebraic structure of $\cM_{K|L}$ (resp. $\cM^{k|l}_{r|s}$), taking advantage of some nice properties of matrix elements. Then we translate these results to $\widetilde{\cM}_{K|L}$ (resp. $\widetilde{\cM}^{k|l}_{r|s}$). 

\subsection{Algebraic structure of $\cM_{K|L}$}\label{secFunalg}
We shall recall a well-known fact.  Suppose that $K,L\in \Z_{+}$ and $V$ is a left $\Uq(\gl_{K|L})$-module, then it has a canonical structure of a right $ \CK[\GL_q(K|L)]$-comodule with  structure map
\[ \de:\ V\rightarrow V\ot \CK[\GL_q(K|L)]. \] 
(Recall that $\CK[\GL_q(K|L)]=\cM_{K|L}\overbar{\cM}_{K|L}$, which is isomorphic to the coordinate superalgebra of $\Uq(\fgl_{K|L})$ as mentioned in \rmkref{rmkcoord}.)
Using Sweedler's notation, we write $ \delta(v)=\sum_{(v)}v_{(0)}\ot v_{(1)}$. Then we obtain
\[ \sum_{(v)}v_{(0)}\langle v_{(1)},x\rangle=(-1)^{[x][v]}xv,\quad \forall x\in \Uq(\gl_{K|L}), \,v\in V.  \]
Therefore, the comodule structure induces the original module structure in the canonical way and vice versa.

 Let $\la\in \La_{K|L}$ be a $(K,L)$-hook partition  and $(L_{\la}, \pi^{\la})$ be an irreducible  representation of $\Uq(\fgl_{K|L})$ with the highest weight $\la^{\natural}$. Define the elements $t_{ab}^{\la} \in \Uq(\fgl_{K|L})^{\circ}$ by
\[ \langle t_{ab}^{\la},x \rangle =\pi^{\la} ( x )_{ab}, \quad \forall x\in\Uq(\gl_{K|L}),\  a,b=1,2,\dots \dim_{\CK}L_{\la}.  \] 
We write  $T_{\la}$ for the subspace spanned by these elements $t_{ab}^{\la}$.  Then it follows form \propref{proptendec}  that $T_{\la}\subset \cM_{K|L}$,  and $L_{\la}$ naturally affords the right and left $\CK[\GL_q(K|L)]$-comodule structure, which are given respectively by  
\[
  \begin{aligned}
  &\de^R_{\la}: L_{\la} \rightarrow L_{\la}\ot T_{\la},\quad &v_b^{\la}\mapsto \sum_{a=1}^{\dim_{\CK} L_{\la}} v_a^{\la}\ot t_{ab}^{\la}, \\
 &\de^L_{\la}: L_{\la} \rightarrow T_{\la}\ot L_{\la},\quad &v_a^{\la}\mapsto \sum_{b=1}^{\dim_{\CK} L_{\la}} t_{ab}^{\la}\ot v_b^{\la},
  \end{aligned}
   \]
where $v_a^{\la}\ (a=1,2,\dots,\dim_{\CK}L_{\la})$ is a basis for $L_{\la}$. Note that $T_{\la}$ is independent of the choices of basis for $L_{\la}$. 

\begin{lem}\label{lemcomiso}
	As a two-sided $\CK[\GL_q(K|L)]$-comodule, $T_{\la}\cong L_{\la}\ot L_{\la}$ is irreducible and hence
	\[\cM_{K|L}= \bigoplus_{\la\in \La_{K|L}} T_{\la}.  \]
\end{lem}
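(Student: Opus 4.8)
The plan is to identify $T_\la$ as the image of the matrix coefficient map and deduce both its comodule structure and the direct sum decomposition from the representation theory already developed, principally \propref{proptendec} and \thmref{thmPeterWeyl}. First I would recall that $\cM_{K|L}$ is, by definition, spanned by the matrix elements of tensor powers $(V^{K|L})^{\otimes N}$, and by part (1) of \propref{proptendec} each such tensor power decomposes into simple modules $L_\mu^{K|L}$ with $\mu\in\La_{K|L}^\natural(N)$; conversely part (2) tells us every simple $L_\la^{K|L}$ occurs inside some $(V^{K|L})^{\otimes N}$. Hence the matrix coefficients $t^\la_{ab}$ indeed lie in $\cM_{K|L}$, and $\cM_{K|L}$ is spanned by $\bigcup_{\la\in\La_{K|L}}T_\la$. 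The $\CK[\GL_q(K|L)]$-bicomodule structure on $T_\la$ is the standard one: writing $\de^R_\la$ and $\de^L_\la$ as in the excerpt, the composite map $L_\la\otimes L_\la^\ast\to T_\la$ sending $v_a^\la\otimes (v_b^\la)^\ast\mapsto t^\la_{ab}$ (or rather the natural pairing realizing $T_\la$ as a quotient of $L_\la\otimes L_\la$ via the comultiplication of matrix elements) intertwines the two-sided coaction with the outer tensor coaction on $L_\la\boxtimes L_\la$.

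Next I would argue irreducibility of $T_\la$ as a two-sided comodule. Since $L_\la^{K|L}$ is an irreducible $\Uq(\gl_{K|L})$-module, $L_\la\boxtimes L_\la$ is an irreducible $\Uq(\gl_{K|L})\otimes\Uq(\gl_{K|L})$-module (the tensor product of absolutely irreducible modules over $\CK$ remains irreducible — this uses that endomorphism rings are just $\CK$, which holds here since we are in the semisimple generic-$q$ setting and highest weight modules have trivial endomorphisms). Translating through the comodule/module correspondence recalled just before the lemma, the two-sided $\CK[\GL_q(K|L)]$-comodule $T_\la$ corresponds exactly to this irreducible bimodule, so $T_\la\cong L_\la\otimes L_\la$ is irreducible and nonzero, with $\dim_\CK T_\la=(\dim_\CK L_\la^{K|L})^2$.

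Finally I would assemble the direct sum. The sum $\sum_{\la\in\La_{K|L}}T_\la$ is all of $\cM_{K|L}$ by the spanning statement above. To see it is direct, I would invoke \thmref{thmPeterWeyl}: as an $\CL(\Uq(\gl_{K|L}))\otimes\cR(\Uq(\gl_{K|L}))$-module, $\cM_{K|L}\cong\bigoplus_{\la\in\La_{K|L}}L_\la^{K|L}\otimes L_\la^{K|L}$, and this decomposition is multiplicity-free. Each $T_\la$ is precisely the isotypic component of type $L_\la\boxtimes L_\la$ (its left and right comodule structures realize the $\cL$- and $\cR$-actions, by the displayed identity $\sum_{(v)}v_{(0)}\langle v_{(1)},x\rangle=(-1)^{[x][v]}xv$ linking coaction to action), so distinct $T_\la$ are non-isomorphic simple submodules and their sum is automatically direct. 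Comparing with the Peter–Weyl decomposition gives $\cM_{K|L}=\bigoplus_{\la\in\La_{K|L}}T_\la$ as claimed.

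The main obstacle I anticipate is making the bicomodule-to-bimodule dictionary fully precise — in particular verifying that $T_\la$, a priori only defined as a \emph{subspace} of $\cM_{K|L}$, carries commuting left and right $\CK[\GL_q(K|L)]$-coactions whose associated $\Uq(\gl_{K|L})\otimes\Uq(\gl_{K|L})$-module is genuinely $L_\la\boxtimes L_\la$ rather than some quotient or subobject, and checking the compatibility of signs in the super setting when one transports the coaction formulas $\de^R_\la,\de^L_\la$ through the comultiplication $\Delta(t_{ab})=\sum_c(-1)^{([a]+[c])([c]+[b])}t_{ac}\otimes t_{cb}$. Once this identification is pinned down, the irreducibility and the direct sum are immediate from the multiplicity-freeness in \thmref{thmPeterWeyl}.
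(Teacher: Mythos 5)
Your argument is correct and follows the same route as the paper: the paper's own proof of this lemma is a single sentence citing \propref{proptendec} and \thmref{thmPeterWeyl}, and your proposal is exactly an unpacking of why those two results suffice (spanning from \propref{proptendec}, irreducibility of $T_\la\cong L_\la\boxtimes L_\la$, directness from the multiplicity-free Peter--Weyl decomposition in \thmref{thmPeterWeyl}).
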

\begin{proof}
	It is an immediate consequence of \propref{proptendec} and \thmref{thmPeterWeyl}.
\end{proof}

For any two left irreducible $\Uq(\gl_{K|L})$-modules $L_{\la}$ and $L_{\mu}$, we may form the tensor product $L_{\la}\ot L_{\mu}$, which encodes the right  $\CK[\GL_q(K|L)]$-comodule structure with
\[\delta^R_{\la\ot \mu}: L_{\la}\ot L_{\mu} \rightarrow  L_{\la}\ot L_{\mu} T_{\la}T_{\mu},\quad v_{b}^{\la}\ot v_{d}^{\mu}\mapsto \sum_{a,c} v_{a}^{\la}\ot v_{c}^{\mu}(-1)^{[v_c^{\mu}][t_{ab}^{\la}]}t_{ab}^{\la}t_{cd}^{\mu}. \]
Therefore, the product $T_{\la}T_{\mu}$ is nothing but the subspace of $\cM_{K|L}$ spanned by the matrix elements of the left $\Uq(\gl_{K|L})$-module $L_{\la}\ot L_{\mu}$. 

\begin{prop}\label{propProdDec}
	As a two-sided $\CK[\GL_q(K|L)]$-comodule, $T_{\la}T_{\mu}$ admits the following multiplicity-free decomposition
	\begin{equation}\label{eqProdDec}
	T_{\la}T_{\mu}=\bigoplus_{\gamma\in N_{\la,\mu}}T_{\gamma},
	\end{equation}
	where $N_{\la,\mu}=\{\gamma \mid N_{\la,\mu}^{\gamma}> 0\}$ such that $N_{\la,\mu}^{\gamma}$ is the  Littlewood-Richardson coefficient appearing in the decomposition $L_{\la}\ot L_{\mu}\cong\bigoplus_{\gamma \in \La_{K|L}} N_{\la,\mu}^{\gamma} L_{\gamma}$ as left $\Uq(\fgl_{K|L})$-module.   
\end{prop}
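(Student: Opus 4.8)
The plan is to identify $T_{\la}T_{\mu}$ with the linear span of the matrix elements of the left $\Uq(\gl_{K|L})$-module $L_{\la}\ot L_{\mu}$, to decompose that module into simples, and then to read off the claimed decomposition from \lemref{lemcomiso}. Throughout, for a finite-dimensional $\Uq(\gl_{K|L})$-module $W$ I write $\mathrm{MC}(W)\subseteq\cM_{K|L}$ for the subspace spanned by the matrix elements of $W$ relative to any chosen basis. This subspace depends only on the isomorphism class of $W$, is a two-sided subcomodule of $\cM_{K|L}$ (by the coproduct formulas $\delta^{L}_{\la},\delta^{R}_{\la}$ recorded just before \lemref{lemcomiso}), and is additive in $W$.

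First I would make precise the remark preceding the statement: by the multiplication formula \eqref{eqmultM} in $\cM_{K|L}$ one has $\langle t^{\la}_{ab}t^{\mu}_{cd},x\rangle=\sum_{(x)}(-1)^{[t^{\mu}_{cd}][x_{(1)}]}\pi^{\la}(x_{(1)})_{ab}\,\pi^{\mu}(x_{(2)})_{cd}$, which, up to a sign depending only on $a,b,c,d$ (and $\la,\mu$), is the matrix element of $(\pi^{\la}\ot\pi^{\mu})\Delta(x)$ on $L_{\la}\ot L_{\mu}$ relative to the basis $\{v^{\la}_{a}\ot v^{\mu}_{c}\}$; hence $T_{\la}T_{\mu}=\mathrm{MC}(L_{\la}\ot L_{\mu})$. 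Next, $L_{\la}\ot L_{\mu}$ is semisimple: by \propref{proptendec}(2) there are embeddings $L_{\la}^{K|L}\hookrightarrow (V^{K|L})^{\ot|\la|}$ and $L_{\mu}^{K|L}\hookrightarrow (V^{K|L})^{\ot|\mu|}$, so $L_{\la}\ot L_{\mu}\hookrightarrow (V^{K|L})^{\ot(|\la|+|\mu|)}$, which is semisimple with all constituents of the form $L_{\gamma}^{K|L}$, $\gamma\in\La_{K|L}$, by \propref{proptendec}(1). This is precisely the decomposition $L_{\la}\ot L_{\mu}\cong\bigoplus_{\gamma\in\La_{K|L}}N^{\gamma}_{\la,\mu}L_{\gamma}$ that defines the coefficients $N^{\gamma}_{\la,\mu}$.

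Now I would fix an isotypic decomposition of $L_{\la}\ot L_{\mu}$ together with a basis adapted to it, using intertwiners to make the $N^{\gamma}_{\la,\mu}$ copies inside each isotypic component $L_{\gamma}^{\oplus N^{\gamma}_{\la,\mu}}$ parallel. Then the matrix elements pairing two distinct isotypic components vanish since the action is block diagonal, those between two distinct parallel copies of the same $L_{\gamma}$ vanish, and those within a single copy of $L_{\gamma}$ all reproduce the functions spanning $T_{\gamma}$; consequently $\mathrm{MC}(L_{\la}\ot L_{\mu})=\sum_{\gamma\in N_{\la,\mu}}T_{\gamma}$. Finally, \lemref{lemcomiso} gives $\cM_{K|L}=\bigoplus_{\gamma\in\La_{K|L}}T_{\gamma}$, so this sum is automatically direct, and since each $T_{\gamma}$ is an irreducible two-sided subcomodule we obtain \eqref{eqProdDec} as a decomposition of two-sided $\CK[\GL_q(K|L)]$-comodules. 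The main obstacle is the bookkeeping in this last step: one must verify that passing from a module to the span of its matrix elements collapses multiplicities correctly, i.e.\ $\mathrm{MC}(L_{\gamma}^{\oplus N})=T_{\gamma}$, and annihilates all cross terms; this forces a basis compatible with a fixed isotypic decomposition and some care with the super signs, but is otherwise formal.
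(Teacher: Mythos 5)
Your proof is correct and takes essentially the same route as the paper: identify $T_{\la}T_{\mu}$ with the span of matrix coefficients of $L_{\la}\ot L_{\mu}$ (via the remark preceding the proposition), decompose that module into simples using the semisimplicity established in \propref{proptendec}, and invoke \lemref{lemcomiso} to see that the resulting sum of $T_{\gamma}$'s is direct. The paper's proof is much terser, but it appeals to precisely the same facts; you merely spell out the bookkeeping about isotypic components and the collapse $\mathrm{MC}(L_{\gamma}^{\oplus N})=T_{\gamma}$ that the paper leaves implicit.
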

\begin{proof}
	This is immediate from the tensor product decomposition of $L_{\la}\ot L_{\mu}$, since each  $T_{\gamma}$ is the irreducible two-sided $\cM_{K|L}$-comodule spanned by the matrix coefficients of $L_{\gamma}$,  and $T_{\gamma_1}\bigcap T_{\gamma_2}=\varnothing$ for different $\gamma_1,\gamma_2\in N_{\la,\mu}$ and $T_{\gamma_1}=T_{\gamma_2}$ if $L_{\gamma_1}\cong L_{\gamma_2}$.
\end{proof}

\begin{rmk}
	The Littlewood-Richardson coefficient $N_{\la,\mu}^{\gamma}$ can be obtained by using super duality due to \cite{CWZ}.
\end{rmk}

\begin{coro}\label{coroTprod}
	Let $\la$ be a $(K,L)$-hook partition. Then we have
	\[ T_{\la}T_{(1)} =\bigoplus_{\gamma\in N_{\la,(1)}} T_{\gamma}, \]
	where  $N_{\la,(1)}$ is the set of all $(K,L)$-hook Young diagrams obtained by adding one box to $\la$.	
\end{coro}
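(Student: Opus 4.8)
The plan is to specialize \propref{propProdDec} to the case $\mu=(1)$. Since $L_{(1)}^{K|L}$ is the natural module $V^{K|L}$, the tensor product decomposition $L_\la^{K|L}\ot V^{K|L}$ as a $\Uq(\gl_{K|L})$-module is governed by the Pieri-type rule: the Littlewood-Richardson coefficients $N_{\la,(1)}^\gamma$ are either $0$ or $1$, and $N_{\la,(1)}^\gamma=1$ precisely when $\gamma$ is obtained from $\la$ by adding a single box (subject, of course, to the constraint that $\gamma$ remain a $(K,L)$-hook partition, i.e.\ $\gamma\in\La_{K|L}$). Thus the set $N_{\la,(1)}$ appearing in \propref{propProdDec} is exactly the set of $(K,L)$-hook Young diagrams obtained by adding one box to $\la$, and \eqref{eqProdDec} gives the claimed decomposition $T_\la T_{(1)}=\bigoplus_{\gamma\in N_{\la,(1)}}T_\gamma$.

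First I would recall that $T_{(1)}$, being spanned by the matrix elements of $L_{(1)}^{K|L}=V^{K|L}$, is by definition the span of the $t_{ab}$ with $a,b\in\bI_{K|L}$, so that the product $T_\la T_{(1)}$ is the subspace of $\cM_{K|L}$ spanned by the matrix coefficients of $L_\la^{K|L}\ot V^{K|L}$, exactly as in the discussion preceding \propref{propProdDec}. Then I would invoke \propref{propProdDec} with $\mu=(1)$. The only remaining point is to identify $N_{\la,(1)}$ with the set of $(K,L)$-hook diagrams obtained from $\la$ by adding a box. This is the classical Pieri rule for tensoring with the natural module; in the super/hook-partition setting it follows from the branching rule for $\gl_{K|L}$-modules with highest weights in $\La_{K|L}^\natural$ — one may cite \cite{CWZ} (super duality), as already flagged in the remark after \propref{propProdDec}, or simply note that $\dim_{\CK}L_\la^{K|L}=\dim_{\C}L_\la^{K|L}|_{q=1}$ (part (2) of \rmkref{rmkHowecl}) reduces the computation of the decomposition multiplicities to the classical $\gl_{K|L}$ case, where the Pieri rule is standard.

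The main (and only real) obstacle is verifying that the multiplicity-free Pieri rule holds in the form stated, i.e.\ that adding a box to a $(K,L)$-hook partition $\la$ and landing outside $\La_{K|L}$ genuinely contributes zero rather than some correction term. This is handled by the observation that $L_\la^{K|L}\ot V^{K|L}$ is a tensor module, hence completely reducible with constituents indexed by $(K,L)$-hook partitions of size $|\la|+1$ (by \propref{proptendec}), and a weight/character count — equivalently, the classical Pieri rule at $q=1$ transported via \rmkref{rmkHowecl}(2) — shows each admissible $\gamma$ appears exactly once and no inadmissible $\gamma$ appears at all. Given all of this, the corollary is essentially an immediate specialization, and the proof will be short: cite \propref{propProdDec}, identify the Pieri set, done.
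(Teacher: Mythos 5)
Your proposal is correct and is exactly the intended argument: the paper states this corollary without proof, as an immediate specialization of \propref{propProdDec} at $\mu=(1)$ combined with the Pieri rule for tensoring a polynomial $\Uq(\gl_{K|L})$-module with the natural module. Your elaboration — that the relevant Littlewood–Richardson multiplicities $N_{\la,(1)}^{\gamma}$ can be read off at $q=1$ via \rmkref{rmkHowecl}(2), whereupon the classical hook-Pieri rule for $\gl_{K|L}$ identifies $N_{\la,(1)}$ with the set of $(K,L)$-hook diagrams obtained from $\la$ by adding a box, the inadmissible boxes contributing nothing because by \propref{proptendec} every constituent of a tensor module has hook-partition highest weight — is precisely the detail the paper leaves implicit, and it is sound.
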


We  denote by $\langle T_{\la}\rangle_{K|L}$ the two-side ideal in $\cM_{K|L}$ generated by $T_{\la}$. For any two-sided ideal $I\subseteq \cM_{K|L}$, we call $I$ the \emph{$\CK[\GL_q(K|L)]$-comodule ideal} if it affords two-sided $\CK[\GL_q(K|L)]$-comodule structure. Then $\langle T_{\la}\rangle_{K|L}$ is the minimal two-sided $\CK[\GL_q(K|L)]$-comodule ideal containing $T_{\la}$, and every two-sided $\CK[\GL_q(K|L)]$-comodule ideal is the sum of ideals of the form $\langle T_{\la}\rangle_{K|L}$.

The following theorem is a quantum super analogue of \cite[Theorem 4.1]{DEP}.

\begin{thm}\label{thmidealdec}
  Let $\la$ be a $(K,L)$-hook partition.  Then $\langle T_{\la}\rangle_{K|L}$ admits the multiplicity-free decomposition
  \begin{equation}\label{eqTideadec}
   \langle T_{\la}\rangle_{K|L}=\bigoplus_{ \la \subseteq \gamma\in  \La_{K|L}} T_{\gamma}
  \end{equation}
  as two-sided $\CK[\GL_q(K|L)]$-comodule.	
\end{thm}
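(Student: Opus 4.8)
The plan is to prove \thmref{thmidealdec} in two inclusions. For the inclusion $\langle T_{\la}\rangle_{K|L}\subseteq \bigoplus_{\la\subseteq\gamma}T_{\gamma}$, the key observation is that the right-hand side is itself a two-sided $\CK[\GL_q(K|L)]$-comodule ideal: it is clearly a $\CK[\GL_q(K|L)]$-subcomodule by \lemref{lemcomiso}, and to see it is a two-sided ideal it suffices to show $T_{\gamma}T_{(1)}\subseteq \bigoplus_{\la\subseteq\delta}T_{\delta}$ and $T_{(1)}T_{\gamma}\subseteq\bigoplus_{\la\subseteq\delta}T_{\delta}$ whenever $\la\subseteq\gamma$, together with the analogous statements with $\overbar T_{(1)}$ (the span of the $\bar t_{ab}$) on either side; since $\cM_{K|L}$ is generated by the $t_{ab}$ and one multiplies generators one at a time, closure under multiplication by $T_{(1)}$ suffices. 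By \corref{coroTprod}, $T_{\gamma}T_{(1)}=\bigoplus_{\delta\in N_{\gamma,(1)}}T_{\delta}$ where every such $\delta$ is obtained from $\gamma$ by adding a box, hence still contains $\la$; the left-multiplication statement and the $\overbar T_{(1)}$-statements are handled identically (for the latter one needs the easy fact, derivable from the relation $\sum_c(-1)^{\cdots}t_{ac}\bar t_{bc}=\delta_{ab}$ recalled before \eqnref{eqantipode}, or directly from the comodule-ideal description, that multiplying by a $\bar t$ cannot leave the comodule ideal $\bigoplus_{\la\subseteq\gamma}T_\gamma$). Since $T_{\la}$ lies in this ideal and $\langle T_{\la}\rangle_{K|L}$ is the minimal two-sided comodule ideal containing $T_{\la}$, the inclusion follows.

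For the reverse inclusion $\bigoplus_{\la\subseteq\gamma}T_{\gamma}\subseteq\langle T_{\la}\rangle_{K|L}$, I would argue by induction on $|\gamma|-|\la|$. The base case $\gamma=\la$ is trivial. For the inductive step, given $\gamma\supsetneq\la$ a $(K,L)$-hook partition, pick a $(K,L)$-hook partition $\mu$ with $\la\subseteq\mu\subseteq\gamma$ and $|\gamma|-|\mu|=1$ (such $\mu$ exists: remove any removable box of the skew shape $\gamma/\la$ lying in an outer corner of $\gamma$, and check it stays a $(K,L)$-hook partition — this needs a small combinatorial lemma, that from $\la\subsetneq\gamma$ both hook partitions one can always delete a box of $\gamma$ outside $\la$ keeping the hook condition). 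By induction $T_{\mu}\subseteq\langle T_{\la}\rangle_{K|L}$, so $T_{\mu}T_{(1)}\subseteq\langle T_{\la}\rangle_{K|L}$, and by \corref{coroTprod} $T_{\mu}T_{(1)}=\bigoplus_{\delta\in N_{\mu,(1)}}T_{\delta}$ where $N_{\mu,(1)}$ ranges over $(K,L)$-hook diagrams obtained from $\mu$ by adding one box; since $\gamma$ is one such diagram, $T_{\gamma}\subseteq T_{\mu}T_{(1)}\subseteq\langle T_{\la}\rangle_{K|L}$, completing the induction.

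Combining the two inclusions gives the claimed multiplicity-free decomposition, the multiplicity-freeness being inherited from \lemref{lemcomiso} (distinct $T_{\gamma}$ are distinct isotypic components). I would also remark that each $T_\gamma$ appearing is an irreducible two-sided comodule, so the decomposition \eqnref{eqTideadec} is genuinely as two-sided $\CK[\GL_q(K|L)]$-comodules and not merely as vector spaces.

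The main obstacle I anticipate is the purely combinatorial point inside the reverse inclusion: verifying that from $\la\subsetneq\gamma$, with both $\la$ and $\gamma$ being $(K,L)$-hook partitions, one can always remove a corner box of $\gamma$ outside $\la$ so that the result is still a $(K,L)$-hook partition — the subtlety is the constraint $\la_{K+1}\le L$ (equivalently $\la'_{L+1}\le K$) defining $\La_{K|L}$ in \eqnref{defnPar}, which could in principle fail when one removes a box from the ``arm'' versus the ``leg'' of the hook. One resolves this by choosing the removed box appropriately: if $\gamma/\la$ has a box in row $\le K$ that is removable from $\gamma$, remove the rightmost such; otherwise remove a removable box in a column $\le L$; in either case the hook condition $\gamma_{K+1}\le L$ is preserved since removing a box never increases any part. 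The rest of the argument is a clean induction feeding on \corref{coroTprod}, which does all the representation-theoretic work (equivalently the Pieri rule, here packaged via Littlewood–Richardson coefficients and super duality of \cite{CWZ}).
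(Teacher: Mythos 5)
Your proof is correct and follows the same approach as the paper: the forward inclusion via Corollary~\ref{coroTprod} together with the minimality of $\langle T_\la\rangle_{K|L}$ among comodule ideals, and the reverse inclusion by induction on $|\gamma|-|\la|$, again feeding on Corollary~\ref{coroTprod}. You supply slightly more detail than the paper on the combinatorial existence of an intermediate hook partition (which is in any case easy, since removing a corner box of $\gamma$ cannot increase $\gamma_{K+1}$); the excursion about $\overbar{T}_{(1)}$ is unnecessary, as $\langle T_\la\rangle_{K|L}$ is an ideal in $\cM_{K|L}$ rather than in $\CK[\GL_q(K|L)]$, but you correctly note that $T_{(1)}$-closure suffices.
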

\begin{proof}
	By \corref{coroTprod}, we have $\langle T_{\la}\rangle_{K|L}\subseteq\bigoplus_{ \la \subseteq \gamma\in  \La_{K|L}} T_{\gamma}$. Now assume that $\La_{K|L}\ni \gamma\supseteq \la $ with $|\gamma|=|\la|+1$, then it follows from \corref{coroTprod} again that $T_{\gamma}\subset T_{\la}T_{(1)}$ and hence $T_{\gamma}\subset \langle T_{\la}\rangle_{K|L}$. Using induction on the size of $\gamma$, we obtain $\bigoplus_{ \la \subseteq \gamma\in  \La_{K|L}} T_{\gamma} \subseteq \langle T_{\la}\rangle_{K|L}$.
\end{proof}

We write $I_{\la}=\langle T_{\la}\rangle_{K|L}$ for short. It is an immediate consequence of  \thmref{thmidealdec} that 
\begin{coro} \label{coroidealdec}
	We have two  facts:
	\begin{enumerate}
		\item Let $\La\subseteq \La_{K|L}$ be a subset  of $(K,L)$-hook partitions. The two-sided $\CK[\GL_q(K|L)]$-comodule $\sum_{\gamma\in \La} T_{\gamma}$  is an ideal in  $\cM_{K|L}$ if and  only if $\gamma_1\in \La$ and $\gamma_2\supseteq \gamma_1$ imply $\gamma_2\in \La$.
		\item $I_{\la}\subseteq I_{\mu}$ if and only if $\la\supseteq \mu$.
		
	\end{enumerate}
\end{coro}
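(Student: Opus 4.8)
The plan is to derive both statements directly from \thmref{thmidealdec} together with the semisimplicity of $\cM_{K|L}$ as a two-sided $\CK[\GL_q(K|L)]$-comodule. Recall from \lemref{lemcomiso} that $\cM_{K|L}=\bigoplus_{\la\in\La_{K|L}}T_{\la}$, with each $T_{\la}\cong L_{\la}\ot L_{\la}$ an irreducible two-sided comodule, and $T_{\la}\cong T_{\mu}$ precisely when $\la=\mu$ (distinct $(K,L)$-hook partitions label non-isomorphic simple $\Uq(\gl_{K|L})$-modules). The key consequence I would isolate first is that every two-sided sub-comodule of $\cM_{K|L}$ is of the form $\bigoplus_{\gamma\in\La'}T_{\gamma}$ for a unique $\La'\subseteq\La_{K|L}$; hence membership of a given $T_{\gamma}$ in a two-sided comodule ideal is governed purely combinatorially.

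For part (1), write $J=\sum_{\gamma\in\La}T_{\gamma}$. If $\La$ is upward closed in the containment order, then \thmref{thmidealdec} gives $\langle T_{\gamma_1}\rangle_{K|L}=\bigoplus_{\gamma_1\subseteq\gamma}T_{\gamma}\subseteq J$ for every $\gamma_1\in\La$, while $T_{\gamma_1}\subseteq\langle T_{\gamma_1}\rangle_{K|L}$, so $J=\sum_{\gamma_1\in\La}\langle T_{\gamma_1}\rangle_{K|L}$ is a sum of two-sided ideals, hence a two-sided ideal. Conversely, if $J$ is a two-sided ideal and $\gamma_1\in\La$, $\gamma_2\supseteq\gamma_1$, then $T_{\gamma_1}\subseteq J$ forces $\langle T_{\gamma_1}\rangle_{K|L}\subseteq J$, and \thmref{thmidealdec} yields $T_{\gamma_2}\subseteq\langle T_{\gamma_1}\rangle_{K|L}\subseteq J$; by the isotypic description above, $T_{\gamma_2}$ must be one of the summands $T_{\gamma}$, $\gamma\in\La$, whence $\gamma_2\in\La$.

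For part (2), with $I_{\la}=\langle T_{\la}\rangle_{K|L}=\bigoplus_{\la\subseteq\gamma\in\La_{K|L}}T_{\gamma}$ and similarly for $I_{\mu}$: if $\la\supseteq\mu$ then $\{\gamma:\gamma\supseteq\la\}\subseteq\{\gamma:\gamma\supseteq\mu\}$, so $I_{\la}\subseteq I_{\mu}$; conversely $I_{\la}\subseteq I_{\mu}$ implies $T_{\la}\subseteq I_{\mu}=\bigoplus_{\mu\subseteq\gamma}T_{\gamma}$, and since $T_{\la}$ occurs exactly once in $\cM_{K|L}$ it must coincide with $T_{\gamma}$ for some $\gamma\supseteq\mu$, forcing $\la\supseteq\mu$. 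There is essentially no obstacle here: the substance is entirely contained in \thmref{thmidealdec}, and the only point requiring care is the passage from ``$T_{\gamma_2}$ lies in the sub-comodule $J$'' to ``$\gamma_2$ indexes one of the summands of $J$'', which rests on the multiplicity-freeness and irreducibility recorded in \lemref{lemcomiso}.
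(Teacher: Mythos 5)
Your proof is correct and takes the same route the paper intends: the paper declares the corollary an immediate consequence of \thmref{thmidealdec}, and you simply spell out the two deductions, carefully using the multiplicity-freeness of \lemref{lemcomiso} (so that every two-sided sub-comodule is a direct sum of a subset of the $T_{\gamma}$) to pass from containment of sub-comodules to containment of index sets.
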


Following \cite{DEP}, we will say that a set $\La\subseteq \La_{K|L}$ is a  diagrammatic ideal if $\gamma_1\in \La$ and $\gamma_2\supseteq \gamma_1$ imply $\gamma_2\in \La$. Then \thmref{thmidealdec} and \corref{coroidealdec} yield

\begin{prop}
  There is a bijective correspondence between diagrammatic ideals and two-sided  $\CK[\GL_q(K|L)]$-comodule ideas of $\cM_{K|L}$, which is given by 
  \[  
  \begin{aligned}
  &\La \rightarrow I(\La)=\sum_{\gamma\in \La} T_{\gamma}, \quad  \text{for all diagrammatic ideals $\La$},\\
  &I\rightarrow \{\gamma\mid T_{\gamma}\subseteq I \},\quad  \text{for all  two-sided  $\CK[\GL_q(K|L)]$-comodule ideas $I$}.  
  \end{aligned}  
  \] 
\end{prop}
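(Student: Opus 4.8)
The plan is to prove this bijective correspondence by packaging together the structural results already established, particularly \thmref{thmidealdec} and \corref{coroidealdec}. The assignments $\La\mapsto I(\La)$ and $I\mapsto\{\gamma\mid T_\gamma\subseteq I\}$ are each well-defined by these results, so the task reduces to showing the two maps are mutually inverse.

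First I would verify that both maps land in the correct target sets. Given a diagrammatic ideal $\La$, the subspace $I(\La)=\sum_{\gamma\in\La}T_\gamma$ is a two-sided $\CK[\GL_q(K|L)]$-comodule ideal of $\cM_{K|L}$ by part (1) of \corref{coroidealdec}, since the defining condition of a diagrammatic ideal is exactly the condition there. Conversely, given a two-sided $\CK[\GL_q(K|L)]$-comodule ideal $I$, write $\La(I):=\{\gamma\mid T_\gamma\subseteq I\}$. Because $I$ is a comodule ideal and $\cM_{K|L}=\bigoplus_{\la\in\La_{K|L}}T_\la$ with each $T_\la$ an irreducible comodule (\lemref{lemcomiso}), $I$ is a direct sum of a subset of the $T_\la$, i.e. $I=\bigoplus_{\gamma\in\La(I)}T_\gamma$; here I would invoke that a submodule of a multiplicity-free semisimple comodule is the direct sum of the irreducibles it contains, using that the $T_\la$ are pairwise non-isomorphic. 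That $\La(I)$ is a diagrammatic ideal follows from part (2) of \corref{coroidealdec}: if $\gamma_1\in\La(I)$ then $I_{\gamma_1}=\langle T_{\gamma_1}\rangle_{K|L}\subseteq I$, and for $\gamma_2\supseteq\gamma_1$ we have $T_{\gamma_2}\subseteq I_{\gamma_1}$ by \thmref{thmidealdec}, hence $T_{\gamma_2}\subseteq I$, so $\gamma_2\in\La(I)$.

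Then the two composites are checked directly. Starting from a diagrammatic ideal $\La$: the set $\{\gamma\mid T_\gamma\subseteq I(\La)\}$ equals $\La$ because $T_\gamma\subseteq\bigoplus_{\mu\in\La}T_\mu$ if and only if $\gamma\in\La$, again using the directness of the decomposition and $T_{\gamma_1}\cap T_{\gamma_2}=0$ for $\gamma_1\neq\gamma_2$ (\propref{propProdDec}). Starting from a comodule ideal $I$: we have $I(\La(I))=\sum_{\gamma:\,T_\gamma\subseteq I}T_\gamma=I$ by the semisimplicity argument above. This establishes that the two maps are inverse bijections.

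The main obstacle, such as it is, is the semisimplicity/multiplicity-freeness bookkeeping: one must be careful that $T_\lambda\cong T_\mu$ as two-sided comodules forces $\lambda=\mu$ (so that ``the isotypic components are the individual $T_\lambda$'' is legitimate), which follows from $T_\lambda\cong L_\lambda\ot L_\lambda$ in \lemref{lemcomiso} together with the classification of the simple tensor modules by $(K,L)$-hook partitions. Once that is in place the argument is essentially formal. The only genuine input is \thmref{thmidealdec}, which has already been proved, so I would keep this proof short and cite \thmref{thmidealdec} and \corref{coroidealdec} as the workhorses.
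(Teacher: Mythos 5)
Your argument is correct and follows exactly the route the paper signals (the paper leaves the proof to the reader by stating that the proposition ``follows from \thmref{thmidealdec} and \corref{coroidealdec}''); you have merely filled in the details. The only thing worth flagging is a very minor citation mismatch: to show $\La(I)$ is a diagrammatic ideal you appeal to ``part (2) of \corref{coroidealdec}'' but what you actually use is \thmref{thmidealdec} directly (which is fine, and in fact part (1) of \corref{coroidealdec} would also give it immediately once you know $I=\bigoplus_{\gamma\in\La(I)}T_\gamma$). Everything else — the semisimple multiplicity-free bookkeeping via \lemref{lemcomiso}, the pairwise non-isomorphism of the $T_\la$, and the verification that the two maps are inverse — is exactly what is needed.
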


\subsection{Algebraic structure of $\cM^{k|l}_{r|s}$.} \label{secalgstr2}
Suppose that $k,l,r,s$ are non-negative integers.  Let $K=\max\{k,r\}, L=\max\{l,s\}$ and write $\cM_{K|L}:=\cM^{K|L}_{K|L}$ for the  bi-superalgebra generated by matrix elements $t_{ab}$ with $a\in \bI_{K|L},b\in\bI_{K|L}$ subject to the relations \eqref{eqRel1}. Write
\begin{equation}\label{eqhatI}
\hat{\bI}_{i|j}=\{a| K-i+1\leq a\leq K+j \}\subseteq \bI_{K|L}
\end{equation}
for any $i\leq K$ and $j\leq L$.
We identify $\cM^{k|l}_{r|s}$ with the subalgebra of $\cM_{K|L}$ generated by elements $t_{ab}$ with $a\in \hat{\bI}_{k|l} ,b\in \hat{\bI}_{r|s}$, and denote this embedding by $\iota^{\,k|l}_{\,r|s}$. Conversely, there is a $\CK$-algebra retraction $\pi^{k|l}_{\,r|s}: \cM_{K|L}\rightarrow  \cM^{k|l}_{r|s}$ such that $\pi^{k|l}_{\,r|s}(t_{ab})=t_{ab}$ for all $a\in \hat{\bI}_{k|l} ,b\in \hat{\bI}_{r|s}$,  and $\pi^{k|l}_{\,r|s}(t_{ab})=0$ otherwise. Clearly, $\pi^{k|l}_{\,r|s}\, \iota^{\,k|l}_{\,r|s}=\id_{\cM^{k|l}_{r|s}}$.

We have $\pi^{k|l}_{\,r|s}(T_{\la})\subset \cM^{k|l}_{r|s}$ and  denote the image by  $^{\pi}T_{\la}$ for simplicity, i.e., $^{\pi}T_{\la}=T_{\la}\cap \cM^{k|l}_{r|s}$. Note that $^{\pi}T_{\la}\neq 0$, unless it is contained in some ideal generated by some $t_{ab}$ with $a\in \bI_{K|L}\backslash \hat{\bI}_{k|l}$ or $b\in \bI_{K|L}\backslash \hat{\bI}_{r|s}$. This implies that  $\pi^{k|l}_{\,r|s}(\cM_{K|L})$  amounts to the truncation applied to $ \cM_{K|L}$ as in \thmref{thmHowe}. Therefore, 
\begin{equation}\label{eqTnozero1}
^{\pi}T_{\la}\neq 0 \ \text{if and only if} \ \la\in \La_{k|l}\cap\La_{r|s}.
\end{equation}
It follows that as $ \CL(\Uq(\gl_{k|l})) \ot \cR(\Uq(\gl_{r|s}))$-module
$ ^{\pi}T_{\la}\cong  L_{\la}^{k|l}\ot L_{\la}^{r|s}$ for any $\la\in \La_{k|l}\cap\La_{r|s},$
and in this case  $T_{\la}=\CL(\Uq(\gl_{K|L})) \ot \cR(\Uq(\gl_{K|L})) \, ^{\pi}T_{\la}$.

\begin{prop}\label{lemmult}
	Let $\la,\mu\in \La_{k|l}\cap\La_{r|s}\subseteq \La_{K|L}$. We have the multiplication formula in $\cM^{k|l}_{r|s}$
	\[ ^{\pi}T_{\la}\, ^{\pi}T_{\mu}=\bigoplus_{\gamma \in N_{\la,\mu}\cap \La_{k|l}\cap\La_{r|s}}\!\!\!\!\!   ^{\pi}T_{\gamma},   \]
	where  $N_{\la,\mu}$ is defined as in \propref{propProdDec}. 
\end{prop}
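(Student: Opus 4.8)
The strategy is to deduce the multiplication formula in $\cM^{k|l}_{r|s}$ from the corresponding formula in the ambient bi-superalgebra $\cM_{K|L}$ (namely \propref{propProdDec}) by applying the retraction $\pi^{k|l}_{\,r|s}$. The key point is that $\pi^{k|l}_{\,r|s}$ is a $\CK$-algebra homomorphism, so $\pi^{k|l}_{\,r|s}(T_{\la}T_{\mu})=\pi^{k|l}_{\,r|s}(T_{\la})\,\pi^{k|l}_{\,r|s}(T_{\mu})={}^{\pi}T_{\la}\,{}^{\pi}T_{\mu}$. Combining this with \propref{propProdDec} gives
\[
{}^{\pi}T_{\la}\,{}^{\pi}T_{\mu}=\pi^{k|l}_{\,r|s}\Bigl(\bigoplus_{\gamma\in N_{\la,\mu}}T_{\gamma}\Bigr)=\sum_{\gamma\in N_{\la,\mu}}{}^{\pi}T_{\gamma},
\]
so it remains to identify which summands survive and to check that the sum is in fact direct.

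\textbf{Key steps.} First I would recall from \eqref{eqTnozero1} that ${}^{\pi}T_{\gamma}\neq 0$ if and only if $\gamma\in\La_{k|l}\cap\La_{r|s}$; hence on the right-hand side only those $\gamma\in N_{\la,\mu}$ lying in $\La_{k|l}\cap\La_{r|s}$ contribute, which already gives the claimed index set $N_{\la,\mu}\cap\La_{k|l}\cap\La_{r|s}$. Second, I would establish directness. The cleanest way is to observe that $\cM^{k|l}_{r|s}$ decomposes as $\bigoplus_{\nu\in\La_{k|l}\cap\La_{r|s}}{}^{\pi}T_{\nu}$ (this follows from \lemref{lemcomiso}, $\cM_{K|L}=\bigoplus_{\nu\in\La_{K|L}}T_{\nu}$, applied with $\pi^{k|l}_{\,r|s}$, together with \eqref{eqTnozero1}), so that any subsum $\sum_{\gamma}{}^{\pi}T_{\gamma}$ over distinct $\gamma$ is automatically direct. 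Third, for the reverse containment $\bigoplus_{\gamma\in N_{\la,\mu}\cap\La_{k|l}\cap\La_{r|s}}{}^{\pi}T_{\gamma}\subseteq{}^{\pi}T_{\la}\,{}^{\pi}T_{\mu}$: using that $T_{\la}=\bigl(\CL(\Uq(\gl_{K|L}))\ot\cR(\Uq(\gl_{K|L}))\bigr)\,{}^{\pi}T_{\la}$ for $\la\in\La_{k|l}\cap\La_{r|s}$ and the $\Uq(\gl_{K|L})\ot\Uq(\gl_{K|L})$-equivariance of multiplication, each $T_{\gamma}\subseteq T_{\la}T_{\mu}$ is generated by the action on ${}^{\pi}T_{\la}\,{}^{\pi}T_{\mu}$; applying $\pi^{k|l}_{\,r|s}$ (which is the projection onto the $\Upsilon$-invariants) and using that $\pi^{k|l}_{\,r|s}$ fixes ${}^{\pi}T_{\la}\,{}^{\pi}T_{\mu}$ pointwise, one gets ${}^{\pi}T_{\gamma}=\pi^{k|l}_{\,r|s}(T_{\gamma})\subseteq{}^{\pi}T_{\la}\,{}^{\pi}T_{\mu}$.

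\textbf{Main obstacle.} The only subtle point is the reverse containment: a priori $\pi^{k|l}_{\,r|s}$ is merely $\CK$-linear and multiplicative, and one must make sure that killing the "extra" matrix elements does not also kill summands ${}^{\pi}T_{\gamma}$ with $\gamma\in N_{\la,\mu}\cap\La_{k|l}\cap\La_{r|s}$, i.e.\ that such a $T_{\gamma}$ is not entirely contained in $\Ker\pi^{k|l}_{\,r|s}$. This is exactly what \eqref{eqTnozero1} guarantees (${}^{\pi}T_{\gamma}\neq 0$ precisely for $\gamma\in\La_{k|l}\cap\La_{r|s}$), so the obstacle dissolves once that fact is invoked; the proof is then a short formal argument and I would not expect to grind through any nontrivial calculation.
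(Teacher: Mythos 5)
Your proof is correct and follows the same approach as the paper: apply the algebra retraction $\pi^{k|l}_{\,r|s}$ to both sides of \propref{propProdDec} and drop the summands that vanish by \eqref{eqTnozero1}. Your ``reverse containment'' step is actually redundant --- once $\pi^{k|l}_{\,r|s}$ is applied to the \emph{equality} $T_{\la}T_{\mu}=\bigoplus_{\gamma}T_{\gamma}$, an equality (not a mere inclusion) of images is produced automatically --- but the extra care you take in verifying directness and non-vanishing makes explicit details the paper leaves implicit.
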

\begin{proof}
	Since $\pi^{k|l}_{\,r|s}$ is an algebra homomorphism, we apply it to both sides of \eqref{eqProdDec} and hence
	\[^{\pi}T_{\la}\, ^{\pi}T_{\mu}=\bigoplus_{\gamma\in N_{\la,\mu}}\,  ^{\pi}T_{\gamma}=\bigoplus_{\gamma \in N_{\la,\mu}\cap \La_{k|l}\cap\La_{r|s}}\!\!\!\!\!   ^{\pi}T_{\gamma},   \]
	where the last equation is the result of \eqref{eqTnozero1}.
\end{proof}

We denote by $ \langle ^{\pi}T_{\la}\rangle^{k|l}_{r|s}$ the two-sided ideal in $\cM^{k|l}_{r|s}$ generated by $^{\pi}T_{\la}$.
\begin{prop}\label{propideadec}
	Let $\la\in \La_{k|l}\cap\La_{r|s}\subseteq \La_{K|L}$.  Then  $\langle ^{\pi}T_{\la}\rangle^{k|l}_{r|s}$  as $\CL(\Uq(\gl_{k|l}))\ot \cR(\Uq(\gl_{r|s}))$-module  admits the following multiplicity-free decomposition
	\[  \langle ^{\pi}T_{\la}\rangle^{k|l}_{r|s}=\bigoplus_{ \la \subseteq \gamma \in \La_{k|l}\cap \La_{r|s}}\!\!\!\! ^{\pi}T_{\gamma}. \] 
\end{prop}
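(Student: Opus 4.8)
The plan is to mimic the proof of \thmref{thmidealdec} but carried out inside the truncated algebra $\cM^{k|l}_{r|s}$, using \propref{lemmult} in place of \propref{propProdDec}. First I would establish the inclusion $\langle ^{\pi}T_{\la}\rangle^{k|l}_{r|s}\subseteq\bigoplus_{\la\subseteq\gamma\in\La_{k|l}\cap\La_{r|s}}{}^{\pi}T_{\gamma}$. Note the right-hand side is genuinely a two-sided ideal of $\cM^{k|l}_{r|s}$: by \propref{lemmult}, for any $\mu\in\La_{k|l}\cap\La_{r|s}$ the product ${}^{\pi}T_{\mu}\,{}^{\pi}T_{\gamma}$ (and ${}^{\pi}T_{\gamma}\,{}^{\pi}T_{\mu}$) decomposes into $\bigoplus_{\nu\in N_{\mu,\gamma}\cap\La_{k|l}\cap\La_{r|s}}{}^{\pi}T_{\nu}$, and every such $\nu$ satisfies $\nu\supseteq\gamma\supseteq\la$ since each constituent of $L_{\mu}\ot L_{\gamma}$ contains $\gamma$ as a subdiagram; hence the right-hand side absorbs multiplication by generators ${}^{\pi}T_{(1)}$ and, more generally, by all of $\cM^{k|l}_{r|s}=\bigoplus_{\mu}{}^{\pi}T_{\mu}$. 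Since it contains ${}^{\pi}T_{\la}$ it contains the ideal it generates, giving one inclusion.

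For the reverse inclusion I would argue by induction on $|\gamma|$ that ${}^{\pi}T_{\gamma}\subseteq\langle ^{\pi}T_{\la}\rangle^{k|l}_{r|s}$ for every $(k|l)\cap(r|s)$-hook partition $\gamma\supseteq\la$. The base case $\gamma=\la$ is clear. For the inductive step, given $\gamma\supsetneq\la$ with $\gamma\in\La_{k|l}\cap\La_{r|s}$, pick a box whose removal yields a partition $\gamma^{-}$ that still contains $\la$ and still lies in $\La_{k|l}\cap\La_{r|s}$ — this is the point needing a small combinatorial check, since one must verify that $\gamma$ can be reached from $\la$ by a chain of single-box additions staying inside the hook region $\La_{k|l}\cap\La_{r|s}$; this follows because $\La_{k|l}\cap\La_{r|s}=\La_{\min(k,r)|\min(l,s)}$ is the set of hook partitions for a fixed hook shape, which is closed under removing a box that keeps the diagram a partition, and $\la\subseteq\gamma$ inside such a region always admits such a saturated chain. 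By the truncated analogue of \corref{coroTprod} applied to $\gamma^{-}$ (i.e. \propref{lemmult} with $\mu=(1)$, using that $N_{\gamma^{-},(1)}$ consists of single-box additions and intersecting with $\La_{k|l}\cap\La_{r|s}$), we get ${}^{\pi}T_{\gamma}\subseteq{}^{\pi}T_{\gamma^{-}}\,{}^{\pi}T_{(1)}$; by induction ${}^{\pi}T_{\gamma^{-}}\subseteq\langle ^{\pi}T_{\la}\rangle^{k|l}_{r|s}$, and since the latter is a two-sided ideal we conclude ${}^{\pi}T_{\gamma}\subseteq\langle ^{\pi}T_{\la}\rangle^{k|l}_{r|s}$. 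Summing over all such $\gamma$ gives the reverse inclusion.

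Finally, the decomposition is multiplicity-free because each ${}^{\pi}T_{\gamma}\cong L_{\gamma}^{k|l}\ot L_{\gamma}^{r|s}$ is irreducible as a $\CL(\Uq(\gl_{k|l}))\ot\cR(\Uq(\gl_{r|s}))$-module by \eqref{eqTnozero1} and the discussion following it, and ${}^{\pi}T_{\gamma_1}\cap{}^{\pi}T_{\gamma_2}=0$ for distinct $\gamma_1,\gamma_2$ since the ${}^{\pi}T_{\gamma}$ are the images under $\pi^{k|l}_{r|s}$ of the distinct isotypic summands $T_{\gamma}$ of $\cM_{K|L}$, which are already in direct sum by \lemref{lemcomiso}. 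An alternative, slicker route that avoids the combinatorial chain argument: apply the retraction $\pi^{k|l}_{r|s}$ directly to the decomposition of \thmref{thmidealdec} for $\langle T_{\la}\rangle_{K|L}$, using that $\pi^{k|l}_{r|s}$ is an algebra homomorphism carrying $T_{\la}$ onto ${}^{\pi}T_{\la}$, so it carries $\langle T_{\la}\rangle_{K|L}$ onto $\langle{}^{\pi}T_{\la}\rangle^{k|l}_{r|s}$, and $\pi^{k|l}_{r|s}(\bigoplus_{\la\subseteq\gamma\in\La_{K|L}}T_{\gamma})=\bigoplus_{\la\subseteq\gamma\in\La_{k|l}\cap\La_{r|s}}{}^{\pi}T_{\gamma}$ by \eqref{eqTnozero1}; I would present this as the main proof, as it is essentially immediate, with the inductive argument as a remark. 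The only real obstacle is the bookkeeping to confirm that $\pi^{k|l}_{r|s}$ of the ideal $\langle T_{\la}\rangle_{K|L}$ is exactly the ideal generated by ${}^{\pi}T_{\la}$ rather than something larger or smaller, but this is precisely the content of $\pi^{k|l}_{r|s}$ being a surjective algebra homomorphism with $\pi^{k|l}_{r|s}\iota^{k|l}_{r|s}=\id$.
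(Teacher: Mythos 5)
Your ``slicker route'' --- applying the retraction $\pi^{k|l}_{r|s}$ to both sides of the decomposition \eqnref{eqTideadec} of $\langle T_{\la}\rangle_{K|L}$ and using \eqnref{eqTnozero1} to identify the surviving summands --- is exactly the paper's proof, and since you present it as the main argument the two proofs agree.

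One small flag on the back-up inductive argument: the claimed identity $\La_{k|l}\cap\La_{r|s}=\La_{\min(k,r)|\min(l,s)}$ is false in the mixed case. For instance with $k=1,\,l=10,\,r=10,\,s=1$ the partition $\la=(5,5)$ satisfies $\la_2\leq 10$ and $\la_{11}\leq 1$, so $\la\in\La_{1|10}\cap\La_{10|1}$, yet $\la\notin\La_{1|1}$. Fortunately the combinatorial fact you actually invoke --- that $\la\subseteq\gamma$ with both in $\La_{k|l}\cap\La_{r|s}$ admits a saturated chain of single-box additions staying inside $\La_{k|l}\cap\La_{r|s}$ --- holds for a simpler reason: each $\La_{k|l}$ is downward-closed under containment (removing boxes preserves the hook condition $\la_{k+1}\le l$), hence so is the intersection, and therefore \emph{any} saturated chain from $\la$ to $\gamma$ in Young's lattice automatically stays inside $\La_{k|l}\cap\La_{r|s}$. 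With that correction your inductive alternative is also valid, mirroring the proof of \thmref{thmidealdec} with \propref{lemmult} in place of \propref{propProdDec}.
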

\begin{proof}
	Applying $\pi^{k|l}_{r|s}$ to both sides of \eqref{eqTideadec}, we obtain
	\[ 	\langle ^{\pi}T_{\la}\rangle^{k|l}_{r|s} = \bigoplus_{\la_{c} \subseteq \la\in\La_{K|L}}\, ^{\pi}T_{\la}=\bigoplus_{ \la_c \subseteq \la \in \La_{k|l}\cap \La_{r|s}}\!\!\!\! ^{\pi}T_{\la}, \]
	where the last equation follows from \eqref{eqTnozero1}.
\end{proof}

\begin{rmk}\label{rmkTclassical}
	It is worth noting that \propref{propProdDec}, \thmref{thmidealdec} hold for $\cM_{K|L}|_{q=1}$,  so do \propref{lemmult} and \propref{propideadec} for $\cM^{k|l}_{r|s}$.  These are actually super analogues of \cite[Theorem 4.1]{DEP}, which was originally proved characteristic-freely by  straightening formula. 
\end{rmk}

\subsection{The second fundamental theorem}
Now we turn to the  homomorphism $ \Psi^{k|l}_{r|s}$  defined in \eqref{eqalghomo}. We shall characterise the kernel of $ \Psi^{k|l}_{r|s}$ as a two-sided ideal of $\widetilde{\cM}^{k|l}_{r|s}$.


\subsubsection{Algebraic structure of $\widetilde{\cM}^{k|l}_{r|s}$}
For our purpose, we will embed $\widetilde{\cM}^{k|l}_{r|s}$ into a larger superalgebra $\widetilde{\cM}_{K|L}$ and then explore its algebraic properties, which are similar to those of $\cM^{k|l}_{r|s}$  in \secref{secalgstr2}.

Fix non-negative integers $k,l,r,s$ and let $K=\max\{k,r\}$, $L=\max\{l,s\}$. Let $\widetilde{\cM}_{K|L}$ be the superalgebra  generated by  $\widetilde{t}_{ab}$ with $a,b\in\bI_{K|L}$ subject to relations \eqref{eqnewalgrel}.
We can  define the following embedding and retraction, which are similar to $\iota^{\,k|l}_{\,r|s}$ and $\pi^{k|l}_{\,r|s}$ in \secref{secalgstr2},
\begin{equation}\label{eqemretra}
   \widetilde{\iota}^{\,k|l}_{\,r|s}: \widetilde{\cM}^{k|l}_{r|s}\rightarrow \widetilde{\cM}_{K|L},\quad       
   \widetilde{\pi}^{k|l}_{r|s}: \widetilde{\cM}_{K|L} \rightarrow \widetilde{\cM}^{k|l}_{r|s},
\end{equation} 
which satisfy $\widetilde{\pi}^{k|l}_{r|s}\, \widetilde{\iota}^{\,k|l}_{\,r|s}=\id_{\widetilde{\cM}^{k|l}_{r|s}}$. Notice that $\widetilde{\cM}_{K|L}$ encodes the   $\Uq(\gl_{K|L})\ot \Uq(\gl_{K|L})$-module structure given in \secref{secrefFFT}, which naturally induces the  $\Uq(\gl_{k|l}) \ot \Uq(\gl_{r|s})$-module structure on the subalgebra $\widetilde{\cM}^{k|l}_{r|s}$ of $\widetilde{\cM}_{K|L}$.

Recall that $\widetilde{\cM}_{K|L}$ admits the multiplicity-free decomposition $\widetilde{\cM}_{K|L}\cong\bigoplus_{\la\in \La_{K|L}} L_{\la}^{K|L}\ot L_{\la}^{K|L}$ as $\Uq(\gl_{K|L})\ot \Uq(\gl_{K|L})$-module by setting $k=r=K$ and $l=s=L$ in \propref{propHowedec}. 

\begin{defn}
For each $\la\in \La_{K|L}$, we denote by  $\widetilde{T}_{\la}$ the subspace of  $\widetilde{\cM}_{K|L}$ which is isomorphic to $L_{\la}^{K|L}\ot L_{\la}^{K|L}$ as $\Uq(\gl_{K|L})\ot \Uq(\gl_{K|L})$-module. Thus,  $\widetilde{\cM}_{K|L}=\bigoplus_{\la\in \La_{K|L}} \widetilde{T}_{\la}$.	
\end{defn}
By  \eqref{eqemretra}, we have $\widetilde{\pi}^{k|l}_{\,r|s}(\widetilde{T}_{\la})\subset \widetilde{\cM}^{k|l}_{r|s}$ and  denote the image by  $^{\pi}\widetilde{T}_{\la}$ for simplicity. Here $^{\pi}\widetilde{T}_{\la}=\widetilde{T}_{\la}\cap \widetilde{\cM}^{k|l}_{r|s}$. It is readily verified that  $\widetilde{\pi}^{k|l}_{\,r|s}(\widetilde{\cM}_{K|L})$  amounts to the truncation applied to $\widetilde{\cM}_{K|L}$ as in \thmref{thmHowe}; therefore we obtain
\begin{equation}\label{eqTnozero2}
^{\pi}\widetilde{T}_{\la}\neq 0 \ \text{if and only if} \ \la\in \La_{k|l}\cap\La_{r|s}.
\end{equation}
In this case, $ ^{\pi}\widetilde{T}_{\la}\cong  L_{\la}^{k|l}\ot L_{\la}^{r|s} \cong\,   ^{\pi}T_{\la}$ as $ \Uq(\gl_{k|l}) \ot \Uq(\gl_{r|s})$-module.

Observe that  $\widetilde{\cM}^{k|l}_{r|s}$ and $\cM^{k|l}_{r|s}$ are isomorphic as $\Uq(\gl_{k|l}) \ot \Uq(\gl_{r|s})$-modules by \thmref{thmHowe}  and \propref{propHowedec}.
We expect that the subspaces $^{\pi} \widetilde{T}_{\la}$ have the same properties as in  \propref{lemmult} and \propref{propideadec}, though $\widetilde{\cM}^{k|l}_{r|s}$ and $\cM^{k|l}_{r|s}$ have different defining relations in general. This is given in the following lemma.

\begin{lem}\label{lemmultfor}
	Let $\la,\mu\in \La_{k|l}\cap\La_{r|s}\subseteq \La_{K|L}$. We have the multiplication formula in $\widetilde{\cM}^{k|l}_{r|s}$
	\[ ^{\pi}\widetilde{T}_{\la}\, ^{\pi}\widetilde{T}_{\mu}=\bigoplus_{\gamma \in N_{\la,\mu}\cap \La_{k|l}\cap\La_{r|s}}\!\!\!\!\!   ^{\pi}\widetilde{T}_{\gamma},   \]
	where  $N_{\la,\mu}$ is defined as in \propref{propProdDec}. 
\end{lem}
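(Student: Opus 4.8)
The strategy is to transfer the already-established multiplication formula for $\cM^{k|l}_{r|s}$ (Proposition~\ref{lemmult}) to $\widetilde{\cM}^{k|l}_{r|s}$. The two superalgebras $\cM^{k|l}_{r|s}$ and $\widetilde{\cM}^{k|l}_{r|s}$ are \emph{not} isomorphic as superalgebras in general, so the formula cannot simply be pulled back along an algebra map; instead I will use the fact that, as $\Uq(\gl_{k|l})\ot\Uq(\gl_{r|s})$-modules, they are isomorphic (by \thmref{thmHowe} and \propref{propHowedec}), and that the product in either algebra restricted to a pair of isotypic components $^\pi T_\la\cdot{}^\pi T_\mu$ is determined, up to the scalars permitted by the module structure, by how the tensor product $L^{k|l}_\la\ot L^{r|s}_\mu$ (or its $\Uq(\gl_{K|L})\ot\Uq(\gl_{K|L})$-counterpart) decomposes. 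Concretely, I would first reduce to the case $K=k=r$, $L=l=s$, i.e. prove the statement for $\widetilde{T}_\la\widetilde{T}_\mu\subseteq\widetilde{\cM}_{K|L}$, and then apply $\widetilde{\pi}^{k|l}_{r|s}$ to both sides together with \eqref{eqTnozero2} exactly as in the proof of \propref{lemmult}.

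For the reduced case the key steps are: (i) the multiplication map $\widetilde{\cM}_{K|L}\ot\widetilde{\cM}_{K|L}\to\widetilde{\cM}_{K|L}$ is a morphism of $\Uq(\gl_{K|L})\ot\Uq(\gl_{K|L})$-modules (it preserves both actions because $\widetilde{\cM}_{K|L}$ is a module superalgebra for each factor — this follows from \propref{propBraidedSym3}, which identifies $\widetilde{\cM}_{K|L}$ with the braided supersymmetric algebra $S_q(V^{K|L}\ot(V^{K|L})^\ast)$, a module superalgebra); (ii) hence the image of $\widetilde{T}_\la\ot\widetilde{T}_\mu\cong (L_\la\ot L_\la)\ot(L_\mu\ot L_\mu)$ is a submodule of $\widetilde{\cM}_{K|L}$, so it is a direct sum $\bigoplus_{\gamma}\widetilde{T}_\gamma$ over some multiset of $\gamma$'s, with each $\widetilde{T}_\gamma$ occurring at most once since $\widetilde{\cM}_{K|L}$ is multiplicity-free; (iii) the set of $\gamma$ occurring is controlled by the decomposition $L_\la\ot L_\mu\cong\bigoplus_\gamma N^\gamma_{\la,\mu}L_\gamma$ on \emph{each} side, so the possible $\gamma$ lie in $N_{\la,\mu}$; (iv) conversely every $\gamma\in N_{\la,\mu}$ actually appears, which is the point requiring genuine argument. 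For (iv) I would argue that $\widetilde{T}_\la\widetilde{T}_\mu$ cannot be a proper submodule of $\bigoplus_{\gamma\in N_{\la,\mu}}\widetilde{T}_\gamma$: this can be seen either by a dimension count using the PBW basis $\CT$ of \propref{propHowedec}(2) and the analogous count for $\cM_{K|L}$ from \propref{propMbasis} (the graded dimensions of $\widetilde{\cM}_{K|L}$ and $\cM_{K|L}$ agree by \lemref{lemIden}, and the span of products $\widetilde{t}^{\,\bm}\widetilde{t}^{\,\bn}$ of given total degree fills the whole degree-$(|\bm|+|\bn|)$ component, by \rmkref{rmk:BPW-multipl}), or more cleanly by specialising $q\to1$: the classical statement holds for $\cM_{K|L}|_{q=1}$ by \rmkref{rmkTclassical}, and a flatness/semicontinuity argument shows that if the product of highest-weight vectors $v^+_\la\ot v^+_\la$ and $v^+_\mu\ot v^+_\mu$ generated a $\gamma$-component in the limit then it does so generically.

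\textbf{Main obstacle.} The delicate point is step (iv), proving that \emph{no} Littlewood--Richardson constituent is lost in the quantum setting. The inclusion $\subseteq$ in the claimed formula is formal (it is just the module map statement plus multiplicity-freeness), but the reverse inclusion — surjectivity onto every $\widetilde{T}_\gamma$ with $\gamma\in N_{\la,\mu}$ — genuinely uses that $\widetilde{\cM}_{K|L}$ is a flat deformation. I expect the cleanest route is the $q\to1$ specialisation: realise $\widetilde{\cM}_{K|L}$ as an $\CA$-form over $\CA=\C[q,q^{-1}]$ (localised appropriately) spanned by the PBW monomials $\widetilde{t}^{\,\bm}$, observe via \rmkref{rmk:BPW-multipl} that multiplication preserves this $\CA$-form and reduces mod $(q-1)$ to the classical multiplication on $S(\C^{K|L}\ot(\C^{K|L})^\ast)$, and then invoke that the rank of the $\CA$-linear multiplication map $\widetilde{T}_\la^{\CA}\ot\widetilde{T}_\mu^{\CA}\to\widetilde{\cM}_{K|L}^{\CA}$ can only drop on a proper closed subset of $\Spec\CA$, while at $q=1$ its image hits every $T_\gamma|_{q=1}$ with $\gamma\in N_{\la,\mu}$ by \rmkref{rmkTclassical}; hence it does so for generic $q$, and therefore for our $\CK=\C(q)$. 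Once this reduced-case formula is in hand, the general statement follows verbatim from the proof of \propref{lemmult} with $\pi^{k|l}_{r|s}$ replaced by $\widetilde{\pi}^{k|l}_{r|s}$ and \eqref{eqTnozero1} by \eqref{eqTnozero2}.
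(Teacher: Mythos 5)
Your preferred route --- reduce to the $K|L$ case, bound the constituents of $\widetilde{T}_\la\widetilde{T}_\mu$ by $N_{\la,\mu}$ using multiplicity-freeness and the fact that multiplication is a module map, then pass to a $\C[q,q^{-1}]$-form and specialise at $q=1$ to see that no Littlewood--Richardson constituent drops out, and finally apply $\widetilde{\pi}^{k|l}_{r|s}$ together with \eqref{eqTnozero2} --- is exactly the paper's proof; the paper's dimension inequality $\dim_\CK(\widetilde T_\la\widetilde T_\mu)\ge\sum_{\gamma\in N_{\la,\mu}}\dim_\CK\widetilde T_\gamma$ is just the concrete form of your semicontinuity-of-rank statement. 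One small caveat: your first alternative for step (iv), a raw PBW dimension count based on products of a given total degree filling the whole degree component, would only show that $\widetilde{\cM}_{K|L}$ is generated in degree one and would not on its own pin down which constituents of $\widetilde T_\la\widetilde T_\mu$ occur; you are right to flag the $q\to1$ specialisation as the cleaner route, and it is the one the paper uses.
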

\begin{proof}
	It suffices to prove that  
	$ \widetilde{T}_{\la} \widetilde{T}_{\mu}= \bigoplus_{\gamma\in N_{\la,\mu}} \widetilde{T}_{\gamma} $ in $\widetilde{\cM}_{K|L}$, from which we can obtain the desired decomposition in $\widetilde{\cM}^{k|l}_{r|s}$ by applying $\widetilde{\pi}^{k|l}_{r|s}$ to both sides of equation.
	
    As a $\Uq(\gl_{K|L})\ot \Uq(\gl_{K|L})$-module, $\widetilde{T}_{\la} \, \widetilde{T}_{\mu}$ is a quotient of $L_{\la}^{K|L}\ot L_{\mu}^{K|L}\ot L_{\la}^{K|L}\ot L_{\mu}^{K|L}$, which admits the decomposition
    \[ L_{\la}^{K|L}\ot  L_{\mu}^{K|L}\ot L_{\la}^{K|L}\ot L_{\mu}^{K|L}\cong \bigoplus_{\nu, \gamma\in N_{\la,\mu} } N_{\la,\mu}^{\nu} N_{\la,\mu}^{\gamma} L_{\nu}^{K|L}\ot L_{\gamma}^{K|L}, \]
    where $N_{\la,\mu}^{\nu}, N_{\la,\mu}^{\gamma}>0$ are Littlewood-Richardson coefficients. 
	On the other hand, $\widetilde{T}_{\la}\, \widetilde{T}_{\mu}$ is a submodule of $\widetilde{\cM}_{K|L}\cong \bigoplus_{\la\in \La_{K|L}} L_{\la}^{K|L}\ot L_{\la}^{K|L}$. It follows that $\widetilde{T}_{\la} \, \widetilde{T}_{\mu}$ decomposes as
    $\bigoplus_{\gamma \in N_{\la,\mu}} (L_{\gamma}^{K|L}\ot L_{\gamma}^{K|L})^{\oplus m_{\gamma}}$, where $m_{\gamma}$ ($0\leq m_{\gamma} \leq 1$) denotes the multiplicity. Thus $\widetilde{T}_{\la} \, \widetilde{T}_{\mu}=\bigoplus_{\gamma \in N_{\la,\mu}} (\widetilde{T}_{\gamma})^{\oplus m_{\gamma}}$, 
and $\dim_{\CK}(\widetilde{T}_{\la} \, \widetilde{T}_{\mu})=\sum_{\gamma \in N_{\la,\mu}} m_{\gamma}\dim_{\CK} \widetilde{T}_{\gamma}$. It remains to show that  $m_{\gamma}=1$ for all $\gamma\in N_{\la,\mu}$.

Set $A=\C[q, q^{-1}]$. Let 
$\widetilde{\cM}_{K|L, A}$ be the superalgebra over $A$ generated by the  $\widetilde{t}_{ab}$'s with the relations \eqref{eqnewalgrel}. Similarly we let 
$\cM_{K|L, A}$ be the superalgebra over $A$  generated by the $t_{ab}$'s.
These superalgebras have PBW bases
(cf. \propref{propMbasis} and \propref{propHowedec}). Their 
specialisations at $q=1$ are isomorphic to the super polynomial algebra generated by $K^2+L^2$ even variables and $2K L$ Grassmannian variables.  These are 
isomorphisms of $\U(\gl_{K|L})\ot \U(\gl_{K|L})$-module superalgebras. 
Let $\widetilde{T}_{\la, A}=\widetilde{T}_{\la}\cap\widetilde{\cM}_{K|L, A}$
and $T_{\la, A}=T_{\la}\cap\cM_{K|L, A}$.
Denote by $\widetilde{T}_{\la}|_{q=1}$ the specialisation of $\widetilde{T}_{\la, A}$ at $q=1$,  and by $T_{\la}|_{q=1}$ that of $T_{\la, A}$. It then follows that  $\widetilde{T}_{\la}|_{q=1}\cong T_{\la}|_{q=1}$ as  $\U(\gl_{K|L})\ot \U(\gl_{K|L})$-modules.  

For any $\la$ and $\mu$, it is clear from \rmkref{rmk:BPW-multipl} that the specialisation of $\widetilde{T}_{\la, A}\widetilde{T}_{\mu, A}$ at $q=1$ is equal to $\widetilde{T}_{\la}|_{q=1} \, \widetilde{T}_{\mu}|_{q=1}$. 
By \propref{propProdDec} and \rmkref{rmkTclassical},  we have 
\[ \widetilde{T}_{\la}|_{q=1} \, \widetilde{T}_{\mu}|_{q=1}= \bigoplus_{\gamma\in N_{\la,\mu}} \widetilde{T}_{\gamma}|_{q=1}\cong \bigoplus_{\gamma \in N_{\la,\mu}} L_{\gamma}^{K|L}|_{q=1}\ot L_{\gamma}^{K|L}|_{q=1}. \]
This leads to 	
\[
\dim_{\CK}(\widetilde{T}_{\la} \, \widetilde{T}_{\mu})\ge 
\sum_{\gamma \in N_{\la,\mu}} (\dim_{\C} L^{K|L}_{\gamma}|_{q=1})^2
=\sum_{\gamma \in N_{\la,\mu}} (\dim_{\CK} L^{K|L}_{\gamma})^2=
\sum_{\gamma \in N_{\la,\mu}} \dim_{\CK} \widetilde{T}_{\gamma},
\]
forcing $m_\gamma=1$ for all $\gamma \in N_{\la,\mu}$. This completes the proof.
\end{proof}

Let $\langle ^{\pi}\widetilde{T}_{\la}\rangle^{k|l}_{r|s}$ be the two-sided ideal in $\widetilde{\cM}^{k|l}_{r|s}$ generated by $^{\pi}\widetilde{T}_{\la}$.  We immediately obtain 

\begin{prop}\label{propideadec2}
	Let $\la\in \La_{k|l}\cap\La_{r|s}\subseteq \La_{K|L}$.  Then  $\langle ^{\pi}\widetilde{T}_{\la}\rangle^{k|l}_{r|s}$  as $\Uq(\gl_{k|l})\ot \Uq(\gl_{r|s})$-module  admits the following multiplicity-free decomposition
	\[  \langle ^{\pi}\widetilde{T}_{\la}\rangle^{k|l}_{r|s}=\bigoplus_{ \la \subseteq \gamma \in \La_{k|l}\cap \La_{r|s}}\!\!\!\! ^{\pi} \widetilde{T}_{\gamma}. \] 
\end{prop}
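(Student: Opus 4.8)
The plan is to deduce \propref{propideadec2} from \lemref{lemmultfor} in exactly the same way that \propref{propideadec} (equivalently \thmref{thmidealdec}) was deduced from the multiplication formula of \propref{lemmult}. First I would observe that \corref{coroTprod} has a direct analogue for $\widetilde{\cM}_{K|L}$: taking $\mu=(1)$ in (the proof of) \lemref{lemmultfor} gives
\[
\widetilde{T}_{\la}\,\widetilde{T}_{(1)}=\bigoplus_{\gamma\in N_{\la,(1)}}\widetilde{T}_{\gamma},
\]
where $N_{\la,(1)}$ is the set of $(K,L)$-hook diagrams obtained from $\la$ by adding one box (the Littlewood--Richardson rule for tensoring with the natural module). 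Applying the retraction $\widetilde{\pi}^{k|l}_{r|s}$ to both sides and using \eqref{eqTnozero2}, this becomes, in $\widetilde{\cM}^{k|l}_{r|s}$,
\[
{}^{\pi}\widetilde{T}_{\la}\,{}^{\pi}\widetilde{T}_{(1)}=\bigoplus_{\gamma\in N_{\la,(1)}\cap\La_{k|l}\cap\La_{r|s}}{}^{\pi}\widetilde{T}_{\gamma}.
\]

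Next I would run the two-sided inclusion argument. For ``$\subseteq$'': since $\langle{}^{\pi}\widetilde{T}_{\la}\rangle^{k|l}_{r|s}$ is by \lemref{lemmultfor} a sum of various ${}^{\pi}\widetilde{T}_{\gamma}$, and every $\gamma$ occurring in a product ${}^{\pi}\widetilde{T}_{\la}\,{}^{\pi}\widetilde{T}_{\mu}$ contains $\la$ (because $N_{\la,\mu}^{\gamma}\neq 0$ forces $\la\subseteq\gamma$), we get $\langle{}^{\pi}\widetilde{T}_{\la}\rangle^{k|l}_{r|s}\subseteq\bigoplus_{\la\subseteq\gamma\in\La_{k|l}\cap\La_{r|s}}{}^{\pi}\widetilde{T}_{\gamma}$. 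For ``$\supseteq$'': I would induct on $|\gamma|-|\la|$. The base case $\gamma=\la$ is trivial. If $\la\subsetneq\gamma\in\La_{k|l}\cap\La_{r|s}$ with $|\gamma|=|\la|+N+1$, pick $\gamma'\subseteq\gamma$ with $\la\subseteq\gamma'$, $|\gamma'|=|\gamma|-1$, and $\gamma'$ still a hook partition (such a $\gamma'$ exists because removing a suitable corner box of $\gamma$ outside $\la$ keeps it a hook); by induction ${}^{\pi}\widetilde{T}_{\gamma'}\subseteq\langle{}^{\pi}\widetilde{T}_{\la}\rangle^{k|l}_{r|s}$, and by the displayed formula above ${}^{\pi}\widetilde{T}_{\gamma}\subseteq{}^{\pi}\widetilde{T}_{\gamma'}\,{}^{\pi}\widetilde{T}_{(1)}\subseteq\langle{}^{\pi}\widetilde{T}_{\la}\rangle^{k|l}_{r|s}$. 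This yields the reverse inclusion and hence equality.

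Most of the work is already carried by \lemref{lemmultfor}, so the only genuinely delicate points are bookkeeping ones: verifying that one can always strip a box from $\gamma$ while staying inside $\La_{k|l}\cap\La_{r|s}$ and keeping $\la$ as a subdiagram, and checking that the hook-partition condition is preserved at each step (this is where the combinatorics of $(K,L)$-hook shapes enters, and it is the analogue of the fact used implicitly in \thmref{thmidealdec} that \corref{coroTprod} only produces hook diagrams). I expect this diagram-combinatorics verification — rather than any representation-theoretic input — to be the main, though routine, obstacle; everything else is a formal transcription of the proof of \propref{propideadec} with $T_{\la}$ replaced by $\widetilde{T}_{\la}$ and \propref{propProdDec} replaced by \lemref{lemmultfor}.
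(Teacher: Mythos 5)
Your proposal is essentially correct and follows the same strategy as the paper; the only difference is in the order of operations. The paper first establishes the decomposition of $\langle\widetilde{T}_{\la}\rangle_{K|L}$ in the square algebra $\widetilde{\cM}_{K|L}$ (by the same two-sided inclusion induction with $\mu=(1)$, transported from \thmref{thmidealdec} via \lemref{lemmultfor}) and only then applies the retraction $\widetilde{\pi}^{k|l}_{r|s}$ to both sides, whereas you apply the retraction to the $\mu=(1)$ multiplication formula first and then run the induction directly inside $\widetilde{\cM}^{k|l}_{r|s}$. The two routes are interchangeable: your version requires the extra (trivial) observation that removing a box from $\gamma\in\La_{k|l}\cap\La_{r|s}$ while keeping $\la\subseteq\gamma'$ stays in $\La_{k|l}\cap\La_{r|s}$, which holds because the hook conditions are inherited by subdiagrams — exactly the bookkeeping point you flag, and it is indeed routine. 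The paper's version avoids even raising that question by doing the combinatorics in the square case, where all of $\La_{K|L}$ is available, and letting $\widetilde{\pi}^{k|l}_{r|s}$ kill the unwanted summands at the end via \eqref{eqTnozero2}. One minor point you leave implicit in the ``$\subseteq$'' direction: the ideal consists of two-sided products $a\,{}^{\pi}\widetilde{T}_{\la}\,b$, so you need to iterate \lemref{lemmultfor} (not apply it once) and use that nonvanishing of iterated Littlewood--Richardson coefficients still forces $\la\subseteq\gamma$; this is the same implicit step the paper takes in \thmref{thmidealdec}, so it is not a gap, just something worth saying.
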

\begin{proof}
  We only need to show that $\langle \widetilde{T}_{\la}\rangle_{K|L}=\bigoplus_{ \la \subseteq \gamma\in  \La_{K|L}} \widetilde{T}_{\gamma}$ in $\widetilde{\cM}_{K|L}$, since  our proposition follows by applying the algebra homomorphism $\widetilde{\pi}^{k|l}_{r|s}$ to both sides of the equation. 
  The decomposition of the two-sided ideal  $\langle \widetilde{T}_{\la}\rangle_{K|L}$ can be derived from \lemref{lemmultfor} in a similar the way as in  \thmref{thmidealdec}. 
\end{proof}

\subsubsection{Formulation of the SFT}
Let $\CX_{K|L}:=(\cM^{K|L}_{m|n}\ot_{\fR} \overbar{\cM}^{K|L}_{m|n})^{\Uqg}$ be the $\Uqg$-invariant subalgebra, which is generated by $X_{ab}$ with $a,b\in\bI_{K|L}$ obeying  relations \eqref{eqinvrel}.
Identifying $\CX^{k|l}_{\;r|s}$   as a subalgebra of $\CX_{K|L}$, we immediately obtain the  embedding $\tau^{k|l}_{\,r|s}: \CX^{k|l}_{\;r|s}\rightarrow \CX_{K|L}$  and the retraction $\psi^{k|l}_{\,r|s}: \CX_{K|L}\rightarrow \CX^{k|l}_{\;r|s}$ with $\psi^{k|l}_{\,r|s}\, \tau^{k|l}_{\,r|s}=\id_{\CX^{k|l}_{\;r|s}}$, which are similar to $\iota^{\,k|l}_{\,r|s}$ and  $\pi^{k|l}_{\,r|s}$ defined in \secref{secalgstr2}.
The algebra homomorphisms fit into the following commutative diagram.
\begin{equation}\label{eqcommdiag}
\begin{aligned}
\xymatrixcolsep{5pc}
\xymatrix{
	\widetilde{\cM}^{k|l}_{r|s} \ar@{->>}[d]^{\Psi^{k|l}_{r|s}}  \ar@{^{(}->}[r]^{\widetilde{\iota}^{\,k|l}_{\,r|s}} & \widetilde{\cM}_{K|L} \ar@{->>}[d]^{\Psi_{K|L}}  \ar@{->>}[r]^{\widetilde{\pi}^{k|l}_{\,r|s}}  &  \widetilde{\cM}^{k|l}_{r|s}  \ar@{->>}[d]^{\Psi^{k|l}_{r|s}}  \\
	\CX^{k|l}_{\;r|s} \ar@{^{(}->}[r]^{ \tau^{k|l}_{r|s}} & \CX_{K|L} \ar@{->>}[r]^{  \psi^{k|l}_{r|s} } & \CX^{k|l}_{\;r|s} 
}
\end{aligned}
\end{equation}

\begin{lem}\label{lemKerKL}
	 Let $\la_c=((n+1)^{m+1})$, then as $\Uq(\gl_{K|L})\ot\Uq(\gl_{K|L})$-module,
	 \[ \Ker\, \Psi_{K|L}= \bigoplus_{\la_{c} \subseteq \la\in\La_{K|L}} \widetilde{T}_{\la}=\langle \widetilde{T}_{\la_c}\rangle_{K|L}. \]
\end{lem}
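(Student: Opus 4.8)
The plan is to identify $\Ker\,\Psi_{K|L}$ first as a module and then as a two-sided ideal. For the module description, recall from \propref{propHowedec} (applied with $k=r=K$, $l=s=L$) that $\widetilde{\cM}_{K|L}\cong\bigoplus_{\la\in\La_{K|L}}L^{K|L}_{\la}\ot L^{K|L}_{\la}$, and from \lemref{leminv} (again with $k=r=K$, $l=s=L$, and with our fixed $m,n$) that $\CX_{K|L}\cong\bigoplus_{\la\in\La_{K|L}\cap\La_{m|n}}L^{K|L}_{\la}\ot L^{K|L}_{\la}$. Since $\Psi_{K|L}$ is a $\Uq(\gl_{K|L})\ot\Uq(\gl_{K|L})$-module homomorphism by \thmref{FFTref}, and since each summand $\widetilde{T}_{\la}\cong L^{K|L}_{\la}\ot L^{K|L}_{\la}$ is irreducible with pairwise distinct isotypic components, $\Psi_{K|L}$ must send $\widetilde{T}_{\la}$ either isomorphically onto the matching summand of $\CX_{K|L}$ (when $\la\in\La_{m|n}$) or to zero (when $\la\notin\La_{m|n}$). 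Hence $\Ker\,\Psi_{K|L}=\bigoplus_{\la\in\La_{K|L}\setminus\La_{m|n}}\widetilde{T}_{\la}$. The first equality in the lemma then reduces to the purely combinatorial claim that for a $(K,L)$-hook partition $\la$, one has $\la\notin\La_{m|n}$ if and only if $\la_c=((n+1)^{m+1})\subseteq\la$; this follows directly from the definition \eqref{defnPar}: $\la\notin\La_{m|n}$ means $\la_{m+1}\geq n+1$, i.e.\ $\la$ has at least $m+1$ rows each of length at least $n+1$, which is exactly the containment $\la_c\subseteq\la$.

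For the second equality, that $\bigoplus_{\la_c\subseteq\la\in\La_{K|L}}\widetilde{T}_{\la}$ coincides with the two-sided ideal $\langle\widetilde{T}_{\la_c}\rangle_{K|L}$, I would invoke \propref{propideadec2} in the special case $k=r=K$, $l=s=L$: there $\La_{k|l}\cap\La_{r|s}=\La_{K|L}$, $^{\pi}\widetilde{T}_{\la}=\widetilde{T}_{\la}$, and the proposition gives $\langle\widetilde{T}_{\la_c}\rangle_{K|L}=\bigoplus_{\la_c\subseteq\gamma\in\La_{K|L}}\widetilde{T}_{\gamma}$ directly. (Alternatively, one can reprove this inclusion-by-inclusion using \lemref{lemmultfor} together with the argument of \thmref{thmidealdec}: the containment $\subseteq$ is automatic since $\widetilde{T}_{\la_c}\subseteq\Ker\,\Psi_{K|L}$ which is an ideal; for $\supseteq$ one checks that adding a single box, via $\widetilde{T}_{\la_c}\widetilde{T}_{(1)}$ and the Pieri-type rule of \corref{coroTprod} translated to $\widetilde{\cM}_{K|L}$ through \lemref{lemmultfor}, reaches every $\widetilde{T}_{\gamma}$ with $\la_c\subseteq\gamma$, and then induct on $|\gamma|$.) Combining the two equalities yields $\Ker\,\Psi_{K|L}=\langle\widetilde{T}_{\la_c}\rangle_{K|L}$ as claimed.

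The step I expect to require the most care is the verification that $\Ker\,\Psi_{K|L}$ is exactly the sum of the "bad" summands $\widetilde{T}_{\la}$ with $\la\notin\La_{m|n}$, rather than merely containing it. The subtlety is that one must know $\Psi_{K|L}$ restricted to each "good" summand $\widetilde{T}_{\la}$ (with $\la\in\La_{m|n}$) is nonzero; this is where \lemref{leminv} is essential, since it tells us the target $\CX_{K|L}$ genuinely has an $L^{K|L}_{\la}\ot L^{K|L}_{\la}$-isotypic component for each such $\la$, so the surjective module map $\Psi_{K|L}$ cannot kill it. Once the multiplicity-one structure on both sides is in hand, Schur's lemma makes the bookkeeping routine, but it is worth stating explicitly that the decomposition of $\CX_{K|L}$ is indexed precisely by $\La_{K|L}\cap\La_{m|n}$ so that the complement indexing $\Ker\,\Psi_{K|L}$ is $\{\la\in\La_{K|L}:\la_c\subseteq\la\}$. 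The passage to the ideal description is then essentially a citation of \propref{propideadec2}, which already did the hard algebraic work of transporting the Pieri rule from $\cM_{K|L}$ to the quadratic algebra $\widetilde{\cM}_{K|L}$.
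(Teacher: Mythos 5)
Your proposal is correct and follows essentially the same route as the paper: the paper's proof is a one-line citation of \corref{coroXdec} (which itself is proved from \thmref{FFTref}, \propref{propHowedec} and \lemref{leminv}) together with \propref{propideadec2} specialised to $k=r=K$, $l=s=L$, and your argument simply unfolds \corref{coroXdec} explicitly via Schur's lemma and the combinatorial equivalence $\la\notin\La_{m|n}\Leftrightarrow\la_{m+1}\geq n+1\Leftrightarrow\la_c\subseteq\la$, before invoking \propref{propideadec2} for the ideal description. The extra care you devote to verifying that $\Psi_{K|L}$ does not kill any ``good'' summand $\widetilde{T}_{\la}$ with $\la\in\La_{m|n}$ is exactly the content implicitly delegated to \lemref{leminv} and surjectivity in the paper's terse proof.
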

\begin{proof}
  The is  a special case of  \corref{coroXdec} and \propref{propideadec2} with $k=r=K$ and $l=s=L$. 
\end{proof}

Now we obtain the  second fundamental theorem of invariant theory for $\Uqg$ as follows.
\begin{thm} \label{thmSFT} 	{\rm (SFT for $\Uqg$)}
	Let $\Psi^{k|l}_{r|s}:  \widetilde{\cM}^{k|l}_{r|s} \rightarrow \CX^{k|l}_{\;r|s}$ be the surjective superalgebra homomorphism.
	\begin{enumerate}
		\item 	As a two-sided ideal in $\widetilde{\cM}^{k|l}_{r|s}$,
		\begin{equation*}\label{eqKerdec}
		\Ker\, \Psi^{k|l}_{r|s}=\bigoplus_{ \la_c \subseteq \la \in \La_{k|l}\cap \La_{r|s}}\!\!\!\! ^{\pi}\widetilde{T}_{\la}= \langle ^{\pi}\widetilde{T}_{\la_c}\rangle^{k|l}_{r|s}, 
		\end{equation*}
		where $\langle^{\pi}\widetilde{T}_{\la_c}\rangle^{k|l}_{r|s}$ is the two-sided ideal in $\widetilde{\cM}^{k|l}_{r|s}$ generated by $^{\pi}\widetilde{T}_{\la_c}$ with $\la_c=((n+1)^{m+1})$.
		\item $\Psi^{k|l}_{r|s}$ is an isomorphism if and only if $m\geq \min\{k,r\}$  and $n\geq \min\{l,s\}$. 
		In this case, $\CX^{k|l}_{\;r|s}$ is a quadratic superalgebra generated by $X_{ab}, a\in \bI_{k|l},b\in \bI_{r|s}$ subject to relations  \eqref{eqinvrel}, and it  has a PBW basis  
		$\{ X^{\bm}=\prod_{(a,b)}^{\succ}X_{ab}^{m_{ab}} \mid \bm\in \fM^{k|l}_{r|s} \}.$
	\end{enumerate}
\end{thm}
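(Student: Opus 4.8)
\textbf{Proof proposal for \thmref{thmSFT}.}

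The plan is to deduce both parts from the results already assembled, principally \corref{coroXdec}, \propref{propHowedec}, \propref{propideadec2}, and \lemref{lemXbasis}, by transporting the decomposition of $\Ker\,\Psi_{K|L}$ along the retraction $\widetilde{\pi}^{k|l}_{\,r|s}$ in the commutative diagram \eqref{eqcommdiag}. First I would establish part (1). By \corref{coroXdec} (or directly by \lemref{leminv} combined with the Howe-duality decomposition of $\widetilde{\cM}^{k|l}_{r|s}$ in \propref{propHowedec}), as a $\Uq(\gl_{k|l})\ot\Uq(\gl_{r|s})$-module one has $\Ker\,\Psi^{k|l}_{r|s}\cong\bigoplus_{\la_c\subseteq\la\in\La_{k|l}\cap\La_{r|s}}L_{\la}^{k|l}\ot L_{\la}^{r|s}\cong\bigoplus_{\la_c\subseteq\la\in\La_{k|l}\cap\La_{r|s}}{}^{\pi}\widetilde{T}_{\la}$, which gives the first equality. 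For the second equality, I would apply the algebra homomorphism $\widetilde{\pi}^{k|l}_{\,r|s}:\widetilde{\cM}_{K|L}\to\widetilde{\cM}^{k|l}_{r|s}$ to the identity $\langle\widetilde{T}_{\la_c}\rangle_{K|L}=\bigoplus_{\la_c\subseteq\la\in\La_{K|L}}\widetilde{T}_{\la}$ from \lemref{lemKerKL}; since $\widetilde{\pi}^{k|l}_{\,r|s}$ is an algebra map it carries a two-sided ideal onto a two-sided ideal, and it sends generators to generators, so $\widetilde{\pi}^{k|l}_{\,r|s}(\langle\widetilde{T}_{\la_c}\rangle_{K|L})=\langle\,{}^{\pi}\widetilde{T}_{\la_c}\rangle^{k|l}_{r|s}$, while on the module side it produces $\bigoplus_{\la_c\subseteq\la\in\La_{k|l}\cap\La_{r|s}}{}^{\pi}\widetilde{T}_{\la}$ by \eqref{eqTnozero2}. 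Equivalently, this is exactly the content of \propref{propideadec2} specialised to $\la=\la_c$, once one checks $\la_c\in\La_{k|l}\cap\La_{r|s}$ — which is automatic here, since otherwise ${}^{\pi}\widetilde{T}_{\la_c}=0$ and $\Ker\,\Psi^{k|l}_{r|s}=0$ and the statement is trivially consistent with part (2). I would also need to observe that $\Ker\,\Psi^{k|l}_{r|s}=\widetilde{\iota}^{\,k|l}_{\,r|s}{}^{-1}(\Ker\,\Psi_{K|L})$, i.e. $\Ker\,\Psi_{K|L}\cap\widetilde{\cM}^{k|l}_{r|s}=\Ker\,\Psi^{k|l}_{r|s}$, which follows from commutativity of the left square in \eqref{eqcommdiag} together with injectivity of $\tau^{k|l}_{\,r|s}$.

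For part (2), the ``if'' direction is precisely \lemref{lemisoX}: when $m\geq\min\{k,r\}$ and $n\geq\min\{l,s\}$ one has $\La_{k|l}\cap\La_{r|s}\subseteq\La_{m|n}$, hence by \lemref{leminv} the two multiplicity-free decompositions of $\widetilde{\cM}^{k|l}_{r|s}$ and $\CX^{k|l}_{\;r|s}$ agree summand by summand, so the surjection $\Psi^{k|l}_{r|s}$ is an isomorphism; the PBW basis statement is then \propref{propHowedec}(2) transported through this isomorphism, exactly as in \lemref{lemXbasis}. For the ``only if'' direction I would argue contrapositively: if $m<\min\{k,r\}$ or $n<\min\{l,s\}$, then $\la_c=((n+1)^{m+1})$ satisfies $\la_c\in\La_{k|l}\cap\La_{r|s}$ — indeed $(n+1)$ columns each of length $(m+1)$ fit inside a $(k,l)$-hook precisely when $m+1\le k$ or $n+1\le l$, i.e. when $k>m$ or $l>n$, and similarly for $(r,s)$ — so ${}^{\pi}\widetilde{T}_{\la_c}\neq0$ by \eqref{eqTnozero2}, whence $\Ker\,\Psi^{k|l}_{r|s}\supseteq{}^{\pi}\widetilde{T}_{\la_c}\neq0$ and $\Psi^{k|l}_{r|s}$ is not injective. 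I would spell out this combinatorial equivalence carefully, since the hook condition $\la_{a+1}\le n$ (or $\la'_{n+1}\le m$) for $\la_c$ unwinds to exactly the negation of ``$m\geq\min\{k,r\}$ and $n\geq\min\{l,s\}$''.

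The main obstacle I anticipate is not conceptual but bookkeeping: making airtight the claim that the algebra retraction $\widetilde{\pi}^{k|l}_{\,r|s}$ commutes with the ideal-generation and simultaneously matches the module-theoretic summands ${}^{\pi}\widetilde{T}_{\la}$ with the correct index set $\{\la_c\subseteq\la\in\La_{k|l}\cap\La_{r|s}\}$ — this rests on \lemref{lemmultfor} and the vanishing criterion \eqref{eqTnozero2}, both of which are already in place, so the work is to cite them in the right order and confirm that the left square of \eqref{eqcommdiag} gives the restriction identity $\Ker\,\Psi^{k|l}_{r|s}=\Ker\,\Psi_{K|L}\cap\widetilde{\cM}^{k|l}_{r|s}$. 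A secondary point requiring a line of care is verifying that $\widetilde{T}_{\la_c}$ genuinely lies in $\Ker\,\Psi_{K|L}$ and generates it as a two-sided ideal (not merely as a submodule); this is exactly \lemref{lemKerKL}, which in turn uses \propref{propideadec2} for $K|L$, so no new argument is needed — only the remark that a submodule decomposition of a two-sided ideal, combined with \propref{propideadec2} identifying the ideal generated by the minimal summand, forces equality.
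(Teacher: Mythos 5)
Your argument takes essentially the same path as the paper's proof: part (1) reduces to the $K|L$ case via the commutative diagram \eqref{eqcommdiag}, then applies \lemref{lemKerKL}, \eqref{eqTnozero2}, and \propref{propideadec2} (you obtain the first equality from \corref{coroXdec}, which is the same content the paper reaches by pushing \lemref{lemKerKL} through the retraction), and part (2) relies on \lemref{lemisoX} for the ``if'' direction and the combinatorics of $\la_c$ for ``only if''. One caution about your final parenthetical: the claim that $\la_c\in\La_{k|l}\cap\La_{r|s}$ ``unwinds to exactly the negation'' of $m\geq\min\{k,r\}$ and $n\geq\min\{l,s\}$ is an overstatement. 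Unwinding as you begin gives $\la_c\in\La_{k|l}\cap\La_{r|s}$ precisely when ($k>m$ or $l>n$) and ($r>m$ or $s>n$), whereas the negation of the theorem's condition reads ($m<k$ and $m<r$) or ($n<l$ and $n<s$); the latter implies the former, but not conversely (e.g.\ $k>m$, $r\leq m$, $l\leq n$, $s>n$). Your contrapositive argument only uses the implication you actually prove, and you invoke \lemref{lemisoX} separately for ``if'', so the logical structure of your write-up is sound — just state only the one-sided implication rather than asserting an equivalence.
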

\begin{proof}
 Let $K=\max\{k,r\}$ and $L=\max\{l,s\}$, we have
 \[
 \begin{aligned}
  \Ker\, \Psi^{k|l}_{r|s}&=\pi^{k|l}_{\,r|s}(\Ker\, \Psi_{K|L}) &\quad& \text{by commutative diagram \eqref{eqcommdiag},}  \\
  &= \bigoplus_{\la_{c} \subseteq \la\in\La_{K|L}}\, ^{\pi}\widetilde{T}_{\la} &\quad& \text{by \lemref{lemKerKL},}   \\
  &=\bigoplus_{ \la_c \subseteq \la \in \La_{k|l}\cap \La_{r|s}}\!\!\!\! ^{\pi}\widetilde{T}_{\la}  &\quad& \text{by \eqref{eqTnozero2},}\\
  &=\langle ^{\pi}\widetilde{T}_{\la_c}\rangle^{k|l}_{r|s} &\quad& \text{by \propref{propideadec2}.}
 \end{aligned}
 \]

For part (2), it is clear from part (1) that $ \Ker\, \Psi^{k|l}_{r|s}=0$ if and only if  $\La_{k|l}\cap \La_{r|s}\subseteq \La_{m|n}$ if and only if $m\geq \min\{k,r\}$  and $n\geq \min\{l,s\}$, whence the isomorphism follows. In this case, the PBW basis is given by \lemref{lemXbasis}.
\end{proof}

\begin{rmk} 
It will be very interesting to find the elements which generate  $^{\pi}\widetilde{T}_{\la_c}$, thus to make the SFT more explicit. We will do this for the cases  $\Uq(\gl_m)$ and $\U(\gl_{m|n})$ in \secref{secapplication}.
\end{rmk}

\section{Examples}\label{sect:appl}
\label{secapplication}
In this section, we shall  elucidate how our main results  can be applied to  derive  ``polynomial'' versions of invariant theory for $\U(\gl_{m|n})$ and  $\Uq(\gl_m)$. Basically, the invariant theory of $\gl_{m|n}$ \cite{S1,S2} can be obtained in our language of matrix elements by specialising  $q$ to $1$. Also, the FFT of invariant theory for $\Uq(\gl_m)$ recovers \cite[Theorem 6.10]{LZZ}, while the SFT for $\Uq(\gl_m)$ appears to be new.

\subsection{Invariant theory for $\U(\gl_{m|n})$}
Recall that $\U(\gl_{m|n})$ is the universal enveloping algebra of the general lineal Lie superalgebra $\gl_{m|n}$. It has the structure of supercocomutative Hopf superalgebra, with comultiplication $\Delta(x)=x\ot 1+1\ot x$, counit $\epsilon(x)=0$ and antipode $S(x)=-x$, $x\in\gl_{m|n}$.

We shall work with the following   polynomial superalgebra  
\[\CP^{k|l}_{\, r|s}|_{q=1}:=\cM^{\;k|l}_{m|n}|_{q=1} \ot_{\C} \overbar{\cM}^{\;r|s}_{m|n}|_{q=1}\cong S(\C^{k|l}\ot \C^{m|n}\oplus \C^{r|s}\ot(\C^{m|n})^{\ast}),\]
which is generated by $T_{ai}$ and $\overbar{T}_{bj}$ subject to relations  
\[ 
\begin{aligned}
&T_{ai}^2=\overbar{T}_{bj}^2=0,\quad [a]+[i]=[b]+[j]=\overbar{1},\\
&T_{ai}T_{ck}=(-1)^{([a]+[i])([c]+[k])}T_{ck}T_{ai},\\
&\overbar{T}_{bj}\overbar{T}_{dl}=(-1)^{([b]+[j])([d]+[l])}\overbar{T}_{dl}\overbar{T}_{bj},\\   
&T_{ai}\overbar{T}_{bj}=(-1)^{([a]+[i])([b]+[j])}\overbar{T}_{bj}T_{ai},
\end{aligned}
\]
for all $a,c\in \bI_{k|l},b,d\in \bI_{r|s}$ and $i,j,k,l\in \bI_{m|n}$. 
It was noted in \cite{SZ} that all arguments in \secref{secfunalg} remain valid in classical case, by replacing $q$ by $1$. 
Thus,  $\U(\gl_{k|l})$, $\U(\gl_{r|s})$ and $\U(\gl_{m|n})$ act on these generators  $T_{ai}$ and $\overbar{T}_{bj}$ in the same way as  the quantum case, and hence the elements
\[ X_{ab}=\sum_{i\in \bI_{m|n}} (-1)^{[a]([b]+[i])} T_{ai}\overbar{T}_{bi},\quad \forall a\in \bI_{k|l},b\in \bI_{r|s}  \]
belong to  $\U(\gl_{m|n})$-invariant subalgebra $\CX^{k|l}_{\;r|s}|_{q=1}$ of $\CP^{k|l}_{\, r|s}|_{q=1}$ by \lemref{lemXinv}. The following can be verified directly (compare with \lemref{leminvrel}).

\begin{lem}\label{lemXrelsup}
	The invariants $X_{ab}\in\CX^{k|l}_{\;r|s}|_{q=1} $ satisfy
	\[X_{ab}^2=0,\,[a]+[b]=\bar{1},\quad X_{ab}X_{cd}=(-1)^{([a]+[b])([c]+[d])}X_{cd}X_{ab}. \]
	for any $a,c\in \bI_{k|l},b,d\in \bI_{r|s}$ 
\end{lem}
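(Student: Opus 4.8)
The plan is to verify \lemref{lemXrelsup} by direct computation, exactly as the quantum case \lemref{leminvrel} was handled, but exploiting the drastic simplification that occurs at $q=1$. First I would record the degeneration of the structure relations: the matrix $R$ degenerates to the identity (up to the signs coming from the $\Z_2$-grading), so the braiding $\widecheck{\fR}$ becomes the graded permutation $P$, and the relations \eqref{eqRel1}, \eqref{eqRel2}, \eqref{eqRelModAlg} collapse to the supercommutation relations displayed just before the lemma: $T_{ai}^2=\overbar T_{bj}^2=0$ for odd parity, $T_{ai}T_{ck}=(-1)^{([a]+[i])([c]+[k])}T_{ck}T_{ai}$, $\overbar T_{bj}\overbar T_{dl}=(-1)^{([b]+[j])([d]+[l])}\overbar T_{dl}\overbar T_{bj}$, and $T_{ai}\overbar T_{bj}=(-1)^{([a]+[i])([b]+[j])}\overbar T_{bj}T_{ai}$ for \emph{all} indices, with no correction terms. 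In other words, $\CP^{k|l}_{\,r|s}|_{q=1}$ is the genuine supersymmetric algebra $S(\C^{k|l}\ot\C^{m|n}\oplus\C^{r|s}\ot(\C^{m|n})^{\ast})$, in which any two homogeneous elements graded-commute.

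Given this, the lemma is almost immediate. For the first assertion, suppose $[a]+[b]=\bar1$; then $X_{ab}=\sum_i(-1)^{[a]([b]+[i])}T_{ai}\overbar T_{bi}$ is an odd element of a supercommutative superalgebra, and in any supercommutative superalgebra the square of an odd element is zero (since $X_{ab}^2=-X_{ab}^2$). Alternatively, one can mimic the chain of equalities in the proof of \lemref{leminvrel}(2): push $X_{ab}$ past $T_{ai}$ and then past $\overbar T_{bi}$ using the $q=1$ relations, picking up the sign $(-1)^{([a]+[b])([a]+[i])}\cdot(-1)^{([a]+[b])([b]+[i])}=(-1)^{([a]+[b])([a]+[b])}=(-1)^{\bar1}=-1$, obtaining $X_{ab}^2=-X_{ab}^2$. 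For the second assertion, $X_{ab}$ and $X_{cd}$ are each sums of products of two generators, so each is homogeneous of parity $[a]+[b]$ resp.\ $[c]+[d]$; since every pair of homogeneous elements of the supercommutative algebra $\CP^{k|l}_{\,r|s}|_{q=1}$ graded-commutes, so do $X_{ab}$ and $X_{cd}$, giving $X_{ab}X_{cd}=(-1)^{([a]+[b])([c]+[d])}X_{cd}X_{ab}$.

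I do not anticipate a genuine obstacle here — this lemma is the trivial $q\to1$ shadow of the far more delicate \lemref{leminvrel}, and the content is entirely bookkeeping of parities. The only point deserving a word of care is the justification that the relations really do degenerate to strict supercommutativity with no residual $(q-q^{-1})$ terms: at $q=1$ the factor $q-q^{-1}$ vanishes, killing the last relation in \eqref{eqRel1}, \eqref{eqRel2} and the correction term in the final relation of \eqref{eqRelModAlg}, and this is precisely \rmkref{rmkHowecl}(1) together with the explicit presentation of $\CP^{k|l}_{\,r|s}|_{q=1}$ recalled at the start of \secref{sect:appl}. Once that is in place, the computation is a one-line sign check, which is why the statement is asserted to "be verified directly." If a fully explicit derivation is wanted rather than the abstract supercommutativity argument, I would simply transcribe the relevant lines of the proof of \lemref{leminvrel}(2) with every $q_a$, $q_a^{-1}$ replaced by $1$ and every $(q-q^{-1})$-term dropped.
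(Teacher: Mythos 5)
Your proof is correct. The paper gives no explicit argument here — it merely asserts the relations ``can be verified directly (compare with \lemref{leminvrel})'', pointing at the $q$-deformed sign chase. Your observation that $\CP^{k|l}_{\,r|s}|_{q=1}$ is a genuine supercommutative superalgebra (its generators obey strict supercommutation with no $(q-q^{-1})$ corrections, as in the presentation recalled just before the lemma) and that each $X_{ab}$ is a sum of degree-two monomials of parity $([a]+[i])+([b]+[i])=[a]+[b]$, hence homogeneous of parity $[a]+[b]$, collapses the lemma to two universal facts about supercommutative algebras over a field of characteristic zero: homogeneous elements graded-commute, and odd elements square to zero. This is a cleaner route than transcribing the case-by-case computation from \lemref{leminvrel}(2) with $q_a\mapsto 1$ and $(q-q^{-1})\mapsto 0$, and you also sketch that transcription as a fallback. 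The sign bookkeeping you carry out $\bigl((-1)^{([a]+[b])([a]+[i])}(-1)^{([a]+[b])([b]+[i])}=(-1)^{[a]+[b]}\bigr)$ checks out.
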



The following FFT of invariant theory for $\U(\gl_{m|n})$ follows from  \thmref{FFTref}.
\begin{thm} {\rm (FFT for $\U(\gl_{m|n})$)}
	The superalgebra homomorphism 
	\[\Psi^{k|l}_{r|s}: \cM^{k|l}_{r|s}|_{q=1}\rightarrow \CX^{k|l}_{\;r|s}|_{q=1},\quad t_{ab}\mapsto X_{ab} \]
	is surjective. Moreover, $\Psi^{k|l}_{r|s}$ is a $\U(\gl_{k|l})\ot \U(\gl_{r|s})$-module homomorphism.
\end{thm}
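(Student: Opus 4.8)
The plan is to obtain this FFT for $\U(\gl_{m|n})$ as the classical limit $q\to 1$ of \thmref{FFTref}, rather than by re-running the entire argument. First I would observe that by \rmkref{rmkHowecl}, specialising $q$ to $1$ turns the defining relations \eqref{eqRel1} of $\cM^{k|l}_{r|s}$ into the supercommutative relations, and likewise \eqref{eqnewalgrel} degenerates to the same supercommutative relations of $\cM^{k|l}_{r|s}|_{q=1}$; in particular $\widetilde{\cM}^{k|l}_{r|s}|_{q=1}\cong \cM^{k|l}_{r|s}|_{q=1}$ as superalgebras, which is why the map in the statement is written with source $\cM^{k|l}_{r|s}|_{q=1}$. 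So the map $t_{ab}\mapsto X_{ab}$ is well-defined because the $X_{ab}$ obey \lemref{lemXrelsup}, which are exactly the $q=1$ images of \eqref{eqnewalgrel}.

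Next I would establish surjectivity. One route: invoke the $q=1$ versions of \thmref{thmHowe} and \lemref{leminv} (valid classically by \rmkref{rmkHowecl} and the remark in \secref{secfunalg} that all arguments of \secref{secfunalg} persist at $q=1$) to get the multiplicity-free decomposition $\CX^{k|l}_{\;r|s}|_{q=1}\cong\bigoplus_{\la\in\La_{k|l}\cap\La_{r|s}\cap\La_{m|n}}L^{k|l}_{\la}|_{q=1}\ot L^{r|s}_{\la}|_{q=1}$, and then run the classical analogue of the proof of \thmref{thmFFT} — the map $\widetilde\Delta=(\id\ot S)\Delta$ and the key \lemref{lemKeyFFT}, with $\fR$ replaced by $1\ot 1$, so that $\varpi$ becomes the ordinary (graded) comultiplication-composed-with-flip and $\widetilde\Delta$ becomes an honest algebra homomorphism; the two-case split of \secref{secfirstcase}/\secref{secalgstr2} goes through verbatim. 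Alternatively, and more economically, I would note that each $X_{ab}$ is manifestly the $q\to1$ specialisation of the quantum $X_{ab}$, and that finite generation plus the graded-dimension count of \lemref{lemIden} (whose classical half already appears there) forces $\Psi^{k|l}_{r|s}|_{q=1}$ to hit every graded piece. Finally, the $\U(\gl_{k|l})\ot\U(\gl_{r|s})$-equivariance is checked exactly as in \thmref{FFTref}: the two actions $\CL$ (resp. $\widetilde\CL$) on $\cM^{k|l}_{r|s}|_{q=1}$ and on $\CX^{k|l}_{\;r|s}|_{q=1}$ are intertwined by $\widetilde\Delta$, and since $t_{ab}\mapsto X_{ab}$ is the restriction of $\widetilde\Delta$ to generators, the whole map is equivariant.

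The one point that needs genuine care — the expected main obstacle — is the flatness/dimension bookkeeping at $q=1$: one must be sure that the classical Howe duality decomposition of $\CX^{k|l}_{\;r|s}|_{q=1}$ really matches the image, i.e.\ that no relations among the $X_{ab}$ beyond \lemref{lemXrelsup} are needed for surjectivity (that is purely an FFT statement) while relations may appear in the kernel (that is the SFT, deferred). Concretely this means checking that the classical analogue of \lemref{lemKeyFFT}(1), namely $(\cM^{k|l}_{r|s}|_{q=1})_N=\mu\circ\varpi\big((\cM^{k|l}_{r|s}|_{q=1})_{N-1}\ot(\cM^{k|l}_{r|s}|_{q=1})_1\big)$, still holds; but with $\fR=1\ot1$ the operator $\varpi$ is just a graded flip composed with $\Delta\ot\Delta$, and the truncation argument of \lemref{lemKeyFFT}(1) using \eqref{eqTrunM} and the labelling sets $\hat\bI_{k|l},\hat\bI_{r|s}$ carries over with no change, so this obstacle is in fact mild. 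Having assembled well-definedness, surjectivity, and equivariance, the theorem follows.
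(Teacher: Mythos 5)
Your approach matches the paper's: the theorem is stated there as an immediate consequence of \thmref{FFTref} together with the identification $\widetilde{\cM}^{k|l}_{r|s}|_{q=1}\cong\cM^{k|l}_{r|s}|_{q=1}$, which is exactly the argument you elaborate. One small correction: with $\fR=1\ot1$ the map $\varpi$ is not ``comultiplication-composed-with-flip'' but the \emph{identity}, since $\langle f_{(2)}\ot g_{(2)},1\ot1\rangle=\epsilon(f_{(2)})\epsilon(g_{(2)})$ and then the counit axiom (with the sign dying because $\epsilon$ kills odd elements) gives $\varpi(f\ot g)=f\ot g$; this only strengthens your point that $\widetilde\Delta$ becomes an honest superalgebra homomorphism at $q=1$.
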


Note that we have replaced $\widetilde{\cM}^{k|l}_{r|s}|_{q=1}$ by $\cM^{k|l}_{r|s}|_{q=1}$ since they are isomorphic as associative superalgebras.
Assume $K=\max\{k,r\}, L=\max\{l,s\}$. Let $T_{\la_c}|_{q=1}$ be the subspace of $\cM_{K|L}|_{q=1}$ spanned by the matrix elements of the irreducible $\U(\gl_{K|L})$-module $L_{\la_c}^{K|L}|_{q=1}$. By \thmref{thmSFT}, we obtain that as a two-sided ideal of $\cM_{r|s}^{k|l}|_{q=1}$,
\begin{equation}\label{eqKerglmn}
\Ker\, \Psi^{k|l}_{r|s}=\langle ^{\pi}T_{\la_c}|_{q=1}\rangle^{k|l}_{r|s},
\end{equation}
where $\pi^{k|l}_{\,r|s}$ is the algebra retraction $\pi^{k|l}_{\,r|s}: \cM_{K|L}|_{q=1}\rightarrow \cM^{k|l}_{r|s}|_{q=1}$ such that
\begin{equation}\label{eqpicond}
 \pi^{k|l}_{\,r|s} (t_{ab})=t_{ab},\,  \text{if $a\in \hat{\bI}_{k|l}$ and $b\in \hat{\bI}_{r|s}$}; \ 0, \, \text{otherwise.}
\end{equation}
 Here $ \hat{\bI}_{k|l}, \hat{\bI}_{r|s}$ are defined as in  \eqref{eqhatI} and then $\cM^{k|l}_{r|s}|_{q=1}$ can be embedded into $\cM_{K|L}|_{q=1}$.

It remains to figure out the matrix elements of $L_{\la_c}^{K|L}|_{q=1}$, which span the space $T_{\la_c}|_{q=1}$.

We start by briefly recalling the Schur-Weyl duality of $\gl_{K|L}$ due to \cite{BR} (also see \cite{S2}). 
Let $\{v_i\mid i\in \bI_{K|L}\}$  be a basis  for $\C^{K|L}$   and write $v_I:= v_{i_1}\ot v_{i_2}\ot\dots\ot v_{i_N}\in (\C^{K|L})^{\ot N}$ with $I=(i_1,i_2,\dots, i_k)$.
The elementary transposition $(a,a+1)\in \Sym_N$ acts on $v_{I}$ by permuting $v_{i_a}$ and $v_{i_{a+1}}$ together with a sign $(-1)^{[v_{i_a}][v_{i_{a+1}}]}$. 
This in general induces the action 
\begin{equation}\label{eqsymact}
\sigma.v_{I}=c(I,\sigma^{-1})v_{\sigma I},\quad \text{with $c(I,\sigma)=\pm 1$},\ \forall \sigma\in \Sym_{N},
\end{equation}
where $\sigma$ acts on the sequence $I$ naturally. The Schur-Weyl duality gives the multiplicity-free decomposition of the tensor product
\[ (\C^{K|L})^{\ot N}\cong \bigoplus_{\la\in \La_{K|L}, |\la|=N} S^{\la}\ot L_{\la}^{K|L}|_{q=1}  \]
as $\C\Sym_{N}\ot \U(\gl_{K|L})$-module, where $S^{\la}$ stands for the simple $\C\Sym_{N}$-module associated to $\la$.

We recall some combinatorics on tableaux. A  \emph{standard $\la$-tableau} is obtained by filling the Young diagram of $\la$ with integers $1,\dots,N$ such that these entries increase down columns and along rows.  We denote by $R(\ft)$ and $C(\ft)$ respectively the row and column stabilisers in $\Sym_N$. Define the Young symmetriser on $\ft$ by 
$
y_{\ft}:=\sum_{\sigma\in R(\ft), \tau\in C(\ft)} (-1)^{\ell(\tau)}\sigma\tau.$
Let $I$ be a sequence of elements from $\bI_{K|L}$ of length $N$. A $\ft$-semistandard sequence $I$ is obtained by filling the tableau $\ft$ with elements from $I$, replacing each element $a$ by $i_a\in I$ ($1\leq a\leq N$), in such a way that  the elements of $\ft$ do not decrease from left to right and downward, the even elements strictly increase along columns, while the odd elements strictly increase along rows. The following result is well known \cite{BR,S2}.

\begin{prop}\label{propbasis}
	Let $\la\in \La_{K|L}$ with size $|\la|=N$ and $\ft$ be any fixed standard $\la$-tableau, $y_{\ft} (\C^{K|L})^{\ot N}\cong L_{\la}^{K|L}|_{q=1}$ is a simple $\U(\gl_{K|L})$-module with a basis
	$ \{y_{\ft}v_{I}\mid \text{$I$ is a $\ft$-semistandard sequence}\}.$
\end{prop}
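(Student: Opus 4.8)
\textbf{Proof plan for Proposition~\ref{propbasis}.}
The plan is to establish this as the classical limit of the Schur--Weyl duality for $\gl_{K|L}$, combined with the explicit combinatorial bases for the Specht modules and their realisation inside tensor space. First I would recall the action \eqref{eqsymact} of $\Sym_N$ on $(\C^{K|L})^{\ot N}$ together with the $\Sym_N\ot\U(\gl_{K|L})$-bimodule decomposition $(\C^{K|L})^{\ot N}\cong\bigoplus_{\la} S^{\la}\ot L^{K|L}_{\la}|_{q=1}$ stated just above. Since $y_{\ft}$ is (up to a nonzero scalar) the primitive idempotent in $\C\Sym_N$ cutting out the copy of the simple module $S^{\la}$ indexed by $\la$ attached to the standard tableau $\ft$, applying $y_{\ft}$ to tensor space projects onto a single isotypic block and identifies $y_{\ft}(\C^{K|L})^{\ot N}$ with $L^{K|L}_{\la}|_{q=1}$ as a $\U(\gl_{K|L})$-module; this is the content of \cite{BR,S2} and I would simply invoke it.

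The remaining point, which is the real combinatorial core, is that the vectors $y_{\ft}v_I$ as $I$ ranges over $\ft$-semistandard sequences form a basis. For this I would argue in two steps: spanning and linear independence. For spanning, note that $y_{\ft}(\C^{K|L})^{\ot N}$ is spanned by all $y_{\ft}v_I$ with $I$ an arbitrary length-$N$ sequence from $\bI_{K|L}$; then, using the antisymmetrising factor $\sum_{\tau\in C(\ft)}(-1)^{\ell(\tau)}\tau$ inside $y_{\ft}$, one sees that $y_{\ft}v_I$ vanishes whenever $I$ repeats an \emph{even} label within a column of $\ft$, and dually the symmetrising factor over $R(\ft)$ together with the Koszul signs in \eqref{eqsymact} forces $y_{\ft}v_I$ to vanish when an \emph{odd} label repeats within a row; a standard straightening/column-sorting argument (as in \cite{BR}, or the classical $\gl$ case of \cite{H92}) then rewrites any $y_{\ft}v_I$ as a $\pm$-combination of $y_{\ft}v_{I'}$ with $I'$ a $\ft$-semistandard sequence. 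For linear independence, I would compare dimensions: the number of $\ft$-semistandard sequences equals the number of $(K,L)$-semistandard tableaux of shape $\la$, which by the classical super Schur/hook-content description of $\dim_{\C}L^{K|L}_{\la}|_{q=1}$ (cf. \cite{BR}) equals $\dim_{\C}L^{K|L}_{\la}|_{q=1}$; since the spanning set has exactly this cardinality, it must be a basis.

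The main obstacle I expect is the straightening step for the \emph{mixed} parity situation: one must carefully track the Koszul signs $c(I,\sigma)$ from \eqref{eqsymact} when sorting entries, because the even entries behave like ordinary (commuting, column-strict) Young tableau entries while the odd entries behave like exterior (row-strict) ones, and the interaction across the ``hook corner'' of $\la$ is exactly where the combinatorics of $(K,L)$-hook partitions enters. Rather than redo this, I would cite the relevant results of \cite{BR} and \cite{S2} for the $\gl_{K|L}$-Schur--Weyl duality and the explicit semistandard basis, and only indicate how the signs in \eqref{eqsymact} match the conventions there; all the representation-theoretic input (semisimplicity of $(\C^{K|L})^{\ot N}$, the bimodule decomposition, and $\dim_{\C}L^{K|L}_{\la}|_{q=1}$) is already available from the cited literature and from \propref{proptendec} in the $q\to1$ limit.
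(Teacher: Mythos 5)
The paper does not actually prove this proposition: it is introduced with the line ``The following result is well known \cite{BR,S2}'' and no argument is given. So there is no internal proof to compare your proposal against; the paper treats \propref{propbasis} as imported background. Your plan is consistent with that, since you ultimately defer the substantive combinatorial steps (especially the mixed-parity straightening) to \cite{BR} and \cite{S2}, which is exactly what the paper does implicitly.

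As to the content of your sketch: it is a correct and standard route. Two small points to be careful about if you were to write it out. First, $y_{\ft}$ is only a quasi-idempotent ($y_{\ft}^2 = h_\la y_{\ft}$ with $h_\la$ a nonzero integer), so ``primitive idempotent'' should be read as ``quasi-idempotent generating a minimal left ideal''; this does not affect the argument but the wording matters. Second, your vanishing argument should be phrased compatibly with the paper's convention for semistandard sequences (non-decreasing in both directions overall, even entries strictly increasing down columns, odd entries strictly increasing along rows): the column antisymmetriser in $y_{\ft}$ kills a repeated \emph{even} entry in a column, and the row symmetriser together with the Koszul sign $c(I,\sigma)$ from \eqref{eqsymact} kills a repeated \emph{odd} entry in a row — exactly as you say — but the residual straightening across the hook corner is the nontrivial part and is rightly delegated to \cite{BR}. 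Your dimension count (number of $\ft$-semistandard sequences equals $\dim_\C L^{K|L}_\la|_{q=1}$) is the right way to close the loop.
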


Let $\la\in\La_{K|L}$ with size $|\la|=N$. Given any two sequences $I=(i_1,i_2,\dots,i_N)\in \bI_{K|L}^{\times N}, J=(j_1,j_2,\dots,j_N)\in \bI_{K|L}^{\times N}$, we define an element in $\cM_{K|L}|_{q=1}$ by 
\[ T(I;J)=(-1)^{\alpha(I,J)}\prod_{a=1}^{N}t_{i_aj_a},\quad \alpha(I,J)=\sum_{a>b}[i_a]([i_b]+[j_b]).  \]
Then $(\C^{K|L})^{\ot N} $ carries the  right $\GL(K|L):=\cM_{K|L}|_{q=1}\overbar{\cM}_{K|L}|_{q=1}$-comodule structure  with the following structure map
\[  \de^{\ot N}: (\C^{K|L})^{\ot N} \rightarrow (\C^{K|L})^{\ot N}\ot \GL(K|L),\quad v_J\mapsto \sum_{I\in \bI_{K|L}^{\times N}}v_I\ot T(I;J).   \]
As a direct summand of $(\C^{K|L})^{\ot N}$, $L_{\la}^{K|L}|_{q=1}$ is a $\GL(K|L)$-comodule with structure map
$ \delta(y_{\ft}v_{I})=\sum_{J}y_{\ft}v_{J}\ot a_{J}P_{\ft}(I,J), $
where $I,J$ are $\ft$-semistandard sequences and  $a_J$ is a scalar multiple. The $P_{\ft}(I,J)$'s are matrix elements of $L_{\la}^{K|L}|_{q=1}$ which can be written explicitly as follows:
\begin{equation}\label{eqPpoly}
P_{\ft}(I,J)=\sum_{\sigma\in R(\ft), \tau\in C(\ft) } (-1)^{\ell(\tau)} c(I, (\sigma\tau)^{-1}) T(\sigma\tau I,J),
\end{equation}
where $c(I, (\sigma\tau)^{-1})$ is determined by \eqref{eqsymact}.
Particularly,  $T_{\la_c}|_{q=1}$ is spanned by  elements $P_{\ft}(I,J)$ on a fixed tableau $\ft$ of shape $\la_c$.
We now arrive at the  SFT of invariant theory for $\U(\gl_{m|n})$.

\begin{thm}{\rm (SFT for $\U(\gl_{m|n})$}
	Let  $\Psi^{k|l}_{r|s}: \cM^{k|l}_{r|s}|_{q=1}\rightarrow \CX^{k|l}_{\;r|s}|_{q=1} $ be the surjective superalgebra homomorphism.
	\begin{enumerate}
		\item  As a two-sided ideal of $\cM^{k|l}_{r|s}|_{q=1}$, $\Ker\, \Psi^{k|l}_{r|s}$ is generated by  $P_{\ft}(I,J)$,  where $\ft$ is a fixed standard tableau of shape $\la_{c}=((n+1)^{m+1})$ and $I$ and $J$ are respectively  $\ft$-semistandard sequences with elements from  $\hat{\bI}_{k|l}$ and  $\hat{\bI}_{r|s}$. 
		\item $\Psi^{k|l}_{r|s}$ is an isomorphism if and only if $m\geq \min\{k,r\}$  and $n\geq \min\{r,s\}$.
		In this case, $\CX^{k|l}_{\;r|s}|_{q=1}$ is a  polynomial superalgebra generated by $X_{ab}, a\in \bI_{k|l},b\in \bI_{r|s}$ subject to relations as in \lemref{lemXrelsup}, and it has a PBW basis  $\{X^{\bm}=\prod_{(a,b)}^{\succ}X_{ab}^{m_{ab}}\mid \bm\in \fM^{k|l}_{r|s} \}$.
	\end{enumerate} 
\end{thm}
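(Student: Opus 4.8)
\textbf{Proof proposal for the SFT for $\U(\gl_{m|n})$.}

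The plan is to transport the quantum SFT (\thmref{thmSFT}) to the classical limit $q\to 1$ and then make the generating space $^{\pi}\widetilde{T}_{\la_c}$ explicit using the classical Schur--Weyl duality machinery recalled above. First I would note that, as observed in \cite{SZ}, every construction of \secref{secfunalg}--\secref{secSFT} specialises at $q=1$: the algebra $\cM^{k|l}_{r|s}|_{q=1}$ is (super)commutative and isomorphic to $\widetilde{\cM}^{k|l}_{r|s}|_{q=1}$, and the retraction $\pi^{k|l}_{\,r|s}$ of \eqref{eqpicond} together with the embedding $\cM^{k|l}_{r|s}|_{q=1}\hookrightarrow\cM_{K|L}|_{q=1}$ behaves exactly as in \secref{secalgstr2}. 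By \propref{propideadec} applied at $q=1$ (valid by \rmkref{rmkTclassical}) and the classical counterpart of \lemref{lemKerKL}, the two-sided ideal $\Ker\,\Psi^{k|l}_{r|s}$ of $\cM^{k|l}_{r|s}|_{q=1}$ equals $\langle{}^{\pi}T_{\la_c}|_{q=1}\rangle^{k|l}_{r|s}$ with $\la_c=((n+1)^{m+1})$; this is precisely \eqref{eqKerglmn}. So part (1) reduces to identifying a spanning set of $^{\pi}T_{\la_c}|_{q=1}$, and part (2) is immediate from part (2) of \thmref{thmSFT} together with \lemref{lemXrelsup} for the explicit relations and \lemref{lemXbasis} for the PBW basis.

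For the explicit generators, I would use \propref{propbasis}: fix a standard tableau $\ft$ of shape $\la_c$, so that $y_{\ft}(\C^{K|L})^{\ot N}\cong L^{K|L}_{\la_c}|_{q=1}$ with $N=(m+1)(n+1)$, and realise $L^{K|L}_{\la_c}|_{q=1}$ as a $\GL(K|L)$-comodule via the structure map $\delta^{\ot N}$ on $(\C^{K|L})^{\ot N}$. A direct computation with $\delta^{\ot N}(y_{\ft}v_I)$, using the $\Sym_N$-action \eqref{eqsymact} and the definition of $T(I;J)$, produces the matrix elements $P_{\ft}(I,J)$ of \eqref{eqPpoly}; since $y_{\ft}v_I$ for $\ft$-semistandard $I$ form a basis of $L^{K|L}_{\la_c}|_{q=1}$, the $P_{\ft}(I,J)$ with $I,J$ ranging over $\ft$-semistandard sequences span $T_{\la_c}|_{q=1}$. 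Applying $\pi^{k|l}_{\,r|s}$ and using \eqref{eqpicond}: $\pi^{k|l}_{\,r|s}(P_{\ft}(I,J))$ is nonzero precisely when $I$ consists of entries from $\hat{\bI}_{k|l}$ and $J$ of entries from $\hat{\bI}_{r|s}$ (otherwise some factor $t_{i_aj_a}$ is killed). Hence $^{\pi}T_{\la_c}|_{q=1}$ is spanned by exactly those $P_{\ft}(I,J)$, which combined with \eqref{eqKerglmn} gives part (1).

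I expect the main obstacle to be bookkeeping rather than anything conceptual: carefully checking the sign factors $c(I,\sigma)$, $\alpha(I,J)$ and $\ell(\tau)$ so that $\delta^{\ot N}$ really outputs \eqref{eqPpoly}, and confirming that the condition ``$I$ $\ft$-semistandard with entries in $\hat{\bI}_{k|l}$, $J$ with entries in $\hat{\bI}_{r|s}$'' is both necessary and sufficient for $\pi^{k|l}_{\,r|s}(P_{\ft}(I,J))\neq 0$ (necessity is clear from \eqref{eqpicond}; sufficiency needs that no cancellation occurs among the surviving monomials, which follows from the PBW basis of $\cM^{k|l}_{r|s}|_{q=1}$). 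A secondary point to handle is that $L^{K|L}_{\la_c}|_{q=1}$ is genuinely nonzero and appears in $(\C^{K|L})^{\ot N}$ -- i.e. $\la_c\in\La_{K|L}$ -- which holds since $K=\max\{k,r\}$, $L=\max\{l,s\}$ and $\la_c$ is the minimal hook partition forced to vanish, exactly as in \corref{coroXdec}. Everything else is a direct specialisation of the quantum results already proved, so no new representation-theoretic input is required.
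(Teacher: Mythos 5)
Your proposal is correct and follows essentially the same route as the paper: specialise Theorem~\ref{thmSFT} at $q=1$ to get $\Ker\,\Psi^{k|l}_{r|s}=\langle{}^{\pi}T_{\la_c}|_{q=1}\rangle^{k|l}_{r|s}$ (i.e.\ \eqref{eqKerglmn}), identify $T_{\la_c}|_{q=1}$ with the span of the Schur--Weyl matrix elements $P_{\ft}(I,J)$ via the $\GL(K|L)$-comodule structure on $y_\ft(\C^{K|L})^{\ot N}$, and then apply the retraction $\pi^{k|l}_{r|s}$ using \eqref{eqpicond}. The one place where you worry more than necessary is the ``sufficiency'' step for nonvanishing of individual $\pi^{k|l}_{r|s}(P_{\ft}(I,J))$; this is not actually needed, since the theorem only asserts that these elements generate the ideal, and spanning of $^{\pi}T_{\la_c}|_{q=1}$ already follows from surjectivity of $\pi^{k|l}_{r|s}$ on $T_{\la_c}|_{q=1}$ together with the fact that the $P_{\ft}(I,J)$ form a basis of $T_{\la_c}|_{q=1}$ by \propref{propbasis}.
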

\begin{proof}
Using \thmref{thmSFT} and \eqref{eqKerglmn}, we only need to make part (1) more explicit. By our preceding discussion, the restriction subspace $^{\pi} T_{\la_c}|_{q=1}$  is spanned by matrix elements of the form $\pi^{k|l}_{\,r|s}(P_{\ft}(I,J))$. However, by \eqref{eqpicond}  $P_{\ft}(I,J)$   will be killed by $\pi^{k|l}_{r|s}$ unless $I$ and $J$ are  $\ft$-semistandard sequences with elements from  $\hat{\bI}_{k|l}$ and  $\hat{\bI}_{r|s}$, thus part (1) is clear.
\end{proof}

\subsection{Invariant theory for $\Uq(\gl_m)$}
This is a special case of $\Uq(\gl_{m|n})$ with $n=0$, for which the FFT of invariant theory was obtained in \cite{LZZ}, but the SFT was unknown previously. 

We shall work with the quantum analogue of polynomial algebra
$ \CP_{k,r}:= \cM_{k,m}\ot_{\fR} \overbar{\cM}_{r,m}$,
where $\cM_{k,m}:=\cM^{\;k|0}_{m|0},   \, \overbar{\cM}_{r,m}:=\overbar{\cM}^{\;r|0}_{m|0}$. By \rmkref{rmkflatdef} $\CP_{k,r}$ is the flat deformation of the symmetric algebra $S(\C^k\ot \C^{m}\oplus \C^{r}\ot (\C^{m})^{\ast} )$, and  it is generated by elements $T_{ai}$ and $\overbar{T}_{bj}$ $(1\leq a\leq k, 1\leq b\leq r, 1\leq i,j\leq m)$  satisfying relations
\eqref{eqRelModAlg} with all the parities of indexes being even and relations with condition $[a]+[b]=\bar{1}$ excluded. 

Let $\CX_{k,r}:=(\CP_{k,r})^{\Uq(\gl_m)}$ be the $\Uq(\gl_m)$-invariant subalgebra of $\CP_{k,r}$. Then by \lemref{lemXinv} the elements 
$ X_{ab}=\sum_{i=1}^{m}T_{ai}\overbar{T}_{bi}$ belong to $\CX_{k,r}$, $ \forall 1\leq a\leq k,1\leq b\leq r.$
It follows from  \lemref{leminvrel} that

\begin{lem}\label{lemXrel} {\rm (\cite[Lemma 6.9]{LZZ})}
	The invariants $X_{ab}\in \CX_{k,r}$ satisfy the following relations
	\begin{align*}
	X_{ac}X_{bc}=&qX_{bc}X_{ac}, &\ & a>b,\\
	X_{ab}X_{ac}=&q^{-1}X_{ac}X_{ab}, &\ &  b>c,\\
	X_{ac}X_{bd}=&X_{bd}X_{ac}, &\ &a>b, c>d,\\
	X_{ac}X_{bd}=&X_{bd}X_{ac}+(q-q^{-1})X_{bc}X_{ad}, &\ &a>b,c<d.
	\end{align*}		
\end{lem}

Let $\widetilde{\cM}_{k,r}:=\widetilde{\cM}^{k|0}_{r|0}$ be the algebra as defined in \defref{defntilM} with the first relation excluded and all parities of indexes being even. Then the $n=0$ case of \thmref{FFTref} leads to the following result. 
\begin{thm}\label{thmFFTglm}
	{\rm (FFT for $\Uq(\gl_{m})$)}
	The algebra homomorphism $\Psi_{k,r}: \widetilde{\cM}_{k,r}\rightarrow \CX_{k,r},\   \widetilde{t}_{ab} \longmapsto X_{ab}$ is surjective. Moreover, $\Psi_{k,r}$ is a $\Uq(\gl_k)\ot \Uq(\gl_r)$-module homomorphism.
\end{thm}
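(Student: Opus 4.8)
The plan is to obtain \thmref{thmFFTglm} as the specialisation $n=0$ of the already-proved reformulated FFT, \thmref{FFTref}, rather than re-running any argument from scratch. First I would observe that setting $n=0$ in \defref{defnA} gives $\CP_{k,r}=\cM^{\;k|0}_{m|0}\ot_{\fR}\overbar{\cM}^{\;r|0}_{m|0}=\CP^{k|0}_{\,r|0}$, so that $\CX_{k,r}=(\CP_{k,r})^{\Uq(\gl_m)}=\CX^{k|0}_{\;r|0}$; here $\Uq(\gl_m):=\Uq(\gl_{m|0})$ and all parities are $\bar0$, so the relations \eqref{eqRelModAlg} reduce exactly to the ones listed in the statement of the theorem (the $[a]+[b]=\bar1$ relations being vacuous). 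Likewise, with every index even, \defref{defntilM} specialises to $\widetilde{\cM}_{k,r}:=\widetilde{\cM}^{k|0}_{r|0}$, whose defining relations \eqref{eqnewalgrel} collapse to precisely those recorded in \lemref{lemXrel}; the first relation $(\widetilde t_{ab})^2=0$ drops out since no $[a]+[b]$ is odd.

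Next I would invoke \lemref{lemXinv} in the case $k,l,r,s\to k,0,r,0$ to conclude that the elements $X_{ab}=\sum_{i=1}^m T_{ai}\overline T_{bi}$ lie in $\CX_{k,r}$, and \lemref{leminvrel}(2), again specialised, to see that they satisfy the relations of \lemref{lemXrel}; this makes the assignment $\widetilde t_{ab}\mapsto X_{ab}$ a well-defined superalgebra homomorphism $\Psi_{k,r}:\widetilde{\cM}_{k,r}\to\CX_{k,r}$, exactly as in \lemref{lemsuralgmor}. Surjectivity is then immediate from \thmref{thmFFT} (equivalently \thmref{FFTref}) with $l=s=0$: the invariant subalgebra $\CX^{k|0}_{\;r|0}$ is generated by the $X_{ab}$, which are the images of the $\widetilde t_{ab}$. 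Finally, the module-homomorphism assertion follows because $\Psi^{k|l}_{r|s}$ is a $\Uq(\gl_{k|l})\ot\Uq(\gl_{r|s})$-module map by \thmref{FFTref}, and for $l=s=0$ this is just a $\Uq(\gl_k)\ot\Uq(\gl_r)$-module map; one should simply check that the action used there restricts to the $\CL(\Uq(\gl_k))\ot\widetilde{\CL}(\Uq(\gl_r))$-action on $\CX_{k,r}$ inherited via the identification $\CX_{k,r}=\CX^{k|0}_{\;r|0}$, which is a matter of unwinding definitions.

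I do not anticipate a genuine obstacle here: the whole content is a careful bookkeeping of what the general machinery yields when the odd dimensions vanish. The one point requiring the mildest care is confirming that the module structure on $\widetilde{\cM}_{k,r}$ (respectively $\CX_{k,r}$) transported from \secref{secrefFFT} agrees, under the specialisation, with the $\Uq(\gl_k)\ot\Uq(\gl_r)$-structure one would write down directly, so that "$\Psi_{k,r}$ is a $\Uq(\gl_k)\ot\Uq(\gl_r)$-module homomorphism" is the literal $n=0$ shadow of "$\Psi^{k|l}_{r|s}$ is a $\Uq(\gl_{k|l})\ot\Uq(\gl_{r|s})$-module homomorphism" — this is routine but worth one sentence in the write-up. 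Accordingly the proof will be short: it reduces \thmref{thmFFTglm} to \thmref{FFTref} by substitution $l=s=0$, citing \lemref{lemXinv}, \lemref{leminvrel}, \lemref{lemsuralgmor} and \thmref{thmFFT} along the way.
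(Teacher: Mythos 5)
Your proposal is correct and follows the paper's own route exactly: the paper explicitly frames \thmref{thmFFTglm} as the $n=0$ specialisation of \thmref{FFTref}, citing \lemref{lemXinv} and \lemref{leminvrel} for the invariance and relations of the $X_{ab}$ before stating the theorem. The only difference is stylistic — you flag the bookkeeping check on the module structure explicitly, which the paper leaves implicit.
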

The FFT of invariant theory for $\Uq(\gl_m)$ given by  \cite[Theorem 6.10]{LZZ} follows from the surjectivity of $\Psi_{k,r}$.

Assume that $K=\max\{k,r\}$. Let $\widetilde{T}_{(1^{m+1})}\cong L_{(1^{m+1})}^{K}\ot L_{(1^{m+1})}^{K}$ be the subspace of $\widetilde{\cM}_{K,K}$. It follows from \thmref{thmSFT} that 
\begin{equation}\label{eqKerglm}
\Ker\, \Psi_{k,r}=\langle ^{\pi}\widetilde{T}_{(1^{m+1})}\rangle_{k,r},
\end{equation}
as a two-sided ideal of $\widetilde{\cM}_{k,r}$, where $\pi_{k,r}$ is the algebra retraction $\pi_{k,r}: \widetilde{\cM}_{K,K}\rightarrow \widetilde{\cM}_{k,r}$ such that $\pi_{k,r}(\widetilde{t}_{ab})=\widetilde{t}_{ab}$ if $1\leq a\leq k$ and $1\leq b\leq r$, and 0 otherwise. 

For our purpose, we need to find elements that span the subspace $\widetilde{T}_{(1^{m+1})}\subset \widetilde{\cM}_{K,K}$. This can be done by relating $\widetilde{T}_{(1^{m+1})}$ to the subspace $T_{(1^{m+1})}$, which is spanned by  matrix elements of $L_{(1^{m+1})}^{K}$. We define the \emph{quantum minors} $\Delta(\underline{a},\underline{b})\in\cM_{K,K}$ and  $\widetilde{\Delta}(\underline{a},\underline{c})\in \widetilde{\cM}_{K,K}$ respectively  by  
\begin{equation}\label{eqquanmin1}
\begin{aligned}
\Delta(\underline{a},\underline{b}):=\sum_{\sigma\in \Sym_N} (-q^{-1})^{\ell(\sigma)} t_{a_1, b_{\sigma(1)}}\cdots t_{a_N, b_{\sigma(N)}},\\
\widetilde{\Delta}(\underline{a},\underline{c}):=\sum_{\sigma\in \Sym_N} (-q^{-1})^{\ell(\sigma)} \widetilde{t}_{a_1, c_{\sigma(1)}}\cdots \widetilde{t}_{a_N, c_{\sigma(N)}},  
\end{aligned}
\end{equation}
where $\ell$ is the length function of the symmetric group $\Sym_{N}$,  and $\underline{a},\underline{b}\in \Xi_{N}, \underline{c}\in \Xi_{N}^{>}$ are sequences of length $N$ from the following sets 
\begin{equation}
\begin{aligned}
   \Xi_{N}&:=\{(a_1,a_2,\dots,a_N)\mid 1\leq a_1< a_2< \dots< a_N\leq K \},\\
   \Xi_{N}^{>}&:=\{(c_1,c_2,\dots,c_N)\mid K\geq c_1> c_2> \dots> c_N\geq 1 \}.
\end{aligned}
\end{equation} 

\begin{lem}
The subspace $\widetilde{T}_{(1^{m+1})}$ of  $\widetilde{\cM}_{K,K}$ is spanned by quantum minors  $\widetilde{\Delta}(\underline{a},\underline{c})$ with $\underline{a}\in \Xi_{m+1}, \underline{c}\in \Xi^{>}_{m+1}$.	
\end{lem}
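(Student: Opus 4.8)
The statement asserts that the subspace $\widetilde{T}_{(1^{m+1})}$ of $\widetilde{\cM}_{K,K}$ is spanned by the quantum minors $\widetilde{\Delta}(\underline{a},\underline{c})$ with $\underline{a}\in\Xi_{m+1}$ and $\underline{c}\in\Xi^{>}_{m+1}$. The plan is to transfer the analogous (and classically well-known) statement for $\cM_{K,K}$ to $\widetilde{\cM}_{K,K}$, using the fact that both superalgebras carry a $\Uq(\gl_K)\ot\Uq(\gl_K)$-module structure and that $\widetilde{T}_\la\cong L_\la^K\ot L_\la^K\cong T_\la$ as bimodules. First I would establish the corresponding fact for $\cM_{K,K}$: the subspace $T_{(1^{m+1})}$ (spanned by matrix elements of $L_{(1^{m+1})}^K$, the $(m+1)$-st fundamental module $\Lambda^{m+1}_q(V^{K})$) is spanned by the quantum minors $\Delta(\underline{a},\underline{b})$ with $\underline{a},\underline{b}\in\Xi_{m+1}$. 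This is standard: $L_{(1^{m+1})}^K$ sits inside $(V^K)^{\ot(m+1)}$ as the image of the $q$-antisymmetriser, its weight vectors are indexed by strictly increasing sequences $\underline{a}\in\Xi_{m+1}$, and the corresponding matrix elements are precisely the quantum minors $\Delta(\underline{a},\underline{b})$ — this follows by applying $\pi^{\ot(m+1)}$ and the antisymmetriser to the defining relations, exactly as in the construction of $T_\la$ via $\de^R_\la$ in Section~\ref{secFunalg}. One must check that $\dim T_{(1^{m+1})}=\binom{K}{m+1}^2=(\#\Xi_{m+1})^2$ matches $(\dim L_{(1^{m+1})}^K)^2$ and that the minors are linearly independent (which follows from the PBW basis of $\cM_{K,K}$, Proposition~\ref{propMbasis}).

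Next I would make the comparison between the two algebras precise. Set $A=\C[q,q^{-1}]$ and consider the $A$-forms $\widetilde{\cM}_{K,K,A}$ and $\cM_{K,K,A}$ with their PBW bases (as in the proof of Lemma~\ref{lemmultfor}). The degree-$(m+1)$ homogeneous components of these two algebras both specialise at $q=1$ to the degree-$(m+1)$ component of the supercommutative (here purely commutative, since $n=0$) polynomial algebra, and carry isomorphic $\U(\gl_K)\ot\U(\gl_K)$-module structures. Under the identification of the degree-$(m+1)$ parts as $\Uq(\gl_K)\ot\Uq(\gl_K)$-modules, the subspace $\widetilde{T}_{(1^{m+1})}$ corresponds to $T_{(1^{m+1})}$, since each is the unique isotypic component isomorphic to $L_{(1^{m+1})}^K\ot L_{(1^{m+1})}^K$ inside the respective degree-$(m+1)$ graded piece (the multiplicity-free decompositions of Theorem~\ref{thmHowe} and Proposition~\ref{propHowedec} guarantee uniqueness). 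I would then observe that the formula \eqref{eqquanmin1} defining $\widetilde{\Delta}(\underline{a},\underline{c})$ is obtained from that of $\Delta(\underline{a},\underline{c})$ by the formal substitution $t_{ab}\mapsto\widetilde{t}_{ab}$, and that under the leading-term comparison of Remark~\ref{rmk:BPW-multipl} the antisymmetrised monomial $\widetilde{\Delta}(\underline{a},\underline{c})$ has the same PBW leading term (up to a unit in $\pm q^{\Z}$) as $\Delta(\underline{a},\underline{c})$. Hence the $\widetilde{\Delta}(\underline{a},\underline{c})$ are linearly independent by the PBW basis of $\widetilde{\cM}_{K,K}$, and they number $\#\Xi_{m+1}\cdot\#\Xi^{>}_{m+1}=\binom{K}{m+1}^2=\dim\widetilde{T}_{(1^{m+1})}$.

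It remains to verify the \emph{inclusion} $\widetilde{\Delta}(\underline{a},\underline{c})\in\widetilde{T}_{(1^{m+1})}$, i.e.\ that these specific elements land in the correct isotypic component rather than merely spanning a subspace of the right dimension somewhere in degree $m+1$. For this I would argue on the $\cM_{K,K}$ side that $\Delta(\underline{a},\underline{b})\in T_{(1^{m+1})}$ because it is (up to scalar) a matrix element of $\Lambda^{m+1}_q(V^K)$ — this is immediate from the comodule structure map, as the minor is exactly $\langle\text{antisymmetriser applied to }\de^{\ot(m+1)}\rangle$ — and then pull this back through the module isomorphism in degree $m+1$. Alternatively, and perhaps more cleanly, one checks directly that $\widetilde{\Delta}(\underline{a},\underline{c})$ is a highest-weight-type vector: acting by the $\Uq(\gl_K)\ot\Uq(\gl_K)$ generators on $\widetilde{\Delta}(\underline{a},\underline{c})$ using the module structure of Section~\ref{secrefFFT} produces again linear combinations of minors of the same shape (the $q$-Plücker/straightening relations survive), so the span of all $\widetilde{\Delta}(\underline{a},\underline{c})$ is a $\Uq(\gl_K)\ot\Uq(\gl_K)$-submodule; being of dimension $\binom{K}{m+1}^2$ and containing the obvious highest weight vector $\widetilde{\Delta}((1,\dots,m{+}1),(m{+}1,\dots,1))$ of weight $(\omega_{m+1},\omega_{m+1})$, it must be $\widetilde{T}_{(1^{m+1})}$. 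I expect the main obstacle to be precisely this last point — confirming that the $q$-deformed minors respect the module structure on $\widetilde{\cM}_{K,K}$ (which is \emph{a priori} only defined abstractly via the isomorphism $\Psi_{K,K}$ with $\CX_{K,K}$, not by explicit formulas on generators) — so in practice the cleanest route is the first one: prove everything on the familiar algebra $\cM_{K,K}$ and transport via the degree-$(m+1)$ $q=1$ specialisation comparison already set up in the proof of Lemma~\ref{lemmultfor}.
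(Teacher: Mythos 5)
Your plan correctly reduces the statement to the corresponding result in $\cM_{K,K}$ (via the quantum exterior algebra and skew Howe duality, exactly as the paper does), but the transfer step from $\cM_{K,K}$ to $\widetilde{\cM}_{K,K}$ has a genuine gap, and you flag it yourself at the end of your proposal without resolving it. The paper's device for this step is an \emph{explicit associative algebra isomorphism} $\phi\colon \cM_{K,K}\to\widetilde{\cM}_{K,K}$, $\phi(t_{ab})=\widetilde{t}_{a,\,K+1-b}$. Your suggestion that $\widetilde{\Delta}(\underline{a},\underline{c})$ arises from $\Delta(\underline{a},\underline{b})$ by ``the formal substitution $t_{ab}\mapsto\widetilde{t}_{ab}$'' cannot be made into a homomorphism argument: the relations \eqref{eqRel1} and \eqref{eqnewalgrel} differ in the sign of the exponent of $q$ (for instance, $t_{ab}t_{ac}=q\,t_{ac}t_{ab}$ when $b>c$, whereas $\widetilde{t}_{ab}\widetilde{t}_{ac}=q^{-1}\widetilde{t}_{ac}\widetilde{t}_{ab}$ when $b>c$), so that substitution is not an algebra map. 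The reversal of the second index is precisely what fixes this mismatch, and it is also why the second sequence in $\widetilde{\Delta}(\underline{a},\underline{c})$ runs over \emph{decreasing} sequences $\underline{c}\in\Xi^{>}_{m+1}$: one has $\phi\bigl(\Delta(\underline{a},\underline{b})\bigr)=\widetilde{\Delta}(\underline{a},\underline{c})$ with $c_i=K+1-b_i$.

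With $\phi$ in hand, the inclusion $\widetilde{\Delta}(\underline{a},\underline{c})\in\widetilde{T}_{(1^{m+1})}$ — which you identified as the ``main obstacle'' — is obtained at once: $\phi$ restricts to a linear isomorphism between the degree-$(m+1)$ graded pieces, both of which decompose multiplicity-freely as $\Uq(\gl_K)\otimes\Uq(\gl_K)$-modules (Theorem~\ref{thmHowe} and Proposition~\ref{propHowedec}) with unique summand isomorphic to $L^K_{(1^{m+1})}\otimes L^K_{(1^{m+1})}$, namely $T_{(1^{m+1})}$ on one side and $\widetilde{T}_{(1^{m+1})}$ on the other, so $\phi(T_{(1^{m+1})})=\widetilde{T}_{(1^{m+1})}$. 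By contrast, your PBW leading-term and $q=1$ specialisation arguments give only linear independence and the dimension count $\binom{K}{m+1}^2$; they cannot locate the span of the $\widetilde{\Delta}(\underline{a},\underline{c})$ inside the correct isotypic component. Your alternative — checking directly that the span of minors is a submodule using the module structure defined via $\Psi_{K,K}$ — would indeed be laborious, and $\phi$ is exactly the shortcut that makes this unnecessary. Note also that $\phi$ is special to the $n=0$ case; nothing like it exists for general $\gl_{m|n}$, which is why the paper's general SFT (Theorem~\ref{thmSFT}) stops at the abstract description of $\Ker\,\Psi^{k|l}_{r|s}$.
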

\begin{proof}
   It is easily verified  that $\phi: \cM_{K,K}\rightarrow  \widetilde{\cM}_{K,K}$ is an  associative algebra isomorphism given by $ \phi (t_{ab})=\widetilde{t}_{a,K+1-b}$. This in particular  induces $\phi(T_{(1^{m+1})})=\widetilde{T}_{(1^{m+1})}$, since by \thmref{thmHowe} and \thmref{propHowedec}  both $\cM_{K,K}$ and $\widetilde{\cM}_{K,K}$ admit multiplicity-free decomposition and  $T_{(1^{m+1})}$ and $\widetilde{T}_{(1^{m+1})}$ are both isomorphic to $L_{(1^{m+1})}^K\ot L_{(1^{m+1})}^K$. We only need to prove that $T_{(1^{m+1})}$ is spanned by quantum minors $\Delta(\underline{a},\underline{b})$ with $\underline{a},\underline{b}\in \Xi_{m+1}$. Once this is completed, we deduce that $\widetilde{T}_{(1^{m+1})}$ is spanned by $\widetilde{\Delta}(\underline{a},\underline{c})=\phi(\Delta(\underline{a},\underline{b}))$ with $\underline{c}=(K+1-b_1,\dots,K+1-b_{m+1})\in \Xi_{m+1}^{>}$.
   
   We now claim that quantum minors $\Delta(\underline{a},\underline{b})$ are matrix elements of $L_{m+1}^K$ which span $T_{(1^{m+1})}$. Recall that quantum exterior algebra $\La_q(V^{K})$ is generated by the elements $\xi_a, 1\leq a\leq K$ with relations $ \xi_a^2=0, \forall a$ and  $\xi_a\xi_b=-q^{-1}\xi_b\xi_a,  a>b$.
   The quantum skew  Howe duality in \rmkref{rmkdul} gives rise to the multiplicity-free decomposition  $\La_q(V^K)= \bigoplus_{N=0}^{K} \La_q(V^K)_{N}$ (see also \cite[Theorem 6.16]{LZZ}), 
   where  $\La_q(V^K)_{N}\cong L_{(1^N)}^{K}$ is the irreducible $\Uq(\gl_{K})$-module with the highest weight $\sum_{i=1}^{N}\epsilon_i$, with the basis given by  $ \xi_{\underline{a}}:=\xi_{a_1}\xi_{a_2}\dots \xi_{a_N}, \underline{a}\in \Xi_N$. Observe that $\La_q(V^{K})$ is  a $\CK[\GL_q(K)]=\cM_{K,K}\overbar{\cM}_{K,K}$-comodule with the structure map
   \[ \delta: \La_q(V^{K})\rightarrow \CK[\GL_q(K)]\ot \La_q(V^{K}),\quad \xi_a \mapsto \sum_{b=1}^{K}t_{ab}\ot \xi_b.\]
   It is easy to see that $\delta(\xi_{\underline{a}})=\sum_{\underline{b}\in \Xi_N}\Delta(\underline{a},\underline{b})\ot \xi_{\underline{b}}$ for any $\underline{a}\in \Xi_N$; this implies that $\Delta(\underline{a},\underline{b})$ are matrix elements of $L^{K}_{(1^N)}$. Our claim follows by letting $N=m+1$.   
\end{proof}

\begin{thm}\label{SFTglm}
	{\rm (SFT for $\Uq(\gl_{m})$)}
	Let  $\Psi_{k,r}: \widetilde{\cM}_{k,r}\rightarrow \CX_{k,r}$ be the  surjective algebra homomorphism.
   \begin{enumerate}
   	\item   As a two-sided ideal of $\widetilde{\cM}_{k,r}$, $\Ker\, \Psi_{k,r}$ is generated by quantum minors $\widetilde{\Delta}(\underline{a},\underline{c})$ with
   	\[ 
   	\begin{aligned}
   	\underline{a}&=(a_1,a_2,\dots,a_{m+1}), \quad &1\leq a_1<a_2< \dots< a_{m+1}\leq k,\\
   	\underline{c}&=(c_1,c_2,\dots,c_{m+1}), \quad &r\geq c_1>c_2> \dots>c_{m+1}\geq 1.
   	\end{aligned}
   	\] 
   	\item $\Psi_{k,r}$ is an isomorphism if and only if  $m\geq \min\{k,r\}$. In this case, $\CX_{k,r}$ is a quadratic  algebra generated by $X_{ab}, 1\leq a\leq k, 1\leq b\leq r$ subject to relations  in \lemref{lemXrel}, and it has a PBW basis $\{\prod_{(a,b)}^{\succ}X_{a b}^{m_{ab}}\mid \bm \in \fM^{k|0}_{r|0} \}$.
   \end{enumerate}	
\end{thm}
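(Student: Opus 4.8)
The plan is to deduce \thmref{SFTglm} as the $n=0$ specialisation of \thmref{thmSFT}, with the only extra work being to make the generators of the kernel explicit as quantum minors. I would proceed as follows.

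First, for part (2): setting $n=0$ in \thmref{thmSFT}(2), the condition ``$m\geq\min\{k,r\}$ and $n\geq\min\{l,s\}$'' becomes simply ``$m\geq\min\{k,r\}$'' (there are no odd indices, so $l=s=0$ and the second inequality is vacuous). In that case $\Psi_{k,r}$ is an isomorphism, $\CX_{k,r}\cong\widetilde{\cM}_{k,r}$, and the quadratic relations \eqref{eqinvrel} reduce, after deleting the first relation (which only occurs when $[a]+[b]=\bar1$, impossible here) and setting all parities even, exactly to the relations in \lemref{lemXrel}; the PBW basis $\{\prod_{(a,b)}^{\succ}X_{ab}^{m_{ab}}\mid \bm\in\fM^{k|0}_{r|0}\}$ is inherited from \lemref{lemXbasis}. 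So part (2) is immediate.

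For part (1): by \eqref{eqKerglm}, $\Ker\,\Psi_{k,r}=\langle{}^{\pi}\widetilde{T}_{(1^{m+1})}\rangle_{k,r}$ as a two-sided ideal of $\widetilde{\cM}_{k,r}$, where ${}^{\pi}\widetilde{T}_{(1^{m+1})}=\widetilde{T}_{(1^{m+1})}\cap\widetilde{\cM}_{k,r}$ and $\widetilde{\pi}_{k,r}$ kills any $\widetilde{t}_{ab}$ with $a>k$ or $b>r$. By the preceding lemma, $\widetilde{T}_{(1^{m+1})}\subset\widetilde{\cM}_{K,K}$ is spanned by the quantum minors $\widetilde{\Delta}(\underline a,\underline c)$ with $\underline a\in\Xi_{m+1}$, $\underline c\in\Xi^{>}_{m+1}$. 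Applying the retraction $\widetilde{\pi}_{k,r}$ to $\widetilde{\Delta}(\underline a,\underline c)=\sum_{\sigma}(-q^{-1})^{\ell(\sigma)}\widetilde{t}_{a_1,c_{\sigma(1)}}\cdots\widetilde{t}_{a_{m+1},c_{\sigma(m+1)}}$, I observe that a monomial survives iff \emph{every} factor $\widetilde{t}_{a_i,c_j}$ has $a_i\le k$ and $c_j\le r$; since the entries of $\underline a$ (resp.\ $\underline c$) are all distinct, this happens for the whole sum precisely when $a_{m+1}\le k$ (all $a_i\le k$) and $c_1\le r$ (all $c_j\le r$), i.e.\ $1\le a_1<\cdots<a_{m+1}\le k$ and $r\ge c_1>\cdots>c_{m+1}\ge1$; otherwise $\widetilde{\pi}_{k,r}(\widetilde{\Delta}(\underline a,\underline c))=0$. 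Hence ${}^{\pi}\widetilde{T}_{(1^{m+1})}$ is spanned by exactly the quantum minors listed in the statement, and part (1) follows.

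The main obstacle — and the only genuinely non-formal point — is the lemma identifying $\widetilde{T}_{(1^{m+1})}$ with the span of quantum minors, which the excerpt already proves via the algebra isomorphism $\phi:\cM_{K,K}\to\widetilde{\cM}_{K,K}$, $\phi(t_{ab})=\widetilde{t}_{a,K+1-b}$, together with the comodule structure of the quantum exterior algebra $\La_q(V^K)$; since I may assume that lemma, what remains for \thmref{SFTglm} is genuinely routine bookkeeping. One should double-check that $\phi$ carries $\Xi_{m+1}$-indexed minors to $\Xi^{>}_{m+1}$-indexed ones consistently with the retraction diagram, and that the sign/order conventions in \eqref{eqquanmin1} match those used when restricting from $\widetilde{\cM}_{K,K}$ to $\widetilde{\cM}_{k,r}$; these are the places where a careless argument could go wrong, but no new idea is needed.
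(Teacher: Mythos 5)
Your proposal is correct and follows essentially the same route as the paper: reduce to \thmref{thmSFT} and \eqref{eqKerglm}, invoke the preceding lemma that $\widetilde{T}_{(1^{m+1})}$ is spanned by the quantum minors $\widetilde{\Delta}(\underline{a},\underline{c})$, and observe that $\pi_{k,r}$ kills $\widetilde{\Delta}(\underline{a},\underline{c})$ precisely when some $a_i>k$ or some $c_j>r$. You spell out the survival criterion (each minor is a sum of monomials, all of which share the largest index $a_{m+1}$ and $c_1$) in slightly more detail than the paper, which states it without elaboration, but the argument is identical in substance.
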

\begin{proof}
  Using \thmref{thmSFT} and \eqref{eqKerglm}, we only need to show part (1). Now the restriction subspace $^{\pi}T_{(1^{m+1})}$ is spanned by the elements $\pi_{k,r}(\widetilde{\Delta}(\underline{a}, \underline{c}))$ with $\underline{a}\in \Xi_{m+1}, \underline{c}\in \Xi^{>}_{m+1}$. By the definition of $\pi_{k,r}$, part (1) is clear since $\widetilde{\Delta}(\underline{a}, \underline{c})$ will be killed by  $\pi_{k,r}$ unless $\underline{a}, \underline{c}$ satisfy the condition given in the theorem. 
\end{proof}

\appendix 
\section{Highest weight representations of $\Uqg$}\label{appA}
\subsection{Basics on  $\gl(m|n)$}\label{secBasicsuper} We collect some well known results on the general linear Lie superalgebra $\gl_{m|n}$ over $\C$. 
This Lie superalgebra 
can be realised  as a set of $(m+n)\times (m+n)$ complex super matrices  equipped with the Lie super bracket as given in Section \ref{sect:Uq} (see \cite{K}). Let $\fh$ be the Cartan subalgebra of $\gl_{m|n}$ consisting of all diagonal matrices and $\{\epsilon_i\}_{1\leq i\leq m+n}$ be the basis for $\fh^{\ast}$ dual to the basis $\{e_{ii}\}_{1\leq i\leq m+n}$ of $\fh$. We denote by $\CE_{m|n}$  the $m+n$-dimensional vector space over $\R$ with a basis $\{\epsilon_i\}_{1\leq i\leq m+n}$. Alternatively, we may use $\delta_{\mu}:=\epsilon_{i+\mu}$ for $1\leq \mu \leq n$.  We endow the basis elements with a total order $\prec$ , which is called \emph{admissible order} if $\epsilon_i\prec \epsilon_{i+1}$ and $\delta_{\mu}\prec \delta_{\mu+1}$ for all $i,\mu$. As an example, we will consider the following two orders
 \begin{equation}\label{eqord}
 \begin{aligned}
 \text{natural order $\lhd$:}\quad &\epsilon_1\lhd\epsilon_2\lhd\cdots\lhd\epsilon_m\lhd \delta_1\lhd\delta_2\lhd\cdots\lhd\delta_n,\\
 \text{filpped order $\LHD$:} \quad &\delta_1\LHD \delta_2\LHD\cdots\LHD \delta_n\LHD \epsilon_1\LHD\epsilon_2\LHD\cdots\LHD \epsilon_m.
 \end{aligned}
 \end{equation}
Fix an admissible order and let $\CE_1\prec \CE_2\prec \cdots\prec \CE_{m+n}$ be the ordered basis of $\CE_{m|n}$. We  define a symmetric non-degenerate bilinear form on $\CE_{m|n}$ by
\begin{equation}\label{eqbilinear}
 (\epsilon_i,\epsilon_j)=(-1)^{\theta}\delta_{ij},\quad (\delta_\mu,\delta_\nu)=-(-1)^{\theta}\delta_{\mu\nu},\quad (\epsilon_i,\delta_\mu)=(\delta_\mu,\epsilon_i)=0,
\end{equation}
where $\theta\in\{0,1\}$ such that $(\CE_{1},\CE_{1})=1$.

The set $\Phi^{\prec}$ of roots of $\gl_{m|n}$ can be  realised as a subset of $\CE_{m|n}$ with an admissible order $\prec$. Explicitly, each choice of a Borel subalgebra of $\gl_{m|n}$ corresponds to a choice of positive roots  $(\Phi^{\prec})^{+}$, and hence a fundamental system $\Pi^{\prec}=\{\alpha_1,\alpha_2,\dots, \alpha_{m+n-1}\}$ of simple roots, where $\alpha_i=\CE_i-\CE_{i+1}$ for $1\leq i< m+n-1$. The Weyl group conjugacy classes of  Borel subalgebras correspond bijectively to the admissible ordered bases of $\CE_{m|n}$.

Let $\fb^{\prec}$ be the Borel subalgebra of $\gl_{m|n}$ corresponding to the  admissible order $\prec$. In the matrix form, we have $\fb^{\prec}=\Span_{\C}\{e_{ij}\mid \CE_i\preceq \CE_j\}$.  Given an $(m,n)$-hook partition $\la$, we write $M(\la)^{\prec}= \U(\gl_{m|n})\ot_{\U(\fb^{\prec})} \C^{\prec}_{\la}$ for the Verma module of the $\fb^{\prec}$-highest weight $\la^{\prec}$, where $\C^{\prec}_{\la}$ is the one-dimensional $\fb^{\prec}$-module of weight $\la^{\prec}$.  The Verma module has a unique quotient $L^{\prec}_{\la}$ with the highest weight $\la^{\prec}$, which appears in the tensor product decomposition of $(V^{m|n})^{\ot N}$ for some integer $N\in \Z_{+}$. In particular, under the natural order $\lhd$,  the highest weight of $L_{\la}^{\lhd}$ is 
\[\la^{\lhd}=\la^{\natural}=\la_1\epsilon_1+\dots \la_{m}\epsilon_m+ \langle \la_1^{\prime}-m\rangle \delta_1+\dots+\langle \la_n^{\prime}-m\rangle\delta_n  \]
as defined in \eqref{eqlanat}. 

We use odd reflections to translate between various highest weights  of irreducible modules arising from different admissible orders. Two ordered bases for  $\CE_{m|n}$ are called \emph{adjacent} if they are identical except for a switch of a neighbouring pair $\CE_{i}$ and $\CE_{i+1}$ with $\CE_{i}\in \{\epsilon_1,\epsilon_2,\dots,\epsilon_m\}$ and $\CE_{i+1}\in \{\delta_1,\delta_2,\dots,\delta_n\}$.
For example, $\{\epsilon_1\prec \epsilon_2\prec \delta_1\prec \delta_2\}$ and $\{\epsilon_1\prec^{\prime} \delta_1\prec^{\prime}\epsilon_2 \prec^{\prime} \delta_2\}$ are adjacent ordered bases for $\CE_{2|2}$. For two adjacent ordered bases which differ at $i$ and $i+1$, the corresponding positive  root systems are related by \[(\Phi^{\prec^{\prime}})^+=\{(\Phi^{\prec})^{+}\backslash \{\alpha_i\}\} \cup \{-\alpha_i\} \quad \text{with}\quad \alpha_i= \CE_{i}-\CE_{i+1}.\] It was discovered by Serganova that
\[ 
\la^{\prec^{\prime}}=
  \begin{cases}
    \la^{\prec}, & (\la,\alpha_i)=0,\\
    \la^{\prec}-\alpha_i, & (\la, \alpha_i)\neq 0.
  \end{cases}
\]
In particular, we can apply a sequence of odd reflections to the natural order $\lhd$ and then obtain the flipped order $\LHD$, yielding
\begin{prop}\label{propLHDwt}
	The $\fb^{\LHD}$-highest weight $\la^{\LHD}$ for $L_{\la}^{\LHD}$ is given by
	\[ \la^{\LHD}=\la_1^{\prime}\delta_1+\dots +\la_n^{\prime}\delta_n+ \langle\la_1-n\rangle \epsilon_1+\dots+ \langle\la_m-n\rangle \epsilon_m.
    \]
\end{prop}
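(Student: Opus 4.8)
\textbf{Proof proposal for \propref{propLHDwt}.}

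The plan is to obtain the flipped-order highest weight from the natural-order one by applying a sequence of odd reflections, exactly as indicated in the paragraph preceding the statement. Recall that under the natural order $\lhd$ the highest weight of $L_\la^{\lhd}$ is $\la^{\natural}=\sum_{i=1}^m \la_i\epsilon_i+\sum_{\mu=1}^n\langle\la_\mu'-m\rangle\delta_\mu$, and that Serganova's rule says that when passing between two adjacent ordered bases differing by a switch of a neighbouring pair $\epsilon_i,\delta_\mu$ with simple root $\alpha=\epsilon_i-\delta_\mu$, the highest weight changes by $\la^{\prec'}=\la^{\prec}$ if $(\la^{\prec},\alpha)=0$ and $\la^{\prec'}=\la^{\prec}-\alpha$ if $(\la^{\prec},\alpha)\neq 0$. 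Here the bilinear form is the standard one with $(\epsilon_i,\epsilon_j)=\delta_{ij}$, $(\delta_\mu,\delta_\nu)=-\delta_{\mu\nu}$, $(\epsilon_i,\delta_\mu)=0$ (taking $\theta=0$, which is the relevant normalisation since $\CE_1=\epsilon_1$ in the natural order). So $(\la^{\prec},\epsilon_i-\delta_\mu)=a_i+b_\mu$ where $a_i$ is the $\epsilon_i$-coefficient and $b_\mu$ the $\delta_\mu$-coefficient of $\la^{\prec}$.

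First I would fix a convenient path of adjacent transpositions from $\lhd$ to $\LHD$: move $\delta_1$ leftward past $\epsilon_m,\epsilon_{m-1},\dots,\epsilon_1$, then move $\delta_2$ leftward past $\epsilon_m,\dots,\epsilon_1$, and so on through $\delta_n$; the net effect is to reverse the relative order of the $\epsilon$-block and the $\delta$-block, which is precisely $\LHD$. Then I would track the weight through each elementary step. The key bookkeeping fact is that subtracting $\epsilon_i-\delta_\mu$ decreases the $\epsilon_i$-coefficient by $1$ and increases the $\delta_\mu$-coefficient by $1$, and the step is performed iff the current sum of these two coefficients is nonzero. Starting from $\la^{\natural}$, one processes $\delta_\mu$ through the $\epsilon_i$'s in decreasing order of $i$; I would prove by induction (on the number of steps, or more cleanly on the pair $(\mu,i)$ in the chosen order) the claim that after $\delta_\mu$ has passed all of $\epsilon_m,\dots,\epsilon_1$, the $\delta_\mu$-coefficient has become $\la_\mu'$ and each $\epsilon_i$-coefficient has dropped from its previous value towards $\langle\la_i-\mu\rangle$ appropriately; after all $n$ reflections of $\delta_\mu$ through the $\epsilon$-block are done for all $\mu$, the $\epsilon_i$-coefficient is $\langle\la_i-n\rangle$ and the $\delta_\mu$-coefficient is $\la_\mu'$.

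The combinatorial heart is the identity that interleaves the ``$\langle\cdot\rangle$'' truncations with the conjugate partition: for a partition $\la$, the number of indices $i\le m$ with $\la_i\ge \mu$ equals $\min(\la_\mu',m)$, equivalently $\la_\mu'-\langle\la_\mu'-m\rangle$ once one restricts to the first $m$ parts. I would verify at step ``$\delta_\mu$ past $\epsilon_i$'' that the running $\delta_\mu$-coefficient plus the running $\epsilon_i$-coefficient is nonzero exactly when $\la_i\ge\mu$ (for the relevant range), so that the reflection fires precisely $\#\{i\le m:\la_i\ge\mu\}$ times while $\delta_\mu$ traverses the $\epsilon$-block; this raises the $\delta_\mu$-coefficient from $\langle\la_\mu'-m\rangle$ to $\langle\la_\mu'-m\rangle+\min(\la_\mu',m)=\la_\mu'$, and lowers $\epsilon_i$ by the number of $\mu$ with $\la_i\ge\mu$ that have passed it, ultimately to $\la_i-\min(\la_i,n)=\langle\la_i-n\rangle$. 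I expect the main obstacle to be precisely this sign/firing analysis, namely checking that the ``fire iff coefficient-sum nonzero'' condition lines up with ``$\la_i\ge\mu$'' at every intermediate stage rather than just at the endpoints --- the intermediate coefficients are themselves truncated quantities, so one has to be careful that no spurious firing or missed firing occurs. Once that is pinned down by the induction, collecting terms gives $\la^{\LHD}=\sum_{\mu=1}^n\la_\mu'\delta_\mu+\sum_{i=1}^m\langle\la_i-n\rangle\epsilon_i$, as asserted. Finally I would remark that this matches \lemref{lemTrunl} (and \propref{proplowwht}) as a consistency check, since the lowest weight of $L_\la^{m|n}$ used there is built from the same data.
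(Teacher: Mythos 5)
Your proposal is correct and follows the same route the paper intends: the paper asserts the result as a consequence of applying a chain of Serganova odd reflections from $\lhd$ to $\LHD$, and you carry out the bookkeeping explicitly, correctly identifying that the firing condition when $\delta_\mu$ passes $\epsilon_i$ reduces to $\la_i\ge\mu$ (since before that step the $\epsilon_i$-coefficient is $\langle\la_i-(\mu-1)\rangle$ and the $\delta_\mu$-coefficient is $\langle\la_\mu'-m\rangle+\max(0,\min(\la_\mu',m)-i)$, both of which vanish precisely when $\la_i<\mu$). The paper leaves this verification implicit; your version supplies it.
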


We use the pair $(\gl_{m|n},\Pi^{\prec})$ to denote the general Lie superalgebra $\gl_{m|n}$ furnished with fundamental system $\Pi^{\prec}$ arising from admissible order $\prec$.  Then it is worthy to note that $(\gl_{m|n},\Pi^{\LHD})=(\gl_{n|m},\Pi^{\lhd})$, and \propref{propLHDwt} implies that the irreducible $(\gl_{m|n},\Pi^{\lhd})$-module $L^{\lhd}_{\la}$, viewed as $(\gl_{n|m},\Pi^{\lhd})$-module, has the highest weight $(\la^{\prime})^{\natural}=\la^{\LHD}$ as given in \propref{propLHDwt}.

For our purpose,  we shall determine the lowest weight of $L_{\la}^{\lhd}$. We consider the following fundamental system $\Pi^{\rm L}$
\[\Pi^{\rm L}:=\{\delta_n-\delta_{n-1}, \delta_{n-1}-\delta_{n-2},\dots,\delta_1-\epsilon_m, \epsilon_{m}-\epsilon_{m-1},\dots,\epsilon_2-\epsilon_1\},  \]
which is conjugate to $\Pi^{\LHD}$ under Weyl group of $\gl_{m|n}$. Precisely, let $w_{k}$ be the longest element in the symmetric group $\Sym_k$ such that $w_k(i)=k+1-i$, $1\leq i\leq k$ for any $k\in \Z_{+}$. Then $\Pi^{\rm L}=w_mw_n\Pi^{\LHD}$, where $w_m$ acts on all $\epsilon_i$ and $w_n$ acts on all $\delta_\mu$ by permuting subscripts. The corresponding Borel subalgebra $\fb^{\rm L}$, which is conjugate to $\fb^{\LHD}$, consists of all lower triangular $(m+n)\times (m+n)$-super matrices. Therefore, the $\fb^{\lhd}$-lowest weight of $L_{\la}^{\lhd}$ is indeed the $\fb^{\rm L}$-highest weight, where the latter can be obtained by the group action of $w_mw_n$ on the $\fb^{\LHD}$-highest weight.  By \propref{propLHDwt}, we immediately have
\begin{prop}\cite{VZ}
The $\fb^{\lhd}$-lowest weight, denoted by $\overbar{\la}^{\natural}$,  of $L_{\la}^{\lhd}$ is given by
\[
\begin{aligned}
  \overbar{\la}^{\natural}&= \langle\la_m-n\rangle \epsilon_1+\dots+ \langle\la_1-n\rangle  \epsilon_m+\la_n^{\prime}\delta_1+\dots +\la_1^{\prime}\delta_n\\
   &=(\langle\la_m-n\rangle,\langle\la_{m-1}-n\rangle,\dots,\langle\la_1-n\rangle; \la_n^{\prime},\la_{n-1}^{\prime},\dots,\la_1^{\prime}).
\end{aligned}
 \]
\end{prop}

\subsection{The quantum supergroup $\U_q(\gl_{m|n},\Pi^{\prec})$}
Fix an admissible order $\prec$, and assume the fundamental system  $\Pi^{\prec}=\{\alpha_1,\alpha_2, \dots, \alpha_{m+n-1}\}$ for $\gl_{m|n}$, where $\alpha_i=\CE_i-\CE_{i+1}$ for $1\leq i< m+n-1$. Let  $\Theta\subseteq\{1, 2, \dots, m+n-1\}$ be the labelling set of the odd simple roots, i.e.,  $\{\alpha_s\mid s\in\Theta\}$ is the subset of $\Pi^{\prec}$ consisting of the odd simple roots. We stick to the nondegenerate bilinear  form in \eqref{eqbilinear}, and  define  the
Cartan matrix of $\gl_{m|n}$ associated to $\Pi^{\prec}$  by
\[
A=(a_{ij}) \quad\text{with}\quad
a_{ij}=\begin{cases}\dfrac{2(\alpha_i,\alpha_j)}{(\alpha_i,\alpha_i)},&\mbox{if}~(\alpha_i,\alpha_i)\neq0,\\
(\alpha_i,\alpha_j),&\mbox{if}~(\alpha_i,\alpha_i)=0.
\end{cases}
\]

\begin{defn}\label{defi:quantised}
	 The quantum general linear supergroup $\U_q(\gl_{m|n},\Pi^{\prec})$  with the fundamental system $\Pi^{\prec}$ is the unital associative $\CK$-superalgebra on generators $e_i,f_i$ $(i=1, 2, \dots, m+n-1)$ and $K_a^{\pm1}$ $(a=1, 2, \dots, m+n)$,  where $e_s,f_s, s\in\Theta$ are odd and the rest are even, subject to the relations
	 \begin{enumerate}[itemsep=1.5mm]
	  \item [(R1)] $K_a K_a^{-1}=K_a^{-1}K_a=1,\quad K_a K_b=K_b K_a;$
	  \item [(R2)] $K_ae_iK_a^{-1}=q^{(\CE_a, \CE_i-\CE_{i+1})}e_j,\quad   K_a f_iK_a^{-1}=q^{-(\CE_a,\CE_i-\CE_{i+1})}f_i;$
	  \item [(R3)] 	$e_if_j-(-1)^{[e_i][f_j]}f_je_i=\de_{ij}\dfrac{K_i K_{i+1}^{-1}-K_{i+1} K_i^{-1}}{q_i-q_i^{-1}}$, with $q_i=q^{(\CE_i,\CE_i)}$;
	  \item [(R4)] Serre relations:
	      \[\begin{aligned}
	      &(e_s)^2=(f_s)^2=0, \quad &\text{if $a_{ss}=0$},\\
	      &\Ad_{e_i}^{1-a_{ij}}(e_j)=\Ad_{f_i}^{1-a_{ij}}(f_j)=0,\quad &\text{if $a_{ii}\neq0,i\neq j$;}
	      \end{aligned}
	      \] 
	   \item [(R5)] High order Serre relations:
	     \[\Ad_{e_s}\Ad_{e_{s-1}}\Ad_{e_s}(e_{s+1})=0,\quad
	     \Ad_{f_s}\Ad_{f_{s-1}}\Ad_{f_s}(f_{s+1})=0, \quad\forall s\in \Theta,\]
	   where $\Ad_{e_i}(x)$ and $\Ad_{f_i}(x)$ are defined respectively by
	   \[
	   \begin{aligned}
	   \Ad_{e_i}(x)=e_ix-(-1)^{[e_i][x]}K_i K_{i+1}^{-1} x K_{i+1}K_i ^{-1}e_i, \\
	   \Ad_{f_i}(x)=f_ix-(-1)^{[f_i][x]} K_{i+1}K_i ^{-1} x K_i K_{i+1}^{-1} f_i. 
	   \end{aligned}
	   \]
	 \end{enumerate}
\end{defn}	

We refer to \cite{XZ,Y} for more details on  $\U_q(\gl_{m|n},\Pi^{\prec})$. In particular, \defref{defnUglmn} is the special case with fundamental system $\Pi^{\lhd}$ and $\Theta=\{m\}, e_i=E_{i,i+1},f_i=F_{i+1,i}$. Clearly, $\Uq(\gl_{m|n},\Pi^{\LHD})=\Uq(\gl_{n|m},\Pi^{\lhd})$.

When $q$ is an indeterminate, the finite dimensional irreducible representations of  $\Uqg$ are similar to those of  $\gl(m|n)$ \cite{Z93}. Here we only mention the results in  \secref{secBasicsuper} using quantum language.  We write $\Uqg=\Uq(\gl_{m|n},\Pi^{\lhd})$ for short.

\begin{prop}\label{proplowwht}
	Let $\la\in\La_{m|n}$ be an $(m,n)$-hook partition and $L_{\la}^{m|n}$ be the irreducible $\Uqg$-module with the highest weight $\la^{\natural}\in \La_{m|n}^{\natural}$ for any $m,n\in \Z_{+}$.
	\begin{enumerate}
		\item The $\Uq(\gl_{m|n})$-module  $L_{\la}^{m|n}$ has the highest weight $(\la^{\prime})^{\natural}$ when it is viewed as $\Uq(\gl_{n|m})$-module, that is, $L_{\la}^{m|n}\cong L_{\la^{\prime}}^{n|m}$ as $\Uq(\gl_{n|m})$-modules.
		\item The lowest weight $\overbar{\la}^{\natural}$ of  $\Uqg$-module $L_{\la}^{m|n}$ is given by the formula
		\[ \overbar{\la}^{\natural}=(\langle\la_m-n\rangle,\langle\la_{m-1}-n\rangle,\dots,\langle\la_1-n\rangle; \la_n^{\prime},\la_{n-1}^{\prime},\dots,\la_1^{\prime}).  \]
	\end{enumerate}
\end{prop}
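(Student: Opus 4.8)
\textbf{Proof proposal for Proposition~\ref{proplowwht}.}
The statement is purely about the representation theory of $\Uqg$ at generic $q$, and the plan is to reduce everything to the classical facts about $\gl_{m|n}$ recalled in \secref{secBasicsuper}, invoking the standard equivalence between the generic-$q$ category of finite-dimensional type-$\mathbf 1$ modules and the category of finite-dimensional $\gl_{m|n}$-modules (e.g.\ \cite{Z93}). Concretely, $L_\la^{m|n}$ is the irreducible highest weight module with highest weight $\la^\natural$ relative to the Borel given by the natural order $\lhd$, and I would use that its character (hence its set of weights with multiplicities) coincides with that of the classical module $L_\la^{\lhd}|_{q=1}$. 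Once this is in place, the two assertions follow from the classical combinatorics of odd reflections already developed in the appendix.

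For part (1): viewing $\Uq(\gl_{m|n},\Pi^{\lhd})$ through the flipped order $\LHD$ amounts, by \propref{propLHDwt} and the identification $(\gl_{m|n},\Pi^{\LHD})=(\gl_{n|m},\Pi^{\lhd})$ noted right after it, to regarding the same irreducible module as a $\Uq(\gl_{n|m},\Pi^{\lhd})$-module. The highest weight relative to $\Pi^{\LHD}$ is obtained from $\la^\natural$ by applying the sequence of odd reflections interpolating between $\lhd$ and $\LHD$; the quantum odd reflection rule is the exact analogue of Serganova's classical rule (this is where one cites \cite{XZ,Y} or the relevant quantum odd-reflection lemma), and the computation of the resulting weight is precisely \propref{propLHDwt}, giving $\la^{\LHD}=(\la')^\natural$ in the $\gl_{n|m}$ labelling. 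Hence $L_\la^{m|n}\cong L_{\la'}^{n|m}$ as $\Uq(\gl_{n|m})$-modules. First I would state the quantum odd reflection lemma (or cite it), then simply transcribe the appendix computation.

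For part (2): the lowest weight of $L_\la^{m|n}$ with respect to $\fb^{\lhd}$ is the highest weight with respect to the opposite Borel $\fb^{\rm L}$ of lower-triangular super-matrices. As explained in \secref{secBasicsuper}, $\Pi^{\rm L}=w_mw_n\Pi^{\LHD}$, so the $\fb^{\rm L}$-highest weight is $w_mw_n$ applied to the $\fb^{\LHD}$-highest weight $\la^{\LHD}$ computed in part (1) (equivalently \propref{propLHDwt}). Applying $w_m$ to reverse the $\epsilon$-coordinates and $w_n$ to reverse the $\delta$-coordinates of $\la^{\LHD}=(\la_1'\delta_1+\cdots+\la_n'\delta_n)+(\langle\la_1-n\rangle\epsilon_1+\cdots+\langle\la_m-n\rangle\epsilon_m)$ yields exactly $\overbar\la^\natural=(\langle\la_m-n\rangle,\dots,\langle\la_1-n\rangle;\la_n',\dots,\la_1')$. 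Since weights of $L_\la^{m|n}$ agree with those of the classical module, this is legitimate; alternatively one argues directly inside $\Uqg$ that $\overbar\la^\natural$ is a weight and that every weight $\mu$ satisfies $\mu\geq\overbar\la^\natural$ in the dominance order for $\fb^{\rm L}$, using that $L_\la^{m|n}$ is generated over $\Uq(\fn_-)$ by any lowest weight vector.

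The only genuine obstacle is ensuring the quantum odd reflection machinery is available and correctly stated at generic $q$: one must know that passing between Borels $\fb^{\prec}$, $\fb^{\prec'}$ differing by an isotropic simple root changes the highest weight of a fixed irreducible module by the same rule as classically, and that $w_mw_n$ (acting by permutation of the $\epsilon$- and $\delta$-subscripts) is realised by an algebra automorphism intertwining $\Uq(\gl_{m|n},\Pi^{\LHD})$ with $\Uq(\gl_{m|n},\Pi^{\rm L})$. Both are standard (see \cite{XZ,Y} and \cite{VZ}), so the argument should be short once these are cited; the bulk of the work is bookkeeping of which coordinates get reversed, which I would keep terse by pointing to \propref{propLHDwt} and the discussion preceding it.
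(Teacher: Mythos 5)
Your proposal is correct and follows essentially the same route as the paper: the appendix develops exactly the classical odd-reflection computation giving $\la^{\LHD}=(\la')^\natural$ (\propref{propLHDwt}) and the Weyl-conjugation $\Pi^{\mathrm L}=w_mw_n\Pi^{\LHD}$ giving $\overbar\la^\natural$, and then Section~A.2 simply asserts (citing \cite{Z93}) that the representation theory of $\Uqg$ at generic $q$ mirrors the classical one, which is precisely the equivalence you invoke to transfer the weight combinatorics. Your extra explicitness about needing a quantum odd-reflection rule and an automorphism realising $w_mw_n$ is a reasonable addition of rigour beyond what the paper spells out, but does not change the argument.
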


\medskip
\section*{Acknowledgement}
I wish to thank Professors Gus Lehrer and Ruibin Zhang for advices and help during the course of this work. This work was supported at different stages by student stipends from the China Scholarship Council and the Australian Research Council.


\end{document}